\newtheorem{theorem}{Theorem}[section]
\newtheorem{lemma}[theorem]{Lemma}
\newtheorem{corollary}[theorem]{Corollary}
\def\t{\theta}
\def\T{\Theta}
\def\sg{\sigma}
\def\a{\alpha}
\def\d{\delta}
\def\w{\omega}
\def\g{\gamma}
\def\G{\Gamma}
\def\c{{\rm c}}
\def\s{{\rm s}}
\def\tf{{\rm t}}
\def\del{\partial}
\def\dot{\accentset{\mbox{\Large\bfseries .}}}
\def\st{\accentset{\star}}
\def\ol{\overline}
\def\ul{\underline}
\def\vp{\varphi}
\def\B#1{\mathbb#1}
\def\mc#1{\mathcal#1}
\def\com#1{\quad\text{#1}\quad}
\def\TS{\textstyle}
\def\wh{\widehat}
\def\wt{\widetilde}
\def\whc#1{\wh{\mc{#1}}}
\def\wht#1{\wt{\mc{#1}}}
\def\ut#1{\undertilde{#1}}
\def\ulc#1{\ul{\mc{#1}}}
\def\olc#1{\ol{\mc{#1}}}
\def\IR#1{\scalebox{1.1}{$\frac{\mc I#1\mc R}2$}}
\def\piot{{\TS\frac{2\pi}T}}
\def\piotk{{\TS\frac{2\pi}{T_k}}}
\def\x#1{$#1\times #1$}
\def\({\left(\begin{array}{cccccc}}
\def\){\end{array}\right)}
\def\dot{\accentset{\mbox{\Large\bfseries .}}}
\def\ddot{\accentset{\mbox{\Large\bfseries .\kern-1.75pt.}}}
\numberwithin{equation}{section}
\begin{document}

\title{The Nonlinear Theory of Sound}

\author{Blake Temple}
\address{Department of Mathematics,
  University of California, Davis, CA 95616}
\email{temple@math.ucdavis.edu}

\author{Robin Young}
\address{Department of Mathematics and Statistics,
  University of Massachusetts, Amherst, MA 01003}
\email{rcy@umass.edu}

\date{\today}

\begin{abstract}
  We prove the existence of ``pure tone'' nonlinear sound waves of all
  frequencies.  These are smooth, space and time periodic, oscillatory
  solutions of the $3\times3$ compressible Euler equations in one
  space dimension.  Being perturbations of solutions of a linear wave
  equation, they provide a rigorous justification for the centuries
  old theory of Acoustics.  In particular, Riemann's celebrated 1860
  proof that compressions always form shocks holds for isentropic and
  barotropic flows, but for generic entropy profiles, shock-free
  periodic solutions containing nontrivial compressions and
  rarefactions exist for every wavenumber $k$.
\end{abstract}

\maketitle

\section{Introduction} 

We prove the existence of $1$-dimensional space and time periodic
solutions of the $3\times3$ compressible Euler equations, thereby
providing the first existence proof for globally bounded solutions of
Euler's equations exhibiting sustained nonlinear interactions and
large total variation.  By this, parallel to the classical linear
theory of sound, there really is a nonlinear theory of sound, even
though it was thought not to exist since Stokes and Riemann considered
the problem in the mid-19th century.  Specifically, we prove that
under an arbitrarily small perturbation of any given entropy profile,
the equations obtained by linearization about stationary (weak)
solutions of constant pressure and zero velocity, admit $k$-mode
solutions for every wave number $k$, and each such linear sinusoidal
$k$-mode perturbs to a one parameter family of \emph{pure tone} space
and time periodic solutions of the nonlinear equations admitting the
same frequency in space, but with time periods $T_k$ depending on $k$,
determined by the linearized equation.

By a \emph{pure tone} nonlinear solution we mean a solution of the
$3\times3$ compressible Euler equations which agrees with a linear
sinusoidal $k$-mode solution of the linearized equations, to leading
order in the perturbation parameter.  This nonlinear theory of sound
requires both genuine nonlinearity \emph{and} varying entropy profiles.
Thus the $2\times 2$ theory of shock wave formation for barotropic
equations of state ($p=p(\rho)$, including isentropic and isothermal
flows), established by Riemann in 1860, and made definitive in the
Glimm-Lax decay result of 1970, is \emph{not} indicative of what
happens in the full $3\times3$ system of compressible Euler when the
entropy is not constant.

For $3\times3$ systems, there are two competing physical effects at
play: on the one hand, waves steepen due to genuine nonlinearity,
which is the dependence of sound speed on the state.  On the other
hand, entropy variations cause nonlinear interaction effects which are
manifest as \emph{echoes}, resulting in a scattering of waves which
mitigates the steepening, and which can ultimately prevent shock
formation even when compressions are present.  Our results show that
periodic entropy variations can act to bring compression and
rarefaction of waves into perfect balance and thus prevent shock
formation.

\subsection{Scientific Context}

Our results resolve a long-standing open problem in the theory of
Acoustics which dates to the mid-nineteenth century.  Namely, how is
music possible when nonlinearities always drive oscillatory solutions
into shock waves.  Recall that in the 1750's, Euler developed the
correct extension of Newton's laws of motion to the continuum, and
then linearized the equations to produce the wave equation which
D'Alembert had earlier derived to describe infinitesimal displacements
of a vibrating string.  By this Euler solved arguably the greatest
intellectual problem of his time -- he gave a \emph{mechanical
explanation for music}: vibrations of an instrument produce sinusoidal
oscillations in air pressure, frequencies of which correspond to the
pure tones of sound we hear when, say, a violin is played.  But in the
mid-19th century, mathematicians including Stokes and Riemann
discovered a problem with this theory: solutions containing
compressions could not be sustained, and this would destroy the
musical tones of the linear theory.  After Challis identified the
issue in 1848, Stokes in his paper \emph{``On a difficulty in the
theory of sound''}~\cite{Stokes}, showed that oscillations break down
in finite time, and proposed a resolution using shock waves.  In 1860
Riemann proved that a compression \emph{always} produces shock waves
in isentropic flows.  A century later, this was made definitive in the
celebrated Glimm-Lax decay result of 1970, which established that
space periodic solutions of isentropic Euler, or any genuinely
nonlinear barotropic system, necessarily form shocks and decay to
average at rate $1/t$.  At that time, it was believed that the same
result was true for $3\times3$ non-isentropic Euler as well.  Thus
Euler's original question, why does music resonate so beautifully,
remained unexplained at the level of the fully nonlinear equations --
until now.

In this context, our results establish that the theory of Acoustics
and music based on linear modes of propagation is \emph{not}
inconsistent with nonlinear evolution.  Persistence of sound waves is
inherent in compressible Euler, but only if the entropy is
\emph{non-constant}, so that echoes are present.  A region of shock
free periodic sound wave propagation opens up around every
\emph{non-resonant}, non-constant entropy profile.  The echoes, which
are nonlinear waves scattered by the entropy profile, are on the order
of the incident nonlinear waves for large entropy jumps, and nonlinear
periodic solutions overcome Glimm-Lax shock formation via
characteristics moving ergodicly though the periods, balancing
compression and rarefaction, \emph{on average}.  This is a new point
of view for shock free wave propagation in compressible Euler: instead
of Riemann invariants propagating as constant coordinates along
characteristics (as in \x2 isentropic and barotropic systems
\cite{S}), in this new \x3 regime, every characteristic 
cycles through a dense set of values of each Riemann invariant.

Our results raise the interesting question as to whether this
shock-free regime is the actual regime of ordinary sounds of speech
and musical tones heard in nature and everyday life.  Glimm-Lax theory
is based on approximating smooth solutions by weak shock waves, but as
far as we can tell, only strong shocks are actually observed in
nature.  Our results and the success of the field of Acoustics
indicate that this regime of nonlinear shock-free wave propagation is
more fundamental to ordinary sounds and musical tones than formation
and propagation of ``weak shock waves’’.  Equi-temperament tuning of
the piano makes frequencies irrationally related, which is precisely
our \emph{non-resonance} condition, sufficient to imply perturbation
of linear pure tones to nonlinear pure tones.

The essential physical ideas in this paper were first understood by
the authors within the context of the theory of nonlinear wave
interactions introduced by Glimm and Lax in~\cite{G,GL}.  The
interaction of a nonlinear (acoustic) wave of strength $\gamma$ with
an entropy jump $[s]$ produces an ``echo'', which is a reflected
acoustic wave, whose strength is $O([s]\,\gamma)$, on the order of the
incident acoustic wave~\cite{TY}.  On the other hand, the interaction
of any two (weak) acoustic waves is linear, with an error which is
\emph{cubic} in wave strength.  We began this project with the insight
that \emph{the echoes produced by finite entropy jumps are at the
critical order sufficient to balance rarefaction and compression}.  By
this we might also expect that a theory of nonlinear superposition of
the ``pure tone'' nonlinear sound waves constructed here could also
produce perturbations which provide general shock-free solutions of
the nonlinear equations, although these would no longer be periodic in
time.  Mathematically, this raises the question as to whether, by the
same mechanism, quasi-periodic mixed modes of the linearized theory
also perturb to nonlinear.

Regarding mathematical methods employed, our results demonstrate that
the problem of expunging resonances inherent in the Nash-Moser method,
can be overcome when enough symmetries are present to impose
periodicity by \emph{projection}, rather than by \emph{periodic
return}.  Thus taking into account all of the physical symmetries in
the problem has led to a dramatic simplification of the mathematical
techniques and tools needed.

\subsection{Statement of Results}

The compressible Euler equations are the generalization of Newton's
laws of motion to a continuous medium, in the absence of viscous or
thermal dissipation.  In a spatial (Eulerian) frame, they consist of
equations representing conservation of mass, momentum and energy, and
in one space dimension take the form
\begin{equation}
  \label{euler-eul}
  \begin{aligned}
    \rho_t &+ \big(\rho\,u\big)_X = 0,\\
    (\rho\,u)_t &+ \big(\rho\,u^2 + p\big)_X = 0,\\
    \big(\TS{\frac12}\,\rho\,u^2 + \rho\,e\big)_t &+
    \big(\TS{\frac12}\,\rho\,u^3 + \rho\,e\,u + u\,p\big)_X = 0.
  \end{aligned}
\end{equation}
Here $X$ is the spatial variable and $u$ is the fluid velocity, while
$\rho$, $p$ and $e$ are the fluid density, pressure and specific
energy, respectively.  These constitutive variables are related
through the Second Law of Thermodynamics,
\begin{equation}
  \label{td2}
  de =  \Theta\,ds - p\,dv,
\end{equation}
in which $v=1/\rho$ is the specific volume, $\Theta$ is the temperature,
and $s$ is the specific entropy.  For reversible solutions,
which do not contain shocks, \eqref{td2} is equivalent to the
\emph{entropy equation},
\begin{equation}
  \label{entr-eul}
  (\rho\,s)_t + \big(\rho\,u\,s\big)_X = 0,
\end{equation}
which states that the entropy is preserved along particle paths~\cite{CF}.

To rewrite the equations in \emph{Lagrangian} form, introduce the
\emph{material coordinate} $x$ by
\begin{equation}
  \label{mat}
  x = \int_0^X \rho(\chi)\;d\chi,
\end{equation}
which after manipulation yields the equivalent system
\begin{equation}
  \label{euler-lag}
  \begin{aligned}
    v_t &- u_x = 0,\\
    u_t &+ p_x = 0,\\
    \big(\TS{\frac12}\,u^2 &+ e \big)_t + \big(u\,p\big)_x = 0,
  \end{aligned}
\end{equation}
see \cite{CF,S}.  In this frame, for reversible solutions, in which
$p$ and $u$ are globally continuous, the entropy equation takes the
simple form
\begin{equation}
  \label{entr-L}
  s_t = 0,
\end{equation}
which is solved by $s=s(x)$.  Because the solutions we construct
here are time reversible, we may assume that $s=s(x)$ has been
prescribed and can then drop the third energy equation, so that the
system is fully described as
\begin{equation}
  \label{system}
  \begin{aligned}
    v\big(p,s(x)\big)_t - u_x &= 0,\\
    u_t + p_x &= 0,
  \end{aligned}
\end{equation}
in which the specific volume $v = v(p,s)$ is our explicitly given
constitutive law~\cite{CF,S}.  We can eliminate $u$ in \eqref{system}
to obtain the nonlinear wave equation
\begin{equation}
  \label{wave}
  v\big(p,s(x)\big)_{tt} + p_{xx} = 0.
\end{equation}
We make the standard physical assumption that $v_p(p,s) < 0$, which
implies that \eqref{wave} is hyperbolic and thus a wave equation.  We
will use the Lagrangian frame throughout the paper, but because
systems \eqref{euler-eul} and \eqref{euler-lag} are equivalent, our
results also apply in the Eulerian frame.

We note that the \emph{stationary solution} given by
\[
  s = s(x), \qquad p(x,t) = \ol p,  \qquad u(x,t) = 0,
\]
is a time reversible exact solution of the system \eqref{euler-lag} or
\eqref{system}, even when $s(x)$ and $\rho(x)$ are discontinuous, and
we refer to this as a \emph{quiet state}.  Our periodic sound wave
solutions are perturbations of this quiet state in $p$ and $u$, whose
leading order is a continuous $k$-mode solution of the linearization
of \eqref{system} or \eqref{wave} around this quiet state.

Because we are regarding the entropy profile $s=s(x)$ as given, and we
wish to find time periodic solutions, we treat the material variable
$x$ as the evolution variable.  This allows us to describe the initial
data and corresponding solutions at any fixed $x$ in terms of Fourier
series in time, and in particular, allows an efficient description of
the linearized operator.  Fundamental to our analysis is the
observation that the symmetry condition $p$ even and $u$ odd as
functions of time $t$ is preserved under both nonlinear and linearized
evolution in $x$.  Our final breakthrough was the realization that a
corresponding symmetry in $x$ then allows for a \emph{reflection
  principle} for generating a periodic tiling of the plane from
solutions of a reduced boundary value problem.

The reduced problem is to solve the compressible Euler equations, or
equivalently the $2\times2$ system \eqref{system}, evolving in $x$,
from an initial condition
\begin{equation}
  \label{ic}
  u(0,\cdot) = 0, \qquad p(0,\cdot) \text{ even},
\end{equation}
to $x=\ell$, where we impose the boundary condition
\begin{equation}
  \label{bc}
  \IR-\,\mc S^{T/4}\,u(\ell,\cdot) = 0,  \qquad
  \IR-\,\mc S^{T/4}\,p(\ell,\cdot) = 0.
\end{equation}
Here $T$ is the time period and we have defined the \emph{reflection
  operator} $\mc R$ and \emph{shift operator} $\mc S^{T/4}$ by
\begin{equation}
  \label{RSdef}
  \mc R\,f(t) := f(-t), \com{and}
  \mc S^{T/4}\,f(t) := f\big(t-T/4\big),
\end{equation}
respectively, so that $\IR-$ is the projection onto the odd part of a
function.

\begin{theorem}
  \label{thm:symm}
  A solution of the nonlinear boundary value problem \eqref{system},
  \eqref{ic}, \eqref{bc} determines a space and time periodic solution
  to the compressible Euler equations via a reflection symmetry
  principle.
\end{theorem}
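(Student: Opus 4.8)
The plan is to show that the solution on the fundamental tile $[0,\ell]\times[0,T]$ (or rather the strip $0\le x\le\ell$ with $t\in\mathbb R/T\mathbb Z$) extends by successive reflections to a solution on the whole plane that is periodic in both $x$ and $t$. The starting point is the observation already emphasized in the excerpt: the subspace of data with $p$ even and $u$ odd in $t$ is invariant under the $x$-evolution of \eqref{system}. This is the $t\to-t$ reversibility of the equations, which holds because \eqref{system} is invariant under $(t,u)\mapsto(-t,-u)$ with $s=s(x)$ fixed. Hence the initial condition \eqref{ic} propagates to give, for every $x\in[0,\ell]$, a profile with $p(x,\cdot)$ even and $u(x,\cdot)$ odd. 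I would record this as the first lemma-step.

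\textbf{Reflecting in $x$ about $x=0$ and $x=\ell$.}

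Next I would exhibit the two spatial symmetries. First, reflection about $x=0$: because $u(0,\cdot)=0$, the map $(x,t)\mapsto(-x,t)$ together with $u\mapsto -u$ sends the solution on $[0,\ell]$ to a solution on $[-\ell,0]$, and the two agree (in $p$, and with $u$ vanishing) on the overlap $x=0$, so they glue to a $C^1$ (reversible) solution on $[-\ell,\ell]$, provided the entropy profile is extended evenly, $s(-x)=s(x)$. Second, reflection about $x=\ell$: here the boundary condition \eqref{bc} is exactly what is needed. The condition $\IR-\,\mc S^{T/4}u(\ell,\cdot)=0$ and $\IR-\,\mc S^{T/4}p(\ell,\cdot)=0$ says that, after the quarter-period shift $\mc S^{T/4}$, both $p$ and $u$ are \emph{even} in $t$ at $x=\ell$; equivalently $\mc S^{T/4}u$ and $\mc S^{T/4}p$ have vanishing odd part. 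This is the matching condition that lets the combined space-time reflection $(x,t)\mapsto(2\ell-x,\,T/2-t)$ — composition of $x$-reflection about $\ell$ with $t$-reflection about $T/4$ — map the solution on $[0,\ell]$ to one on $[\ell,2\ell]$ agreeing with it along $x=\ell$. One checks the resulting extension solves \eqref{system} on $[-\ell,2\ell]$; since the equations are autonomous in $t$, the $t$-shift is harmless, and the $t$-reflection is again the reversibility symmetry.

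\textbf{Iterating to a periodic tiling.}

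Iterating these two reflections generates the solution on all of $\mathbb R$ in $x$. The composition of reflection about $x=0$ with reflection about $x=\ell$ is translation by $2\ell$ in $x$, accompanied by a corresponding action in $t$ (a shift by $T/2$ coming from composing the two $t$-reflections built into the gluings); applying it again yields pure translation by $4\ell$ in $x$ and by $T$ (hence the identity) in $t$. Thus the full extension is periodic with spatial period $4\ell$ and — because the $t$-action has order dividing the relevant shift-reflection group and the net $t$-shift over one spatial period is $T/2$, so two spatial periods give a shift by $T$ — time-periodic with period $T$. I would then note that the gluings are $C^1$ across $x=0$ and $x=\ell$ (the odd/even parity of $u$, $p$ in $t$ forces the relevant one-sided $x$-derivatives to match, e.g. $u_x$ is even in $t$ where $u$ is odd, and \eqref{system} then pins down $p_x$), so the tiled function is a genuine (classical, reversible) solution of \eqref{system}, and therefore of \eqref{euler-lag}, on the whole plane. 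Reconstructing $v$ from $v(p,s(x))$ and, if desired, passing back to Eulerian coordinates via \eqref{mat} completes the proof.

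\textbf{Main obstacle.}

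The substantive point — and the step I expect to need the most care — is verifying that the reflected pieces match to sufficient regularity, i.e. that no spurious jump in the $x$-derivatives (equivalently in $u_x$, $p_x$, hence in the characteristic data) is introduced at the reflection interfaces $x=0$ and $x=\ell$. This is where the parity structure in $t$ is essential: at $x=0$ oddness of $u(0,\cdot)$ gives $u\equiv 0$ there and evenness of $p$ makes $p_x(0,\cdot)$ odd, so it too vanishes at the fixed time-symmetry points, and similarly the shifted parities at $x=\ell$ force the $\mc S^{T/4}$-even/odd combinations to have vanishing normal derivatives of the appropriate type. Everything else — that the PDE is preserved under each reflection, and that composing the reflections yields the stated periods — is a routine check using only autonomy in $t$, the $(t,u)\mapsto(-t,-u)$ reversibility, and the evenness of the extended entropy profile $s(x)$.
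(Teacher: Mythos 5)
Your proposal follows the same route as the paper's own construction (carried out in Theorem~\ref{thm:tile}): preservation of the $p$ even / $u$ odd symmetry under $x$-evolution, reflection about $x=0$ using $u(0,\cdot)=0$, the shifted reflection (equivalently a rotation by $\pi$ about $(\ell,T/4)$) licensed by \eqref{bc}, and iteration to get spatial period $4\ell$ and time period $T$. One local slip in your regularity check: the vanishing of $p_x(0,\cdot)$ does not follow from "evenness of $p$ makes $p_x(0,\cdot)$ odd" (evenness of $p$ in $t$ makes $p_t$ odd, not $p_x$, and vanishing at isolated symmetry times would not suffice anyway) — the correct one-line argument is $p_x=-u_t$ together with $u(0,\cdot)\equiv 0$; the paper instead sidesteps derivative-matching entirely by observing that the reflected piece solves the same $x$-Cauchy problem as the continued solution and invoking uniqueness.
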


We solve the boundary value problem as a perturbation from quiet state
solutions.  In order to do so, we must first develop a detailed
understanding of the linearized problem.  We first accomplished this
for the simplest non-trivial entropy profile consisting of a single
jump between two constant entropy states.  Our previous
work~\cite{TYperStr, TYperLin, TYperG} led to the understanding that
the key nonlinear effect is balancing of compression and rarefactions
to avoid shock formation.  Identifying the simplest periodic pattern
that balances rarefaction and compression gave the intuition that led
to the discovery of the boundary conditions \eqref{ic} and \eqref{bc}
and the corresponding reflection principle for generating periodic
solutions.

In this paper, we develop these ideas first in this simplest case, and
then successively generalize to piecewise constant profiles and
finally to general entropy profiles.  In doing the general case, we
realized that the boundary conditions are self-adjoint, which allows
us to analyze the linearization as a Sturm-Liouville system, and the
previous cases can be incorporated into this general Sturm-Liouville
framework.  Here we state our results for the general case.

Given a quiet state with entropy profile $s(x)$ based at constant
pressure $\ol p$, we define the \emph{inverse linearized wavespeed}
$\sg=\sg(x)$ to be
\begin{equation}
  \label{wvspd}
  \sigma(x) := \sqrt{-v_p\big(\ol p,s(x)\big)},
\end{equation}
recalling that we are evolving in the material variable $x$.  Define
the set of allowable entropy profiles $s=s(x)$ to be
\[
\mc B := \Big\{s\in L^1[0,\ell]\;\Big|\;
\sg \in L^1,\ \log\sg\in BV\Big\},
\]
together with the $L^1$ topology.  Note that $s(x)$ is general, and we
do not require it to be periodic over $[0,\ell]$.

For any $s\in\mc B$, linearizing the compressible Euler equations
\eqref{system} around the quiet state $\ol p$ yields the linear wave
equation
\begin{equation}
  \label{linWv}
  P_x + U_t = 0, \qquad U_x + \sg^2(x)\,P_t = 0,
\end{equation}
or equivalently
\[
  P_{xx} - \sg^2(x)\,P_{tt} = 0,
\]
in which $x$ is the evolution variable.  We separate variables with
the ansatz
\begin{equation}
  \label{PUans}
  P(x,t) := \vp(x)\,\c(\w\,t),\qquad
  U(x,t) := \psi(x)\,\s(\w\,t),
\end{equation}
where $\c$ and $\s$ denote cosine and sine, respectively.  Together
with the self-adjoint boundary conditions \eqref{ic} and \eqref{bc}
this yields a Sturm-Liouville eigenvalue problem.  In a solution of
\eqref{PUans}, the \emph{eigenfrequency} $\w$ is the square root of
the corresponding Sturm-Liouville eigenvalue.

\begin{theorem}
  \label{thm:slev}
  For $s\in\mc B$, there is a monotone increasing set $\w_k$ of
  \emph{eigenfrequencies} with $\w_k/k$ bounded, and corresponding
  eigenfunctions $\vp_k$ and $\psi_k$, such that the functions
  \[
    P_k(x,t) := \vp_k(x)\,\c(\w_k\,t),\qquad
    U_k(x,t) := \psi_k(x)\,\s(\w_k\,t),
  \]
  solve the linear wave equation \eqref{linWv} together with boundary
  conditions \eqref{ic} and \eqref{bc}.  We call these \emph{pure
    tone} solutions of the linearized equation.
\end{theorem}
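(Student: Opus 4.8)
The plan is to separate variables, reduce the operator boundary conditions \eqref{ic}--\eqref{bc} to a standard separated pair, and then quote the classical spectral theory of regular Sturm--Liouville problems. Substituting the ansatz \eqref{PUans} into \eqref{linWv}, the equation $P_x+U_t=0$ forces $\vp'(x)+\w\,\psi(x)=0$, i.e.\ $\psi=-\vp'/\w$, and $U_x+\sg^2P_t=0$ forces $\psi'(x)=\w\,\sg^2(x)\,\vp(x)$; eliminating $\psi$ gives
\[
  \vp''+\lambda\,\sg^2(x)\,\vp=0 \quad\text{on }[0,\ell],\qquad \lambda:=\w^2 .
\]
Since the hypothesis $\log\sg\in BV[0,\ell]$ in particular makes $\log\sg$ bounded, the weight obeys a two-sided bound $0<\ul c\le\sg\le\ol c<\infty$, so $\sg^2$ is a genuine Sturm--Liouville weight; as then $\vp''\in L^1$, the eigenfunctions lie in $C^1$ with absolutely continuous derivative and the classical theory applies verbatim.

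Next I would push the boundary data through the ansatz. At $x=0$, the condition $u(0,\cdot)=0$ reads $\psi(0)\,\s(\w t)\equiv0$, i.e.\ $\psi(0)=0$, equivalently $\vp'(0)=0$, while the evenness of $p(0,\cdot)$ is automatic for $\vp(0)\,\c(\w t)$. At $x=\ell$, write $T=2\pi/\w$ for the fundamental period, so that the quarter-period shift $\mc S^{T/4}$ is a phase shift by $\pi/2$ and interchanges $\c(\w t)$ and $\s(\w t)$ up to sign: $\mc S^{T/4}U(\ell,\cdot)=-\psi(\ell)\,\c(\w t)$ is even, so $\IR-\,\mc S^{T/4}u(\ell,\cdot)=0$ holds identically, whereas $\mc S^{T/4}P(\ell,\cdot)=\vp(\ell)\,\s(\w t)$ is odd, so $\IR-\,\mc S^{T/4}p(\ell,\cdot)=0$ reduces to $\vp(\ell)=0$. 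Thus the eigenvalue problem is $\vp''+\lambda\,\sg^2\,\vp=0$ with the separated --- hence self-adjoint --- conditions $\vp'(0)=0$, $\vp(\ell)=0$, posed on $L^2\big([0,\ell];\sg^2\,dx\big)$.

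Then I would invoke the spectral theory of this regular Sturm--Liouville problem: a simple real spectrum $\lambda_1<\lambda_2<\cdots\to+\infty$ with eigenfunctions $\vp_k$. Multiplying by $\vp_k$ and integrating by parts, the boundary terms drop out by $\vp'_k(0)=\vp_k(\ell)=0$, leaving $\int_0^\ell(\vp'_k)^2=\lambda_k\int_0^\ell\sg^2\vp_k^2$, so every $\lambda_k\ge0$; and $\lambda=0$ is impossible since it forces $\vp$ constant and then, by $\vp(\ell)=0$, identically zero. Hence $\w_k:=\sqrt{\lambda_k}$ is a strictly increasing sequence of positive eigenfrequencies, with $T_k:=2\pi/\w_k$. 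For the bound $\w_k/k=O(1)$ I would combine the min--max characterization with $\ul c\le\sg\le\ol c$: the Rayleigh quotient $\int(\vp')^2\,\big/\int\sg^2\vp^2$ is squeezed between $\ol c^{-2}$ and $\ul c^{-2}$ times $\int(\vp')^2\,\big/\int\vp^2$, so $\lambda_k$ lies between fixed multiples of the $k$-th constant-coefficient eigenvalue $\big((2k-1)\pi/(2\ell)\big)^2$, whence $\w_k\asymp k$; a Liouville substitution $\xi=\int_0^x\sg(y)\,dy$ would further pin down $\w_k\sim\pi k\,\big/\!\int_0^\ell\sg(x)\,dx$.

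It remains only to reassemble the solutions: putting $\psi_k:=-\vp'_k/\w_k$ (so that $\psi_k(0)=0$), the pair $P_k=\vp_k\,\c(\w_k t)$, $U_k=\psi_k\,\s(\w_k t)$ solves \eqref{linWv} and satisfies \eqref{ic}, \eqref{bc} by the computations above. I expect the only real obstacle to be the translation step --- checking that the operator boundary conditions \eqref{ic}, \eqref{bc} really do collapse, via the parity ansatz and the quarter-period shift, to the standard separated pair $\vp'(0)=\vp(\ell)=0$ --- since it is exactly this collapse that exhibits the linearization as a self-adjoint Sturm--Liouville system; once that is secured, the remaining assertions are textbook Sturm--Liouville theory for a bounded, uniformly positive weight.
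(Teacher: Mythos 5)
Your reduction to $\vp''+\lambda\,\sg^2\,\vp=0$ with $\vp'(0)=0$, and your positivity, simplicity, and min--max growth arguments, are all fine. The genuine gap is at precisely the step you flagged as the crux: the translation of the right-hand boundary condition \eqref{bc}. You take $T=2\pi/\w$, the fundamental period of the single mode, so that $\mc S^{T/4}$ is always a phase shift by $\pi/2$ and \eqref{bc} collapses to $\vp(\ell)=0$ for every mode. In the paper, however, $T$ is the reference period of the full solution, $T_k=k\,2\pi/\w_k$ (see \eqref{Tn}), so the $k$-th eigenfunction is the $k$-th Fourier harmonic of period $T_k$ and $\mc S^{T/4}$ shifts its phase by $k\,\pi/2$. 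The resulting condition at $x=\ell$ is therefore parity-dependent, \eqref{bcper}: $\vp_k(\ell)=0$ for $k$ odd, but $\psi_k(\ell)=\dot\vp_k(\ell)=0$ for $k$ even --- for even $k$ the quarter-period shift preserves parity, so it is the $u$-condition (which you discarded as automatic) that is active and the $p$-condition that is vacuous. Your single Neumann--Dirichlet problem thus yields only half the spectrum: in the paper's Pr\"ufer-angle language your $m$-th eigenvalue satisfies $\t(\ell,\w)=(2m-1)\pi/2$, whereas the paper defines $\w_k$ for \emph{every} $k$ by $\t(\ell,\w_k)=k\,\pi/2$ (condition \eqref{abc}), interlacing the Neumann--Dirichlet and Neumann--Neumann spectra. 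The missing even-indexed frequencies are not cosmetic: Theorem~\ref{thm:onemode} is asserted for every $k$, the acoustic boundary value problem \eqref{yab} uses exactly the even modes, and the nonresonance condition $\w_j/\w_k\notin\B Q$ ranges over the full interlaced set.

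For contrast, the paper does not invoke classical Sturm--Liouville spectral theory here. It passes to modified Pr\"ufer variables \eqref{pruf}, derives the scalar angle equation \eqref{theta}, proves $\del\t(\ell,\w)/\del\w>0$ directly (Lemma~\ref{lem:wk}) to get existence, uniqueness and monotonicity of each $\w_k$ from the single condition $\t(\ell,\w_k)=k\,\pi/2$, and reads off $\w_k/k$ bounded from \eqref{wkexp} by bounding the oscillatory integral by the total variation of $\log\sqrt\sg$ --- which is also where the hypothesis $\log\sg\in BV$ does its work. Your min--max comparison would recover the growth rate, but only after you set up and interlace both boundary-value problems; the uniform angle formulation is what lets the paper treat all $k$ at once.
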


Our main result is that under a generic nonresonance assumption,
\emph{each of} these linearized pure tone solutions perturbs to a
one-parameter family of pure tone solutions of the \emph{nonlinear}
compressible Euler equations, with the same space and time periods,
parameterized by amplitude.

We say  a linearized $k$-mode is \emph{nonresonant} if its frequency
$\w_k$ is not a rational multiple of any other eigefrequency,
\[
\frac{\w_j}{\w_k} \notin \B Q, \com{for all} j\ne k.
\]

\begin{theorem}
  \label{thm:onemode}
  For each constant pressure $\ol p>0$ and \emph{nonresonant} linearized
  $k$-mode, there exists $\ol\a_k>0$ such that the $k$-mode perturbs
  to a periodic solution of the nonlinear compressible Euler equations
  with the same space and time periods, taking the form
  \[
  \begin{aligned}
    p(x,t) &= \ol p + \a\,\vp_k(x)\,\c(\w_k\,t) + O(\a^2),\\
    u(x,t) &= \a\,\psi_k(x)\,\s(\w_k\,t) + O(\a^2),
  \end{aligned}
  \]
  for each $|\a|<\ol\a_k$.  Here $p$ and $u$ are the pressure and
  velocity in the Lagrangian frame, $\a$ is the amplitude, used as a
  perturbation parameter, and the \emph{modulations} $\vp_k$ and
  $\psi_k$ are the linearized eigenfunctions of the Sturm-Liouville
  problem.
\end{theorem}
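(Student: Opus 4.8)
The plan is to set up a Lyapunov–Schmidt reduction for the nonlinear boundary value problem \eqref{system}, \eqref{ic}, \eqref{bc}, using the amplitude $\a$ as the bifurcation parameter and the linearized pure-tone solution $(P_k,U_k)$ from Theorem~\ref{thm:slev} as the leading order. First I would recast the problem as evolution in $x$ on a space of time-periodic functions with the parity constraints ($p$ even, $u$ odd in $t$), expand everything in Fourier series in $t$, and write the nonlinear system schematically as $\mc L_k\,w = N(w,\a)$, where $\mc L_k$ is the linearization about the quiet state restricted to time period $T_k=2\pi/\w_k$, $w=(p-\ol p,u) - \a(P_k,U_k)$ is the correction, and $N$ collects the quadratic and higher terms coming from expanding $v(p,s(x))$ about $\ol p$. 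The operator $\mc L_k$ has a one-dimensional kernel spanned by $(P_k,U_k)$, and by the self-adjointness of the boundary conditions its cokernel is spanned by the same mode; the nonresonance hypothesis $\w_j/\w_k\notin\B Q$ for $j\ne k$ guarantees that no other Fourier mode $\cos(n\w_k t),\sin(n\w_k t)$, $n\ge2$, hits another eigenvalue, so $\mc L_k$ is boundedly invertible on the complement of the kernel with no small divisors — this is exactly the point flagged in the introduction, that periodicity is imposed ``by projection rather than by periodic return,'' which is what kills the Nash–Moser resonance problem.

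The key steps, in order, would be: (1) Fix the time period to $T_k$ and introduce the splitting of the relevant function space into the one-dimensional span of the kernel mode and its orthogonal complement, with $\Pi$ and $I-\Pi$ the corresponding projections. (2) Solve the \emph{range equation} $(I-\Pi)\big[\mc L_k w - N(\a(P_k,U_k)+w,\a)\big]=0$ for $w=w(\a)$ by the implicit function theorem (or a contraction mapping) in a suitable Banach space of time-periodic functions on $[0,\ell]$ satisfying the parity and boundary conditions; here one uses that $(I-\Pi)\mc L_k(I-\Pi)$ is invertible with norm bounded independently of the Fourier index, and that $N$ is quadratic so $w(\a)=O(\a^2)$, giving the stated expansion. (3) Substitute $w(\a)$ back into the \emph{bifurcation equation} $\Pi\big[\mc L_k w - N(\cdots)\big]=0$, which is a single scalar equation; show that it is satisfied identically (or solvable) — this is where the variational/Hamiltonian structure of \eqref{euler-lag} and the time-reversal symmetry $(p\text{ even},u\text{ odd})$ do the work, forcing the scalar obstruction to vanish by an orthogonality/parity argument, so that no frequency adjustment is needed and $\w_k$ is unchanged. (4) Invoke Theorem~\ref{thm:symm} to reflect the solution of the reduced boundary value problem on $[0,\ell]$ into a genuine space- and time-periodic solution of compressible Euler on all of $(x,t)$-space, with the same periods $T_k$ in time and $2\ell$ (or the appropriate multiple) in space.

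The choice of function space is a real technical point: one needs enough regularity in $t$ (a Sobolev or analytic-type class in the Fourier variable) that $N$ maps the space to itself and is smooth, while the coefficient $\sg^2(x)$ — and hence $\mc L_k$ — is only $BV$ in $x$ by the definition of $\mc B$, so the $x$-regularity must be handled in the mild/integral-equation sense along characteristics rather than classically; reconciling these, and getting uniform-in-$n$ bounds on $\mc L_k^{-1}$ from the Sturm–Liouville spectral theory of Theorem~\ref{thm:slev}, is where the estimates will be most delicate. The main obstacle, though, is step (3): verifying that the scalar bifurcation equation vanishes identically rather than merely having an isolated root. I expect this to follow from combining the time-reversal symmetry with the conservative (no-dissipation) structure of Euler — the same symmetry that makes the boundary conditions self-adjoint should make the leading scalar obstruction an odd function of $\a$ with no linear term after the projection, hence zero to the order needed, with the full statement then closed by the implicit function theorem; if instead a genuine frequency correction were required, one would have to enlarge the unknown to include $\w=\w(\a)$ and solve the bifurcation equation for it, but the symmetry is precisely what the authors have arranged to avoid this.
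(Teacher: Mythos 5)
Your overall architecture (Lyapunov--Schmidt at fixed period $T_k$, range equation by the implicit function theorem, a scalar bifurcation equation, then reflection via Theorem~\ref{thm:symm}) matches the paper, but two of your steps as stated would fail. First, nonresonance does \emph{not} give a boundedly invertible linearization off the kernel. The hypothesis $\w_j/\w_k\notin\B Q$ only makes the divisors $\d_j$ nonzero for $j\ne k$; they still accumulate at $0$ (the best available lower bound, under a diophantine condition, is of the form $K/j^{2(r-1)}$, Lemma~\ref{lem:dk}), so $\mc L_k^{-1}$ is unbounded $H^s\to H^s$ and your contraction step loses derivatives --- this is exactly the small-divisor problem, which the projection formulation alone does not remove. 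The paper's fix is the factorization $\mc F=\IR-\,\mc L_0\,\mc N$ (Theorem~\ref{thm:fact}, Lemma~\ref{lem:fund}), where $\mc L_0$ is the \emph{fixed} linearization at the quiet state and $\mc N$ is a bounded invertible perturbation of the identity; since the divisors then enter only through the constant factor $\mc L_0$, they are absorbed into the weighted norm \eqref{Hplus} on the target space, making $D_{y_2}\whc F(0,0)$ an isometry. Your schematic $\mc L_k w=N(w,\a)$ has no such factorization, and the weighted-norm device does not close without it.

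Second, the bifurcation equation neither vanishes identically by symmetry nor calls for a frequency correction. The kernel of the linearization is two-dimensional: it contains the $k$-mode \emph{and} the $0$-mode of constant states. The paper's additional unknown is a constant pressure shift $z$ added to $\ol p$; the scalar bifurcation equation $g(\a,z)=0$, with $g=f/\a$, is solved for $z=z(\a)$ by the implicit function theorem, and the required nondegeneracy $\del^2 f/\del z\,\del\a\big|_{(0,0)}\ne0$ is a consequence of genuine nonlinearity, $v_{pp}(\ol p,s)\ne0$ --- not of time-reversal symmetry or the Hamiltonian structure. Establishing it is the content of Section~\ref{sec:D2E}: one computes the second derivative $D^2\mc E^\ell$ of the evolution operator, which reduces to an inhomogeneous Sturm--Liouville system solved by Duhamel's principle in Pr\"ufer variables, the sign condition $b(\ell)<0$ in Lemma~\ref{lem:duh} delivering the nonvanishing. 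Parity shows only that certain components are zero; it cannot supply this transversality, and if you omit $z$ you are left with one scalar equation and no free parameter with which to solve it.
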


Note that the amplitude bound $\ol\a_k>0$ for which the perturbation
is proven to hold, depends on $k$ through the entire eigenfrequency
structure of the linearized problem.

Our next theorem shows that for \emph{generic} non-constant entropy
proflies, \emph{all} linearized $k$-modes are nonresonant, and so
every $k$-mode perturbs to periodic sound waves solutions of the
nonlinear compressible Euler equations.

\begin{theorem}
  The set of \emph{completely nonresonant entropy profiles}, which
  consists of those entropy profiles for which \emph{every} $k$-mode is
  nonresonant, and so perturbs, is generic in the sense that it is
  residual, or second Baire category, in $\mc B$.
\end{theorem}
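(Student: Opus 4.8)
The plan is a Baire category argument. For $s\in\mc B$ write $\w_k(s)$ for the $k$-th eigenfrequency furnished by Theorem~\ref{thm:slev}. For indices $j\ne k$ and a positive rational $q$ introduce the resonant set
\[
  Z_{j,k,q}:=\big\{\,s\in\mc B\ :\ \w_j(s)=q\,\w_k(s)\,\big\},
\]
and observe that $s$ fails to be completely nonresonant precisely when it lies in the \emph{countable} union $\bigcup_{j\ne k}\bigcup_{q\in\B Q_{>0}} Z_{j,k,q}$. Hence it suffices to show that each $Z_{j,k,q}$ is closed with empty interior: the completely nonresonant profiles then form the complement of a meager set, i.e.\ a residual subset of the Baire space $\mc B$, and by Theorem~\ref{thm:onemode} every $k$-mode over such a profile perturbs to a nonlinear pure tone.

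Closedness of $Z_{j,k,q}$ should follow directly from continuity of $s\mapsto\w_k(s)$ on $\mc B$, a property intrinsic to the Sturm--Liouville analysis behind Theorem~\ref{thm:slev}: the eigenvalues depend continuously on the weight $\sg^2$, and $\sg^2(x)=-v_p(\ol p,s(x))$ depends continuously on $s$ in the $L^1$ topology. Then $Z_{j,k,q}$ is the zero set of the continuous function $s\mapsto\w_j(s)-q\,\w_k(s)$, hence closed.

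The heart of the matter is to show that each $Z_{j,k,q}$ has empty interior. Given $s\in\mc B$ and $\varepsilon>0$, I would perturb along the line $t\mapsto s+t\,h$ with $h$ smooth and compactly supported in $(0,\ell)$; this stays in $\mc B$ for $t$ in a neighborhood of $0$ (since $\log\sg\in BV$ keeps $\sg$ bounded above and away from $0$) and has $L^1$-norm $o(1)$. The separated self-adjoint boundary conditions \eqref{ic}, \eqref{bc} make the eigenvalue problem for \eqref{linWv} a scalar Sturm--Liouville problem with simple spectrum, so $\w_j(s)^2$ and $\w_k(s)^2$ are simple; by Sturm oscillation the eigenfunctions $\vp_j$, $\vp_k$ carry $j$ and $k$ interior zeros respectively, hence $\vp_j^2\not\equiv\vp_k^2$. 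First-order perturbation theory (the Hadamard variation formula, normalizing $\int_0^\ell\sg^2\vp_k^2\,dx=1$) gives $\frac{d}{dt}\big|_{0}\log\w_k(s+t\,h)^2$ as a fixed nonzero multiple of $\int_0^\ell v_{ps}(\ol p,s(x))\,h(x)\,\vp_k(x)^2\,dx$, and the same formula with $j$ in place of $k$. Using that $\sg^2$ genuinely depends on $s$, i.e.\ $v_{ps}(\ol p,\cdot)\ne0$, together with $\vp_j^2\not\equiv\vp_k^2$, one can choose $h$ making $\frac{d}{dt}\big|_0\log(\w_j/\w_k)\ne0$: otherwise $v_{ps}(\ol p,\cdot)\,(\vp_j^2-\vp_k^2)$ would be $L^1$-orthogonal to all admissible $h$ and hence vanish a.e., forcing $\vp_j^2\equiv\vp_k^2$. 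Then $t\mapsto\w_j/\w_k$ is $C^1$ with nonzero derivative at $0$, hence strictly monotone near $0$, so it equals the fixed rational $q$ for at most one $t$; choosing $t\ne0$ small, avoiding $q$, with $\|t\,h\|_{L^1}<\varepsilon$ produces a point outside $Z_{j,k,q}$ within $\varepsilon$ of $s$. (Equivalently, by Kato's analytic perturbation theory $t\mapsto\w_j(s+t\,h)^2-q^2\,\w_k(s+t\,h)^2$ is real analytic and not identically zero, so its zeros are isolated, and simplicity is preserved along the family so the labeling is unambiguous.) Thus $Z_{j,k,q}$ is nowhere dense, and taking the countable union over $(j,k,q)$ finishes the proof.

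I expect the genuine difficulty to be the low-regularity inputs rather than the category bookkeeping: establishing, within the Sturm--Liouville framework of Theorem~\ref{thm:slev}, the continuity (and, for the analytic variant, holomorphy) of $s\mapsto\w_k(s)$ in the $L^1$ topology on $\mc B$, justifying the Hadamard variation formula when the weight $\sg^2$ is only of $BV$ type rather than smooth, and verifying that $\mc B$ is a Baire space in the topology used. Once these are secured, the perturbation step and the countable union above go through routinely.
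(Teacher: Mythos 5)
Your decomposition of the resonant set into countably many sets $Z_{j,k,q}=\{s:\w_j(s)=q\,\w_k(s)\}$, and the reduction to showing each is closed with empty interior, is exactly the paper's strategy (Lemma~\ref{lem:Z}, with the indexing $q=j/k$ absorbed into a triple of integers). The closedness step is also the same in both treatments, and shares the same soft spot: continuity of $s\mapsto\w_k(s)$ on $\mc B$ in the $L^1$ topology is asserted rather than proved (the paper calls the corresponding functional ``evidently continuous''), and without some uniform control on $\sg$ along an $L^1$-convergent sequence this deserves more care; you rightly flag it. Where you genuinely diverge is the empty-interior step. The paper does \emph{not} perturb within the Sturm--Liouville problem at all: it invokes the density of piecewise constant profiles together with Theorem~\ref{thm:genpw}/\ref{thm:T2}, which say that fully nonresonant \emph{step} profiles are $L^1$-dense; since such a profile lies in no $\mc Z_{k,j,p}$, each closed resonant set is nowhere dense. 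That route reuses the explicit finite-dimensional transversality computation (the angle function $h(J,x)=\arctan(J\tan x)$ and the gradient calculations of Section~\ref{sec:Tvar}--\ref{sec:pw}) and entirely avoids infinite-dimensional perturbation theory. Your route instead perturbs $s$ directly and uses the Hadamard variation formula plus the observation that $\vp_j^2\not\equiv c\,\vp_k^2$ for $j\ne k$ (which, incidentally, follows even more cheaply than from oscillation theory: $\vp_j^2=c\,\vp_k^2$ forces $\lambda_j=\lambda_k$ by substituting into the ODE) and the nondegeneracy $v_{ps}(\ol p,\cdot)\ne0$ to produce a direction $h$ along which $\w_j/\w_k$ moves. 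This is intrinsic to the general Sturm--Liouville setting, is independent of the step-profile machinery, and yields a quantitative transversality statement; the price is that you must justify differentiability of the eigenvalues with respect to an $L^\infty$-valued analytic family of weights of only $BV$ regularity, and verify that the line $s+t\,h$ stays in $\mc B$ — issues you identify correctly and which are of the same order of difficulty as the lemmas the paper leans on. Both arguments are valid; yours is self-contained within Section~\ref{sec:SL}, the paper's is shorter given the work already done for piecewise constant entropy.
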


Recall that a set is residual if its complement is the countable union
of nowhere dense sets.  Moreover, when we restrict to the set of
piecewise constant entropy profiles, the completely nonresonant set
also has full measure, so that almost every piecewise constant entropy
profile with $n$ jumps is such that \emph{all} linearized $k$-modes
perturb to nonlinear periodic sound wave solutions of the compressible
Euler equations.

Because the leading order terms of our nonlinear pure tone solutions
solve the wave equation, an immediate corollary is the first
\emph{rigorous} mathematical justification for the field of Acoustics,
which uses the wave equation to study sound propagation, which has
been developed since the time of Euler.

\begin{corollary}
  \label{cor:acoustics}
  The use of the linear wave equation
  \[
    \frac1{c^2(x)}\,p_{tt} - p_{xx} = 0, \qquad
    c(x) := \Big(-\frac{\partial v}{\partial p}\Big)^{-1/2},
  \]
  as an approximation for the propagation of one-dimensional sound
  waves is \emph{mathematically justified}, for nonconstant entropy.
\end{corollary}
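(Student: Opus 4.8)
The plan is to obtain this directly from Theorem~\ref{thm:onemode} together with the genericity theorem on completely nonresonant entropy profiles stated above, since the corollary only restates those results in the language of classical acoustics; no new analysis is needed beyond identifying the two wave equations and noting uniformity of the error term.

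First I would check that the acoustics equation in the statement \emph{is} the linearization \eqref{linWv}. Evaluating $-v_p$ at the quiet-state pressure $\ol p$, definition \eqref{wvspd} gives $c(x) = \bigl(-v_p(\ol p,s(x))\bigr)^{-1/2} = 1/\sg(x)$, so $\sg^2(x) = 1/c^2(x)$ and the scalar form $P_{xx} - \sg^2(x)\,P_{tt} = 0$ of \eqref{linWv} is exactly $\TS\frac{1}{c^2(x)}\,p_{tt} - p_{xx} = 0$. Hence the pure tone solutions $P_k(x,t) = \vp_k(x)\,\c(\w_k\,t)$ furnished by Theorem~\ref{thm:slev} are precisely the separated-variable standing waves on which Acoustics is built, and their existence for \emph{every} wavenumber $k$, with $\w_k/k$ bounded, is the content of that theorem.

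Next, fix any nonconstant entropy profile in the residual, completely nonresonant set supplied by the genericity theorem, so that each $k$-mode satisfies the nonresonance hypothesis of Theorem~\ref{thm:onemode}. That theorem then produces, for every $k$ and every sufficiently small amplitude $\a$, an exact space- and time-periodic solution $(p,u)$ of the compressible Euler equations whose leading order is the acoustic mode,
\[
  p(x,t) = \ol p + \a\,\vp_k(x)\,\c(\w_k\,t) + O(\a^2), \qquad
  u(x,t) = \a\,\psi_k(x)\,\s(\w_k\,t) + O(\a^2),
\]
with the same periods as the linear mode. By the previous step the leading term is an exact solution of the acoustics wave equation, so every pure-tone solution of that equation is \emph{shadowed}, to relative error $O(\a)$, by a genuine solution of the full nonlinear $3\times 3$ system; this is the precise sense in which the classical use of the wave equation is mathematically justified. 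The only point requiring care is that the $O(\a^2)$ remainder in Theorem~\ref{thm:onemode} be uniform on the compact fundamental period cell in $(x,t)$ --- indeed in whatever norm (sup norm, or Sobolev in $t$) the nonlinear solution was obtained as a fixed point --- rather than merely pointwise or in $L^2$; this is simply read off from the Banach space in which that fixed point lives, and is the one bit of bookkeeping involved.

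Finally I would emphasize that the hypothesis of \emph{nonconstant} entropy is not an artifact of the argument but is forced: by the Glimm--Lax theorem every nontrivial space-periodic solution of isentropic or barotropic Euler develops shocks and decays, so for constant entropy there can be no family of smooth periodic solutions shadowing the wave equation, and the classical acoustic picture genuinely fails at the nonlinear level. Thus the main obstacle here is expository rather than mathematical: the substantive work resides in Theorems~\ref{thm:symm}, \ref{thm:slev} and \ref{thm:onemode}, and what remains is to assemble it correctly and to record the uniformity of the perturbation estimate.
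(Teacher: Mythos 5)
Your proposal is correct and matches the paper's (implicit) argument: the paper treats this as an immediate consequence of Theorem~\ref{thm:slev}, Theorem~\ref{thm:onemode} and the genericity theorem, exactly as you assemble them, after the same identification $c(x)=1/\sg(x)$ showing the acoustics equation is the linearization \eqref{linWv}. Your added remarks on uniformity of the $O(\a^2)$ remainder and on the necessity of nonconstant entropy are consistent with the paper's surrounding discussion but are not needed beyond what the cited theorems already provide.
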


We note that no such statement can be made for constant entropy, as
has been known since Riemann.  Indeed, if the entropy is constant, the
isentropic case, or if the fluid is barotropic, $p=p(\rho)$, then all
modes are \emph{fully resonant}, in that they are all rational
multiples of each other.  Thus Theorem~\ref{thm:onemode} does not
apply for any mode, and our results are consistent with the celebrated
results of Riemann, Lax and Glimm-Lax, which establish that spatially
periodic solutions to any $2\times2$ genuinely nonlinear system always
form shock waves and subsequently decay to average at rate
$1/t$~\cite{Riemann,Lax64,GL}.  In particular, our results imply that
the complete \x3 system of compressible Euler is fundamentally
different from the isentropic \x2 system.

\begin{corollary}
  \label{cor:noDecay}
  Generically, space periodic solutions of the \x3 compressible Euler
  equations will \emph{not} decay to constant (or quiet state), and in
  particular, solutions containing compressions need not form shock
  waves.
\end{corollary}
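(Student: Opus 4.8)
The plan is to deduce Corollary~\ref{cor:noDecay} from the preceding results: the one--mode perturbation theorem (Theorem~\ref{thm:onemode}), the eigenfunction structure of the Sturm-Liouville problem (Theorem~\ref{thm:slev}), the genericity theorem asserting that completely nonresonant entropy profiles are residual in $\mc B$, and the reflection principle (Theorem~\ref{thm:symm}), which guarantees that solutions of the reduced boundary value problem extend to genuine space-- and time--periodic solutions of compressible Euler. First I would fix a completely nonresonant entropy profile $s\in\mc B$: by the genericity theorem these form a residual set in $\mc B$, and a full--measure set within the piecewise constant profiles, and since all modes are fully resonant when the entropy is constant, any such $s$ is automatically non-constant, so that echoes are present. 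Fixing this $s$ and any mode $k$, Theorem~\ref{thm:onemode} supplies $\ol\a_k>0$ such that for every $0<|\a|<\ol\a_k$ there is a solution of the nonlinear compressible Euler equations, periodic in $x$ and periodic in $t$ with period $T_k=2\pi/\w_k$, of the form $p=\ol p+\a\,\vp_k(x)\,\c(\w_k t)+O(\a^2)$, $u=\a\,\psi_k(x)\,\s(\w_k t)+O(\a^2)$.

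Next I would establish non-decay. Since $\vp_k$ is a nontrivial Sturm-Liouville eigenfunction it is not identically zero, so for $|\a|$ small the order--$\a$ term cannot be cancelled by the $O(\a^2)$ correction and the solution is not identically equal to the quiet state, nor to any constant. But the solution is exactly $T_k$--periodic in $t$: evaluating along $t_n=t_0+n\,T_k$ reproduces the original non-constant profile, so it cannot converge, in total variation or $L^1$ or sup norm, to any constant state as $t\to\infty$. In particular it does not decay to the quiet state and does not decay to average at the Glimm-Lax rate $1/t$; since genuine nonlinearity is in force throughout, this already shows that the decay theorem of Riemann, Lax and Glimm-Lax for genuinely nonlinear \x2 systems fails for the \x3 system of compressible Euler, which is the first assertion.

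Finally I would verify that these solutions contain nontrivial compressions. As explained in the introduction, for such a nonlinear periodic solution with non-constant entropy each characteristic cycles through a dense set of values of each Riemann invariant; in particular, restricted to any characteristic, each Riemann invariant is a non-constant function of the characteristic parameter, hence is strictly decreasing on some interval, which is precisely a nontrivial compression in that characteristic family. Since the solution is smooth and global by construction, it is a shock--free solution that nonetheless contains compressions, which is the second assertion, and Corollary~\ref{cor:noDecay} follows. I expect the substantive point to be this last step: making the presence of a genuine compression precise at the level of the full nonlinear solution rather than merely its order--$\a$ leading term, i.e., ruling out that the $O(\a^2)$ correction could cancel the compressive portion of the leading oscillation. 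This should follow because on an open set of characteristic parameters the leading term forces the characteristic derivative of the relevant Riemann invariant to be of order $\a$ with a definite sign while the correction there is $O(\a^2)$, so for $|\a|$ small the sign is controlled by the leading term; along the way one must also identify, via the standard Riemann-invariant description of the reduced \x2 system \eqref{system}, that this sign condition is exactly the gas--dynamical notion of compression.
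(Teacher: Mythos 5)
Your proposal is correct and follows essentially the same route the paper intends: the corollary is presented there as an immediate consequence of Theorem~\ref{thm:onemode} together with the genericity of completely nonresonant profiles, since a non-constant time-periodic solution cannot decay and, being smooth, is shock-free despite containing compressive regions. Your extra care in the last step --- noting that the sinusoidal leading term fixes the compressive sign of the characteristic derivative on an open set, so the $O(\a^2)$ correction cannot cancel it for small $\a$ --- is a legitimate filling-in of a detail the paper leaves implicit, and is the right way to make ``contains compressions'' precise.
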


The breakthrough in solving the problem was the realization that we
can impose periodicity within a smaller more symmetric class of
solutions by \emph{projection}, rather than by \emph{periodic return}.
The standard way to impose periodicity is by periodic return, by which
we pose data $U_0$ at $x=0$, evolve nonlinearly in $x$ through one
period by $\mc N$, and set $\mc N\,U_0 = U_0$.  This yields a
nonlinear equation of the form
\[
\mc F_1(U_0) := (\mc N - \mc I)\,U_0 = 0,
\]
and we wish to solve for $U_0$.  Our initial attempt to do so was by a
Newton-Nash-Moser iteration argument, as in~\cite{CW, Moser, Deimling,
  Rabinowitz, TYperNM}.  The Newton method requires inversion of nearby
linearized operators $D\mc F_1(U^{(k)})[\cdot]$ at each approximation
$U^{(k)}$ of the iteration.  In our previous work, we identified the
main technical difficulty of this approach, namely, the decay rate of
the small divisors does not depend continuously on the constant state
$\ol p$ and the resonances, at which divisors vanish, cannot be
controlled or predicted.  In~\cite{TY}, we formulated a strategy for
expunging parameters in order to effectively control the small
divisors, and carried this out in a scalar warm-up problem.  Although
this approach is plausible, implementing it is a daunting project
because of the very delicate estimates required to expunge small
divisors.

In this paper we identify symmetry properties of the nonlinear
solution which allow us to impose periodicity by \emph{projection}.
The idea is that within a smaller more symmetric class of solutions,
periodicity can be imposed by a projection consistent with the
nonlinear symmetries.  With this reformulation the problem is
effectively changed from periodic return $\mc N\,U_0=U_0$ to
periodicity by projection,
\[
\mc F_2(U_0) := \Pi\,\mc N\,U_0 = 0,
\]
where again $\mc N$ is nonlinear evolution, and $\Pi$ is a half-space
projection.

A remarkable further simplification is that this nonlinear operator
now also factors as
\[
\mc F_2(U_0) = \Pi\,\mc L\,\big(\mc L^{-1}\,\mc N\big)\,U_0 = 0,
\]
where $\mc L$ is linearized evolution by $D\mc N(\ol p)[\cdot]$, and
$\mc L^{-1}\,\mc N$ is a {\it bounded invertible} nonlinear operator
satisfying
\[
D\big(\mc L^{-1}\,\mc N\big)(\ol p) = \mc I,
\]
the identity.  Here $D\mc F_2(\ol p)=\Pi\,\mc L$ is the \emph{fixed}
operator obtained by linearizing $\mc F_2$ around the quiet state
$(\ol p,0)$, and this \emph{uniformly} encodes the small divisors.  In
this way, we are able to avoid Nash-Moser altogether and treat the
problem as a regular bifurcation problem which can be solved using the
implicit function theorem.

This simplification allows us to dramatically extend the analysis to
general entropy profiles and wave numbers, subject only to a
non-resonance condition which is generically satisfied.  Although the
background pressure $\ol p$ is perturbed to $\ol p+\a\,z$ in solving
the problem, the linear factor $\Pi\,\mc L$, which is the
linearization at $\ol p$, is constant, and hence so are the small
divisors, independent of the perturbation parameter $\a$.

\subsection{History of the Problem}

We give a contextual history of the problem, focusing on the
development of the theory of sound and its relation to that of
continuum mechanics and shock waves.  Our main sources are several
works of Truesdell~\cite{Tru2, Tru1}, together with the collections of
Lindsay~\cite{Lindsay, Lindsay2} and Johnson and
Cheret~\cite{JohnsonCheret}; see also~\cite{HamBla, Pierce, Muller}.

The wave-like nature of sound and the fact that it is caused by
vibrations of solid objects such as bells and musical instruments was
known by the ancients.  It was also known that these vibrations caused
corresponding vibrations in the air or other medium, which were then
sensed by the eardrum as sounds of various sorts.

The origin of modern theories of mechanics is regarded as Newton's
Principia, which consists of three books.  In Book I, Newton sets out
his famous Laws of Motion and uses his Calculus to solve many problems
of the dynamics of a single small body or point-mass.  Newton realized
that in order to obtain accurate results, he needed to take into
account resistive forces, which led him to begin a development of
fluid mechanics in Book II, and Book III revolved around his Law of
Gravitation.  In Book II, Newton attempted to describe both the
dynamics of continuous media and the propagation of sound waves, but
he was ultimately unable to get these quite right.

In the century after the appearance of the Principia, many of the
problems of continua and sound raised by Newton were understood, being
especially driven by the Bernoullis and Euler.  This led to
d'Alembert's derivation of the wave equation for a vibrating string in
1748, and to Euler's equation of 1751, expressing conservation of
linear momentum, which is
\begin{equation}
  \label{mom}
  \rho\,\frac{Du}{Dt} = -\nabla p + \rho\,b,
\end{equation}
in which $u$ is the velocity, $\rho$ the density, $p$ the pressure,
and $b$ is the body force per unit mass.  Euler correctly defined the
pressure and identified the internal force as the negative pressure
gradient, but his real triumph was to get the convective derivative
\[
  \frac{D}{Dt} = \frac{\del}{\del t} + u\cdot\nabla
\]
right.  In 1755, Euler coupled this with the continuity equation,
which expresses conservation of mass,
\begin{equation}
  \label{mass}
  \frac{D\rho}{Dt} + \rho\,\nabla\cdot u = 0.
\end{equation}
If the pressure $p(\rho)$ is given, \eqref{mom} and \eqref{mass} close
and together consist of \emph{Euler's equations} for a barotropic
fluid.  In 1759, Euler expressed these equations in a material
coordinate and linearized to get the wave equation of d'Alembert.  In
so doing, Euler provided a \emph{mechanical explanation} for the
propagation of sound waves.  The vibration of a solid object, such as
a bell, drum or string, causes the surrounding fluid (that is, air) to
vibrate, and these vibrations propagate through the fluid medium
according to \eqref{mom} and \eqref{mass}, and are in turn sensed at
the ear.  Given the central role that music played in Europe at that
time, this explanation of musical tones in terms of linear sinusoidal
oscillation and superposition, was one of the greatest intellectual
achievements of that era.

One of the big questions of the late 18-th and early 19-th centuries
was to understand the the constitutive relation $p=p(\rho)$, from
which the speed of sound could be calculated.  Poisson in 1808
developed what is now known as the method of characteristics, and
obtained implicit equations for the solution of \eqref{mom},
\eqref{mass} in one space dimension.  In 1848, Challis pointed out
that in some cases, Poisson's solution breaks down, which led Stokes,
also in 1848~\cite{Stokes}, to propose discontinuous solutions
containing shock waves.  In 1860, Earnshaw introduced simple waves,
and Riemann independently showed that \emph{any} non-trivial solution
to Euler's equations suffers gradient blowup; indeed,\footnote{from
  B. Riemann, ``The Propagation of Planar Air Waves of Finite
  Amplitude'', 1860,
  transl.~in~\cite{JohnsonCheret}}\\
\begin{center}
\begin{minipage}[h]{0.8\linewidth}
  \it The compression wave, that is, the portions of the wave where
  the density decreases in the direction of propagation, will
  accordingly become increasingly more narrow as it progresses, and
  finally goes over into compression shocks; but the width of the
  expansion or release wave grows proportional with time.
\end{minipage}
\end{center}
\medskip

A large part of the later 19-th century was spent in understanding
thermodynamics and the roles of energy and entropy, by which the
correct shock speeds could be found.  This culminated in the
development of the Rankine-Hugoniot conditions in the 1880's, which
allowed for the successful treatment of discontinuous solutions and
shock waves.  The upshot is that a third equation, namely conservation
of energy,
\begin{equation}
  \label{energy}
  \frac{\del}{\del t}\Big(\frac12\,\rho\,u^2 + \rho\,e\Big) +
  \nabla\cdot\Big(\frac12\,\rho\,u^2\,u + \rho\,e\,u + p\,u\Big) = 0,
\end{equation}
together with the Second Law of Thermodynamics,
\[
  \Theta\,ds = de + p\,dv, \qquad v= \frac1\rho,
\]
is needed to fully describe the system.  Here $\Theta$, $e$ and $s$ are the
temperature, specific internal energy, and specific entropy,
respectively, and for smooth solutions, \eqref{energy} is equivalent
to the \emph{entropy equation},
\begin{equation}
  \label{entropy}
  \frac{\del}{\del t}(\rho\,s) +
  \nabla\cdot(\rho\,s\,u) = 0.
\end{equation}
If discontinuities are present, \eqref{entropy} is replaced by an
inequality, which becomes a selection criterion for shock waves.

In the first decades of the twentieth century, researches investigated
more general constitutive laws and effects of viscosity, while the
1930's focussed attentin on shock waves.  In their renowned
monograph~\cite{CF}, Courant and Friedrichs collected most of the
results of shock wave theory which was developed up to and in the
Second World War, and which included early numerical simulations.

The modern theory of conservation laws was initiated by Peter Lax
in~\cite{Lax}, in which he considered the abstract system
\begin{equation}
  U_t + F(U)_x = 0, \qquad U\in\B R^n,\quad F:\B R^n\to\B R^n,
  \label{cl}
\end{equation}
for which he defined weak solutions and solved the Riemann problem.
In his celebrated 1965 paper~\cite{G}, Glimm proved the global
existence of weak solutions to~\eqref{cl}, provided the total
variation of the initial data $U_0$ is small enough.  Shortly
afterwards, Glimm and Lax showed that weak solutions of $2\times2$
systems decay as $1/t$ in~\cite{GL}, being driven by the decay of
shock waves.  Thereafter the prevailing view in the community was that
solutions to generic hyperbolic systems should always form shocks and
decay to a constant state.

For systems of more than two equations, F.~John, and later Tai-Ping
Liu, proved that shocks form given sufficiently compressive initial
data~\cite{J, TPLblowup}, and this was later improved by Geng Chen and
collaborators~\cite{Chen1, CYZ, CPZ1}.  The first indications that
there may be non-decaying solutions of the $3\times3$ Euler equations
arose from the method of weakly nonlinear geometric optics in the
mid-80s~\cite{MRS,P,HMR}, with further suggestions coming from
numerical studies~\cite{Shef,Vayn}, and exact solutions to certain
simplified model systems~\cite{Yper,Ysus}.  In~\cite{TY}, the authors
extended Glimm's existence theory to large time existence for the
$3\times3$ Euler equations, provided the initial data has (arbitrarily
large) finite total variation and small amplitude.  To our knowledge,
this is the largest class of data for which large-time existence has
been proved prior to the current work.

In~\cite{TYperStr}, the authors understood the physical phenomenon
that may prevent shock formation, namely an echoing effect due to
changes in entropy.  That is, when the (linearly degenerate) entropy
varies, nonlinear simple waves necessarily interact with this entropy
field, resulting in a partial transmission and partial reflection of
nonlinear waves.  These reflected waves, or echoes, are then (to
leading order) superimposed on all other nonlinear waves.  If this
superposition of waves fits a certain pattern, then compressions never
steepen completely, and periodic solutions could then ensue.
In~\cite{TYperLin,TYperG,TYperBif}, we described this structure in the
linearized Euler equations, and expressed the problem of finding
periodic solutions as a bifurcation problem, albeit with small
divisors and a loss of derivvative.  In~\cite{TYperEV,TYperNM}, we
then set up a Nash-Moser framework for solving this bifurcation
problem.  In~\cite{TYdiff1,TYdiff2}, we understood the effects of
derivative loss and small divisors in a scalar model problem, and
described a strategy to treat the small divisors uniformly.

\subsection{Structure of the paper}

We briefly outline the layout of the rest of the paper. In
Section~\ref{sec:sys}, we recall the nonlinear equations, and state
the periodicity problem as a projection for square wave entropy
profiles.  This mirrors the nonlinear effects and periodic structure
developed in our earlier papers~\cite{TYperStr, TYperLin, TYperBif}.
In Section~\ref{sec:lin} we linearize around a quiet state and
calculate the small divisors for this simplified profile.  In
Section~\ref{sec:onejump}, we carry out the bifurcation argument for
the lowest mode in the simplest case of a square wave entropy profile;
this establishes the first nontrivial periodic solution of the
compressible Euler equations.  In Section~\ref{sec:Tvar} we allow the
time period to vary, and repeat the bifurcation argument for all
nonresonant $k$-modes, thus providing an infinite number of distinct
periodic pure tone solutions for a nonresonant square wave entropy
field.  In Section~\ref{sec:pw} we extend the analysis to general
piecewise constant entropy profiles and show that the set of fully
nonresonant profiles has full measure and is residual.  This
establishes that generically, \emph{all} linearized $k$-modes perturb
to pure tone solutions of the nonlinear problem.  In
Section~\ref{sec:SL} we further generalize to \emph{arbitrary} $BV$
entropy profiles.  For this, we reduce the corresponding linear
problem to a Sturm-Liouville system and carry out the bifurcation
argument in this most general case.  This includes the statement that
generic profiles are again fully nonresonant.  Finally in
Section~\ref{sec:D2E}, we describe and solve the second derivative
$D^2\mc E$ of the nonlinear evolution operator, which requires
solution of an inhomogeneous linear system.  Although of interest in
its own right because it demonstrates that nonlinear evolution in
$H^s$ is twice differentiable, this provides an essential step in the
solution of the bifurcation problem used in Section~\ref{sec:SL}.

\section{The System}
\label{sec:sys}

Our starting point is the compressible Euler equations in a Lagrangian
frame, which is the $3\times3$ system in conservative form,
\begin{equation}
  \label{lagr}
  v_t - u_x = 0, \quad u_t + p_x = 0, \quad
  \big(\TS{\frac12}\,u^2 + e\big)_t + (u\,p)_x = 0.
\end{equation}
Here $x$ is the material coordinate and $u$ is the Eulerian
velocity, and the thermodynamic variables are specific volume $v$,
pressure $p$ and specific internal energy $e$.  The system is closed
by a \emph{constitutive relation} which satisfies the Second Law of
Thermodynamics,
\[
  de = \Theta\,ds - p\,dv, \com{so that}
  \Theta = e_s(s,v) \com{and} p = -e_v(s,v),
\]
where $\Theta$ is the temperature and $s$ the specific entropy.  It
follows that for classical smooth solutions, the third (energy)
equation can be replaced by the simpler \emph{entropy equation},
\[
  e_t + p\,v_t = 0,  \com{or}  s_t = 0.
\]
We initially consider the fundamental case of a polytropic ideal (or
$\gamma$-law) gas, which is described by
\begin{equation}
  \label{glaw}
  p = v^{-\gamma}\,e^{s/c_v}, \com{or}
  v = p^{-1/\gamma}\,e^{s/c_p},
\end{equation}
where $\gamma = c_p/c_v$ is the ratio of specific heats, assumed
constant; in Section 7 we generalize to arbitrary equation of state.
To start, we assume a piecewise constant entropy field, so that the
system is \emph{isentropic} on finite $x$-intervals, so we use only
the first two equations of \eqref{lagr}, which yields a closed system
in $u$ and $p$.  We treat $x$ as the evolution variable, so we write
the equation as
\begin{equation}
  \label{isen}
  u_x - \big(p^{-1/\gamma}\,e^{s/c_p}\big)_t = 0,
  \qquad p_x + u_t = 0,
\end{equation}
and regard $s$ as constant on each subinterval.

In our earlier development, we introduced a non-dimensionalization
which translates linear evolution into rotation, and thus finds the
essential geometry of the solutions of the linearized wave equation.
This allowed us to identify the simplest periodic wave structure, and
the nonlinear functional which imposes periodicity.  This rotation
structure allowed us to explicitly understand the resonances and small
divisors in the linearized operator.  In our later development using
Sturm-Liouville theory in Section~\ref{sec:SL} below, we again find
the correct angle variable in which the evolution can be interpreted
as rotation.

\subsection{Non-dimensionalization}

We briefly recall the non-dimensionalization of the isentropic system
\eqref{isen}, which was initially derived in \cite{TYperLin}.  The
advantage in non-dimensionalizing is that the nonlinear evolution
becomes independent of the constant value of the entropy.  In this
formulation the jump conditions in the rescaled variables are imposed
at each entropy jump.  The simplest weak solution which incorporates a
given entropy profile is one in which $(u,p)=(u_0,p_0)$ is constant.
By a Galilean transformation, we can take $u_0=0$ without loss of
generality, so our constant state is characterized by $p_0$.

\begin{lemma}
  \label{lem:nondim}
  On an interval on which the entropy $s$ is constant, the rescaling
  \[
    \begin{aligned}
    w &:= \Big(\frac {p-p_0}{p_0}\Big)^{\frac{\gamma-1}{2\gamma}},\\
    \st w &:= \frac{\gamma-1}{2\sqrt\gamma}\,
    e^{-s/2c_p}\,p_0^{-\frac{\gamma-1}{2\gamma}}\,u,\\
    X &:= \frac1{\sqrt\gamma}\,e^{s/2c_p}\,
    p_0^{-\frac{\gamma+1}{2\gamma}}\,x,
    \end{aligned}
  \]
  transforms the isentropic system \eqref{isen} into the non-dimensional
  system
  \begin{equation}
  \st w_X + (1+w)^{-\nu}\,w_t = 0, \qquad
  w_X + (1+w)^{-\nu}\,\st w_t = 0,
  \label{nondim}
\end{equation}
with $\nu=\frac{\gamma+1}{\gamma-1}$.  Conversely, any classical
solution $(w,\st w)$ of \eqref{nondim} yields a solution of the
isentropic system \eqref{isen}, in which the physical variables are
given by
\[
  \begin{aligned}
    p &:= p_0\,\big(w+1\big)^{\frac{2\gamma}{\gamma-1}},\\
    u &:= \frac{2\sqrt\gamma}{\gamma-1}\,
    e^{s/2c_p}\,p_0^{\frac{\gamma-1}{2\gamma}}\,\st w,\\
    x &:= \sqrt\gamma\,e^{-s/2c_p}\,
    p_0^{\frac{\gamma+1}{2\gamma}}\,X.
  \end{aligned}
\]
\end{lemma}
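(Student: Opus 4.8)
The plan is to verify the rescaling directly by the chain rule, treating it as a change of dependent and independent variables in the system~\eqref{isen}. First I would note that on an interval of constant entropy, the factor $e^{s/2c_p}$ is constant, so the change of the evolution variable $x\mapsto X$ is simply a positive constant rescaling, and $\partial_x = (\tfrac1{\sqrt\gamma}e^{s/2c_p}p_0^{-(\gamma+1)/2\gamma})\,\partial_X$, while $\partial_t$ is untouched. The substantive content is the nonlinear change of the dependent variable $p\mapsto w$; from the definition $w+1 = 1 + ((p-p_0)/p_0)^{(\gamma-1)/2\gamma}$ one has $p = p_0(1+w)^{2\gamma/(\gamma-1)}$, which is exactly the inverse formula claimed, so I would first record that the map $p\leftrightarrow w$ is a smooth bijection near $p=p_0$ (where $w=0$) because $v_p<0$ keeps $p$ bounded away from $0$, and then compute $p_t = p_0\cdot\tfrac{2\gamma}{\gamma-1}(1+w)^{(\gamma+1)/(\gamma-1)}\,w_t$ and similarly $p_x$.

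Next I would substitute into the two equations of~\eqref{isen} in turn. For the momentum equation $p_x + u_t = 0$: writing $u = \tfrac{2\sqrt\gamma}{\gamma-1}e^{s/2c_p}p_0^{(\gamma-1)/2\gamma}\,\st w$ and $p_x$ in terms of $w_X$, the common factors $p_0$ and powers of $p_0$, together with $e^{s/2c_p}$ and $\sqrt\gamma$, should collect so that both terms carry the same dimensional prefactor; dividing through leaves $(1+w)^{(\gamma+1)/(\gamma-1)}\,w_X + \st w_t = 0$ up to the overall constant, and since $(\gamma+1)/(\gamma-1) = \nu$ this is the second equation of~\eqref{nondim} after multiplying by $(1+w)^{-\nu}$ — here I need to check carefully that the exponent of $(1+w)$ produced by differentiating $p$ is $(\gamma+1)/(\gamma-1)$ and not, say, $2\gamma/(\gamma-1)$, i.e.\ that one power of $(1+w)$ is consumed correctly. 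For the volume equation $u_x - (p^{-1/\gamma}e^{s/c_p})_t = 0$: I would use $v = p^{-1/\gamma}e^{s/c_p} = p_0^{-1/\gamma}e^{s/c_p}(1+w)^{-2/(\gamma-1)}$ (from~\eqref{glaw}), differentiate in $t$ to get a $(1+w)^{-(\gamma+1)/(\gamma-1)}w_t = (1+w)^{-\nu}w_t$ factor times constants, rewrite $u_x$ via $X$, and again check that all $p_0$-powers, $\gamma$-powers and $e^{s/2c_p}$-factors match so that after division one is left with $\st w_X + (1+w)^{-\nu}w_t = 0$, the first equation of~\eqref{nondim}. The converse direction is then immediate: the inverse formulas are exactly the ones I used to derive the forward map, and the computation is reversible since every step was an equivalence (multiplication by nonvanishing factors).

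The only real obstacle here is purely bookkeeping: keeping the three independent scaling constants (the $p_0$-powers, the $\sqrt\gamma$ factors, and $e^{s/2c_p}$) straight across both equations simultaneously, and in particular confirming that the specific exponents $\tfrac{\gamma-1}{2\gamma}$, $\tfrac{\gamma+1}{2\gamma}$ in the definitions of $w$, $\st w$, $X$ are precisely the ones that make \emph{both} equations collapse to the same symmetric form~\eqref{nondim} with the \emph{same} nonlinearity $(1+w)^{-\nu}$ — this is what forces the choice of scaling and is the one place a sign or exponent slip would be fatal. I would organize the verification by first reducing each equation to the form $(\text{const})\cdot(1+w)^{a}\,w_X + (\text{const})\cdot\st w_t = 0$ (and the analogue), then observing that the stated scalings are defined exactly so that the two constants agree and $a=\nu$ after the obvious rearrangement; no estimates, compactness, or analytic subtleties are involved, so the lemma reduces to this finite algebraic check plus the observation that $w\mapsto p$ is a diffeomorphism near the quiet state.
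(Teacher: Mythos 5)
Your overall strategy --- a direct chain-rule verification, substituting the new dependent and independent variables into both equations of \eqref{isen} and checking that the three scaling constants collapse both equations to the symmetric form \eqref{nondim} --- is exactly what the paper does (the paper merely prefaces the computation with a short motivation, introducing a thermodynamic parameter $h=p^{(\gamma-1)/2\gamma}$ chosen so that the wavespeed appears identically in both equations, which explains where the exponent $\frac{\gamma-1}{2\gamma}$ comes from). Your bookkeeping of the exponents is also right: differentiating $p=p_0(1+w)^{2\gamma/(\gamma-1)}$ produces $(1+w)^{\nu}$ in the momentum equation and differentiating $v=p_0^{-1/\gamma}e^{s/c_p}(1+w)^{-2/(\gamma-1)}$ produces $(1+w)^{-\nu}$ in the volume equation, with $\nu=\frac{\gamma+1}{\gamma-1}$.

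There is, however, one genuine error in your first step. You assert that ``from the definition $w+1 = 1 + ((p-p_0)/p_0)^{(\gamma-1)/2\gamma}$ one has $p = p_0(1+w)^{2\gamma/(\gamma-1)}$.'' This is false: the stated definition gives $(p-p_0)/p_0 = w^{2\gamma/(\gamma-1)}$, i.e.\ $p = p_0\bigl(1+w^{2\gamma/(\gamma-1)}\bigr)$, which is not the same function as $p_0(1+w)^{2\gamma/(\gamma-1)}$ (and would make $p_t$ degenerate at $w=0$, destroying the linearization used throughout the paper). The forward definition of $w$ printed in the lemma statement and the inverse formula printed at the end of the statement are mutually inconsistent; the paper's own proof works with $\wh w := (p/p_0)^{(\gamma-1)/2\gamma}$ and $w:=\wh w-1$, i.e.\ $w = (p/p_0)^{(\gamma-1)/2\gamma}-1$, which is what the inverse formula and the rest of the argument require. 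So the correct move is to identify the printed definition of $w$ as a typo and use $w = (p/p_0)^{(\gamma-1)/2\gamma}-1$, rather than to claim the two printed formulas are equivalent --- as you yourself note, this is precisely the spot where ``a sign or exponent slip would be fatal.'' With that correction, the remainder of your computation goes through and matches the paper's proof.
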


\begin{proof}
  We begin by introducing a thermodynamic parameter $h=h(p)$, so that
  $p=p(h)$ is determined in such a way that the nonlinear wavespeed
  appears the same in both equations.  Setting $p=p(h)$ in
  \eqref{isen} and manipulating yields
\[
  u_x +
  \frac1\gamma\,e^{s/c_p}\,p^{-\frac{\gamma+1}\gamma}\,p'(h)\,h_t = 0,
  \qquad
  h_x + \frac1{p'(h)}\,u_t = 0,
\]
so we choose $h$ such that
\[
  \frac1{p'(h)} = k_1\,p'(h)\,p^{-\frac{\gamma+1}\gamma},
  \com{or} h'(p) = k_2\,p^{-\frac{\gamma+1}{2\gamma}}.
\]
It suffices to take
\[
  h := p^{\frac{\gamma-1}{2\gamma}}, \com{so that}
  p = h^{\frac{2\gamma}{\gamma-1}},
\]
which yields a wavespeed of
\[
  \frac{k_3}{p'(h)} = k_4\,h^{-\nu}, \com{with}
  \nu := \frac{\gamma+1}{\gamma-1},
\]
where $k_i$ are appropriate constants.

To non-dimensionalize, we procced as follows: first, scale $h$ by
$h_0$, then scale $u$ by a constant (depending on $h_0$ and $s$), and
finally, rescale the material coordinate $x$ to get the simplest
possible nonlinear system.  Thus, given constant $p_0$, we set
\[
  \wh w := \Big(\frac p{p_0}\Big)^{\frac{\gamma-1}{2\gamma}},
  \com{so} p = p_0\,\wh w^{\frac{2\gamma}{\gamma-1}}, \quad
  v = e^{s/c_p}\,p_0^{-1/\gamma}\,\wh w^{-\frac2{\gamma-1}}.
\]
Plugging in to \eqref{isen}, for classical solutions we get
\[
  \begin{gathered}
    u_x +
    \frac2{\gamma-1}\,e^{s/c_p}\,p_0^{-1/\gamma}\,\wh w^{-\nu}\,\wh w_t =
    0, \\  \wh w_x +
    \frac{\gamma-1}{2\gamma}\,p_0^{-1}\,\wh w^{-\nu}\,u_t = 0,
  \end{gathered}
\]
which we rewrite as
\[
  \begin{gathered}
    \sqrt{\gamma}\,e^{-s/2c_p}\,p_0^{\frac{\gamma+1}{2\gamma}}\,
    \Big(\frac{\gamma-1}{2\sqrt\gamma}\,
    e^{-s/2c_p}\,p_0^{-\frac{\gamma-1}{2\gamma}}\,u\Big)_x + \wh w^{-\nu}\,\wh w_t = 0, \\
    \sqrt{\gamma}\,e^{-s/2c_p}\,p_0^{\frac{\gamma+1}{2\gamma}}\,
    \wh w_x + \wh w^{-\nu}\,\Big(\frac{\gamma-1}{2\sqrt\gamma}\,
    e^{-s/2c_p}\,p_0^{-\frac{\gamma-1}{2\gamma}}\,u\Big)_t = 0.
  \end{gathered}
\]
From this it is evident how we should rescale $u$ and $x$: namely, we
set
\begin{equation}
  \st w := \frac{\gamma-1}{2\sqrt\gamma}\,
    e^{-s/2c_p}\,p_0^{-\frac{\gamma-1}{2\gamma}}\,u, 
  \label{wst}
\end{equation}
and rescale the material variable by
\[
  x \mapsto X := \frac1{\sqrt\gamma}\,e^{s/2c_p}\,
  p_0^{-\frac{\gamma+1}{2\gamma}}\,x,
\]
and as a final simplification, we set $w:=\wh w-1$, so that our base
constant state is the origin in the $(w,\st w)$ coordinates.  Our
system then becomes \eqref{nondim}.
\end{proof}

For convenience, we revert to using $x$ rather than $X$ for the
rescaled material variable.  After this rescaling, on a region of
constant entropy, in a Lagrangian frame, the Euler equations
\eqref{nondim} can be written as the non-dimensional quasilinear \x2
system
\begin{equation}
  \del_xW + \wh\sg(w)\,H\,\del_tW = 0, \quad W(t,0) = W^0(t),
\label{evol}
\end{equation}
with
\[
  W = \(w\\\st w\), \quad H = \(0&1\\1&0\), \quad \wh\sg(w)=(1+w)^{-\nu},
\]
and we denote evolution of the data $W^0$ from $x=0$ to $x=\t$, by
$\wht E^\t(W^0)$.  Here $w$ and $\st w$ are the rescaled thermodynamic
variable and Eulerian velocity, respectively, and the system is
independent of the (constant) value of the entropy.  Because we want
to find solutions that are pure tones in time, we evolve in the
material variable, and the data $W^0$ is taken to be a periodic function
of time with period $T$, which we will initially take to be $2\pi$.

Since the entropy satisfies the equation $s_t=0$, any entropy jump is
stationary in the Lagrangian frame, and the effect of crossing the
jump is described by the Rankine-Hugoniot conditions, $[u] = [p] = 0$.
In non-dimensional coordinates, this becomes
\[
  w_R =  w_L,\quad \st w_R= J\,\st w_L, \com{or}
  W_R = \mc J\,W_L,
\]
where the linear operator $\mc J$ is defined by
\begin{equation}
  \label{jump}
  \mc J\,W := M(J)\,W, \com{where} M(J) = \(1&0\\0&J\),
\end{equation}
and where $J = e^{-[s]/2c_p}$, with $[s]:=s_R-s_L=s(x+)-s(x-)$
denoting the entropy jump.  We note that although this description is
exact for an ideal polytropic ($\gamma$-law) gas, given by
\eqref{glaw}, for a general constitutive law, the Rankine-Hugoniot
condition takes the form $p(v_R,s_R)=p(v_L,s_L)$.

In \cite{TYperStr,TYperLin}, the authors described the simplest
structure of possible space and time periodic solutions of the
compressible Euler equations which formally balance compression and
rarefaction.  These occur when there are exactly two entropy levels,
with separate jumps between them, and they should satisfy the
nonlinear equation
\begin{equation}
  \label{NU=U}
  (\mc N-\mc I)\,W = 0, \com{where}
  \mc N := \mc S^{T/2}\,\mc J^{-1}\,\wht E^{\ut\t}\,\mc J\,\wht E^{\wt\t},
\end{equation}
where the evolutions and jumps are as above, and $\mc S^{T/2}$ denotes
a half-period shift,
\[
  \mc S^{T/2}\,W(t) := W(t-T/2).
\]
In this paper we find space and time periodic solutions based on a
modification of \eqref{NU=U} that takes advantage of symmetries
preserved by nonlinear evolution in space, namely $p$ even and $u$ odd
as functions of $t$.  The key to managing the small divisors in
\eqref{NU=U} is based on a reformulation tailored to this more
symmetric class of allowable solutions.  We call this new approach
\emph{periodicity by projection}, while we refer to \eqref{NU=U} as
describing \emph{periodicity by periodic return}.

\subsection{Riemann Invariants and Symmetry}

It is convenient to use Riemann invariants to describe the system;
in terms of our rescaled variables, these are
\begin{equation}
  \begin{aligned}
    y &= w + \st w, \quad \st y = w - \st w, \com{so that}\\
    w &= \frac{y+\st y}2, \quad \st w = \frac{y-\st y}2.
  \end{aligned}
  \label{RIdef}
\end{equation}
In Riemann invariants, the evolution equations \eqref{evol} are
\begin{equation}
  \begin{aligned}
  \del_x\st y - \wh\sg\big(\frac{\st y+y}2\big)\,\del_t\st y &= 0, \\
  \del_xy + \wh\sg\big(\frac{\st y+y}2\big)\,\del_ty &= 0,
  \end{aligned}
\label{RI}
\end{equation}
and we write the state as $U = (\st y, y)^T$.  We let $\mc E^\t(U^0)$
denote evolution of the initial data $U^0$ from $x=0$ to $x=\t$.  It
follows that
\[
  \wht E^\t\mc Q = \mc Q\,\mc E^\t, \com{where}
  \mc Q = \( 1 & 1 \\ -1 & 1 \),
\]
and the two evolutions \eqref{evol} in $W$ and \eqref{RI} in $U$ are
equivalent.

Observe that the time symmetry
\begin{equation}
  \label{symm}
  w(x,\cdot) \text{ even}, \qquad \st w(x,\cdot) \text{ odd}  
\end{equation}
is respected by the nonlinear evolution \eqref{evol}.  That is, if it
holds at some $x_0$, it remains true at any other $x$.  Translating
this into Riemann invariants using \eqref{RIdef}, this is
\[
\begin{aligned}
  \st y(x,-t) &= w(x,-t) - \st w(x,-t)\\
  &= w(x,t) + \st w(x,t) = y(x,t).
\end{aligned}
\]
It is convenient to express this in terms of a reflection operator:
denote the action of reflecting time $t\to -t$ by $\mc R$, so that
\begin{equation}
  [\mc R\,v](t) := v(-t).
  \label{refl}
\end{equation}
In this notation, the $w$ even, $\st w$ odd symmetry \eqref{symm} becomes
the condition
\begin{equation}
  \st y(x,\cdot) = \mc R\, y(x,\cdot) \com{for any}x,
\label{rssymm}
\end{equation}
and imposing this at any point $x_0$ implies that it holds throughout
the evolution.  On the other hand, imposing \eqref{rssymm} on $y$ and
$\st y$ and defining $w$ and $\st w$ to be the even and odd parts of
$y$, respectively, implies the symmetry \eqref{symm}.  This means that we
can impose the symmetry \eqref{symm} in terms of Riemann invariants by
allowing $y^0$ to be arbitrary and setting $\st y^0 = \mc R\,y^0$.
Similarly, once $y(x,\cdot)$ is known, we use \eqref{rssymm} to get
$\st y(x,\cdot)$ and \eqref{RIdef} to reconstruct the full solution
$U$.

The upshot is that as long as the solutions of the \x2 system
\eqref{evol} remain regular, and the even/odd symmetry \eqref{symm} is
imposed in the data, then the system \eqref{evol} is equivalent to a
single \emph{nonlocal} and nonlinear scalar equation, namely
\begin{equation}
  \begin{gathered}
    \del_xy + \sg(y)\,\del_ty = 0, \com{where}\\
    \sg(y) := \wh\sg\big(\IR+y\big) = \big(1 + \IR+y\big)^{-\nu}.
  \end{gathered}
  \label{yevol}
\end{equation}
We can interpret this as a scalar transport equation, in which the
nonlinear wavespeed $\sg$ is nonlocal because $\IR+$ is a nonlocal
operator.  Note that \eqref{yevol} is the second equation of
\eqref{RI}; since $\mc R\del_t = -\del_t\mc R$, we recover the first
equation by simply applying reflection $\mc R$ to \eqref{yevol} and
using \eqref{rssymm}.

\subsection{Solution of the scalar equation at constant entropy}

We briefly describe the nonlinear evolution given by the nonlocal
scalar equation \eqref{yevol}.  We treat it as a simple nonlinear
transport problem, by solving along characteristics as usual.  These
are given by
\[
  \frac{dt}{dx} = \sg, \com{along which} \frac{dy}{dx} = 0,
\]
so $y$ is constant along characteristics, $y(x,t) = y^0(t_0)$.  Note
that in contrast to a scalar conservation law, we cannot conclude that
the characteristics are straight lines; this is due to the nonlocality
of the problem, because $\sg(x,t)$ is a function of both $y(x,t)$ and
$y(x,-t)$.  Note also that $\sg$ is always strictly positive.

Denote the characteristic through the reference point $(x_*,t_*)$,
parameterized by $x$, by $t = \tau_x := \tau_{x}(x_*,t_*)$.  Here the
subscript is a position locator rather than a partial derivative.  Then
$\tau_x$ satisfies the equation
\[
  \frac{d\tau_x}{dx} = \sg\big(y(x,\tau_x)\big), \quad
  \tau_{x_*} = t_*,
\]
and integrating gives
\begin{equation}
  \label{char}
  \tau_x(x_*,t_*) = t_* + \int_{x_*}^x \sg\big(y(\xi,\tau_\xi)\big)\;d\xi.
\end{equation}
This fully describes the characteristic field before gradient blowup,
which is the regime we are working in.  For later reference we record
the group property of the characteristic field,
\begin{equation}
  \label{group}
  \tau_{x_*}(x_*,t_*) = t_* \com{and}
  \tau_\xi\big(\eta,\tau_{\eta}(x_*,t_*)\big) = \tau_\xi(x_*,t_*).  
\end{equation}
The characteristic field yields the solution of the initial value
problem \eqref{yevol}, namely
\[
  y(x,t) = y^0\big(\tau_0(x,t)\big).
\]
This in turn defines the evolution operator $\mc E^\t(y^0)$, namely
\begin{equation}
  \label{et}
  \mc E^\t(y^0)(t) = y^0\big(\tau_0(\t,t)\big), \com{so that}
  \mc E^\t = \mc S_\phi,
\end{equation}
where the (linear) \emph{shift operator} $\mc S_\phi$ is defined by
\begin{equation}
  \label{Sphi}
  \mc S_\phi\big[y\big] := y\circ\phi,\qquad
  \mc S_\phi\big[y\big](t) = y\big(\phi(t)\big),
\end{equation}
and where  $\phi = \tau_0(\t,\cdot)$ is the (nonlinear) \emph{shift},
implicitly given by
\begin{equation}
  \label{shift}
  \phi(t) = \tau_0(\t,t) =
  t + \int_\t^0\sg\big(y(\xi,\tau_\xi)\big)\;d\xi.
\end{equation}

It follows that the nonlinear evolution
$\mc E^\t(y^0) = \mc S_\phi[y^0]$ can be regarded as acting linearly
\emph{once the characteristic field $\phi$ is known}, while the
nonlinearity (and nonlocality) is manifest in the determination of the
characteristics.  The above reasoning establishes the following lemma,
which describes the evolution under equation \eqref{yevol}.

\begin{lemma}
  \label{lem:evol}
  The evolution operator $\mc E^\t$, which evolves data $y^0$ through
  a spatial interval of width $\t$, is given by \eqref{et}, where the
  shift $\phi$ is a smooth function given implicitly by \eqref{shift}.
  This solution is valid and unique as long as the derivative
  $\del_ty$ remains finite, which holds as long as different
  characteristics don't intersect, that is, for values of $\t$ such
  that \eqref{char} can be solved uniquely for $t_*$, uniformly for
  $x\in[0,\t]$.
\end{lemma}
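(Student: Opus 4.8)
The plan is to read \eqref{yevol} as an ordinary differential equation in the evolution variable $x$, valued in a Banach space of functions of $t$, and to solve it by the contraction mapping principle. The one feature that prevents a routine ``solve one characteristic at a time'' argument is that the wavespeed $\sg(y)=(1+\IR+ y)^{-\nu}$ is \emph{nonlocal} in $t$: the speed along the characteristic through $(x_*,t_*)$ depends, through $\IR+$, on the value of $y$ at the reflected time $-\tau_x$, which lies on a different characteristic. So I would run the fixed point on the entire characteristic field at once. Fix data $y^0\in C^1$ (or $H^s$ with $s$ large) with $1+\IR+ y^0$ bounded below by a positive constant, and on the set of continuous families $\{\tau_\cdot(x_*,t_*)\}$ lying uniformly $C^1$-close (in the reference variables) to the trivial family $\tau_x(x_*,t_*)\equiv t_*$, define the map that sends a candidate field to the field obtained by first forming $y(x,t):=y^0\big(\tau_0(x,t)\big)$, then evaluating $\sg\big(y(x,t)\big)$, and finally integrating \eqref{char}.

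I expect the verification that this map is a contraction on $[0,\t_0]$ for $\t_0$ small to be routine. On the range of $y$ determined by the lower bound on $1+\IR+ y$, the map $w\mapsto(1+w)^{-\nu}$ is smooth with bounded derivatives; $\IR+$ is a bounded linear operator; and precomposition with a near-identity shift is Lipschitz in $C^1$. Hence the difference of two outputs is controlled by $C\,\t_0$ times the difference of the inputs, with $C$ depending only on $\|y^0\|_{C^1}$ and the lower bound. The unique fixed point is the characteristic field; it inherits the regularity of $y^0$ because the integrand in \eqref{char} does, and the group property \eqref{group} is immediate from uniqueness for the defining ODE. Then $y(x,t):=y^0\big(\tau_0(x,t)\big)$ solves \eqref{yevol}, and writing $\phi:=\tau_0(\t,\cdot)$ we obtain $\mc E^\t(y^0)=\mc S_\phi[y^0]$, with $\phi$ given implicitly by \eqref{shift} once \eqref{char} is read backwards from $x=\t$ to $x=0$.

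It remains to identify the precise lifespan. Differentiating $\del_x\tau_x(0,t_0)=\sg\big(y(x,\tau_x)\big)$ in $t_0$ yields a linear ODE for the Jacobian $J_x(t_0):=\del_{t_0}\tau_x(0,t_0)$ whose coefficient involves $\sg'(y)\,\del_t y$ together with the analogous contribution of the reflected value; conversely $\del_t y(x,t)=(y^0)'\big(\tau_0(x,t)\big)\big/J_x\big(\tau_0(x,t)\big)$. Hence $\del_t y$ stays finite exactly as long as $J_x$ stays bounded away from $0$, i.e. as long as the forward characteristic map $t_0\mapsto\tau_x(0,t_0)$ remains an orientation-preserving diffeomorphism, i.e. as long as distinct characteristics do not cross, which is precisely the condition that \eqref{char} be uniquely solvable for $t_*$, uniformly in $x\in[0,\t]$. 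A standard continuation argument extends the local solution to the maximal such interval, and on it uniqueness follows from the contraction above (equivalently, a Gronwall estimate on the difference of two solutions issuing from the same data). The genuinely non-routine point, which I would emphasize, is the nonlocality: it forces the fixed point to be posed on the full field rather than characteristic-by-characteristic, and it couples the Jacobian ODE to the reflected characteristic; once one works in a norm strong enough to see $\del_t y$ (here $C^1$ in $t$, uniformly in $x$), the rest is Picard--Gronwall bookkeeping.
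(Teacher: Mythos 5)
Your proposal is correct and follows the same route as the paper: the paper's ``proof'' of this lemma is simply the preceding informal derivation of the characteristic field \eqref{char} and the representation $\mc E^\t=\mc S_\phi$ with $\phi$ given by \eqref{shift}, asserted to be valid until characteristics cross. What you add --- posing the Picard/contraction iteration on the entire characteristic field at once because the nonlocal wavespeed $\sg(y)=(1+\IR+y)^{-\nu}$ couples $t$ to $-t$, and identifying the lifespan through the Jacobian of $t_*\mapsto\tau_x(x_*,t_*)$ --- is a correct filling-in of details the paper leaves implicit (the paper flags the nonlocality only to note that characteristics need not be straight lines), and it matches the paper's stated breakdown criterion exactly.
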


We note that the characteristics vary smoothly, because the wavespeed
$\sg(y)$ is a smooth function of $y$, and so can be expanded as
needed.  On the other hand, our prior work in~\cite{TYperEV} argues
that one cannot expand the linear (composition) $\mc S_\phi[y]$ in
$\phi$, because any such expansion up to $k$-th derivatives $y^{(k)}$
cannot be controlled in a Nash-Moser iteration~\cite{TYperNM}, because
the error is $O\big(y^{(k+1)}\big)$.

\subsection{Initial setup: finding the smallest tile}

By analyzing the linearization of \eqref{NU=U}, we characterized the
resonances and small divisors in the problem as eigenvalues of the
linearization of $\mc G(U) = \mc N(U)-U$.  We showed how a
Liapunov-Schmidt decomposition, coupled to a Nash-Moser iteration to
handle the small divisors, provides a consistent methodology for
solving \eqref{NU=U}, see \cite{TYperBif,TYperNM}.
In~\cite{TYdiff1,TYdiff2}, we further analyzed a scalar model
problem which isolated the problem of small divisors, and we proposed
a strategy for implementing a Nash-Moser iteration to solve
\eqref{NU=U}.

Preliminary numerical simulations indicated that solutions of
\eqref{NU=U} appear to all lie inside a smaller class of data which
satisfy restrictive symmetry properties.  The difficulty with
\eqref{NU=U} as stated is that this restricted class of solutions is
unidentifiable, because that formulation does not impose enough
symmetry that is respected by the nonlinear problem.  In order to
proceed further, we look for a modification of \eqref{NU=U} based on
symmetries that are respected by the \emph{nonlinear} evolution with
appropriate boundary conditions, which are also satisfied by the
linearized solutions.  In this more restricted class, the half-period
time shift is placed at axes of symmetry in (material) space $x$.

Our first restriction is to the space of solutions satisfying the
basic symmetry \eqref{symm}, so that $w$ is even and $\st w$ is odd as
functions of $t$ throughout the evolution.  Our second symmetry
follows by imposing the natural physical ``acoustic boundary
condition'' $u=0$ at $x=0$.  This further restricts the domain of the
nonlinear operator and imposes a spatial reflection property.  Our
proof ultimately shows that the periodic solutions we seek live within
this smaller domain.

The spatial reflection symmetry which follows from $u(0,\cdot)=0$ is
\begin{equation}
  w(-x,t) = w(x,t), \qquad
  \st w(-x,t) = -\st w(x,t),
\label{olsymm}
\end{equation}
and continuity at $x=0$ requires $\st w=0$ there, which is precisely
the acoustic boundary condition.  We thus impose the conditions
$\st w^0(t)=0$, $w^0$ even, and evolve this data forward in space to
$x=\ol\t:=\wt\t/2$.  We then obtain the solution on the entire entropy
level $-\ol\t<x<\ol\t$ by reflection through \eqref{olsymm}: by
uniqueness, these must coincide.  We note that the boundary symmetry
$\st w(x,\cdot)=0$ holds only at $x=0$, while the $w$ even, $\st w$
odd symmetry \eqref{symm} holds throughout the evolution.

It remains to impose a periodicity condition at the end of the
evolution.  For this we use the following principle: an even or odd
periodic function admits \emph{two} axes of symmetry, at both the
endpoint $0$ and midpoint $T/2$ of the interval, so that even or odd
periodic functions remain even or odd after a half-period shift,
respectively.  That is, if we set $\wt f(s):=f(s-T/2)$, then
\begin{equation}
  \label{reflper}
  \wt f(-s) = f(-s-T/2) = \pm f(s+T/2) = \pm f(s-T/2) = \pm\wt f(s).
\end{equation}
For periodicity,  it thus suffices to impose the additional \emph{shifted}
reflection symmetry at the end of the evolution, which becomes the shifted
symmetry axis.  This means that the practical interval of evolution is
actually only half the width of that of problem \eqref{NU=U}.  This
shifted reflection, analogous to \eqref{olsymm}, is
\begin{equation}
  \begin{aligned}
    w(\ell+x,t) &= w(\ell-x,t+T/2), \\
    \st w(\ell+x,t) &= -\st w(\ell-x,t+T/2),
  \end{aligned}
\label{ulsymm}
\end{equation}
and setting $x=0$, continuity at $\ell$ requires
\begin{equation}
  w(\ell,t+T/2)=w(\ell,t), \qquad
  \st w(\ell,t+T/2)=-\st w(\ell,t).
  \label{odddat}
\end{equation}
In other words, if $w(x,\cdot)$ and $\st w(x,\cdot)$ are defined for
say $0<x<\ell$, and satisfy the symmetry condition \eqref{odddat} at
$x=\ell$, then use of \eqref{ulsymm} allows us to extend the solution
to the interval $0<x<2\ell$.

Assuming these symmetries, it follows that we can generate a periodic
solution from a single tile evolving from the center of one entropy
level to the center of the other, with a single jump in between.  The
full periodic tile is then generated by the series of reflections
first \eqref{olsymm}, and then \eqref{ulsymm}, so the full spatial
period of the solution is $4(\ul\t+\ol\t)$.  We note that one
reflection converts the jump operator $\mc J$ to $\mc J^{-1}$, because
$\mc J$ is the jump from $U(x-)$ to $U(x+)$, and these are switched
under reflection in $x$.

\begin{figure}[htb]
\def\olw{0.6}
\tikzset{
  onetile/.pic={
    \draw[style=very thick,color=C7] (\olw,0) -- (\olw,4);
    \draw[style=very thick,color=C8] (0,0) -- (1,0);
    \draw[style=very thick,color=C8] (0,4) -- (1,4);
    \draw[style=very thick,color=C0,style=dotted] (0,0) -- (0,4);
    \draw[style=very thick,color=C3,style=dotted] (1,0) -- (1,4);
    \draw[style=very thick,color=C0] (0,0) -- (\olw,0.7*\olw);
    \draw[style=very thick,color=C9] (0,2) -- (\olw,2+0.7*\olw);
    \draw[style=very thick,color=C3] (1,1) -- (\olw,1-0.8*\olw);
    \draw[style=very thick,color=C6] (1,3) -- (\olw,3-0.8*\olw);
    \node[color=C2!90!black] at (\olw/2,2*\olw) {\Large{\halfnote\twonotes}};
    \fill[color=C0] (0,0) circle (0.05);
    \fill[color=C9] (0,2) circle (0.05);
    \fill[color=C3] (1,1) circle (0.05);
    \fill[color=C6] (1,3) circle (0.05);
  }
}
\hspace*{-.6in}\resizebox{1.2\textwidth}{\textwidth}{%
  \begin{tikzpicture}
  \fill[color=C7!30] (-4,0) rectangle (-3,4);
  \draw [->] (-4,0.2) arc (90:245:0.2);
  \draw [->] (-4,2.2) arc (90:245:0.2) node[left] {1};
  \draw [->] (-3,2.8) arc (-90:75:0.2) node[right] {2};
  \draw [->] (-3,0.8) arc (-90:75:0.2);
  \pic at (-4,0) {onetile};
  \draw [->] (0.17,3.1) arc (30:150:0.2) node[above] {1};
  \draw [->] (1.,0.8) arc (-90:-15:0.2) node[below] {2};
  \fill[color=C7!30] (0,-1) rectangle (1,3);
  \pic at (0,-1) {onetile};
  \pic[rotate=180,transform shape] at (0,3) {onetile};
  \pic[rotate=180,transform shape] at (2,5) {onetile};
  \pic at (2,1) {onetile};

  \tikzset{shift={(5,-8)}}
  \fill[color=C7!30] (0,3) rectangle (1,7);
  \fill[color=C7!15] (1,3) rectangle (4,7);
  \foreach \nx in {0,4,8}
  {
    \foreach \ny in {0,4,8}
    {
      \pic at (\nx,\ny-1) {onetile};
      \pic[rotate=180,transform shape] at (\nx,\ny+3) {onetile};
      \pic[rotate=180,transform shape] at (\nx+2,\ny+5) {onetile};
      \pic at (\nx+2,\ny+1) {onetile};
    }
  }
  \draw (-6,2) node {\includegraphics[scale=0.5]{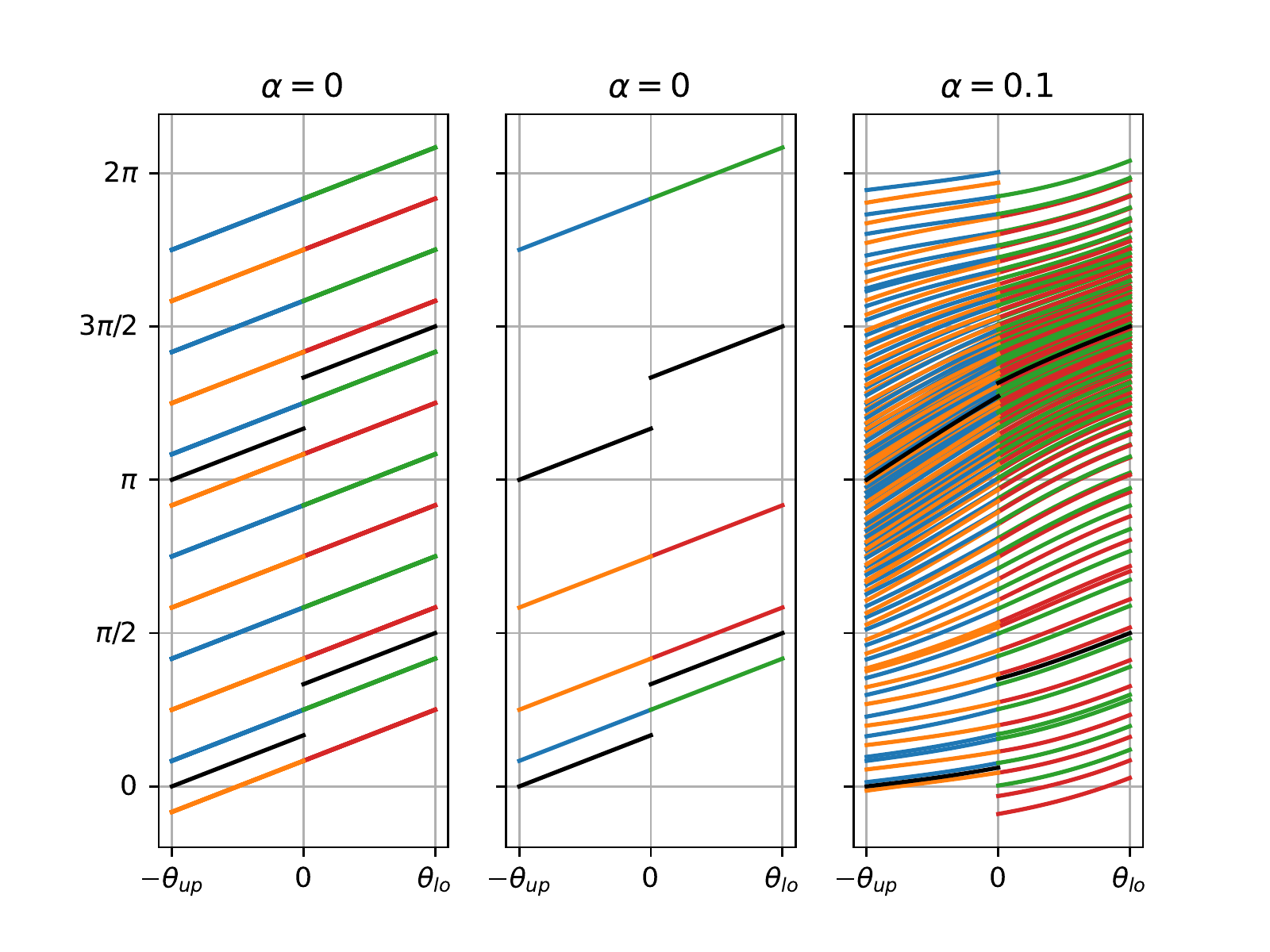}};
\end{tikzpicture}
}
\caption{Reflections of the generating tile.}
\label{fig:period}
\end{figure}

We illustrate the repeated reflections of the generating tile in
Figure \ref{fig:period}.  Recalling that two orthogonal reflections of
the plane yields a rotation of $\pi$ around the intersection point,
the standard reflection at $x=0$ is labelled `1', and the shifted
reflection at $x=\ell$ is labeled `2'.  The composition of these two
reflections is then translated periodically by $4\ell$ and $2\pi$ to
give the wave pattern on the plane.  A sketch of the perturbed tile,
showing the perturbed characteristics to leading order, is also shown.
The densityof sketched characteristics indicates the presence of
nonlinear rarefaction and compression.  Only the forward
characteristics are shown; the backward characteristics are obtained
by applying the time reflection $\mc R$.

In order to precisely write down the reduced nonlinear equation which
determines the minimal tile, we need to describe the double boundary
value problem which imposes periodicity.  We do this by
expressing the relevant operators in terms of the scalar Riemann
invariant $y$.  Assume the first entropy level $\ol s$ extends from
$-\ol\t<x<\ol\t$, and the second $\ul s$ extends from
$\ol\t<x<\ol\t+2\ul\t$.  We define a reduced nonlinear operator taking
data at $x=0$ to evolved solution at $x=\ol\t+\ul\t$ and impose
boundary conditions which imply periodicity.

Recall that the action of the jump $\mc J$ is $w \to w$,
$\st w\to J\st w$, ($\to$ indicating the jump from left to right)
where the scalar jump $J$ is a measure of how far apart the entropy
levels are.  Using \eqref{RIdef} and \eqref{rssymm}, the action of the
jump on $y$ is
\begin{equation}
y\to \mc Jy, \com{where}
\mc J := \IR+ + J\,\IR-,
\label{Jop}
\end{equation}
where $J$ is again the (scalar) size of the entropy jump, and
$\frac{\mc I\pm\mc R}2$ are projections onto the even and odd parts
$w$ and $\st w$ of $y$, respectively.

We next express the boundary conditions \eqref{olsymm} and
\eqref{ulsymm} in terms of the scalar Riemann invariant $y$.  Setting
$x=0$ in \eqref{olsymm} gives $\st w(0,t) = 0$, which is
\begin{equation}
   y(0,t) = \st y(0,t) = y(0,-t) \com{or} \ol y^0 = \mc R\,\ol y^0,
\label{ols}
\end{equation}
so that $\ol y^0 := y(0,\cdot)$ is even.

For the second entropy level, we set $x=\ell:=\ol\t+\ul\t$ in
\eqref{ulsymm}, so we get $w(\ell,t) = w(\ell,t+T/2)$ and
$\st w(\ell,t) = - \st w(\ell,t+T/2)$, or equivalently
\begin{equation}
\begin{gathered}
  y(\ell,t+T/2) = \st y(\ell,t) = y(\ell,-t), \com{or} \\
  y^\ell\big(\tau+\TS{\frac T4}\big)
  = y^\ell\big(-\tau+\TS{\frac T4}\big), \com{with} \tau = t+\TS{\frac T4}.
\end{gathered}
\label{shbd}
\end{equation}
It is convenient to define the shift or translation operator
$\mc S^\t$, which translates $v$ by a fixed amount $\t$, by
\begin{equation}
  [\mc S^\t\,v](t) := v(t-\t).
  \label{S}
\end{equation}
We use the convention that $\mc S^\t$ is a constant or uniform shift,
while $\mc S_\phi$ is a non-uniform shift given by \eqref{Sphi}.
These are then related by
\[
  \mc S^\t = \mc S_{\phi_\t}, \com{where}
  \phi_\t(t) := t-\t.
\]
Noting that
\[
  [\mc S^\t\,\mc R\,v](t) = v(-t+\t)
  = [\mc R\,\mc S^{-\t}\,v](t),
\]
our second boundary condition \eqref{shbd} can be written as
\begin{equation}
\mc S^{-T/4}\,y^\ell = \mc S^{T/4}\,\mc R\,y^\ell
= \mc R\,\mc S^{-T/4}\,y^\ell,
\label{uls}
\end{equation}
so that the target $\ut y^\ell$ is the quarter-period shift of an even
function,
\[
  \ut y^\ell = \mc S^{T/4}\,\ul y^\ell, \com{where}
  \ul y^\ell = \mc R\,\ul y^\ell,
\]
because $\ul y^\ell$ is even.

\subsection{The nonlinear equation}

We now give the reduced nonlinear operator whose solutions solve the
above double boundary value problem which generates a periodic
solution.  We treat the problem as an evolution in which we use the
first boundary condition \eqref{ols} to restrict the data of the
nonlinear operator, and the second boundary condition \eqref{uls} as
the target for the nonlinear evolution.

The boundary condition \eqref{ols} can be written $\IR+y^0=0$, so that
$y^0$ is even, and so we restrict our domain to (arbitrary periodic)
even data $y^0$.  This data is then evolved a distance $\ol\t$,
subjected to the jump \eqref{Jop}, and evolved a further distance
$\ul\t$.  We then impose the second boundary condition \eqref{uls},
namely $\IR-\mc S^{-T/4}y=0$.  Thus our periodic tile is generated by
the fully nonlinear (and nonlocal) reduced evolution equation
\begin{equation}
  \label{F=0}
  \IR-\,\mc S^{-T/4}\,\mc E^{\ul\t}\,\mc J\,\mc E^{\ol\t}\,y^0
  =: \mc F(y^0) = 0,
\end{equation}
where the data $y^0$ is even, and $T$-periodic.  At this stage, the
parameters $T$, $\ol\t$, $\ul\t$ and $J$ are yet to be determined;
each solution of \eqref{F=0} determines a periodic solution.

We note that we have replaced the subtraction $\mc N-\mc I$ in
\eqref{NU=U} by the linear projection $\IR-$ to impose periodicity.
In other words, we have replaced the \emph{periodic return} problem
\eqref{NU=U} with the \emph{periodicity by projection} problem
\eqref{F=0}.  Remarkably, this converts out-of-control small divisors
to uniform small divisors thus making the whole problem tractable.
Our re-expression of the nonlinear problem using projection allows a
factorization of the nonlinear operator, whereas the expression
\eqref{NU=U} has no obvious factorization.

\begin{theorem}
  \label{thm:tile}
  Any solution of the scalar equation \eqref{F=0} with even data $y^0$
  provides a minimal tile which generates by reflections, a solution of the
  compressible Euler equations which is $4\ell$-periodic in space and
  $T$-periodic in time.
\end{theorem}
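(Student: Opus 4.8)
The plan is to run the construction of this subsection backwards. Given $y^0$ solving \eqref{F=0}, set $\st y^0 := \mc R\,y^0$; since $y^0$ is even this just says $\st y^0 = y^0$. I propagate $(\st y^0,y^0)$ by $\mc E^{\ol\t}$, apply the jump $\mc J$ of \eqref{Jop}, and propagate by $\mc E^{\ul\t}$, obtaining $y(x,\cdot)$ for $x\in[0,\ell]$; by \eqref{rssymm} the companion $\st y(x,\cdot)=\mc R\,y(x,\cdot)$ is carried along, so \eqref{RIdef} and Lemma~\ref{lem:nondim} convert this into $(w,\st w)$, hence into $(u,p)$, which solves the Euler system \eqref{evol} on $0\le x\le\ell$ on each side of the entropy jump, with the Rankine--Hugoniot conditions $[u]=[p]=0$ built into $\mc J$. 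Lemma~\ref{lem:evol} is what makes this legitimate: it applies exactly as long as the tile is shock-free, which is part of what ``a solution of \eqref{F=0}'' means.

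Next I would extend across the symmetry axes. The key structural fact is that $(x,t)\mapsto(-x,t)$ together with $(w,\st w)\mapsto(w,-\st w)$ is a symmetry of \eqref{evol} — because $HN=-NH$ for $N=\mathrm{diag}(1,-1)$ while the wavespeed $\wh\sg(w)$ is untouched — and likewise after composing with the time translation $t\mapsto t+T/2$, which is a symmetry because \eqref{evol} is autonomous in $t$. The acoustic boundary condition $\st w(0,\cdot)=0$ (equivalently $y^0$ even, \eqref{ols}) forces continuity at $x=0$ and, through the governing equation, also $\del_x w(0,\cdot)=0$, so the even/odd extension \eqref{olsymm} is $C^1$ — and hence, by bootstrapping in the PDE, as smooth as the solution on the tile — across $x=0$; it carries the reflected entropy profile, which turns $\mc J$ into $\mc J^{-1}$ at the mirrored jump. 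The boundary condition \eqref{uls}, which is precisely $\mc F(y^0)=0$, is equivalent to \eqref{odddat}; differentiating \eqref{odddat} together with the PDE gives the matching of $\del_x w$ and $\del_x\st w$ at $x=\ell$, so the shifted reflection \eqref{ulsymm} is likewise $C^1$ and smooth there. Iterating these two reflections about the resulting lattice of axes $x\in\ell\,\B Z$ produces a function on all of $\B R\times\B R$; uniqueness for the shock-free Cauchy problem in $x$ guarantees the reflected copies agree on overlaps, so the global object is a bona fide classical solution of the compressible Euler equations.

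Finally, for the periods: the data $y^0$ is $T$-periodic and \eqref{evol} is autonomous in $t$, so the solution is $T$-periodic in time; composing the unshifted reflection about $x=0$ with the shifted reflection about $x=\ell$ yields the identity $(u,p)(x+2\ell,t)=(u,p)(x,t+T/2)$, whence $(u,p)(x+4\ell,t)=(u,p)(x,t+T)=(u,p)(x,t)$, i.e.\ $4\ell$-periodicity in $x$ (the entropy profile itself closes up after $2\ell$, but $(u,p)$ only after $4\ell$). Minimality of the tile is the observation that the pattern cannot be closed on a shorter $x$-interval without collapsing the $T/2$ shift. I expect the real work to be concentrated in the gluing step: checking that the reflection maps are genuine symmetries of the \emph{full} system including the correct transformation $\mc J\to\mc J^{-1}$ of the entropy jump, upgrading mere continuity at the axes to $C^1$ and thence full regularity via the boundary conditions, and verifying mutual consistency of the countably many reflected tiles — all of which rests on finite-speed propagation and uniqueness in the pre-shock regime, which is why the shock-free hypothesis on the tile (Lemma~\ref{lem:evol}) is indispensable.
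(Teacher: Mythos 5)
Your proposal is correct and follows essentially the same route as the paper: reconstruct $(w,\st w)$ from the even/odd parts of $y$, then tile the plane by the reflection \eqref{olsymm} at $x=0$ and the shifted reflection \eqref{ulsymm} at $x=\ell$, with the boundary conditions \eqref{ols} and \eqref{uls} guaranteeing matching at the axes and the composition of the two reflections giving $(w,\st w)(x+2\ell,t)=(w,\st w)(x,t+T/2)$, hence the $4\ell$ and $T$ periods. The paper's proof simply writes the piecewise extension formulas and asserts continuity at $x=j\ell$; your additional verification that the reflections are genuine symmetries of \eqref{evol} (via $HN=-NH$) and that the gluing is $C^1$ is a correct filling-in of details the paper leaves implicit.
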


By a \emph{minimal tile}, we mean a solution of \eqref{F=0} defined on
$[0,\ell]\times[0,T]$, which generates a space and time periodic
solution by this series of reflections and shifted reflections.

\begin{proof}
  Given a solution $y(x,t)$ defined on $[0,\ell]\times[0,T]$ and
  satisfying the stated conditions, we extend this to a full periodic
  solution of the system \eqref{nondim}, illustrated in
  Figure~\ref{fig:period}, as follows.  Recalling that $y$ is a
  Riemann invariant, we generate the state $(w,\st w)$ from $y$, and
  then reflect this $2\times2$ state using the boundary conditions.

  Recalling \eqref{rssymm} and \eqref{RIdef}, we define
  \[
    \begin{aligned}
      w(x,t) &:= \IR+y(x,t) = \frac{y(x,t)+y(x,-t)}2,\\
      \st w(x,t) &:= \IR-y(x,t) = \frac{y(x,t)-y(x,-t)}2,
    \end{aligned}
  \]
  for $(x,t)\in[0,\ell]\times[0,T]$.  We then extend $w$ using
  \eqref{ulsymm} and \eqref{olsymm} as
  \[
    w(x,t) :=
    \begin{cases}
      w(2\ell-x,t+T/2), &\ell\le x\le 2\ell,\\
      w(x-2\ell,t+T/2), & 2\ell\le x\le 3\ell,\\
      w(4\ell-x,t), & 3\ell\le x\le 4\ell,
    \end{cases}
  \]
  and similarly extend $\st w$ as
  \[
    \st w(x,t) :=
    \begin{cases}
      - \st w(2\ell-x,t+T/2), &\ell\le x\le 2\ell,\\
      \st w(x-2\ell,t+T/2), & 2\ell\le x\le 3\ell,\\
      - \st w(4\ell-x,t), & 3\ell\le x\le 4\ell.
    \end{cases}
  \]
  By construction, the solution so given is periodic and continuous at
  all boundaries $x=j\ell$, $j=0,\dots 4$.  Note that this uses the
  extra symmetry point of funtions that are both periodic and even or
  odd, \eqref{reflper}.
\end{proof}

\section{Linearization and Small Divisors}
\label{sec:lin}

Our first task is to linearize our main equation \eqref{F=0} around
$y=0$, which is itself a solution of the equation.  By our
non-dimensionalization, $y=0$ corresponds to the quiet state solution
$p=p_0$, $u=0$ of \eqref{lagr} over a variable entropy profile.  We
note that the jump $\mc J$, shift $\mc S^{-T/4}$ and projection onto
even, $\IR+$ are already linear operators, so it suffices to linearize
the evolution $\mc E^\t$.  Since $\sg(0)=1$, the linearization of the
nonlinear transport equation \eqref{yevol} around $y=0$ is just the
linear transport equation
\[
  \del_xY + \del_tY = 0,
\]
which has solution
\[
  Y(x,t) = Y^0(t-x) = \mc S^x\big[Y^0\big](t),
\]
where $\mc S^x$ is translation by $x$, as in \eqref{S}.  It follows
that the linearization of the evolution operator around $y=0$ through
$\t$ is
\begin{equation}
  \label{L}
  \mc L^\t := D\mc E^\t(0), \com{so}
  \mc L^\t[Y] = D\mc E^\t(0)[Y] = \mc S^\t Y.
\end{equation}
Here and throughout the paper, we will use square brackets $[\cdot]$
for inputs of (multi-)linear operators, and parentheses $(\cdot)$ to
denote inputs to nonlinear operators, and we will use upper case to
refer to arguments of (multi-)linear operators when convenient.  It
follows that the linearization of $\mc F$ around $y=0$, acting on $Y$,
is
\begin{equation}
  \label{DF}
  D\mc F(0)[Y] =
  \IR-\,\mc S^{-T/4}\,\mc L^{\ul\t}\,\mc J\,\mc L^{\ol\t}[Y]
  =: \IR-\,\mc L_0[Y],
\end{equation}
and because we have restricted the domain, we take $Y$ even, and we
have set
\begin{equation}
  \mc L_0 := \mc S^{-T/4}\,\mc L^{\ul\t}\,\mc J\,\mc L^{\ol\t},
  \label{L0}
\end{equation}  
and we note that each factor of $\mc L_0$ is invertible.

Our goal is to fully understand the kernel of
$D\mc F(0)=\IR-\,\mc L_0$, and choose parameters so that the kernel
consists of constant states and isolated modes.  This will allow us to
perturb the kernel to get a nontrivial solution of \eqref{F=0}.
Because $\mc L_0$ consists of translations and jumps, it respects
Fourier $k$-modes in $t$; this in turn allows us to explicitly
calculate the kernel and small divisors of $D\mc F(0)$.

We use the following notation: for a given (reference) time period
$T$, denote the $k$-mode by the $1\times2$ matrix
\begin{equation}
  \mc T_k = \mc T_k(T) := \(\c(k\frac{2\pi}Tt) & \s(k\frac{2\pi}Tt)\),
  \label{Tk}
\end{equation}
where $\c/\s$ abbreviate the trigonometric functions $\cos/\sin$.  It
then follows that any $T$-periodic function can be represented as
\[
  f(t) = \sum_{k\ge 0} \mc T_k\(a_k\\b_k\)
  = \sum a_k\,\text{c}(k\TS{\frac{2\pi}T}t) + b_k\,\text{s}(k\TS{\frac{2\pi}T}t),
\]
uniquely with $b_0=0$.  Identifying a mode as a $k$-mode requires
us to fix the reference period $T$: clearly
$\mc T_{jk}(jT) = \mc T_k(T)$ for any $j\ge 1$.

This notation allows us to express \eqref{L} in simple matrix terms,
as stated in the following lemma, which can readily be verified.

\begin{lemma}
  \label{lem:Tk}
  The above linear operators act on $\mc T_k$, as follows:
  \begin{align*}
    \mc R\,\mc T_k &= \mc T_k\,M(-1),\\
    \mc J\,\mc T_k &= \mc T_k\,M(J),\\
    \mc S^{-T/4}\,\mc T_k &= \mc T_k\,P^{-k},\\
    \IR-\,\mc T_k &= \s(k\TS{\frac{2\pi}T}\t)\(0&1\), \com{and}\\
    \mc L^\t\,\mc T_k &= \mc S^\t\,\mc T_k
         = \mc T_k\,R(k\TS{\frac{2\pi}T}\t),
  \end{align*}
  where $R(\t)$ is the usual rotation matrix, $M(\cdot)$ is the jump
  matrix from \eqref{jump}, and $P = R(\pi/2)$, that is
  \[
    R(\t) = \(\c\t&-\s\t\\\s\t&\c\t\), \quad
    M(J) = \(1&0\\0&J\), \com{and}
    P = \(0&-1\\1&0\).
  \]
\end{lemma}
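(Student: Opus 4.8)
The plan is to verify each of the five identities by direct computation, reading $\mc T_k$ from \eqref{Tk} as the $1\times 2$ row vector $(\c(\w t),\s(\w t))$ with $\w := k\,\tfrac{2\pi}{T}$, acted on from the right by the $2\times 2$ matrices in the statement. The single mechanism behind all five is the cosine/sine addition formula, which says precisely that shifting the phase of $(\c(\w t),\s(\w t))$ is right multiplication by a rotation matrix. So first I would fix conventions: $\mc R$ is time reversal \eqref{refl}, $\mc S^\t$ translates by $\t$ in the sense of \eqref{S} (hence $\mc S^{-T/4}$ \emph{advances} the time argument by $T/4$), the jump acts in the Riemann variable $y$ as $\mc J=\IR++J\,\IR-$ by \eqref{Jop}, and the linearized evolution is $\mc L^\t=\mc S^\t$ by \eqref{L}.

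With the conventions in place, each identity is a one-line check. Since cosine is even and sine odd, $\mc R\,\mc T_k=(\c(\w t),-\s(\w t))=\mc T_k\,M(-1)$. Since $\c(\w t)$ equals its even part and $\s(\w t)$ its odd part, $\mc J\,\mc T_k=(\c(\w t),J\,\s(\w t))=\mc T_k\,M(J)$, and the same splitting gives $\IR-\,\mc T_k=(0,\s(\w t))=\s(\w t)\,(0,1)$. For the shift operators I would expand $\c(\w t+\phi)=\c(\w t)\,\c\phi-\s(\w t)\,\s\phi$ and $\s(\w t+\phi)=\s(\w t)\,\c\phi+\c(\w t)\,\s\phi$ and collect coefficients, which shows that sending the time argument $\w t$ to $\w t+\phi$ acts on $\mc T_k$ as right multiplication by $R(-\phi)$. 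Taking $\phi=\w T/4=k\pi/2$ yields $\mc S^{-T/4}\,\mc T_k=\mc T_k\,R(-k\pi/2)=\mc T_k\,P^{-k}$, using $P=R(\pi/2)$ and $R(\psi)^n=R(n\psi)$; taking $\phi=-\w\t$ (since $\mc S^\t$ sends $\w t$ to $\w(t-\t)$) yields $\mc L^\t\,\mc T_k=\mc S^\t\,\mc T_k=\mc T_k\,R(\w\t)=\mc T_k\,R(k\,\tfrac{2\pi}{T}\t)$.

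I do not expect any genuine obstacle: the lemma is just the matrix-bookkeeping form of the addition theorems, as the paper's remark ``which can readily be verified'' indicates. The only points that need care are the bookkeeping ones --- keeping the row-vector/right-multiplication convention consistent with the column-vector convention used for $W$ in \eqref{jump}, tracking whether a given operator advances or retards the time argument so as to get the sign of the rotation angle right, and using $R(\psi)^n=R(n\psi)$ together with $P=R(\pi/2)$ to rewrite $R(-k\pi/2)$ as $P^{-k}$. As a sanity check I would also note that the degenerate mode $k=0$, where $\mc T_0=(1,0)$ and the normalization $b_0=0$ applies, is consistent with all five identities, since every rotation angle is then a multiple of $2\pi$ and the sine component vanishes.
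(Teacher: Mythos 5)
Your verification is correct, and it is exactly the computation the paper intends: the lemma is stated with no proof beyond the remark that it "can readily be verified," and your direct check via the addition formulas, with the signs tracked so that advancing the phase by $\phi$ is right multiplication by $R(-\phi)$, reproduces all five identities (including the correct identifications $R(-k\pi/2)=P^{-k}$ and $\mc S^{\t}\mapsto R(k\tfrac{2\pi}{T}\t)$). No gaps.
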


\begin{corollary}
  \label{cor:DF0}
  The action of $D\mc F(0)$ on any even $Y=\sum a_k\,\c(k\piot t)$ is
  \[
    D\mc F(0)\Big[\TS{\sum}a_k\,\c(k\piot t)\Big]
    = \sum a_k\,\d_k\,\s(k\piot t),
  \]
  where the \emph{$k$-th divisor}
  $\d_k = \d_k\big(\ol\t,\ul\t,J;T\big)$ is the number
  \begin{equation}
    \label{dk}
    \d_k = \(0&1\)\,P^{-k}\,R(k\piot\ul\t)\,M(J)\,R(k\piot\ol\t)\(1\\0\).
  \end{equation}
\end{corollary}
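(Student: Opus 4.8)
\emph{Proof proposal.}
The plan is to read the corollary off Lemma~\ref{lem:Tk} by working one Fourier mode at a time. By \eqref{DF}, $D\mc F(0) = \IR-\,\mc S^{-T/4}\,\mc L^{\ul\t}\,\mc J\,\mc L^{\ol\t}$, a composition of five bounded linear operators. Lemma~\ref{lem:Tk} says that each of $\mc L^\t$, $\mc J$, and $\mc S^{-T/4}$ carries the two-dimensional space $V_k$ spanned by $\c(k\piot t)$ and $\s(k\piot t)$ into itself, acting as right multiplication of the row $\mc T_k$ by a constant $2\times2$ matrix, while $\IR-$ sends $\mc T_k$ to $\s(k\piot t)\,(0,1)$. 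Consequently $D\mc F(0)$ respects the decomposition $Y=\sum_{k\ge 0} a_k\,\c(k\piot t)$ of an even periodic function, and, all five factors being bounded on the function space in which this sum converges, it suffices to compute $D\mc F(0)\big[\c(k\piot t)\big]$ for each fixed $k$ and then sum against the coefficients $a_k$.

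For fixed $k$, write the input mode as $\c(k\piot t) = \mc T_k\,(1,0)^T$, and push the operators through from the innermost factor outward, using repeatedly that if a linear operator $\mc A$ satisfies $\mc A\,\mc T_k = \mc T_k\,A$ then $\mc A\big(\mc T_k\,B\big) = \mc T_k\,(AB)$ for any constant matrix $B$. By Lemma~\ref{lem:Tk} this appends on the right, in turn, the factor $R(k\piot\ol\t)$ coming from $\mc L^{\ol\t}$, then $M(J)$ from $\mc J$, then $R(k\piot\ul\t)$ from $\mc L^{\ul\t}$, and finally $P^{-k}$ from $\mc S^{-T/4}$, yielding
\[
  \mc S^{-T/4}\,\mc L^{\ul\t}\,\mc J\,\mc L^{\ol\t}\big[\c(k\piot t)\big]
  = \mc T_k\,P^{-k}\,R(k\piot\ul\t)\,M(J)\,R(k\piot\ol\t)\,(1,0)^T.
\]
Applying $\IR-$ and using $\IR-\,\mc T_k = \s(k\piot t)\,(0,1)$ then replaces the leading $\mc T_k$ by $\s(k\piot t)$ times the row $(0,1)$, and the scalar obtained by sandwiching the matrix product between $(0,1)$ and $(1,0)^T$ is exactly $\d_k$ as defined in \eqref{dk}. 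Hence $D\mc F(0)\big[\c(k\piot t)\big] = \d_k\,\s(k\piot t)$, and summing over $k$ with weights $a_k$ gives the stated identity.

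Since this is essentially bookkeeping on top of Lemma~\ref{lem:Tk}, I do not expect a genuine obstacle; the points that merely require care are: (i) the ordering convention --- operator composition on functions becomes matrix multiplication in the \emph{same} left-to-right order, acting on the \emph{right} of $\mc T_k$, so that the first evolution $\mc L^{\ol\t}$ ends up rightmost in the matrix product, matching \eqref{dk}; (ii) the degenerate mode $k=0$, for which $\mc T_0 = (1,0)$ and all rotations and the factor $P^{-k}$ reduce to the identity, so that $\d_0 = (0,1)\,M(J)\,(1,0)^T = 0$, confirming that constant data lie in the kernel of $D\mc F(0)$; and (iii) the interchange of $D\mc F(0)$ with the infinite Fourier sum, legitimate precisely because each of its five factors is bounded on the space in which the sum converges. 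With these checks the corollary follows directly from Lemma~\ref{lem:Tk}.
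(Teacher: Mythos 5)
Your proposal is correct and is essentially the paper's own argument: the paper likewise writes $Y=\sum a_k\,\mc T_k\begin{pmatrix}1\\0\end{pmatrix}$ and applies Lemma~\ref{lem:Tk} factor by factor in \eqref{DF} to read off the matrix expression \eqref{dk}. Your extra checks on operator ordering, the $k=0$ mode, and interchanging with the Fourier sum are sound but just make explicit what the paper leaves implicit.
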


\begin{proof}
  We write the data as
  $Y=\sum a_k\,\c(k\piot t) = \sum a_k\,\mc T_k\(1\\0\)$, and use
  Lemma~\ref{lem:Tk} in \eqref{DF}, to get the matrix expression for
  $\d_k$.  This in turn can be easily multiplied out to get an
  explicit expression.
\end{proof}

\subsection{The Kernel}

Because $D\mc F(0)$ respects $k$-modes, it follows that it has a
$k$-mode kernel if and only if $\d_k=0$.  There is always a 0-mode
kernel, because constant states solve the linearized (and nonlinear)
equation.  Our initial strategy, to construct the simplest solution as
in \cite{TYperLin}, is to choose parameters so that there is a 1-mode
kernel, $\d_1=0$, but no higher modes appear in the kernel, $\d_k\ne0$
for each $k>1$.  We call this the \emph{nonresonant case}.  Because
the 1-mode kernel is then isolated, our methods will show that it will
perturb to a solution of the nonlinear problem \eqref{F=0}.  To ease
notation, until we choose otherwise, we will assume a time period of
$T=2\pi$; putting the factor $\piot$ back in is straight-forward.

\begin{lemma}
\label{lem:dk}
  The linearization $D\mc F(0)$ has a 1-mode kernel if and only if 
  \begin{equation}
    \label{Jth}
    J = \frac{\c\ul\t}{\s\ul\t}\,\frac{\c\ol\t}{\s\ol\t}.
  \end{equation}
  Moreover, for almost every pair $(\ul\t,\ol\t)\in\B R^2$, $\d_k\ne0$
  for all $k\ge 2$, so that there are no other $k$-modes in the
  kernel.

  In the diagonal case $\ul\t=\ol\t=:\t$, some $\d_k=0$ if and only if
  $\t$ is a rational multiple of $\pi$, and if $\t\notin\pi\B Q$
  satisfies the diophantine condition
  \begin{equation}
    \label{dio}
    \Big|\frac\t\pi-\frac pq\Big| \ge \frac C{q^r},
  \end{equation}
  then the divisors $\d_k$ satisfy the explicit bound
  \begin{equation}
    \big|\d_k\big| \ge \frac{K}{k^{2(r-1)}}.
    \label{lowbd}
  \end{equation}
\end{lemma}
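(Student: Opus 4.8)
The plan is to extract an explicit formula for $\d_k$ from the matrix expression \eqref{dk} in Corollary~\ref{cor:DF0}, and then read off each of the three claims. First I would multiply out
\[
\d_k = \(0\;\;1\)\,P^{-k}\,R(k\ul\t)\,M(J)\,R(k\ol\t)\(1\\0\)
\]
with $T=2\pi$. Since $P^{-k}$ is rotation by $-k\pi/2$, acting on the row vector $(0\;\;1)$ it produces $(\s(k\pi/2)\;\;\c(k\pi/2))$ up to sign; combined with $R(k\ul\t)M(J)R(k\ol\t)$ applied to $(1\;\;0)^T$, whose entries are $\c(k\ol\t)\c(k\ul\t)-J\,\s(k\ol\t)\s(k\ul\t)$ and $\s(k\ol\t)\c(k\ul\t)+J\,\c(k\ol\t)\s(k\ul\t)$ — wait, I need to be careful about which factor is $M(J)$ sandwiched between; in any case the result is a trigonometric polynomial of the schematic form
\[
\d_k = A_k\,\c(k\pi/2) + B_k\,\s(k\pi/2),
\]
where $A_k,B_k$ are each $\B R$-linear combinations of $\c(k(\ol\t\pm\ul\t))$ and $\s(k(\ol\t\pm\ul\t))$ with coefficients $\frac{1\pm J}{2}$. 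The $k=1$ case then gives $\d_1=0$ iff $\c\tfrac\pi2\,(\dots)+\s\tfrac\pi2(\dots)=0$; since $\c\tfrac\pi2=0$, $\s\tfrac\pi2=1$, this collapses to $B_1=0$, which after expanding the product $R(\ul\t)M(J)R(\ol\t)$ is exactly the condition $\s\ul\t\,\c\ol\t\cdot J + \c\ul\t\,\s\ol\t$ times something $=0$ — rearranging yields $J=\dfrac{\c\ul\t}{\s\ul\t}\dfrac{\c\ol\t}{\s\ol\t}$ as claimed in \eqref{Jth}. (The precise bookkeeping of signs is routine and I would not grind through it here.)

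Second, for the genericity statement: having fixed $J$ by \eqref{Jth} as a function of $(\ul\t,\ol\t)$, each $\d_k$ becomes a real-analytic function $g_k(\ul\t,\ol\t)$ on the open set where $\s\ul\t,\s\ol\t\ne0$. I would check that $g_k$ is not identically zero — easiest by evaluating at a convenient point, e.g.\ the diagonal $\ul\t=\ol\t=\t$, where (by the third part of the lemma, or directly) $\d_k$ is a nonzero trigonometric polynomial in $\t$. Since a real-analytic function on a connected open set that is not identically zero has zero set of measure zero, $\{g_k=0\}$ is null; taking the countable union over $k\ge2$, its complement still has full measure. Hence for a.e.\ $(\ul\t,\ol\t)$ all divisors $\d_k\ne0$ for $k\ge2$.

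Third, the diagonal case $\ul\t=\ol\t=\t$. Here I would substitute $\ol\t=\ul\t=\t$ into the explicit formula and simplify using the half-angle identities and the fact that on the diagonal $J$ is forced only if we impose $\d_1=0$ — but the claim here is about general $\t$, so I keep $J$ free, or rather the claim concerns when \emph{some} $\d_k$ vanishes, and I expect that with $J$ chosen by \eqref{Jth} (so $J=\c^2\t/\s^2\t$) the formula for $\d_k$ simplifies to something proportional to $\s(k\t)\,\c(\dots)$-type terms, ultimately to a nonzero constant multiple of a quantity that vanishes iff $\sin$ of an integer multiple of $\t$ hits a rational-angle coincidence — concretely I would aim to show $\d_k = c\cdot\dfrac{\s(k\t)\,\text{(polynomial in }\c\t)}{\s^{2}\t}$ or similar, from which $\d_k=0$ for some $k$ iff $\sin(k\t)=0$ for some $k$ (plus possibly the cosine-coincidence at $\c(k\pi/2)$), i.e.\ iff $k\t\in\pi\B Q$, i.e.\ $\t\in\pi\B Q$. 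Then the lower bound: writing $\d_k$ in terms of $\s(k\t)$ and bounded trigonometric factors, I reduce \eqref{lowbd} to a lower bound $|\s(k\t)|\ge K'/k^{2(r-1)}$, or more precisely $|\s(k\t\pm k\pi/2)|$ or a product of two such factors, each bounded below by $\mathrm{dist}(k\t/\pi,\B Z)\gtrsim C/k^{r-1}$ using the Diophantine hypothesis \eqref{dio} with $p/q=\lfloor\cdot\rceil/k$ (so $q=k$), giving the $k^{-2(r-1)}$ when two such factors multiply. The main obstacle is the bookkeeping that turns the raw matrix product into a clean enough closed form that these three consequences become visible; in particular, identifying that on the diagonal the whole divisor factors through $\s(k\t)$ (so that vanishing is governed purely by $k\t\in\pi\B Z$ and the $\c(k\pi/2)$ parity), and tracking whether one or two small sine-factors appear — since that determines whether the exponent is $r-1$ or $2(r-1)$ — is the delicate point that the rest of the argument hinges on.
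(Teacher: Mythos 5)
Your route is the same as the paper's: multiply out the matrix product \eqref{dk}, solve $\d_1=0$ for $J$ to get \eqref{Jth}, get genericity from non-triviality of the (analytic, periodic) functions $\d_k$ plus a countable union, and in the diagonal case reduce \eqref{lowbd} to the elementary bound $|\s\vartheta|\ge\frac2\pi\min_{j\in\B Z}|\vartheta-\pi j|$ combined with \eqref{dio}. But two things need fixing. First, your $k=1$ computation is internally inconsistent: since $(0,1)\,P^{-1}=(-1,0)$ as row vectors, $\d_1$ is minus the \emph{first} component of $R(\ul\t)\,M(J)\,R(\ol\t)$ applied to $(1,0)^T$, i.e.\ $\d_1=J\,\s\ul\t\,\s\ol\t-\c\ul\t\,\c\ol\t$; the condition you display, $J\,\s\ul\t\,\c\ol\t+\c\ul\t\,\s\ol\t=0$, is the wrong entry and would give $J=-\c\ul\t\,\s\ol\t/(\s\ul\t\,\c\ol\t)$, not \eqref{Jth}. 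The final formula you quote is right, but it does not follow from the line preceding it.

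Second, and more substantively, the step you defer is exactly where the content of the third claim lies, and your guess for its outcome is wrong in form: on the diagonal $\d_k$ is \emph{not} of the shape $\s(k\t)\cdot(\text{polynomial in }\c\t)/\s^2\t$. The paper's device is to use \eqref{Jth} to factor $M(J)$ as $\frac1{\s\ul\t\,\s\ol\t}\,\mathrm{diag}(\s\ul\t,\c\ul\t)\,\mathrm{diag}(\s\ol\t,\c\ol\t)$ and then apply product-to-sum identities; on the diagonal this yields $\d_k=\frac{(-1)^{k/2}}{2\s^2\t}\,\s(2k\t)$ for $k$ even and $\d_k=\frac{(-1)^{(k-1)/2}}{\s^2\t}\,\s\big((k+1)\t\big)\,\s\big((k-1)\t\big)$ for $k$ odd. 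So the parity of $k$ decides between one small sine factor (argument $2k\t$) and two (arguments $(k\pm1)\t$); the equivalence with $\t\in\pi\B Q$, and the exponent $2(r-1)$ in \eqref{lowbd} (forced by the odd case, the even case giving the better rate $k^{-(r-1)}$), both read off from this and cannot be verified without it. Until you produce this factorization, the third part of the lemma is a plan rather than a proof — albeit the right plan.
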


\begin{proof}
The proof requires us to calculate each $\d_k$.  First, we calculate
$\d_1$ to be 
\[
  \begin{aligned}
    \d_1 &= \(-1&0\)R(\ul\t)\,M(J)\,R(\ol\t)\(1\\0\)\\
    &= \( -\c\ul\t & \s\ul\t\)\(\c\ol\t\\J\,\s\ol\t\)\\
    &= J\,\s\ul\t\,\s\ol\t - \c\ul\t\,\c\ol\t,
  \end{aligned}
\]
and there is a 1-mode kernel if and only if $\d_1=0$, which is
equivalent to \eqref{Jth}.  This in turn allows us to write
\[
  M(J) = \frac1{\s\ul\t\,\s\ol\t}
  \(\s\ul\t&0\\0&\c\ul\t\)\(\s\ol\t&0\\0&\c\ol\t\),
\]
and we use this in the calculation of $\d_k$.

To calculate $\d_k$, at the $\ol\t$ level, we have
\[
  \begin{aligned}
  \(\s\ol\t&0\\0&\c\ol\t\)&R(k\ol\t)\(1\\0\)
  = \(\s\ol\t\,\c(k\ol\t)\\\c\ol\t\,\s(k\ol\t)\)\\[2pt]
  &= \frac{\s\big((k+1)\ol\t)}{2}\(1\\1\)
     + \frac{\s\big((k-1)\ol\t)}{2}\(-1\\1\).
  \end{aligned}
\]
Similarly, at the $\ul\t$ level, for $k$ even, we get
\[
  \begin{aligned}
    (-1)^{k/2}\(0&1\)&\,P^{-k}\,R(k\ul\t)\(\s\ul\t&0\\0&\c\ul\t\)\\
    &= \(\s(k\ul\t)\s\ul\t &
    \c(k\ul\t)\c\ul\t\)\\
    &= \frac{\c\big((k+1)\ul\t\big)}{2}\(-1&1\)+
    \frac{\c\big((k-1)\ul\t\big)}{2}\(1&1\),
  \end{aligned}
\]
while for $k$ odd,
\[
  \begin{aligned}
    (-1)^{(k+1)/2}\(0&1\)&\,P^{-k}\,R(k\ul\t)\(\s\ul\t&0\\0&\c\ul\t\)\\
    &= \(\c(k\ul\t)\s\ul\t &
    -\s(k\ul\t)\c\ul\t\)\\
    &= \frac{\s\big((k+1)\ul\t\big)}{2}\(1&-1\)+
    \frac{\s\big((k-1)\ul\t\big)}{2}\(-1&-1\).
  \end{aligned}
\]
It now follows from \eqref{dk} that for $k$ even, we have
\[
  \d_k = \frac{(-1)^{k/2}}{2\,\s\ul\t\,\s\ol\t}\,\Big(
  \c\big((k+1)\ul\t\big)\,\s\big((k-1)\ol\t\big)
  + \c\big((k-1)\ul\t\big)\,\s\big((k+1)\ol\t\big) \Big),
\]
while for $k$ odd,
\[
  \d_k = \frac{(-1)^{(k-1)/2}}{2\,\s\ul\t\,\s\ol\t}\,\Big(
  \s\big((k+1)\ul\t\big)\,\s\big((k-1)\ol\t\big)
  + \s\big((k-1)\ul\t\big)\,\s\big((k+1)\ol\t\big) \Big).
\]

For each fixed $k\ge 2$, the equation $\d_k=0$ holds at the zeros of a
nontrivial periodic function, so is a countable set of regular
solution curves $(\ul\t,\ol\t)\in\B R^2$, each of which has measure 0
in $\B R^2$.  Thus, being a countable union of measure zero sets, the
full ``resonant'' set
\[
  \Big\{(\ul\t,\ol\t)\;|\;\d_k=0 \text{ for some }k\ge2\Big\}
\]
also has measure 0 in $\B R^2$.

Restricting to the diagonal case $\ul\t=\ol\t=:\t$, we get
\begin{equation}
  \begin{aligned}
  \d_k &= \frac{(-1)^{k/2}}{2\,\s^2\t}\,\s(2k\t) \com{or}\\
  \d_k &= \frac{(-1)^{(k-1)/2}}{\s^2\t}\,
  \s\big((k+1)\t\big)\,\s\big((k-1)\t\big),
  \end{aligned}
  \label{dk1}
\end{equation}
for $k$ even or odd, respectively, so that $\d_k=0$ for some $k\ge 2$
if and only if $\t$ is some rational multiple of $\pi$.

Since $\s(\vartheta)\ge 2\vartheta/\pi$ for $0\le \vartheta\le \pi/2$,
it follows that for all $\vartheta$, we have the lower bound
\[
  \big|\s(\vartheta)\big| \ge
  \frac2\pi\,\min_{j\in\B Z}\big|\vartheta-\pi\,j\big|,
\]
so, provided $\t$ satisfies \eqref{dio}, we have for $q\ge 2$,
\[
  \big|\s(q\t)\big|\ge \frac2\pi\,\min\big|q\,\t-\pi\,j\big|
  \ge 2q\,\min\Big|\frac\t\pi-\frac jq\Big| \ge
  \frac{2\,C}{q^{r-1}}.
\]
Using this estimate in \eqref{dk1} now yields \eqref{lowbd}.
\end{proof}

\section{Factorization of the Nonlinear Operator}
\label{sec:onejump}

Our use of symmetry replaces the periodic return problem \eqref{NU=U}
with the periodicity by projection problem \eqref{F=0}.  In this
section we show that the fully nonlinear operator $\mc F$ factors
into the fixed linearized operator $\IR-\,\mc L_0$ times a regular
invertible nonlinear factor.  In this sense, by exploiting symmetry
and restricting to a smaller domain, we have been able to
``shrink-wrap'' the problem by reducing it while retaining the
fundamental nonlinear principle that leads to the existence of
periodic solutions, which is the balance of compression and
rarefaction.

We thus focus on the reduced equation \eqref{F=0}, namely
\[
  \mc F(y^0) = \IR-\,\mc S^{-T/4}\,\mc E^{\ul\t}\,
  \mc J\,\mc E^{\ol\t}\,y^0 = 0,
\]
for $y^0$ even.  Our first observation is that, according to
Corollary~\ref{cor:DF0} and Lemma~\ref{lem:dk}, the small divisors are
a fundamental effect of the leading order linearization around the
base state $y=0$, which will persist when perturbing to a nonlinear
solution.  By factoring the nonlinear operator, we show that the small
divisors remain uniform under perturbation.  Indeed, because of the
simple structure of $\mc F$ as a composition of operators, and since
each linear and nonlinear evolution is invertible by backwards
evolution, we are able to factor the linearization $\mc L_0$, which
generates the small divisors, out of the fully nonlinear operator
$\mc F$, as follows.

\begin{theorem}
  \label{thm:fact}
  The nonlinear operator $\mc F$ given in \eqref{F=0} can be factored
  as
  \begin{equation}
    \label{fact}
    \mc F = \IR-\,\mc L_0\,\mc N, \com{where}
    \mc N := \ulc{N}\,\olc{N}.
  \end{equation}
  Here $\mc L_0$ is given by \eqref{L0}, and for $y = O(\a)$, $\ulc{N}$ and
  $\olc{N}$ are regular nonlinear operators, with
  \begin{equation}
    \label{NIa}
    D\olc N = \mc I + O(\a), \qquad D\ulc N = \mc I + O(\a).
  \end{equation}
  It follows that the fully nonlinear equation \eqref{F=0} can be
  rewritten as
  \begin{equation}
    \mc F(y^0) = 0 \com{iff}
    \mc N(y^0) \in \ker\big\{\IR-\,\mc L_0\big\}.
    \label{Neq}
  \end{equation}    
\end{theorem}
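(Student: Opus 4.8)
The plan is to obtain \eqref{fact} by \emph{pulling the uniform shift out of each nonlinear evolution}. By Lemma~\ref{lem:evol}, in the regular regime every nonlinear evolution is a reparameterization, $\mc E^\t=\mc S_\phi$ with $\phi=\phi(y)$ a smooth near-identity diffeomorphism given implicitly by \eqref{shift}, it is invertible with inverse $\mc E^{-\t}$ (backward evolution), and it linearizes at $y=0$ to the uniform shift $\mc L^\t=\mc S^\t$ of \eqref{L}. I would then simply \emph{define}
\[
  \olc N:=\big(\mc L^{\ol\t}\big)^{-1}\mc E^{\ol\t}=\mc S^{-\ol\t}\mc E^{\ol\t},
  \qquad
  \ulc N:=\big(\mc L^{\ul\t}\,\mc J\,\mc L^{\ol\t}\big)^{-1}\mc E^{\ul\t}\big(\mc J\,\mc L^{\ol\t}\big),
\]
so $\olc N$ strips the shift $\mc S^{\ol\t}$ off the first evolution and $\ulc N$ strips $\mc S^{\ul\t}$ off the second after conjugating by the intervening linear isomorphism $\mc J\,\mc L^{\ol\t}$. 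Every linear factor here --- $\mc S^{\pm T/4}$, $\mc S^{\pm\ol\t}$, $\mc S^{\pm\ul\t}$, $\mc J$ --- is bounded and invertible (Lemma~\ref{lem:Tk}, acting diagonally on $k$-modes, or directly), and $\mc J\,\mc L^{\ol\t}$ carries an $O(\a)$ ball into an $O(\a)$ ball, so $\mc E^{\ul\t}$ may legitimately be applied inside $\ulc N$; likewise $\olc N(y^0)=\mc S^{-\ol\t}\big(y^0\circ\phi(y^0)\big)$ is $O(\a)$ whenever $y^0$ is. Hence $\mc N:=\ulc N\,\olc N$ is a well-defined, bounded, invertible nonlinear operator on a ball of $O(\a)$ data, with $\mc N(0)=0$.

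The factorization \eqref{fact} is then pure operator algebra. From $\mc E^{\ol\t}=\mc L^{\ol\t}\,\olc N$ we get
\[
  \mc F(y^0)=\IR-\,\mc S^{-T/4}\,\mc E^{\ul\t}\,\mc J\,\mc E^{\ol\t}\,y^0
  =\IR-\,\mc S^{-T/4}\,\mc E^{\ul\t}\big(\mc J\,\mc L^{\ol\t}\big)\,\olc N\,y^0,
\]
and applying the definition of $\ulc N$ to the $O(\a)$ input $\olc N\,y^0$ replaces $\mc E^{\ul\t}\big(\mc J\,\mc L^{\ol\t}\big)$ by $\big(\mc L^{\ul\t}\,\mc J\,\mc L^{\ol\t}\big)\,\ulc N$, so that
\[
  \mc F(y^0)=\IR-\,\mc S^{-T/4}\,\mc L^{\ul\t}\,\mc J\,\mc L^{\ol\t}\,\ulc N\,\olc N\,y^0
  =\IR-\,\mc L_0\,\mc N\,y^0,
\]
with $\mc L_0$ as in \eqref{L0}. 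Then \eqref{Neq} is immediate: $\mc F(y^0)=0$ iff $\IR-\,\mc L_0\,\mc N(y^0)=0$ iff $\mc N(y^0)\in\ker\{\IR-\,\mc L_0\}$, and since $\mc N$ is invertible this is a genuine reformulation --- solving \eqref{F=0} amounts to hitting the \emph{fixed} kernel of $\IR-\,\mc L_0$ (computed in Corollary~\ref{cor:DF0} and Lemma~\ref{lem:dk}) with the invertible nonlinear map $\mc N$.

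It remains to establish the uniform estimates \eqref{NIa}, and this I expect to be the main obstacle. Since $\mc S^{-\ol\t}$, $\big(\mc L^{\ul\t}\mc J\mc L^{\ol\t}\big)^{-1}$ and $\mc J\mc L^{\ol\t}$ are fixed bounded linear operators, the chain rule reduces both estimates to a single fact about the nonlinear evolution: $\mc E^\t$ is $C^1$ on the ambient function space with $D\mc E^\t(0)=\mc L^\t$ and $D\mc E^\t(y)=\mc L^\t+O(\|y\|)$ for $y=O(\a)$; granting this, $D\olc N(y)=\mc S^{-\ol\t}D\mc E^{\ol\t}(y)=\mc S^{-\ol\t}\big(\mc L^{\ol\t}+O(\a)\big)=\mc I+O(\a)$ and, likewise, $D\ulc N(z)=\big(\mc L^{\ul\t}\mc J\mc L^{\ol\t}\big)^{-1}D\mc E^{\ul\t}\big(\mc J\mc L^{\ol\t}z\big)\,\mc J\mc L^{\ol\t}=\mc I+O(\a)$, which is \eqref{NIa}. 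The delicate point is precisely this differentiability: writing $\mc E^\t(y)=y\circ\phi$ with $\phi=\phi(y)$ and differentiating, one term is a composition $h\circ\phi$ with a smooth diffeomorphism, which is harmless, while the other carries the derivative $\del_t y$ of the base point multiplied by the variation of the characteristic computed from \eqref{shift} --- the usual quasilinear mechanism for loss of a derivative. This correction vanishes at $y=0$ (giving $D\mc E^\t(0)=\mc L^\t$) and is $O(\a)$ for small data, and controlling it so that $\mc E^\t$ --- hence $\olc N$ and $\ulc N$ --- is genuinely differentiable with the stated uniform bounds is exactly the analysis of $D\mc E^\t$ (and of $D^2\mc E^\t$) that the paper carries out, culminating in Section~\ref{sec:D2E}. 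Everything else --- invertibility of the linear factors, the substitution yielding \eqref{fact}, and the restatement \eqref{Neq} --- is routine.
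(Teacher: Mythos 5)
Your proposal is correct and follows essentially the same route as the paper: your $\olc N$ and $\ulc N$ coincide with the definitions in \eqref{Ndef} (since $\big(\mc L^{\ul\t}\mc J\mc L^{\ol\t}\big)^{-1}=\big(\mc J\mc L^{\ol\t}\big)^{-1}{\mc L^{\ul\t}}^{-1}$), the operator algebra yielding \eqref{fact} is the same insertion-of-inverses computation, and \eqref{NIa} is reduced, exactly as in the paper, to $D\mc E^\t(0)=\mc L^\t$ together with stability of $\mc I+O(\a)$ under conjugation by fixed invertible linear maps. Your explicit flagging of the loss-of-derivative issue in differentiating $\mc E^\t$ is a fair (and slightly more careful) acknowledgment of what the paper defers to Lemma~\ref{lem:evol} and Lemma~\ref{lem:D2E}.
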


The upshot of this factorization is that the small divisors, which are
determined by the leading order linearization $\IR-\mc L_0$, have been
explicitly factored out, and so are necessarily uniform.  In addition,
\eqref{Neq} can be regarded as containing only a projection with no
small divisors, and in addition $\mc N = \ulc{N}\,\olc{N}$ is a
regular perturbation of the identity.  By this, we are able to
solve \eqref{Neq} using the standard implicit function theorem,
without having to resort to a more technical Nash-Moser iteration.

\begin{proof}
  The proof is a direct computation, using the fact that the
  linearized evolutions $\mc L^\t$ and jump $\mc J$ are invertible.
  Using \eqref{F=0} and \eqref{L0}, we write
  \[
    \begin{aligned}
      \mc F(y^0) &= \IR-\,\big\{\mc L^{\ul\t}\mc J\mc L^{\ol\t}
      \,{\mc L^{\ol\t}}^{-1}\mc J^{-1}
      {\mc L^{\ul\t}}^{-1}\big\}\,\mc E^{\ul\t}
      \,\mc J\mc E^{\ol\t}\,y^0\\
      &= \IR-\,\mc L_0\,\big(\mc J\mc L^{\ol\t}\big)^{-1}\,
      {\mc L^{\ul\t}}^{-1}\mc E^{\ul\t}
      \big(\mc J\mc L^{\ol\t}\big)\,{\mc L^{\ol\t}}^{-1}
      \mc E^{\ol\t}\,y^0,
    \end{aligned}
  \]
  where $\{\square\}=\mc I$.  This is \eqref{fact}, provided we define
  \begin{equation}
    \ulc{N}:=\big(\mc J\mc L^{\ol\t}\big)^{-1}\,
    {\mc L^{\ul\t}}^{-1}\mc E^{\ul\t}
    \big(\mc J\mc L^{\ol\t}\big) \com{and}
    \olc{N}:={\mc L^{\ol\t}}^{-1}\mc E^{\ol\t}.
    \label{Ndef}
  \end{equation}   
  Recall that, as long as derivatives remain bounded, $\mc E^\t$ is a
  regular operator which propagates the value of $y^0$ along
  characteristics.  Also, if $y^0=0$, the characteristics for
  $\mc L^\t$ and $\mc E^\t$ coincide, and $\mc L^\t = D\mc E^\t(0)$.
  From this it follows that $\mc N^\t:={\mc L^\t}^{-1}\mc E^\t$ is a
  regular perturbation of the identity, and moreover
  \[
    D\mc N^\t(y) = {\mc L^\t}^{-1}D\mc E^\t(y) = \mc I + O(\a) \com{if}
    y = O(\a),
  \]
  because $\mc L^\t=D\mc E^\t(0)$.  Thus \eqref{NIa} follows for
  $\olc{N}$, and it follows for $\ulc{N}$ since this property is
  preserved under conjugation by (fixed) linear operators.
\end{proof}

\subsection{Removal of the small divisors}

Our fully nonlinear equation now has the factored form \eqref{fact},
namely
\[
  \IR-\,\mc L_0\,\mc N\,y^0 = 0, \quad y^0 \text{ even},
\]
where the nonlinear part $\mc N$ is bounded invertible for small
regular data.  We now introduce the Hilbert spaces which allow us to
find solutions which are perturbations of the constant state $y=0$.
To leading order, these have the form
$\a\,\c(t)\in\ker\{\IR-\mc L_0\}$, and are parameterized by the
amplitude $\a$ provided \eqref{Jth} holds.  Our program is to
construct solutions of the form
\begin{equation}
  \label{y0}
  y^0(t) = \a\,\c(t) + z + \sum_{j>1}a_j\,\c(jt),
\end{equation}
where the 0-mode $z$ (also in $\ker\{\IR-\mc L_0\}$), and higher mode
coefficents $a_j$ of order $O(\a^2)$ are unknowns to be found.

Observe first that there is one free variable and one equation
corresponding to each mode because the operator \eqref{fact} takes
even modes to odd modes.  That is, after projection by $\IR-$, we have
one equation for each coefficient of $\s(jt)$ in
$\mc L_0\,\mc N\,y^0$, $j\ge 1$.  On the other hand, the free
parameters are the $a_j$, $j>1$, and 0-mode $z$, so each equation
corresponds uniquely to an unknown.  Thus formally we expect to get a
solution of \eqref{fact} of the form \eqref{y0} for any $\a$
sufficiently small in the nonresonant case.

The nonresonant case is characterized by the conditions $\d_1=0$,
$\d_j\ne0$ for all $j>1$.  According to Lemma~\ref{lem:dk}, this is in
turn implied for almost every pair $(\ul\t,\ol\t)$, provided $J$ is
chosen according to \eqref{Jth}.  We thus fix a nonresonant pair
$(\ul\t,\ol\t)$.  The following development is an explicit version of
the Liapunov-Schmidt decomposition of the nonlinear operator $\mc F$
into the auxiliary equation and corresponding bifurcation equation in
the nonresonant case.

As a roadmap, we briefly describe the abstract bifurcation problem and
its solution.  Consider the problem of solving equations of the form
\[
  f(\a,z,w) = 0, \com{with} f(0,0,0)=0,
\]
for solutions parameterized by the small amplitude parameter $\a$ (for
us, $\mc F$ plays the role of $f$).  We cannot do this with a direct
application of the implicit function theorem because the gradient
$\nabla_{(z,w)}f\big|_0$ is not invertible.  Here we assume
\[
  z\in\ker\Big\{\nabla_{(z,w)}f\big|_0\Big\} \com{and}
  w\in\ker\Big\{\nabla_{(z,w)}f\big|_0\Big\}^\perp.
\]
Assuming $\frac{\del f}{\del w}\big|_0$ is invertible, we first find
\[
  w(\a,z) \com{so that}
  f\big(\a,z,w(\a,z)\big)=0;
\]
this is known as the \emph{auxiliary equation}.  Next, we wish to
solve for
\[
  z=z(\a) \com{such that} f\big(\a,z(\a),w(\a,z(\a))\big) = 0,
\]
In our problem, $\frac{\del f}{\del z}\big|_0$ is not invertible, so
we replace $f$ by the equivalent function
\[
  g(\a,z) :=
  \begin{cases}
    \frac1\a\,f\big(\a,z,w(\a,z)\big), &\a\ne 0,\\[3pt]
    \frac{\del f}{\del \a}\big(0,z,w(0,z)\big), &\a = 0,
  \end{cases}
\]
and the equation $g(\a,z)=0$ is known as the \emph{bifurcation
  equation}.  This bifurcation equation can be solved by the implicit
function theorem, provided
\[
  \frac{\del g}{\del z}\Big|_{(0,0)} \equiv
  \frac{\del^2 f}{\del z\,\del\a}\Big|_{(0,0,0)} \ne 0.
\]
The decomposition of the domain of the function $f$ into a direct sum
of the kernel and its orthogonal complement is known as the
Liapunov-Schmidt decomposition~\cite{Golush,TYperBif}.

In our application, the (infinite dimensional) gradient, although
invertible, has unbounded inverse because of the presence of small
divisors $\d_k$.  However, the factorization \eqref{fact} means that
these small divisors are uniform, and so we can handle them by an
appropriate adjustment of the associated Hilbert space norms.  It is
remarkable that by factoring the nonlinear operator, we are able to
avoid difficult technical issues, like diophantine estimates such as
\eqref{dio}, \eqref{lowbd}, which are common in Nash-Moser iterations.
When solving the bifurcation equation, we must calculate the second
derivative of an infinite dimensional nonlinear evolution operator,
which is an important technical part of the overall argument.

Assume $y^0$ lies in the Sobolev space $H^s$, so that for small data
$y^0$, the evolution $\mc E^x(y^0)$ stays in $H^s$, by the local
existence theory~\cite{Majda,Taylor}.  To apply the implicit function
theorem, define
\begin{equation}
  \label{Hsplit}
  \begin{aligned}
    \mc H_1 &:= \big\{z + \a\,\c(t)\;\big|\;z, \a\in\B R\big\}
    \com{and}\\
    \mc H_2 &:= \Big\{ \sum_{j>1}a_j\,\c(jt)\;\Big|
    \;\sum a_j^2\,j^{2s}<\infty\Big\},    
  \end{aligned}
\end{equation}
so that $y^0\in\mc H_1\oplus\mc H_2$, and define
\[
  \begin{gathered}
    \whc F:\mc H_1\times\mc H_2\to H^s \com{by}\\
    \whc F(y_1,y_2) := \mc F(y_1+y_2) =
    \IR-\,\mc L_0\,\mc N\,(y_1+y_2),
  \end{gathered}
\]
so that $\whc F$ is a continuous (nonlinear) operator.  It follows that
the partial Frechet derivative
\[
  D_{y_2}\whc F(0,0):\mc H_2\to H^s \com{is}
  D_{y_2}\whc F(0,0) = \IR-\,\mc L_0,
\]
and according to Corollary~\ref{cor:DF0}, this acts as
\[
  D_{y_2}\whc F(0,0)\Big[\sum_{j>1} a_j\,\c(jt)\Big]
  = \sum_{j>1} a_j\,\d_j\,\s(jt).
\]
By our choice of parameters, we have $\d_j\ne0$ for $j>1$, so that
$D_{y_2}\whc F(0,0)$ is injective, but the inverse is not bounded as a map
$H^s\to H^s$ because of the presence of the small divisors $\d_k$.
However, because the small divisors are uniform, we can define a new
norm on the target space $H^s$ so that $D_{y_2}\whc F(0,0)$ becomes an
isometry, and in particular is bounded invertible on its range, as in
\cite{TYdiff2}.  Thus we set
\begin{equation}
  \begin{aligned}
  \mc H_+ &:= \Big\{ y = \sum_{j>1}c_j\,\s(jt)\;\Big|
  \;\|y\|<\infty\Big\}, \com{and}\\
  \mc H &:= \big\{ \beta\,\s(t) \big\} \oplus \mc H_+, \com{with norm}\\
  \|y\|^2 &:= \beta^2 + \sum_{j>1} c_j^2\,\d_j^{-2}\,j^{2s},
  \end{aligned}
  \label{Hplus}
\end{equation}
where for convenience we set $c_1:= \beta$, which isolates the
1-mode kernel.  Referring to \eqref{dk}, we see that each divisor
$\d_j$ is bounded above, $\d_j\le\max\{1,J\}$, so that
\[
  \|y\|^2_{H^s} \le \frac1{\max\{1,J^2\}}\,\|y\|^2,
  \com{and} \mc H\subset H^s,
\]
and it is clear that $\mc H$ is a Hilbert space.  Finally, let $\Pi$
denote the projection 
\[
  \Pi:\mc H\to \mc H_+, \quad
  \Pi\Big[\beta\,\s(t)+\sum_{j>1}a_j\,\s(jt)\Big] := \sum_{j>1}a_j\,\s(jt).
\]
Note that we have constructed these spaces so that
\[
  \ker\{D_{y_2}\whc F(0,0)\} = \mc H_1, \quad
  \text{ran}\{D_{y_2}\whc F(0,0)\} = \mc H_+,
\]
so that $\Pi D_{y_2}\whc F(0,0) = D_{y_2}\whc F(0,0)$, and moreover
$D_{y_2}\whc F(0,0):\mc H_2\to\mc H_+$ is an isometry, and thus bounded
invertible on its range $\mc H_+$.  The invertibility of
$D_{y_2}\whc F(0,0)$ allows a regular application of the classical
implicit function theorem on Hilbert spaces.

\begin{lemma}
  \label{lem:himodes}
  There is a neighborhood $\mc U\subset\mc H_1$ of the origin and a
  unique $C^1$ map
  \[
    W:\mc U\to\mc H_2, \com{written}
    W\big(z+\a\,\c(t)\big) =: W(\a,z) \in\mc H_2,
  \]
  such that, for all $z+\a\,\c(t)\in\mc U$, we have
  \begin{equation}
    \Pi\,\whc F\big(z+\a\,\c(t),W(\a,z)\big)  =
    \Pi\,F\big(z+\a\,\c(t)+W(\a,z)\big) = 0.
    \label{Weq}
  \end{equation}
\end{lemma}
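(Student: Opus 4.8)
The plan is to solve \eqref{Weq} by a direct application of the implicit function theorem on Hilbert spaces to the map $(y_1,y_2)\mapsto\Pi\,\whc F(y_1,y_2)$ in a neighborhood of the origin, with $y_1 = z+\a\,\c(t)\in\mc H_1$ playing the role of the (two-dimensional) bifurcation parameter and $y_2\in\mc H_2$ the unknown high modes. The whole point of the factorization $\mc F = \IR-\,\mc L_0\,\mc N$ of Theorem~\ref{thm:fact} and of the renormalized target space $\mc H$ is to turn this into a \emph{regular} bifurcation problem: the small divisors live entirely in the fixed linear operator $\IR-\,\mc L_0$, the nonlinear factor $\mc N$ is a regular perturbation of the identity, and the $\d_j^{-2}$ weights in $\|\cdot\|$ are chosen precisely to neutralize those divisors, so no Nash--Moser scheme is needed.

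Concretely, I would verify the hypotheses of the Hilbert-space implicit function theorem for $\Pi\,\whc F:\mc H_1\times\mc H_2\to\mc H_+$ as follows. First, $\mc N(0)=0$ (since $\olc N(0)={\mc L^{\ol\t}}^{-1}\mc E^{\ol\t}(0)=0$ and similarly $\ulc N(0)=0$ by \eqref{Ndef}), so $\whc F(0,0)=\IR-\,\mc L_0\,\mc N(0)=0$. Second, $\whc F$ is $C^1$ on a neighborhood of $0$: by the local existence theory for \eqref{yevol} the nonlinear evolutions $\mc E^{\ol\t},\mc E^{\ul\t}$ are $C^1$ (indeed $C^2$) as maps of a small ball in $H^s$ into $H^s$, hence so are $\olc N,\ulc N$ and $\mc N=\ulc N\,\olc N$ by \eqref{Ndef}, with $D\mc N(0)=\mc I$ by \eqref{NIa}; composing with the fixed bounded operators $\mc L_0$ and $\IR-$ preserves $C^1$ regularity. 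Differentiating and using $D\mc N(0)=\mc I$, the partial derivative $D_{y_1}\whc F(0,0)=\IR-\,\mc L_0\big|_{\mc H_1}$ vanishes, since $\mc L_0$ fixes constants and $\d_1=0$, while $D_{y_2}\whc F(0,0)=\IR-\,\mc L_0\big|_{\mc H_2}$ acts by $\sum_{j>1}a_j\,\c(jt)\mapsto\sum_{j>1}a_j\,\d_j\,\s(jt)$ by Corollary~\ref{cor:DF0}; by the definition of $\|\cdot\|$ in \eqref{Hplus} this is an isometry of $\mc H_2$ onto $\mc H_+$, and since its range lies in $\mc H_+$ we get $\Pi\,D_{y_2}\whc F(0,0)=D_{y_2}\whc F(0,0)$, a bounded linear isomorphism $\mc H_2\to\mc H_+$. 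The implicit function theorem then produces a neighborhood $\mc U\subset\mc H_1$ of the origin and a unique $C^1$ map $W:\mc U\to\mc H_2$ with $W(0)=0$ and $\Pi\,\whc F\big(y_1,W(y_1)\big)=0$ on $\mc U$, which is \eqref{Weq}.

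The step that carries the real content — and which I expect to be the main obstacle — is the assertion that $\whc F$ maps a neighborhood of $0$ into the \emph{renormalized} space $\mc H$, not merely into $H^s$, and does so in a $C^1$ fashion. This is exactly where periodicity-by-projection beats periodicity-by-periodic-return: writing $v=\mc N(y^0)$ and decomposing in $t$-modes, the $j$-th sine coefficient of $\IR-\,\mc L_0\,v$ is obtained by applying to $\wh v_j$ the $2\times2$ matrix $P^{-j}R(j\ul\t)M(J)R(j\ol\t)$ of Lemma~\ref{lem:Tk}, whose $(2,1)$-entry is precisely $\d_j$; thus the contribution of the even part of $v$ is $\d_j$ times its cosine coefficients, and the $\d_j^{-2}$ weight in $\|\cdot\|$ cancels it exactly, yielding a bound $\|\IR-\,\mc L_0\,v\|\le C\,\|v\|_{H^s}\le C'\,\|y^0\|_{H^s}$ once one accounts for the even/odd symmetry structure that the reduced problem imposes on $\mc N(y^0)$. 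This is the same uniform control of divisors that makes $D_{y_2}\whc F(0,0)$ an isometry, now applied off the linearization; it is what keeps $\whc F$ valued in $\mc H$ and hence makes the ordinary implicit function theorem applicable. The remaining delicate point is that the composition operators $\olc N,\ulc N$ must be genuinely Fr\'echet $C^1$ (indeed $C^2$) on $H^s$ — which holds for composition with a smooth near-identity time shift, even though, as noted after Lemma~\ref{lem:evol}, such a bound cannot be obtained by expanding in the shift — so the bookkeeping of these smoothness and symmetry properties is where the effort lies; everything past that is the classical implicit function theorem.
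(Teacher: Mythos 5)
Your proposal is correct and follows essentially the same route as the paper: a direct application of the Hilbert-space implicit function theorem, with the weighted norm on $\mc H_+$ turning $D_{y_2}\whc F(0,0)=\IR-\,\mc L_0$ into an isometry so that no Nash--Moser machinery is needed. The only difference is one of emphasis — you spell out the verification that $\Pi\,\whc F$ is $C^1$ into the renormalized target space, a point the paper's proof leaves implicit in the phrase ``the result follows immediately.''
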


\begin{proof}
  This is a direct application of the implicit function theorem.
  Recall that this states that if $\mc H_1$, $\mc H_2$ and $\mc H_+$
  are Hilbert spaces, and
  \[
    \mc G := \Pi\whc F:\Omega\subset\mc H_1\times\mc H_2\to\mc H_+
  \]
  is a continuously differentiable map defined on an open neighborhood
  $\Omega$ of $(0,0)$, and satisfying $\mc G(0,0)=0$, and if the
  linear (partial derivative) map $D_{y_2}\mc G(0,0):\mc H_2\to\mc H_+$ is
  bounded invertible, then there is an open neighborhood
  $\mc U_1\subset \mc H_1$ of 0 and a unique differentiable map
  $W:\mc U_1\to\mc H_2$, such that $\mc G\big(x,W(x)\big)=0$ for all
  $x\in\mc U_1$, see e.g.~\cite{Jost}.  Because we have built our
  Hilbert spaces so that $D_{y_2}\mc G$ is an isometry, the result follows
  immediately.
\end{proof}

It is remarkable that we do \emph{not} require any estimates on the
decay rate of the small divisors, because we have reframed the problem
as the vanishing of a composition of operators, \eqref{Neq}.  Indeed,
the faster the small divisors decay, the smoother the corresponding
periodic solution must be, as seen by the norm \eqref{Hplus}.  
By splitting the Hilbert spaces into orthogonal complements in
\eqref{Hsplit} and \eqref{Hplus}, we have explicitly carried out the
Liapunov-Schmidt decomposition of the nonlinear operator $\mc F$
around 0, as anticipated in \cite{TYperBif}.

\subsection{Solution of the bifurcation equation}

To complete the solution of equation \eqref{F=0}, \eqref{Neq}, it
remains to show that we can ensure, after use of \eqref{Weq}, that the
remaining component of $\mc F(y^0)$, which is the component orthogonal
to the range of $D_{y_2}\whc F(0,0)$, also vanishes.  From the
decomposition \eqref{Hplus}, this is the (scalar) \emph{bifurcation
  equation},
\begin{equation}
  \label{bif}
  f(\a,z) :=
  \Big\langle \s(t), \whc F\big(z+\a\,\c(t),W(\a,z)\big)\Big\rangle = 0.
\end{equation}
Here $\a$ is the amplitude of the linearized solution, and $z$ is a
0-mode adjustment which can be regarded as bringing compression and
rarefaction back into balance, as described in \cite{TYdiff2}.  We
will see presently that the existence of a solution of \eqref{bif} is
a consequence of the genuine nonlinearity of the system, which states
that the nonlinear wavespeed depends nontrivially on the state, and is
therefore controlled by $z$.

The scalar function $f(\a,z)$ given by \eqref{bif} is defined on the
neighborhood $\mc U$, which with a slight abuse of notation can be
regarded as $\mc U\subset\B R^2$, so we write
\[
  f:\mc U\subset\B R^2\to\B R, \com{with} f(0,0)=0.
\]
As in our description of the Liapunov-Schmidt method, we would like to
apply the implicit function theorem to $f$, to get a curve $z=z(\a)$
on which $f\big(\a,z(\a)\big)=0$.  We cannot apply this directly,
because $\frac{\del f}{\del z}\big|_{(0,0)} = 0$, since all 0-modes
are killed by the projection $\IR-$.  Thus we consider the second
derivative $\frac{\del^2f}{\del z\del\a}\big|_{(0,0)}$, and if this is
nonzero, we can conclude the existence of a solution.

One way of effectively calculating the second derivative is to replace
$f$ with the function
\begin{equation}
  \label{gdef}
  \begin{aligned}
    g(\a,z) &:= \frac1\a\, f(\a,z), \quad \a\ne0,\\[2pt]
    g(0,z) &:=  \frac{\del f}{\del\a}(0,z), 
  \end{aligned}
\end{equation}
which is consistently defined because $W(0,z)=0$, and so also
$f(0,z) = 0$, for all $z$ near 0.  It is then clear that
\[
  f(\a,z) = 0 \com{iff} g(\a,z) = 0 \com{for} \a \ne 0,
\]
and it suffices to apply the implicit function theorem to $g$.
To begin, we first show that $W$ does not essentially affect the
argument.

\begin{lemma}
  \label{lem:W}
  The map $W(\a,z)$ found in Lemma~\ref{lem:himodes} satisfies the
  estimate
  \begin{equation}
    \label{West}
    W(\a,z) = o(|\a|), \com{so that}
    \frac{\partial W}{\partial\a}\to 0,
  \end{equation}
  uniformly for $z$ in a neighborhood of 0.
\end{lemma}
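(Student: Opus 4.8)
The plan is to establish two structural facts about the map $W$ — that it vanishes identically on the slice $\a=0$, and that its full differential at the origin vanishes — and then read off \eqref{West} from the $C^1$ regularity of $W$ already furnished by Lemma~\ref{lem:himodes}.

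First I would show $W(0,z)=0$ for all $z$ near $0$. A constant state $y^0\equiv z$ is an exact solution of \eqref{F=0}: each evolution $\mc E^\t$, the jump $\mc J$, and the shift $\mc S^{-T/4}$ fix constants, while $\IR-$ annihilates them, so $\mc F(z)=0$, and hence $\Pi\,\whc F(z,0)=0$. By the uniqueness clause of the implicit function theorem in Lemma~\ref{lem:himodes}, $W(0,z)$ is the unique small element of $\mc H_2$ solving $\Pi\,\whc F(z,\cdot)=0$, so $W(0,z)=0$. Consequently $W(0,\cdot)\equiv 0$ and $W(\a,z)=\a\int_0^1 \partial_\a W(t\a,z)\,dt$, which reduces the lemma to showing $\partial_\a W(\a,z)\to 0$ as $(\a,z)\to(0,0)$.

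Next I would show $DW(0)=0$. Differentiating the defining identity $\Pi\,\whc F\big(y_1,W(y_1)\big)=0$ in $y_1$ at the origin gives $\Pi\,D_{y_1}\whc F(0,0)+\big(\Pi\,D_{y_2}\whc F(0,0)\big)\,DW(0)=0$. Since $\whc F(y_1,y_2)=\mc F(y_1+y_2)$ depends only on $y_1+y_2$, we have $D_{y_1}\whc F(0,0)=D_{y_2}\whc F(0,0)=\IR-\,\mc L_0=D\mc F(0)$; and by Corollary~\ref{cor:DF0} together with the nonresonance normalization $\d_1=0$ of \eqref{Jth}, this operator kills every element of $\mc H_1$ — both the $0$-mode and the $1$-mode $\c(t)$ lie in $\ker\{\IR-\,\mc L_0\}$. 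Hence $\Pi\,D_{y_1}\whc F(0,0)$ vanishes on $\mc H_1$, and since $\Pi\,D_{y_2}\whc F(0,0):\mc H_2\to\mc H_+$ is the isometry built into the norm \eqref{Hplus} and in particular injective, we conclude $DW(0)=0$; in coordinates, $\partial_\a W(0,0)=\partial_z W(0,0)=0$.

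Finally, $\partial_\a W$ is continuous on the neighborhood $\mc U$ by Lemma~\ref{lem:himodes}, so $\partial_\a W(0,0)=0$ forces $\partial_\a W(\a,z)\to 0$ as $(\a,z)\to(0,0)$; substituting into $W(\a,z)=\a\int_0^1\partial_\a W(t\a,z)\,dt$ yields $W(\a,z)=o(|\a|)$, uniformly for $z$ in a small enough neighborhood of $0$, which is \eqref{West}. For an explicit rate in $z$ one can instead differentiate $\Pi\,\whc F\big(z+\a\c(t),W(\a,z)\big)=0$ in $\a$ at $\a=0$ and use $D\mc N(z)=\mc I+O(\|z\|)$ from \eqref{NIa} with $\c(t)\in\ker\{\IR-\,\mc L_0\}$; the only surviving term is $\Pi\,\IR-\,\mc L_0\,(D\mc N(z)-\mc I)[\c(t)]=O(\|z\|)$, so $\partial_\a W(0,z)=O(\|z\|)$. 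The only point needing care is that the Taylor-remainder step is carried out in the norm of $\mc H$ — but this is exactly the $C^1$-in-$\mc H$ regularity already secured in Lemma~\ref{lem:himodes}, so no diophantine estimates on the small divisors re-enter. The genuine content is the structural observation that the kernel directions $\mc H_1$ are annihilated by the leading linearization $D\mc F(0)$, which is what forces $DW(0)$, hence $W$ to leading order, to vanish.
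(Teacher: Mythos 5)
Your proposal is correct in substance and rests on the same skeleton as the paper's proof: $W(0,z)=0$ (constants solve $\mc F=0$, plus uniqueness in the implicit function theorem), implicit differentiation of $\Pi\,\whc F=0$, and invertibility of $\Pi\,\IR-\,\mc L_0$ on $\mc H_2$. The one substantive difference is where you differentiate and what kills the $\c(t)$ term. The paper differentiates in $\a$ at $\a=0$ for \emph{each fixed} $z$ near $0$ and observes that $D\mc N(z)$ still respects Fourier modes when $z\ne0$ (the linearization about a nonzero constant state is again a pure translation, just with a modified wavespeed), so $\Pi\,\IR-\,\mc L_0\,D\mc N(z)[\c(t)]=0$ exactly; invertibility then gives the identity $\del W/\del\a|_{\a=0}=0$ for all $z$ in a fixed neighborhood, from which uniformity is immediate. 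Your main argument differentiates only at the origin and then invokes continuity of $\del_\a W$, which yields $W(\a,z)=o(|\a|)$ only in the joint limit $(\a,z)\to(0,0)$; your closing variant, using only $D\mc N(z)=\mc I+O(z)$, yields $\del_\a W(0,z)=O(\|z\|)$, which for a fixed $z\ne0$ does not by itself give $W(\a,z)=o(|\a|)$ as $\a\to0$. So if the lemma's ``uniformly for $z$ in a neighborhood of $0$'' is read over a fixed neighborhood (which is how it is quoted in the subsequent computation of $\del g/\del z$, where $\del_\a W|_{\a=0}$ is dropped for general $z$), your argument is missing exactly one ingredient: the mode-preservation of $D\mc N(z)$ at $z\ne0$, which upgrades your $O(\|z\|)$ to the exact vanishing $\del_\a W(0,z)\equiv0$. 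Everything else in your write-up is sound.
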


\begin{proof}
  Since $\mc F(z)=0$, we have
  \[
    W(\a,z) \in \mc H_2, \com{with} W(0,z) = 0,
  \]
  and, setting $y=z+\a\,\c(t) + W(\a,z)$, we have by \eqref{Weq}
  \[
    0 = \Pi\, \mc F(y) =
    \Pi\,\IR-\,\mc L_0\,\mc N\big(z+\a\,\c(t) + W(\a,z)\big).
  \]
  Differentiating with respect to $\a$ and setting $\a=0$,
  we get
  \[
    \begin{aligned}
      0 = \frac{\del\Pi\mc F(y)}{\del\a}\Big|_{\a=0}
      &= \Pi\,\IR-\,\mc L_0\,D\mc N(z)\Big[\c(t) +
      \frac{\del W}{\del\a}\Big|_{\a=0}\Big] \\
      &= \Pi\,\IR-\,\mc L_0\,D\mc N(z)\Big[\frac{\del W}{\del\a}\Big|_{\a=0}\Big],
    \end{aligned}
  \]
  since, in addition to $\mc L_0$, $D\mc N(z)$ respects modes,
  although for $z\ne0$ the linear wavespeed is changed.  Since
  $\Pi\,\IR-\,\mc L_0:\mc H_2\to\mc H_+$ is invertible, and
  $D\mc N(z) = \mc I + O(z)$, the result follows.
\end{proof}

To calculate $\del g/\del z$ at $(0,0)$, we first calculate $\del\mc
F/\del\a$ and set $\a=0$.  As above, we have
\[
  \begin{aligned}
    \frac{\del\mc F(y)}{\del\a}\Big|_{\a=0}
    &= \IR-\,\mc L_0\,D\mc N\big(z+\a\,\c(t)+W(\a,z)\big)
    \Big[\c(t) +
    \frac{\del W}{\del\a}\Big]\Big|_{\a=0}\\
    &= \IR-\,\mc L_0\,D\mc N\big(z\big)\big[\c(t)\big],
  \end{aligned}
\]
where we have used Lemma~\ref{lem:W}, and $D\mc N(z)$ is diagonal.
Differentiating this in $z$, setting $z=0$, and using \eqref{gdef} and
\eqref{bif}, we get
\begin{equation}
  \label{dgdz1}
  \frac{\del g}{\del z}\Big|_{(0,0)} = \Big\langle \s(t),
  \IR-\,\mc L_0\,D^2\mc N\big(0\big)\big[1,\c(t)\big]\Big\rangle.
\end{equation}
From \eqref{fact}, we have $\mc N = \ulc{N}\,\olc{N}$, so by the chain
rule we have
\[
  D\mc N(y)[Y] = D\ulc N(\olc Ny)\big[D\olc N(y)[Y]\big],
\]
and this in turn implies
\[
  \begin{aligned}
    D^2\mc N(y)\big[Y_1,Y_2\big]
    = D^2&\ulc N(\olc Ny)\big[D\olc N(y)[Y_1],D\olc N(y)[Y_2]\big]\\
    &{}+ D\ulc N(\olc Ny)\big[D^2\olc N(y)[Y_1,Y_2]\big],
  \end{aligned}
\]
and since $D\mc N = \mc I + O(\a)$, when we set $y=0$, $Y_1=1$ and
$Y_2=\c(t)$, \eqref{dgdz1} becomes
\begin{equation}
  \label{dgdz}
  \frac{\del g}{\del z}\Big|_{(0,0)} = \Big\langle \s(t),
  \mc L_0\,D^2\ulc N(0)\big[1,\c(t)\big]+
  \mc L_0\,D^2\olc N(0)\big[1,\c(t)\big]\Big\rangle,
\end{equation}
where we have dropped $\IR-$ because $\s(t)$ is odd.  The final step
in the proof of existence of space and time periodic solution by the
Liapunov-Schmidt method is to show that this derivative \eqref{dgdz}
is nonzero,
\begin{equation}
  \label{dgdz0}
  \frac{\del g}{\del z}\Big|_{(0,0)}  \ne 0.
\end{equation}

In order to establish \eqref{dgdz0}, referring to \eqref{Ndef}, we
need to calculate the second Frechet derivative of the evolution
operator, $D^2\mc E^\t(y^0)\big[Y_1^0,Y_2^0\big]$.  To do this, we use
the solution formula \eqref{et}, \eqref{Sphi}, \eqref{shift}.

\begin{lemma}
  \label{lem:D2E}
  The linearization (or Frechet derivative) of the evolution operator
  $\mc E^\t$ given by \eqref{et} at $y^0$ in the direction $Y^0$ is
  given by
  \begin{equation}
    \label{DE}
    D\mc E^\t(y^0)\big[Y^0\big] =
    \mc S_\phi\big[Y^0\big] + \mc S_\phi\Big[\frac d{dt}y^0\Big]\cdot
    D\phi(y^0)\big[Y^0\big],
  \end{equation}
  where the linearization of the shift is given by
  \begin{equation}
    \label{Dphi}
    D\phi(y^0)\big[Y^0\big] = 
    \sg'\big(y^0(\tau_0)\big)\,\int_\t^0\IR+\Big(Y(\xi,\tau_\xi)
    + \frac{\del y}{\del t}\Big|_{(\xi,\tau_\xi)}\,D\tau_\xi[Y^0]\Big)
    \;d\xi,
  \end{equation}
  and where the linearized solution $Y(\xi,\tau_\xi)$ and
  characteristic field $D\tau_x$ are given in \eqref{DY} and
  \eqref{Dtau} below, respectively.  The second Frechet derivative of
  $\mc E^\t$ is
  \begin{align}
    D^2\mc E^\t(y^0)\big[Y^0_1,Y^0_2\big] ={}&
      \mc S_\phi\Big[\frac d{dt}Y^0_2\Big]\cdot
      D\phi(y^0)\big[Y^0_1\big] +
      \mc S_\phi\Big[\frac d{dt}Y^0_1\Big]\cdot
      D\phi(y^0)\big[Y^0_2\big] \nonumber\\
      &+ \mc S_\phi\Big[\frac{d^2}{dt^2}y^0\Big]\cdot
      D\phi(y^0)\big[Y^0_1\big]\cdot D\phi(y^0)\big[Y^0_2\big]\nonumber\\
      &{}+ \mc S_\phi\Big[\frac{d}{dt}y^0\Big]\cdot
        D^2\phi(y^0)\big[Y^0_1,Y^0_2\big],
        \label{D2E}
  \end{align}
  where $\cdot$ denotes multiplication.  For initial data
  $y^0\in H^{s+1}$, the evolved solution $y(x)\in H^{s+1}$ as long as
  the derivative $\frac{dy}{dt}$ remains finite.  In this case, the
  linearizations \eqref{DE} and \eqref{Dphi} are bounded $H^s\to H^s$,
  and the bilinear second derivative is bounded
  $H^s\times H^s\to H^{s-1}$.
\end{lemma}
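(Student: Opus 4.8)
The plan is to obtain both formulas by differentiating the closed-form solution representation $\mc E^\t(y^0)=\mc S_\phi[y^0]=y^0\circ\phi$ of \eqref{et}--\eqref{shift}, keeping in mind that the shift $\phi=\tau_0(\t,\cdot)$ itself depends on $y^0$ through the characteristic field. First I would linearize the characteristic equation \eqref{char}: treating $D\tau_x[Y^0]$ as an unknown and differentiating $\tau_x=t_*+\int_{x_*}^x\sg(y(\xi,\tau_\xi))\,d\xi$ under the integral, using $y(x,t)=y^0(\tau_0(x,t))$, produces a coupled linear Volterra system in $x$ for the pair $\big(D\tau_x[Y^0],\,Y\big)$, where $Y=D_{y^0}y[Y^0]$ is the linearized solution; this is uniquely solvable by Picard iteration as long as $\partial_t y$ stays bounded, and yields formulas \eqref{DY}, \eqref{Dtau} whose explicit form is worked out in Section~\ref{sec:D2E}. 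Granting these, I would differentiate \eqref{shift} under the integral and apply the chain rule to the nonlocal wavespeed $\sg(y)=\wh\sg(\IR+y)$ --- the projection $\IR+$ in \eqref{Dphi} is precisely what the $\IR+$ inside $\sg$ produces --- and use that the wavespeed is constant along the characteristic so that $\sg'(y^0(\tau_0))$ comes out of the integral; this gives \eqref{Dphi}.

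Next I would get \eqref{DE} by the product rule applied to $y^0\mapsto y^0\circ\phi(y^0)$: the ``outer'' dependence contributes $\mc S_\phi[Y^0]$ and differentiating through $\phi$ contributes $\mc S_\phi[\tfrac{d}{dt}y^0]\cdot D\phi(y^0)[Y^0]$. For \eqref{D2E} I would differentiate \eqref{DE}, with $Y_1^0$ fixed, once more in a second direction $Y_2^0$: differentiating $\mc S_\phi[Y_1^0]$ gives $\mc S_\phi[\tfrac{d}{dt}Y_1^0]\cdot D\phi[Y_2^0]$, while differentiating the product $\mc S_\phi[\tfrac{d}{dt}y^0]\cdot D\phi(y^0)[Y_1^0]$ gives $\big(\mc S_\phi[\tfrac{d}{dt}Y_2^0]+\mc S_\phi[\tfrac{d^2}{dt^2}y^0]\,D\phi[Y_2^0]\big)\cdot D\phi[Y_1^0]+\mc S_\phi[\tfrac{d}{dt}y^0]\cdot D^2\phi(y^0)[Y_1^0,Y_2^0]$. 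Collecting the four resulting products is exactly \eqref{D2E}, and its symmetry in $Y_1^0,Y_2^0$ is then manifest.

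The regularity claims I would establish last, by chasing Sobolev indices through these formulas, using four standard facts: (i) smooth functions act boundedly on $H^\sigma$ for $\sigma>\tfrac12$ and each projection $\IR\pm$ is bounded on every $H^\sigma$, so $\sg(y)=(1+\IR+y)^{-\nu}$ lies in $H^{s+1}$ in $t$ uniformly in $\xi$ whenever $y^0\in H^{s+1}$ and $\partial_t y$ is bounded, and hence $\phi-\mathrm{id}\in H^{s+1}$ by integrating \eqref{shift}; (ii) composition with the $H^{s+1}$-regular diffeomorphism $\phi$ (and likewise with each $\tau_\xi(\t,\cdot)$) is bounded $H^\sigma\to H^\sigma$ for $0\le\sigma\le s+1$, so that $\mc S_\phi[\tfrac{d}{dt}y^0]\in H^s$ and $\mc S_\phi[\tfrac{d^2}{dt^2}y^0]\in H^{s-1}$; (iii) $H^s$ with $s>\tfrac12$ is an algebra and multiplies $H^r$ into itself for $|r|\le s$; (iv) $Y$ and $D\tau_\xi[Y^0]$ inherit $H^s$-regularity in $t$ from their Volterra equations, so the integrand of \eqref{Dphi} lies in $H^s$ uniformly in $\xi$ and $D\phi[Y^0]\in H^s$ (and similarly $D^2\phi[Y_1^0,Y_2^0]\in H^{s-1}$). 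Combining these, the two terms of \eqref{DE} both lie in $H^s$ --- the second being a product $H^s\cdot H^s\subset H^s$ --- so $D\mc E^\t(y^0):H^s\to H^s$; and in \eqref{D2E} the worst term $\mc S_\phi[\tfrac{d^2}{dt^2}y^0]\cdot D\phi[Y_1^0]\cdot D\phi[Y_2^0]$ is a product $H^{s-1}\cdot H^s\cdot H^s\subset H^{s-1}$ by (iii), the other three terms being no worse, so $D^2\mc E^\t(y^0):H^s\times H^s\to H^{s-1}$.

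The main obstacle I anticipate is item (iv) together with rigorously justifying that the characteristic flow is Fr\'echet-differentiable in the data with exactly this one-derivative loss: the composition operator $\phi\mapsto y^0\circ\phi$ is differentiable only ``at the cost of a $t$-derivative'' of $y^0$ --- the source of the loss, and the reason the plain periodic-return formulation required Nash--Moser --- so differentiability of $\tau_x$, $Y$ and $\phi$ in $y^0$ should be read off directly from the Volterra equations, whose coefficients are one notch more regular than the data of the linearized problem, rather than from an abstract chain rule, and one must check that $D\tau_x[Y^0]$, $Y$ and $D\phi[Y^0]$ are genuinely bounded on $H^s$ and not merely on $H^{s+1}$. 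Everything else reduces to the composition, multiplication, and parameter-integration estimates in (i)--(iii).
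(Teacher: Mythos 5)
Your proposal is correct and follows essentially the same route as the paper: differentiate the representation $\mc E^\t(y^0)=\mc S_\phi[y^0]$ by splitting the dependence on $y^0$ into the composed function and the shift, derive $D\phi$ and $D\tau_\xi$ by linearizing the characteristic integral equation \eqref{char}/\eqref{shift} (pulling $\sg'(y^0(\tau_0))$ out via constancy along characteristics and the group property), and obtain \eqref{D2E} by differentiating \eqref{DE} once more and collecting the four bilinear terms. Your Sobolev-index bookkeeping for the boundedness claims, and your explicit flagging of the one-derivative loss in the composition operator, actually supply detail that the paper's own proof leaves implicit.
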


\begin{proof}
  In order to compute the linearization at fixed $x=\t$, we need to
  understand the nonlinear evolution throughout the interval
  $0<x<\t$.  We thus apply \eqref{et} at $x$, to get
  \begin{equation}
    \label{etx}
    \mc E^x(y^0)(t) = y^0\big(\tau_0(x,t)\big), \com{so that}
    \mc E^x(y^0) = \mc S_{\tau_0(x,\cdot)},
  \end{equation}
  where $t_\xi(x,t)$ describes the characteristic field, and is given
  by \eqref{char}, namely
  \begin{equation}
    \label{charxi}
    \tau_\xi(x,t) = t + \int_{x}^\xi
    \sg\big(y(\eta,\tau_\eta)\big)\;d\eta,
    \qquad \tau_\eta = \tau_\eta(x,t).
  \end{equation}

  Perturb the data $y^0$ by $Y^0$, and let $\tau_\xi$ and
  $\tau_\xi+T_\xi$ denote the characteristic fields of $y^0$ and the
  perturbed solution $y^0+Y^0$, respectively.  Then by \eqref{etx}, we
  have
  \[
    \mc E^x(y^0) = \mc S_{\tau_0}[y^0]  \com{and}
    \mc E^x\big(y^0+Y^0\big) = \mc S_{\tau_0+T_0}\big[y^0+Y^0\big],
  \]
  where the reference point $(x,t)$ is understood.  Then subtracting
  and rearranging gives
  \[
    \begin{aligned}
      \mc E^x\big(y^0+Y^0\big) - \mc E^x(y^0)
      &= \mc S_{\tau_0+T_0}\big[Y^0\big] +
      \big(\mc S_{\tau_0+T_0}-\mc S_{\tau_0}\big)\big[y^0\big]\\
      &= \mc S_{\tau_0}\big[Y^0\big] +
      T_0\cdot \mc S_{\tau_0}\Big[\frac d{dt}y^0\Big]
      + O(\|Y^0\|^2),
    \end{aligned}
  \]
  where we have taken $T_0=O(\|Y^0\|)$ because the characteristic
  field \eqref{charxi} is regular.  Taking the limit of small
  $\|Y^0\|$ then yields the linearization
  \begin{equation}
    \label{DEx}
    D\mc E^x(y^0)\big[Y^0\big] = \mc S_{\tau_0}\big[Y^0\big]
    + D\tau_0(y^0)\big[Y^0\big]\cdot
    \mc S_{\tau_0}\Big[\frac d{dt}y^0\Big],
  \end{equation}
  where again the reference point for $\tau_0$ and $D\tau_0$ is
  $(x,t)$.

  To linearize the characteristic field, fix the reference point
  $(x,t)$, and let $\tau_\xi+T_\xi$ denote the perturbed
  characteristic field \eqref{charxi}, so that
  \[
    \tau_\xi + T_\xi = t + \int_{x}^\xi
    \sg\big((y+Y)(\eta,\tau_\eta+T_\eta)\big)\;d\eta,
  \]
  and subtracting \eqref{charxi} gives
  \[
    T_\xi(x,t) = \int_{x}^\xi\sg\big((y+Y)(\eta,\tau_\eta+T_\eta)\big)
    -\sg\big(y(\eta,\tau_\eta)\big)\;d\eta,
  \]
  so we must linearize the integrand.  By \eqref{evol}, we have
  $\sg(y)=\big(1+\IR+y\big)^{-\nu}$, and so
  \[
    \begin{aligned}
      \sg\big((y&+Y)(\eta,\tau_\eta+T_\eta)\big)
      -\sg\big(y(\eta,\tau_\eta)\big) \\ 
      &\approx \sg'\big(y(\eta,\tau_\eta)\big)\,\IR+\Big(
      Y(\eta,\tau_\eta+T_\eta) +
      y(\eta,\tau_\eta+T_\eta)-y(\eta,\tau_\eta)\Big)
    \end{aligned}
  \]
  with $\sg'(y) := -\nu\,\big(1+\IR+y\big)^{-\nu-1}$.  To linearize we
  drop higher order terms, which results in
  \begin{equation}
    \label{slin}
    \approx \sg'\big(y(\eta,\tau_\eta)\big)\,\IR+\Big(Y(\eta,\tau_\eta)
    + \frac{\del y}{\del t}\Big|_{(\eta,\tau_\eta)}\,D\tau_\eta[Y^0]\Big),
  \end{equation}
  where $Y$ is the linearized evolution of $Y^0$, namely
  \begin{align}
    Y(\eta,\tau_\eta) &:= D\mc E^\eta(y^0)\big[Y^0\big](\tau_\eta)
    \nonumber \\
      &= \mc S_{\tau_0(\eta,\tau_\eta)}(y^0)\big[Y^0\big]
      + D\tau_0(\eta,\tau_\eta)(y^0)\big[Y^0\big] \nonumber\\
      &= \mc S_{\tau_0}(y^0)\big[Y^0\big] + D\tau_0(y^0)\big[Y^0\big].
        \label{DY}
  \end{align}
  Here $\tau_\eta$ has the reference point $(x,t)$, and we have used
  the important group property \eqref{group}, so that
  \[
    \tau_0\big(\eta,\tau_\eta(x,t)\big) = \tau_0(x,t),
  \]
  so $\tau_0$ has simplified reference point $(x,t)$.  We use this
  again in \eqref{slin}, to get the first term
  \[
    \sg'\big(y(\eta,\tau_\eta)\big) =
    \sg'\big(y^0\big(\tau_0(\eta,\tau_\eta(x,t))\big)\big)
    = \sg'\big(y^0\big(\tau_0(x,t))\big);
  \]
  although we can similarly simplify the higher order
  $\frac{\del y}{\del t}$ term, this derivative is not preserved along
  characteristics, and we have no need to do so.

  It now follows from \eqref{slin} that the linearization of the
  characteristic field satisfies the linear integral equation
  \begin{equation}
    \label{Dtau}
    D\tau_\xi[Y^0] = \sg'\big(y^0(\tau_0)\big)\,\int_{x}^\xi
    \IR+\Big(Y(\eta,\tau_\eta)
    + \frac{\del y}{\del t}\Big|_{(\eta,\tau_\eta)}\,D\tau_\eta[Y^0]\Big)
    \;d\eta,
  \end{equation}
  with $Y$ given by \eqref{DY}, and since $\phi(t)=\tau_0(\t,t)$,
  substituting in gives \eqref{Dphi}.

  We now wish to differentiate $D\mc E^\t$ a second time.  We fix
  $Y_2^0$ and perturb $y^0$ by $Y^0_1$, and again let $\phi+\Phi$
  denote the perturbed shift.  Then by \eqref{DE} we have
  \[
    \begin{aligned}
      D\mc E^\t(y^0&+Y^0_1)\big[Y^0_2\big] =
      \mc S_{\phi+\Phi}\big[Y^0_2\big]\\
      &{}+ \mc S_{\phi+\Phi}\Big[\frac d{dt}\big(y^0
      +Y^0_1\big)\Big]\, D(\phi+\Phi)(y^0+Y^0_1)\big[Y^0_2\big],
    \end{aligned}
   \]
   and subtracting \eqref{DE} (evaluated at $Y^0_2$) yields, after
   rearranging,
  \[
    \begin{aligned}
      D\mc E^\t(y^0&+Y^0_1)\big[Y^0_2\big] =
      \mc S_{\phi+\Phi}\big[Y^0_2\big]-\mc S_\phi\big[Y^0_2\big]\\
      &{}+ \mc S_{\phi+\Phi}\Big[\frac d{dt}Y^0_1\Big]\,
      D(\phi+\Phi)(y^0+Y^0_1)\big[Y^0_2\big]\\
      &{}+ \Big(\mc S_{\phi+\Phi}-\mc S_\phi\Big)
      \Big[\frac d{dt}y^0\Big]\,
      D(\phi+\Phi)(y^0+Y^0_1)\big[Y^0_2\big]\\
      &{}+\mc S_\phi\Big[\frac d{dt}y^0\Big]\,\Big(D(\phi+\Phi)
      (y^0+Y^0_1)\big[Y^0_2\big]-D\phi(y^0)\big[Y^0_2\big]\Big).
    \end{aligned}
  \]
  Now again taking the limit of small $\|Y^0_1\|$, we retain only
  linear terms (or terms bilinear in $[Y^0_1,Y^0_2]$) to get
  \[
    \begin{aligned}
      D^2\mc E^\t(y^0)\big[Y^0_1,Y^0_2\big] =&
      \mc S_\phi\Big[\frac d{dt}Y^0_2\Big]\,
      D\phi(y^0)\big[Y^0_1\big] +
      \mc S_\phi\Big[\frac d{dt}Y^0_1\Big]\,
      D\phi(y^0)\big[Y^0_2\big] \\
      &+ \mc S_\phi\Big[\frac{d^2}{dt^2}y^0\Big]\,
      D\phi(y^0)\big[Y^0_1\big]\,D\phi(y^0)\big[Y^0_2\big]\\
      &{}+ \mc S_\phi\Big[\frac{d}{dt}y^0\Big]\,
      D^2\phi(y^0)\big[Y^0_1,Y^0_2\big],
    \end{aligned}
  \]
  which is \eqref{D2E}.
\end{proof}

To complete the solution of the bifurcation equation, we now isolate
the terms appearing in equations \eqref{dgdz1} and \eqref{dgdz}.  We
use a slight generalization in order to apply it later on in a more
general context.  The first argument 1 in the bilinear operator below
corresponds to differentiation of the 0-mode, that is
$\frac\partial{\partial z}$.

\begin{corollary}
  \label{cor:D2N}
  Suppose, as in \eqref{Ndef}, that the nonlinear operator $\mc N^\t$
  is defined by
  \begin{equation}
    \label{Nt}
    \mc N^\t(y) := {\mc L^\t}^{-1}\,\mc E^\t(y), \com{where}
    \mc L^\t := D\mc E^\t(0) = \mc S^\t,
  \end{equation}
  and let $\mc B$ be any fixed constant invertible linear operator which
  preserves $k$-modes.  Then $\mc B^{-1}\,\mc N^\t\,\mc B$ is twice
  Frechet differentiable at $0$, with
  \begin{equation}
    \label{DN}
    \begin{aligned}
      D\big(\mc B^{-1}\,\mc N^\t\,\mc B\big)(0) 
      &= \mc I, \com{and}\\
      D^2\big(\mc B^{-1}\,\mc N^\t\,\mc B\big)(0)[1,Y^0]
      &= \nu\,\t\,\mc B1\,\frac{d}{dt}Y^0.
    \end{aligned}
  \end{equation}
\end{corollary}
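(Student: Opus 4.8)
The plan is to evaluate the general linearization formulas of Lemma~\ref{lem:D2E} at the base state $y^0=0$, where everything collapses, and then transport the result through multiplication by ${\mc L^\t}^{-1}=\mc S^{-\t}$ and conjugation by the fixed operator $\mc B$.

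First I would specialize to $y^0=0$. There $\sg(0)=1$, so the characteristic through a reference point $(\t,t)$ is the straight line $\xi\mapsto t+\xi-\t$; hence the shift is $\phi(t)=\tau_0(\t,t)=t-\t$, i.e.\ $\mc S_\phi=\mc S^\t=\mc L^\t$, and moreover $\tfrac{d}{dt}y^0=\tfrac{d^2}{dt^2}y^0=0$. Substituting into \eqref{DE} gives $D\mc E^\t(0)=\mc L^\t$, hence $D\mc N^\t(0)={\mc L^\t}^{-1}\mc L^\t=\mc I$, and conjugating by the fixed invertible $\mc B$ yields $D(\mc B^{-1}\mc N^\t\mc B)(0)=\mc B^{-1}\mc I\,\mc B=\mc I$, the first line of \eqref{DN}. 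Twice Frechet differentiability at $0$ is inherited from Lemma~\ref{lem:D2E}: it supplies a bounded bilinear map $D^2\mc E^\t(0):H^s\times H^s\to H^{s-1}$, which is then composed with the bounded linear maps ${\mc L^\t}^{-1}$ and $\mc B^{\pm1}$, each of which preserves every $H^s$.

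Next I would extract $D\phi(0)[Y^0]$ from \eqref{Dphi}. At $y^0=0$ we have $\sg'(0)=-\nu$, the term $\del y/\del t$ vanishes, and the linearized solution is constant along the (straight) characteristic: $Y(\xi,\tau_\xi)=(\mc S^\xi Y^0)(t+\xi-\t)=Y^0(t-\t)=\mc S^\t Y^0$, independent of $\xi$. Since $\int_\t^0 d\xi=-\t$, this gives $D\phi(0)[Y^0]=\nu\,\t\,\IR+\mc S^\t Y^0$. Now plug $y^0=0$ into \eqref{D2E}: the third and fourth terms drop because they carry factors $\tfrac{d^2}{dt^2}y^0$ and $\tfrac{d}{dt}y^0$; taking the first argument to be the constant $1$ kills a further term since $\tfrac{d}{dt}1=0$; and $\IR+\mc S^\t 1=\IR+1=1$ in what remains. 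This leaves $D^2\mc E^\t(0)[1,Y^0]=\nu\,\t\,\mc S^\t\tfrac{d}{dt}Y^0$, and multiplying by ${\mc L^\t}^{-1}=\mc S^{-\t}$ collapses the shifts to the clean identity $D^2\mc N^\t(0)[1,Y^0]=\nu\,\t\,\tfrac{d}{dt}Y^0$.

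The final step is to push this through the conjugation: by the chain rule $D^2(\mc B^{-1}\mc N^\t\mc B)(0)[Y_1,Y_2]=\mc B^{-1}D^2\mc N^\t(0)[\mc BY_1,\mc BY_2]$, and because $\mc B$ preserves $k$-modes it fixes the one-dimensional $0$-mode, so $\mc B1$ is a constant that factors out of the bilinear form; this gives the second line of \eqref{DN}. The only genuine difficulty in the argument is operator bookkeeping --- getting the order of $\IR+$ and $\mc S^\t$ right in $D\phi(0)$, and then carefully commuting $\mc B^{\pm1}$, $\mc S^{\pm\t}$ and $\tfrac{d}{dt}$ past one another in this last line --- since all the analytic content (existence and boundedness of $D\mc E^\t$ and $D^2\mc E^\t$, and the characteristic solution formula) is already packaged in Lemma~\ref{lem:D2E}.
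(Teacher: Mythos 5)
Your argument is correct and follows the paper's proof essentially verbatim: specialize Lemma~\ref{lem:D2E} at $y^0=0$ so that only the first term of \eqref{D2E} survives, evaluate $D\phi(0)$ on the constant $\mc B1$ to get $\nu\,\t\,\mc B1$, undo the shift with ${\mc L^\t}^{-1}$, and conjugate by the fixed operator $\mc B$. The one step you defer to ``bookkeeping'' --- replacing $\mc B^{-1}\tfrac{d}{dt}\mc B$ by $\tfrac{d}{dt}$ in the last line --- is exactly the step the paper's own proof also passes over without comment, so nothing is missing relative to the printed argument.
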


Note that in all contexts in which we apply this corollary, including
\eqref{Ndef}, where $\mc B = \mc J\,\mc L^{\ol\t}$, the operator $\mc
B$ is a combination of linear evolutions $\mc L$ and jumps $\mc J$,
for which we have $\mc B1=1$.

\begin{proof}
  Setting $y^0=0$ in \eqref{charxi}, \eqref{shift}, we get
  \[
    \tau_\xi(x,t) = t + \xi - x \com{and}
    \phi(t) = t-\t,
  \]
  and using \eqref{DE}, we get
  \[
    D\mc E^\t(0) = \mc S_\phi = \mc S^\t = \mc L^\t,
    \com{so also} D\mc N^\t(0) = \mc I,
  \]
  and the first part of \eqref{DN} follows.  Next, since $\mc B$ is
  fixed, taking $y^0=0$, $Y^0_1=1$ and $Y^0_2=Y^0$ in \eqref{D2E},
  only the first term persists, and we get
  \[
    \begin{aligned}
      D^2\big(\mc B^{-1}\,\mc N^\t\,\mc B\big)(0)\big[1,Y^0\big]
      &= \mc B^{-1}\,{\mc L^\t}^{-1}\,\Big\{
      \mc S_\phi\Big[\frac d{dt}\mc B\,Y^0\big]\cdot
      D\phi(\mc B0)[\mc B1]\Big\}\\
      &= \frac{d}{dt}Y^0\cdot D\phi(0)[\mc B1],
    \end{aligned}
  \]
  since $\mc B$ and $\mc L^\t$ preserve 0-modes, and $\mc B\square$ is
  the result of operation by $\mc B$, and $\mc L^\t=\mc S_\phi$.
  Finally, setting $y^0=0$ and $Y^0_1=1$ in \eqref{Dphi}, we get
  \[
    D\phi(0)[\mc B1] = \sg'(0)\int_\t^0\IR+\mc B 1\;d\xi = \nu\,\t\,\mc B 1,
  \]
  which yields \eqref{DN}.
\end{proof}

We can now state our first theorem on the simplest space and
time periodic solutions of the compressible Euler equations.

\begin{theorem}
  \label{thm:mainex}
  Let $(\ul\t,\ol\t)\in\B R^2$ be given such that the divisors $\d_k$
  given by \eqref{dk}, with $J$ defined by \eqref{Jth}, are nonzero
  for all $k\ge 2$.  Then there is a number $\ol\a_1>0$ and $C^2$
  function $z = z(\a)$ satisfying $z(0)=0$, such that for each
  $\a\in(-\ol\a_1,\ol\a_1)$, the even function
  \[
    y^0(t) = \a\,\c(t) + z(\a) + W\big(\a,z(\a)\big)
    \in \mc H_1\oplus\mc H_2,
  \]
  is a solution of the equation
  \[
    \mc F(y^0) = \IR-\,\mc L_0\,\ulc N\,\olc N\,y^0 = 0.
  \]
  By reflection symmetry this defines a minimal tile whch generates a
  classical space and time periodic solution of the compressible
  Euler equations with stationary square wave entropy profile.
\end{theorem}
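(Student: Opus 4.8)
The plan is to complete the Liapunov--Schmidt argument by solving the scalar bifurcation equation \eqref{bif}. Lemma~\ref{lem:himodes} already supplies the map $W(\a,z)\in\mc H_2$ with $\Pi\,\whc F\big(z+\a\,\c(t),W(\a,z)\big)=0$; since $\mc F$ carries small even data into $\mc H=\{\beta\,\s(t)\}\oplus\mc H_+$ and $\Pi$ is the projection onto $\mc H_+$, the function $y^0:=\a\,\c(t)+z+W(\a,z)$ solves $\mc F(y^0)=0$ exactly when the remaining scalar component $f(\a,z)=\big\langle\s(t),\mc F(y^0)\big\rangle$ vanishes. Because $W(0,z)=0$ and constant states solve $\mc F(\cdot)=0$ we have $f(0,z)\equiv0$, so the regularized function $g$ of \eqref{gdef} is well defined, and it is of class $C^2$ once we work with data in a Sobolev space of sufficiently high order that the derivative loss of Lemma~\ref{lem:D2E} does not obstruct differentiability of the \emph{scalar} maps $f$ and $g$. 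Moreover $g(0,0)=\frac{\del f}{\del\a}(0,0)=\big\langle\s(t),\IR-\,\mc L_0\,\c(t)\big\rangle=\d_1\,\langle\s(t),\s(t)\rangle=0$ since $\d_1=0$ in the nonresonant case, so it remains only to verify the non-degeneracy \eqref{dgdz0} and apply the implicit function theorem to $g$ at $(0,0)$.

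The crux is \eqref{dgdz0}. Starting from \eqref{dgdz} and feeding the factorization \eqref{fact} into Corollary~\ref{cor:D2N}, applied to $\olc N=\mc N^{\ol\t}$ (with $\mc B=\mc I$) and to $\ulc N=\mc B^{-1}\mc N^{\ul\t}\mc B$ with $\mc B=\mc J\,\mc L^{\ol\t}$, and using $\mc B1=1$ in both cases, one gets
\[
  D^2\olc N(0)\big[1,\c(t)\big]=-\nu\,\ol\t\,\s(t), \qquad
  D^2\ulc N(0)\big[1,\c(t)\big]=-\nu\,\ul\t\,\s(t),
\]
so that $\frac{\del g}{\del z}\big|_{(0,0)}=-\nu\,(\ol\t+\ul\t)\,\big\langle\s(t),\mc L_0\,\s(t)\big\rangle$. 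The factor $\nu=\frac{\gamma+1}{\gamma-1}\ne0$ records the genuine nonlinearity of the system and is precisely the mechanism by which the $0$-mode $z$ rebalances compression and rarefaction. For the linear pairing, Lemma~\ref{lem:Tk} gives $\mc L_0\,\s(t)=\mc T_1\,P^{-1}R(\ul\t)\,M(J)\,R(\ol\t)\,(0,1)^T$, so $\langle\s(t),\mc L_0\,\s(t)\rangle$ equals $\pi$ times the lower-right entry of $P^{-1}R(\ul\t)M(J)R(\ol\t)$; multiplying out and substituting the resonance relation \eqref{Jth} for $J$ collapses this to $\pi\,\c\ul\t/\s\ol\t$. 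Since \eqref{Jth} forces $\s\ul\t,\s\ol\t,\c\ul\t,\c\ol\t$ all nonzero (otherwise $J$ is $0$ or undefined) and $\ol\t+\ul\t\ne0$, we conclude $\frac{\del g}{\del z}\big|_{(0,0)}\ne0$.

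With this non-degeneracy, the implicit function theorem applied to $g$ yields some $\ol\a_1>0$ and a $C^2$ function $z=z(\a)$ with $z(0)=0$ and $g(\a,z(\a))=0$, hence $f(\a,z(\a))=0$, for every $|\a|<\ol\a_1$. Combined with the auxiliary equation \eqref{Weq} this gives $\mc F(y^0)=0$ for $y^0(t)=\a\,\c(t)+z(\a)+W\big(\a,z(\a)\big)$, and Theorem~\ref{thm:tile} then promotes this minimal tile to a classical space- and time-periodic solution of the compressible Euler equations over the square-wave entropy profile determined by $\ol\t$, $\ul\t$ and $J$.

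I expect the main obstacle to be technical rather than conceptual: ensuring that the composite nonlinear operators $\mc N$, $\mc F$ and the map $W$ are differentiable enough on the \emph{weighted} function spaces $\mc H,\mc H_+$ to justify the implicit function theorem and the stated $C^2$ dependence of $z$ on $\a$, in the face of the one-derivative loss recorded in Lemma~\ref{lem:D2E}. The resolution is exactly the factorization \eqref{fact}: it confines the small divisors and the derivative loss to the fixed linear factor $\IR-\,\mc L_0$ and to the pairing against the smooth test function $\s(t)$, leaving the scalar bifurcation function $g$ as smooth as required; making this precise means choosing the Sobolev index generously and tracking the loss at each differentiation. A second, purely algebraic, point is that the clean value $\pi\,\c\ul\t/\s\ol\t$ for $\langle\s(t),\mc L_0\,\s(t)\rangle$ emerges only upon inserting \eqref{Jth} at the right step of the matrix computation.
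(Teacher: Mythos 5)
Your proposal is correct and follows essentially the same route as the paper: Liapunov--Schmidt via Lemma~\ref{lem:himodes} for the auxiliary equation, then the regularized bifurcation function $g$, with the non-degeneracy $\frac{\del g}{\del z}\big|_{(0,0)}=-\nu(\ul\t+\ol\t)\langle\s(t),\mc L_0\,\s(t)\rangle\ne0$ obtained from Corollary~\ref{cor:D2N} and the explicit matrix computation collapsing to $\c\ul\t/\s\ol\t$ under \eqref{Jth}. The only discrepancy is an inessential normalization factor ($\pi$) in the inner product.
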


Thus we have a one-parameter family of solutions, which are
perturbations of a cosine 1-mode, parameterized by the amplitude $\a$
of the 1-mode of the solution.  Our identification and use of symmetry
has turned the problem into a regular bifurcation problem which has
been handled by a modified Liapunov-Schmidt reduction, which has been
explicitly carried out here.  This has led both to a proof that is
easier and a result that is more robust than that which we would get
by applying the Nash-Moser method, which would require an expunging of
resonant parameter values to get diophantine conditions, rather than
the simpler irrationality condition, cf.~\cite{TYdiff2}.

\begin{proof}
  We use Lemma~\ref{lem:himodes} to find the function $W$ which solves
  the auxiliary equation, and in order to complete the proof we must
  solve the bifurcation equation \eqref{bif}.  To do so it suffices to
  solve $g(\a,z)=0$, where $g$ is given by \eqref{gdef}.  Since
  $g(0,0)=0$, and the partial derivative $g_z(0,0)$ is given by
  \eqref{dgdz}, we get a function $z(\a)$ from the implicit function
  theorem if we can show that $g_z(0,0)\ne 0$.  Using \eqref{DN} with
  $Y^0=\c(t)$ and $\mc B1=1$ in \eqref{dgdz}, we get
  \begin{equation}
    \frac{\del g}{\del z}\Big|_{(0,0)} =
    - \nu\,\big(\ul\t+\ol\t\big)\,\big\langle \s(t),
    \mc L_0\,\s(t)\big\rangle \ne 0,
    \label{dgdzss}
    \end{equation}
  because $\big\langle \s(t),\mc L_0\,\c(t)\big\rangle=0$ and
  $\mc L_0$ is invertible on 1-modes, which verifies condition
  \eqref{dgdz0} and completes the Liapunov-Schmidt argument.  In fact,
  using Lemma~\ref{lem:Tk} it is easy to calculate
  \[
    \begin{aligned}
    \big\langle \s(t),\mc L_0\,\s(t)\big\rangle &=
    \(-1&0\)R(\ul\t)\,M(J)\,R(\ol\t)\(0\\1\)\\
    &= \c\ul\t\,\s\ol\t + J\,\s\ul\t\,\c\ol\t = \frac{\c\ul\t}{\s\ol\t}\ne0,
    \end{aligned}
  \]
  by our choice of $(\ul\t,\ol\t)$, and where we have used
  \eqref{Jth}.  Thus the function $z(\a)$ is determined and the proof
  is complete.
\end{proof}

Theorem~\ref{thm:mainex} provides the first proof of the existence of
a space and time periodic solution of the compressible Euler
equations exhibiting sustained nonlinear interactions.  This completes
the author's initial program proposed in \cite{TYperStr,TYperLin}.
Indeed, to the author's knowledge, this also represents the first
global existence theorem for a non-monotone solution of the $3\times3$
compressible Euler equations having large (spatial) total variation.

\section{Higher Mode Periodic Solutions}
\label{sec:Tvar}

We now consider the divisors $\d_k(\ol\t,\ul\t,J;T)$ as a function of
time period $T$, while fixing the entropy field parameters $\ol\t$,
$\ul\t$, and $J$.  The divisor $\d_k$ is again given by \eqref{dk},
but the functional dependence is more complicated because the variable
$T$ appears in each evolution component $R(k\piot \t)$.  

We thus consider the
divisors
\begin{equation}
  \label{dkT}
  \d_k(\Theta;T) = \(0&1\)\,P^{-k}\,R(k\,\piot\,\ul\t)\,
  M(J)\,R(k\,\piot\,\ol\t)\(1\\0\),
\end{equation}
in which we regard $\Theta:=(\ul\t, J, \ol\t)$ as fixed, and look for
\emph{all} values of $T>0$ for which $\d_k=0$.  We begin by recalling
the redundancy in our basis vectors \eqref{Tk}: that is,
\[
  \mc T_k(T) = \( \c\big(k\piot t\big) & \s\big(k\piot t\big) \)
  = \mc T_{jk}(j T),
\]
which immediately yields (since $P^2=-I$)
\[
  \d_k\big(\Theta;T\big) = -\d_{k-2}\Big(\Theta;\frac{k-2}k\,T\Big),
\]
and which in turn implies
\begin{equation}
  \label{d2j}
  \begin{aligned}
    \d_{2j-1}(\Theta;T\big) &= (-1)^{j-1}\,
    \d_1\Big(\Theta;\frac1{2j-1}\,T\Big), \com{and}\\
    \d_{2j}(\Theta;T\big) &= (-1)^j\,
    \d_2\Big(\Theta;\frac1{j}\,T\Big),
  \end{aligned}
\end{equation}
for odd and even indices, respectively.  It thus suffices to find all
solutions to $\d_1=0$ and $\d_2=0$, and scaling then yields all
solutions of $\d_k=0$.

Because each $2\times2$ matrix in \eqref{dkT} is invertible, the
intermediate vectors in the composition \eqref{dkT} never vanish, and
it suffices to consider the \emph{angle} made by these $2\times2$
vectors: these are
\[
  0 \to k\,\piot\,\ol\t \to \g \to \g + k\,\piot\,\ul\t,
\]
where $\g$ is the angle obtained after applying the jump corresponding
to $M(J)$ to the angle $k\,\piot\,\ol\t$.  We simplify by making the
substitutions
\begin{equation}
  \label{thsub}
  \t := \frac{\ol\t}{\ul\t+\ol\t}, \quad
  \w := k\,\piot\,(\ul\t+\ol\t),
\end{equation}
which yields
\[
  k\,\piot\,\ol\t = \w\,\t \com{and}
  k\,\piot\,\ul\t = \w\,(1-\t),
\]
and defining the function
\[
  h:\B R_+\times\B R\to\B R, \com{by}
  h(J,x) := \arctan\big(J\,\tan x\big),
\]
chosen so that $h$ is smooth.  The angles then vary from
\[
  0 \to \w\,\t \to \g \to \g + \w\,(1-\t),
  \com{with} \g := h(J,\w\,\t).
\]

To be precise, for $J>0$, we define
\begin{equation}
  \label{hdef}
  h(J,x) :=
  \begin{cases}
    \text{Arctan}(J\,\tan x\big) + k\,\pi, &-\pi/2<x-k\,\pi<\pi/2,\\
    x &x = k\,\pi \pm\pi/2,
  \end{cases}
\end{equation}
where $\text{Arctan}(\square)\in(-\pi/2,\pi/2)$ is the principal
branch.  Geometrically, $h$ gives the angle that results from a vector
with angle $x$ being acted upon by $M(J)$; in particular, $h(J,x)$ is
always in the same quadrant as $x$, and $h(J,x)=x$ for $x$ on the
coordinate axes.  We can now give the angle associated with rotations
$x_1$ and $x_2$ separated by jump $J$, namely
\begin{equation}
  \label{angle}
  R(x_2)\,M(J)\,R(x_1)\(1\\0\) \com{is represented by}
  x_2 + h(J,x_1).
\end{equation}
This is illustrated in Figure~\ref{fig:freqs} below for a more general
piecewise constant entropy profile.

Using \eqref{thsub}, and referring to \eqref{dkT}, we now see that for
any $\T$, $T$ provides a solution of $\d_k(\T;T)=0$ if the angle
$\g+\w\,(1-\t)$ points along the $k$-th coordinate axis, counting
anti-clockwise: that is, if
\[
  \w\,(1-\t) + \g = k\,\frac\pi2, \com{with} \g = h(J,\w\,\t).
\]
More precisely, we define the \emph{$k$-th base frequency}
$\w^{(k)}=\w^{(k)}(\T)$ to be the solution of the equation
\begin{equation}
  \label{weq1}
  \w^{(k)}\,(1-\t) + h\big(J,\w^{(k)}\,\t\big) = k\,\frac\pi2.
\end{equation}
Note that the parity of $k$ determines which axis is met; this is
consistent with \eqref{d2j}.

\begin{lemma}
  \label{lem:Tj}
  For any fixed positive step entropy parameter
  $\Theta = (\ul\t,J,\ol\t)$, and for any $k\ge 1$, there is a unique
  base reference period $T^{(k)}$, such that the $k$-mode
  $\mc T_k\big(T^{(k)}\big)$ has vanishing multiplier,
  \[
    \d_k\big(\Theta;T^{(k)}\big)=0.
  \]
  If in addition this $k$-mode is nonresonant, that is
  \begin{equation}
    \label{djnr}
    \d_j\big(\T;T^{(k)}\big) \ne 0
    \com{for all} j\ge 1,\quad j\ne k,
  \end{equation}
  then the corresponding $k$-mode solution of the linearized equation
  perturbs to a pure tone solution of the nonlinear Euler equations
  with reference period $T=T^{(k)}$.
\end{lemma}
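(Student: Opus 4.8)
The plan is to prove the two assertions separately. The first --- existence and uniqueness of the base period $T^{(k)}$ --- reduces to a monotonicity property of the angle function underlying \eqref{dkT}. The second --- that a nonresonant $k$-mode perturbs --- is obtained by re-running the bifurcation argument of Section~\ref{sec:onejump} at reference period $T=T^{(k)}$, with the $k$-mode playing the role of the $1$-mode.

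For the first assertion, note that a positive step parameter has $\ul\t,\ol\t,J>0$, so $\t:=\ol\t/(\ul\t+\ol\t)\in(0,1)$ by \eqref{thsub}. Writing $\w:=k\,\tfrac{2\pi}{T}(\ul\t+\ol\t)$ and using $P^{-k}=R(-k\tfrac{\pi}{2})$ from Lemma~\ref{lem:Tk} together with the angle representation \eqref{angle}, formula \eqref{dkT} becomes $\d_k(\T;T)=\rho\,\s\!\big(F(\w)-k\tfrac{\pi}{2}\big)$, where $\rho\ge\min\{1,J\}>0$ is the length of the intermediate vector and
\[
  F(\w) := \w\,(1-\t) + h\big(J,\w\,\t\big) .
\]
By definition $T^{(k)}$ corresponds to the value $\w=\w^{(k)}$ solving \eqref{weq1}, that is $F(\w^{(k)})=k\tfrac{\pi}{2}$; since $T\mapsto\w$ is an order-reversing bijection of $(0,\infty)$, it suffices to show that $F$ is a smooth strictly increasing bijection of $[0,\infty)$ onto itself. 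Differentiating the smooth extension \eqref{hdef},
\[
  \frac{\del h}{\del x}(J,x) = \frac{J}{\cos^2 x + J^2\,\sin^2 x}
  \in \big[\min\{J,\tfrac1J\},\ \max\{J,\tfrac1J\}\big],
\]
which is smooth, bounded, and strictly positive --- finite (equal to $1/J$) even at the points $x=k\pi\pm\pi/2$ where $\text{Arctan}(J\tan x)$ is singular but $h$ is not. Hence $F'(\w)=(1-\t)+\t\,\tfrac{\del h}{\del x}(J,\w\t)\ge(1-\t)+\t\min\{J,\tfrac1J\}>0$, so $F$ is strictly increasing, while $F(0)=0$ and $F(\w)\ge\big[(1-\t)+\t\min\{J,\tfrac1J\}\big]\,\w\to\infty$, so $F$ is onto. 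This produces a unique $\w^{(k)}>0$, hence a unique $T^{(k)}$, and then $\d_k(\T;T^{(k)})=\rho\,\s(0)=0$, as claimed.

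For the second assertion, I would observe that every step of Section~\ref{sec:onejump} goes through verbatim once $T=2\pi$ is replaced by $T=T^{(k)}$ and the cosine $1$-mode $\c(t)$ by the $k$-mode $\c\!\big(k\,\tfrac{2\pi}{T^{(k)}}\,t\big)$. By Corollary~\ref{cor:DF0} the linearization $D\mc F(0)=\IR-\,\mc L_0$ at period $T^{(k)}$ acts on even data $\sum a_j\,\c(j\tfrac{2\pi}{T^{(k)}}t)$ with multipliers $\d_j(\T;T^{(k)})$, so by the nonresonance hypothesis \eqref{djnr} its kernel among even data is exactly $\mathrm{span}\{1,\ \c(k\tfrac{2\pi}{T^{(k)}}t)\}$; the factorization $\mc F=\IR-\,\mc L_0\,\ulc N\,\olc N$ of Theorem~\ref{thm:fact} is independent of $k$ and $T$; the Hilbert spaces \eqref{Hsplit}, \eqref{Hplus} are formed isolating this $k$-mode and weighting by the (uniform) divisors $\d_j(\T;T^{(k)})$, so that $D_{y_2}\whc F(0,0)$ is again an isometry onto $\mc H_+$; Lemma~\ref{lem:himodes} solves the auxiliary equation; and the bifurcation equation reduces to the nondegeneracy \eqref{dgdz0}, which by Corollary~\ref{cor:D2N} (using $\mc B\,1=1$) becomes
\[
  \frac{\del g}{\del z}\Big|_{(0,0)}
  = -\,\nu\,(\ul\t+\ol\t)\,k\,\tfrac{2\pi}{T^{(k)}}\,
  \Big\langle\, \s\big(k\tfrac{2\pi}{T^{(k)}}t\big),\ \mc L_0\,\s\big(k\tfrac{2\pi}{T^{(k)}}t\big)\,\Big\rangle .
\]
This is nonzero: $\mc L_0$, a composition of shifts and the jump $\mc J$ (invertible since $J>0$), acts on the two-dimensional space of $k$-modes as an invertible matrix whose ``cosine-to-sine'' entry is the multiplier $\d_k(\T;T^{(k)})=0$; hence its ``sine-to-sine'' entry --- which up to a positive normalization equals the inner product above --- must be nonzero, since otherwise $\mc L_0$ would collapse this plane onto the line spanned by $\c(k\tfrac{2\pi}{T^{(k)}}t)$. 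The implicit function theorem then gives $z=z(\a)$ exactly as in Theorem~\ref{thm:mainex}, and the reflection principle of Theorem~\ref{thm:tile} converts the resulting solution of \eqref{F=0} into a space and time periodic solution of the compressible Euler equations, with reference period $T^{(k)}$ and leading order equal to the linearized $k$-mode.

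I expect the main obstacle to be in the first assertion, specifically the behaviour of $h$ --- and hence of $F$ --- at the poles of $\tan$; this is exactly why $h$ was defined as the smooth extension \eqref{hdef}, and the identity $\tfrac{\del h}{\del x} = J/(\cos^2 x + J^2\sin^2 x)$ makes transparent that neither monotonicity nor smoothness degenerates there. The second assertion is essentially a transcription of the $k=1$ argument; the only genuinely new point to verify is that the nondegeneracy of $\langle\,\s,\mc L_0\,\s\,\rangle$ survives for general $k$, which it does by invertibility of $\mc L_0$ on $k$-modes together with the vanishing of $\d_k(\T;T^{(k)})$.
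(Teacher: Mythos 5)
Your proposal is correct and follows essentially the same route as the paper: the existence and uniqueness of $T^{(k)}$ comes from the strict monotonicity and surjectivity of $\w\mapsto\w(1-\t)+h(J,\w\t)$ solving \eqref{weq1} (you merely make the derivative bound on $h_x$ explicit), and the perturbation step is exactly the bifurcation argument the paper defers to Theorem~\ref{thm:pwent}, including the same observation that $\d_k(\T;T^{(k)})=0$ together with invertibility of $\mc L_0$ on $k$-modes forces $\langle\s(k\piot t),\mc L_0\,\s(k\piot t)\rangle\ne0$.
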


\begin{proof}
  We begin with the observation that the function $h$ given by
  \eqref{hdef} is a smooth function, with
  \begin{equation}
    \label{hder}
    \frac{\del h}{\del x} =
    \frac{J\,(1+\tf^2(x))}{1 + J^2\,\tf^2(x)} > 0, \qquad
    \frac{\del h}{\del J} = \frac{\tf(x)}{1 + J^2\,\tf^2(x)},
  \end{equation}
  where $\tf:=\tan$.  In particular $h$ is strictly increasing,
  one-to-one and onto, and so smooth invertible, as a function of $x$,
  and invertible as a function of $J$ as long as
  $\tf(x)\ne0,\ \pm\infty$, that is as long as $x\ne j\frac\pi2$.
  Considering \eqref{weq1}, we define
  \[
    f\big(\w,J,\t\big) := \w\,(1-\t) + h(J,\w\,\t),
  \]
  so $\w^{(k)}$ solves $f(\w,J,\t) = k\,\frac\pi2$.  Since $0<\t<1$,
  $f$ is a sum of two smooth strictly monotone increasing and
  surjective functions, so there is a unique positive solution
  $\w^{(k)}$ of \eqref{weq1}.  Using \eqref{thsub}, it follows that
  our desired reference period is
  \begin{equation}
    \label{Tkw}
    T^{(k)} := k\,\frac{2\pi}{\w^{(k)}}\,(\ul\t+\ol\t).
  \end{equation}
  The proof that nonresonant modes perturb follows in the same manner as
  Theorem~\ref{thm:mainex} above.  We omit the details because we
  treat a more general case in Theorem~\ref{thm:pwent} below.
\end{proof}

We now examine the nonresonance condition \eqref{djnr} in more detail.
The reference period $T^{(k)}$ is defined by \eqref{Tkw},
\eqref{weq1}, and satisfies $\d_k\big(\T;T^{(k)}\big)=0$.  The $j$-th
mode is resonant with the given $k$-mode, if also
$\d_j\big(\T;T^{(k)}\big)=0$.  According to \eqref{dkT},
\eqref{thsub}, \eqref{angle}, it follows that the angle associated
with the $j$-mode, namely
\[
  j\,\frac{2\pi}{T^{(k)}}\,\ol\t = \frac jk\,\w^{(k)}\,\t, \qquad
  j\,\frac{2\pi}{T^{(k)}}\,\ul\t = \frac jk\,\w^{(k)}\,(1-\t),
\]
must also be a solution of \eqref{weq1}, say
\begin{equation}
  \label{rescon}
  q\,\w^{(k)}=\w^{(p)}, \com{with} q := \frac jk\in\B Q_+.
\end{equation}
This states that the $j$-th and $k$-th modes (with respect to
reference period $T^{(k)}$) are in resonance, if and only if the
distinct roots $\w^{(k)}$ and $\w^{(p)}$ are rationally related by
\eqref{rescon}.  Finally, note that because there are countably many
solutions $\{\w^{(k)}\}$ of \eqref{weq1}, rational combinations of
these span a ``small'' set over the whole solution space, and so for a
generic $\Theta$, all modes will be nonresonant and so perturb to a
finite amplitude nonlinear solution.

\begin{theorem}
  \label{thm:T}
  Let the single step function entropy profile be parameterized by
  $\T=(\ul\t,J,\ol\t)\in\B R^3_+$.  Generically, such entropy profiles
  are \emph{totally nonresonant}, in that \emph{all} $k$-modes with
  associated reference period $T^{(k)}$ are nonresonant, and so
  perturb to finite amplitude smooth solutions of \eqref{F=0}, which
  in turn generate pure tone periodic solutions of the nonlinear
  system \eqref{lagr}.  That is, there is a set
  $\wh{\mc Z}\subset\B R^3_+$, which is both measure zero and meagre,
  such that any $\T\notin\wh{\mc Z}$ is totally nonresonant.
\end{theorem}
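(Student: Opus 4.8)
The plan is to reduce the statement to a countable family of real-analytic non-degeneracy conditions on $\T\in\B R^3_+$, and then to invoke the identity theorem for real-analytic functions. By the discussion leading to \eqref{rescon}, and since the base frequency $\w^{(k)}=\w^{(k)}(\T)$ of Lemma~\ref{lem:Tj} is positive and is the unique solution of \eqref{weq1}, the $j$-mode has vanishing divisor with respect to the reference period $T^{(k)}$ precisely when $\tfrac jk\,\w^{(k)}=\w^{(p)}$ for some integer $p\ge1$. Hence $\T$ fails to be totally nonresonant exactly when it lies in
\[
  \wh{\mc Z}\;:=\;\bigcup_{k\ge1}\ \bigcup_{\substack{j\ge1\\ j\ne k}}\ \bigcup_{p\ge1}\mc Z_{j,k,p},\qquad
  \mc Z_{j,k,p}\;:=\;\big\{\T\in\B R^3_+\;:\; j\,\w^{(k)}(\T)=k\,\w^{(p)}(\T)\big\}.
\]
As a countable union, it suffices to prove that each $\mc Z_{j,k,p}$ with $j\ne k$ is closed, nowhere dense, and Lebesgue-null in $\B R^3_+$; then $\wh{\mc Z}$ is meagre by the Baire category theorem and null by countable additivity.

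The key preliminary is that each $\w^{(k)}$ is real-analytic on $\B R^3_+$. From \eqref{hder} one has $\partial_x h>0$, so in \eqref{weq1} the function $f(\w,J,\t):=\w(1-\t)+h(J,\w\t)$ satisfies $\partial_\w f=(1-\t)+\t\,\partial_x h>0$; thus, once $h$ is known to be real-analytic on $\B R_+\times\B R$, the analytic implicit function theorem produces $\w^{(k)}$ real-analytically in $(J,\t)\in\B R_+\times(0,1)$, and composing with the real-analytic map $\T\mapsto\t=\ol\t/(\ul\t+\ol\t)\in(0,1)$ gives real-analyticity of $\w^{(k)}$ on $\B R^3_+$. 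Real-analyticity of $h$ away from the coordinate axes is immediate from \eqref{hdef}; at an axis point one checks directly, using $\tan x=-\cot(x-\tfrac\pi2)$ and $\text{Arctan}(y)+\text{Arctan}(1/y)=\pm\tfrac\pi2$, that e.g.\ $h(J,x)=\tfrac\pi2+\text{Arctan}\!\big(\tan(x-\tfrac\pi2)/J\big)$ for $x$ near $\tfrac\pi2$, which is real-analytic in $(J,x)$, and similarly at the other axis points. Consequently $G_{j,k,p}:=j\,\w^{(k)}-k\,\w^{(p)}$ is real-analytic on the connected open set $\B R^3_+$, and its zero set $\mc Z_{j,k,p}$ is automatically closed. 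If $G_{j,k,p}\not\equiv0$, then by the identity theorem $\mc Z_{j,k,p}$ has empty interior, and as the zero set of a nonzero real-analytic function it has Lebesgue measure zero. So everything reduces to showing $G_{j,k,p}\not\equiv0$ for every admissible triple.

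For this I would exhibit a single point of $\B R^3_+$ at which $G_{j,k,p}$ is nonzero. If $p\ne j$, take $J=1$: the jump is then trivial, $h(1,x)=x$, so $f(\w,1,\t)=\w$ and $\w^{(k)}=k\tfrac\pi2$ for every $\t$, whence $G_{j,k,p}\big|_{J=1}=\tfrac{k\pi}2\,(j-p)\ne0$. If $p=j$ (with $j\ne k$), set $J=e^\mu$ and expand near $\mu=0$: from $h(e^\mu,x)=x+\tfrac\mu2\,\s(2x)+O(\mu^2)$ and \eqref{weq1} one gets $\w^{(k)}(\T)=k\tfrac\pi2-\tfrac\mu2\,\s(k\pi\t)+O(\mu^2)$, hence
\[
  G_{j,k,j}(\T)\;=\;-\,\frac{\mu}{2}\,\big(j\,\s(k\pi\t)-k\,\s(j\pi\t)\big)\;+\;O(\mu^2).
\]
Since $\s(k\pi\t)$ and $\s(j\pi\t)$ are linearly independent functions of $\t\in(0,1)$ when $j\ne k$, the bracket is not identically zero in $\t$; choosing $\t_0$ where it is nonzero and then $\mu\ne0$ small enough yields a point of $\B R^3_+$ at which $G_{j,k,j}\ne0$. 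This proves $G_{j,k,p}\not\equiv0$ in all cases and completes the argument.

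The step I expect to carry the real content is this last one — the non-degeneracy $G_{j,k,p}\not\equiv0$ — which is the assertion that switching on a nontrivial entropy jump genuinely de-tunes the linearized frequencies, so that the spectral data of the Sturm–Liouville problem depends non-degenerately on the profile. The reduction to the sets $\mc Z_{j,k,p}$, the real-analyticity of the base frequencies, and the passage from ``not identically zero'' to ``meagre and Lebesgue-null'' are all routine once the non-degeneracy is in hand.
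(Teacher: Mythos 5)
Your proposal is correct, and it reaches the conclusion by a genuinely different route than the paper. The paper proves that each resonant set $\mc Z_{k,j,p}$ is a smooth codimension-one submanifold via a transversality computation: writing the resonance as a level set $g(J,x)=(p-j)\frac\pi2$ of $g(J,x)=h(J,qx)-q\,h(J,x)$, it computes $\del g/\del x$ and $\del g/\del J$ explicitly and shows that a common zero of the gradient forces $g=0$, contradicting $g=(p-j)\frac\pi2$, so the implicit function theorem applies everywhere on the level set. You instead observe that $h$, and hence each base frequency $\w^{(k)}$, is real-analytic (your branch-matching computation at the axis points $x=k\pi\pm\frac\pi2$ is the one nontrivial verification, and it is right), so that each $G_{j,k,p}=j\,\w^{(k)}-k\,\w^{(p)}$ is real-analytic on the connected set $\B R^3_+$, and then you only need a \emph{single} point where $G_{j,k,p}\ne0$ to conclude via the identity theorem that its zero set is closed, nowhere dense and Lebesgue-null. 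The trade-off: the paper's argument yields the stronger structural fact that the resonant sets are smooth hypersurfaces, while yours yields only what the theorem needs but with far less computation. Your approach also cleanly covers the case $p=j$ (the harmonic relation $\w^{(j)}/j=\w^{(k)}/k$, which holds identically at $J=1$ and is the physically central resonance): the paper's contradiction ``$g=0\ne(p-j)\frac\pi2$'' is vacuous there, whereas your first-order expansion $\w^{(k)}=k\frac\pi2-\frac\mu2\,\s(k\pi\t)+O(\mu^2)$ at $J=e^\mu$, together with the linear independence of $\s(k\pi\t)$ and $\s(j\pi\t)$, handles it directly. Both the expansion and the $J=1$ evaluation for $p\ne j$ check out, so the non-degeneracy $G_{j,k,p}\not\equiv0$ is established in all cases and the argument is complete.
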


\begin{proof}
  We work with the reduced parameters $(\w,J,\t)$, given by
  \eqref{thsub}, for which redundancies due to scale-invariance of the
  system \eqref{lagr} have been removed.  The condition that the
  $j$-th mode be resonant with the $k$-th mode, with respect to
  reference period $T^{(k)}$, is \eqref{rescon}.  Thus for fixed
  integers $j$, $k$ and $p$, we define the set
  \[
    \mc Z_{k,j,p} := \Big\{(J,\t)\;\Big|\;
    \w^{(p)} = q\,\w^{(k)}\Big\}, \quad q := \frac jk,
  \]
  where $\w^{(k)}=\w^{(k)}(J,\t)$ is regarded as a known function
  defined by \eqref{weq1}.

  We show that $\mc Z_{k,j,p}$ is a smooth one-dimensional manifold in
  $\B R^2_+$.  We first make our description of $\w^{(k)}$ more
  explicit, by noting that we can write $\w=\w^{(k)}$ in \eqref{weq1}
  as
  \begin{equation}
    \label{wf}
    \w = x - h(J,x) + k\,\frac\pi2, \qquad \w\,\t = x,
  \end{equation}
  and treating $x$ as a variable with $(J,\t)$ fixed.  Thus $(x,\w)$
  is the point of intersection of two explicit, regular curves.
  The slopes of these curves are
  \[
    1-h_x<1 \com{and} \frac1\t>1,
  \]
  respectively, so there is a unique solution, as expected.

  From \eqref{wf} we write
  \[
    \begin{aligned}
      \w^{(k)} = x^{(k)} - h(J,x^{(k)}) + k\,\frac\pi2,
      &\qquad \w^{(k)}\,\t = x^{(k)},\\
      \w^{(p)} = x^{(p)} - h(J,x^{(p)}) + p\,\frac\pi2,
      &\qquad \w^{(p)}\,\t = x^{(p)},
    \end{aligned}
  \]
  and so $\mc Z_{k,j,p}$ becomes the set
  \[
    \w^{(p)} = q\,\w^{(k)}, \qquad
    x^{(p)} = q\,x^{(k)},
  \]
  and
  \[
    g(J,x) := h(J,q\,x) - q\,h(J,x) = (p-j)\frac\pi2,
  \]
  where we have written $x:=x^{(k)}$, $q\,x = x^{(p)}$.  To show that
  $\mc Z_{k,j,p}$ is a submanifold, it suffices to show that $\nabla
  g\ne0$ on $\big\{g = (p-j)\frac\pi2\big\}$.

  From \eqref{hder}, we calculate
  \[
    \begin{aligned}
      \frac{\del g}{\del x}
      &= \frac{J\,(1+\tf^2(qx))}{1 + J^2\,\tf^2(qx)}\,q
        - q\,\frac{J\,(1+\tf^2(x))}{1 + J^2\,\tf^2(x)}\\
      &= \frac{q\,J\,(J^2-1)\,\big(\tf^2(x)-\tf^2(qx)\big)}
        {\big(1 + J^2\,\tf^2(qx)\big)\,\big(1 + J^2\,\tf^2(x)\big)},
    \end{aligned}
  \]
  and so
  \[
    \frac{\del g}{\del x} = 0
    \com{if and only if}
    \tf(qx) = \pm\tf(x),
  \]
  unless $J=1$, which is the degenerate isentropic case.  Similarly,
  \[
    \begin{aligned}
      \frac{\del g}{\del J}
      &= \frac{\tf(qx)}{1 + J^2\,\tf^2(qx)}
        - q\,\frac{\tf(x)}{1 + J^2\,\tf^2(x)}\\
      &= \frac{\tf(qx)-q\tf(x) + J^2\,\tf(qx)\,\tf(x)\,
        \big(\tf(x)-q\,\tf(qx)\big)}
        {\big(1 + J^2\,\tf^2(qx)\big)\,\big(1 + J^2\,\tf^2(x)\big)},
    \end{aligned}
  \]
  and so if $\frac{\del g}{\del J}=0$ \emph{and}
  $\frac{\del g}{\del x} = 0$, we must have $\tf(qx)=\pm\tf(x)$ and,
  after simplifying,
  \[
    \tf(x)\,(\pm1-q)\,\big(1+J^2\tf^2(x)\big) = 0,
    \com{so that}
    \tf(qx)=\tf(x)=0.
  \]
  This condition (or $J=1$) then implies that $g=0\ne(p-j)\frac\pi2$, so
  that
  \[
    \nabla g(J,\t)\ne0 \com{for all}
    (J,\t)\in\mc Z_{k,j,p}.
  \]
  
  The implicit function theorem now implies that $\mc Z_{k,j,p}$ is a
  smooth submanifold of codimension one.  This implies that
  $\mc Z_{k,j,p}$ is both a measure zero and nowhere dense set in
  $\B R^2_+$.  Since the integers $k$, $j$ and $p$ were arbitrary, it
  follows that the resonant set
  \[
    \mc Z := \bigcup_{k,p,j}\mc Z_{k,p,j}
  \]
  being a countable union, is also measure zero and meagre.  Thus the
  complement $\mc Z^c$, which is the set of \emph{fully nonresonant}
  parameters, is generic in both the measure and Baire category
  senses.  Finally, we use scale invariance of the system and
  \eqref{thsub} to transform from $(J,\t)$ back to
  $\T = (\ul\t,J,\ol\t)$ coordinates, which preserves the smallness
  condition and thus completes the proof.
\end{proof}

\section{Piecewise constant entropy}
\label{sec:pw}

Having understood the evolution and its linearization for two constant
enropy levels separated by a single jump, we now extend the argument
to a general piecewise constant entropy profile.  We again use the
non-dimensional system \eqref{nondim}, and consider perturbations of
the equilibrium state $(w,\st w)=(0,0)$, which corresponds to the
quiet state stationary solution $(p,u)=(p_0,0)$.  Let the entropy
profile $s(x)$ be a piecewise constant function on $[0,\ell]$,
continuous at the endpoints $x=0$ and $x=\ell$.  To be precise, we
introduce $n+1$ \emph{entropy widths} $\t_m>0$, $m=0,\dots,n$,
separated by $n$ \emph{entropy jumps} $J_m=e^{-[s]_m/2c_p}\ne 1$, as
above.  Assume the same symmetries as before, namely $w$, $\st w$
even/odd in $t$ and time periodic with period $T$, together with the
spatial/material reflection symmetries \eqref{olsymm} at $x=0$ and
\eqref{ulsymm} at $x=\ell$.  Then our generalized setup becomes: $x=0$
is the center of the $\t_0$ entropy level, and boundary condition
\eqref{olsymm} is imposed there, and that initial data is evolved
through $\t_0$ by the nonlinear evolution \eqref{evol}.  Then
inductively, at $x=\sum_{j=0}^{m-1}\t_j$, we place entropy jump
$\mc J_m$ of size $J_m$, given by \eqref{jump}, and evolve this from
$x$ to $x+\t_m$ by \eqref{evol}.  After the final jump $J_n$, we
evolve the amount $\t_n$ to $x = \ell = \sum_{j=0}^n\t_j$, where we
impose the shifted boundary condition \eqref{ulsymm} which generates a
minimal tile.

As above, after non-dimensionalization we can leverage the even/odd
symmetry to reduce to the nonlocal and nonlinear scalar
evolution~\eqref{yevol} together with jumps \eqref{Jop}, and our
boundary conditions again become \eqref{ols} and \eqref{uls},
respectively.  Since the entropy field is stationary, the
non-dimensionalized jumps $\mc J_m$ are the same for the nonlinear
evolution.  Thus our nonlinear equation for $y^0$, given arbitrary
piecewise constant entropy profile, which is a direct generalization
of \eqref{F=0}, becomes
\begin{equation}
  \label{F=0pw}
  \begin{gathered}
    \mc F(y^0) = 0,  \quad  y^0\text{ even, where} \\
    \mc F(y^0) :=
    \IR-\,\mc S^{-T/4}\,\mc E^{\t_n}\,\mc J_n\,\mc E^{\t_{n-1}}\,
    \dots\,\mc J_1\,\mc E^{\t_0}\,y^0.
  \end{gathered}
\end{equation}
Our goal is to show that generically, $k$-mode solutions of the
corresponding linearized equation are nonresonant, and perturb to
finite amplitude pure tone solutions of the nonlinear system.

Because the structure is very similar to that of
Sections~\ref{sec:lin} and~\ref{sec:onejump}, we can solve the
equation \eqref{F=0pw} in much the same way.  In particular,
Lemmas~\ref{lem:Tk} and \ref{lem:D2E} and Corollary~\ref{cor:D2N}
continue to hold unchanged, and the only essential difference is in
the base linear operator $\mc L_0$, which, however, has a similar
structure.

\begin{lemma}
  \label{lem:pws}
  For a given reference period $T$, and parameters
  \[
    J := \big(J_1,\dots,J_n\big)\in\B R^n_+ \com{and}
    \Theta := \big(\t_0,\dots,\t_n\big)\in\B R^{n+1}_+,
  \]
  let $\mc F$ be given by \eqref{F=0pw}.  Then $\mc F(0)=0$, and the
  Frechet derivative of $\mc F$ at $0$ is
  \[
    D\mc F(0) = \IR-\,\mc L_0, \com{that is}
    D\mc F(0)\big[Y^0\big] = \IR-\,\mc L_0\big[Y^0\big],
  \]
  where $\mc L_0$ is the invertible linear operator
  \begin{equation}
    \label{Lpw}
    \mc L_0 := \mc S^{-T/4}\,\mc L^{\t_n}\,\mc J_n\,\mc L^{\t_{n-1}}\,
    \dots\,\mc J_1\,\mc L^{\t_0},
  \end{equation}
  and the input $Y^0$ is an even function of $t$ with period $T$.
  The nonlinear operator \eqref{F=0pw} can again be factored,
  \begin{equation}
    \label{facpw}
    \mc F = \IR-\,\mc L_0\,\mc N, \com{where}
    \mc N := \mc N_n\,\mc N_{n-1}\cdots\mc N_0,
  \end{equation}
  and each $\mc N_m$ is a nonlinear operator given by
  \[
    \mc N_m = \mc B_m^{-1}\,{\mc L^{\t_m}}^{-1}\,
    \mc E^{\t_m}\,\mc B_m,
  \]
  where $\mc B_m$ is a bounded linear operator that respects
  $k$-modes, with $\mc B1=1$.

  The operators $\mc L_0$ and $D\mc F(0)$ respect $k$-modes, with
  \[
    D\mc F(0)\Big[\c\big(k\piot t\big)\Big] =
    \IR-\,\mc L_0\Big[\c\big(k\piot t\big)\Big]
    = \d_k(J,\Theta;T)\,\s\big(k\piot t\big),
  \]
  where the divisors are the scalars
  \begin{equation}
    \label{dkpw}
    \begin{aligned}
      \d_k(J,\Theta;T) := \(0&1\)&P^{-k}\,
      R(k\piot\t_n)\,M(J_n)\,\dots\\&\dots
      R(k\piot\t_1)\,M(J_1)\,R(k\piot\t_0)\(1\\0\),
    \end{aligned}
  \end{equation}
  and the $2\times2$ matrices $P$, $R(\cdot)$ and $M(\cdot)$ are
  defined in Lemma~\ref{lem:Tk}.
\end{lemma}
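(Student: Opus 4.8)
The plan is to run the arguments of Corollary~\ref{cor:DF0}, Theorem~\ref{thm:fact} and Lemma~\ref{lem:dk} with the two-factor composition replaced by the $(n{+}1)$-factor composition \eqref{F=0pw}. First, $\mc F(0)=0$ because the quiet state $y\equiv0$ is fixed by every building block: each $\mc E^{\t_m}$ fixes the constant $0$ (a constant is a stationary solution of \eqref{yevol}), each jump $\mc J_m=\IR+ + J_m\,\IR-$ fixes constants since they are even, and the linear operators $\mc S^{-T/4}$ and $\IR-$ fix $0$. Next, $\mc F$ is a composition of the fixed linear operators $\IR-$, $\mc S^{-T/4}$, $\mc J_1,\dots,\mc J_n$ with the nonlinear evolutions $\mc E^{\t_m}$, so the chain rule gives $D\mc F(0)=\IR-\,\mc L_0$, where $\mc L_0$ is obtained by replacing each $\mc E^{\t_m}$ by its linearization $D\mc E^{\t_m}(0)=\mc L^{\t_m}=\mc S^{\t_m}$ from \eqref{L} (equivalently Lemma~\ref{lem:D2E} at $y^0=0$); this is precisely \eqref{Lpw}, and since each $J_m\ne0$ every factor is invertible, so $\mc L_0$ is invertible.

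For the factorization I would imitate the telescoping computation in the proof of Theorem~\ref{thm:fact}. Set $\mc B_0:=\mc I$ and, for $1\le m\le n$, let $\mc B_m:=\mc J_m\,\mc L^{\t_{m-1}}\,\mc J_{m-1}\cdots\mc J_1\,\mc L^{\t_0}$ be the block of linear operators lying to the right of $\mc L^{\t_m}$ in \eqref{Lpw}, so that $\mc B_m=\mc J_m\,\mc L^{\t_{m-1}}\,\mc B_{m-1}$. Each $\mc B_m$ is a finite product of shift and jump operators, hence bounded, invertible, and respects $k$-modes (by Lemma~\ref{lem:Tk} each factor acts on $\mc T_k$ by right multiplication by a fixed $2\times2$ matrix), and $\mc B_m\,1=1$ since shifts and the $\mc J_m$ all fix constants. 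Defining $\mc N:=\mc L_0^{-1}\,\mc S^{-T/4}\,\mc E^{\t_n}\mc J_n\cdots\mc J_1\mc E^{\t_0}$ gives $\mc F=\IR-\,\mc L_0\,\mc N$ automatically; the content is to check, by inserting the telescoping cancellations $\mc B_m\,\mc B_m^{-1}=\mc I$ between consecutive evolutions, that $\mc N=\mc N_n\mc N_{n-1}\cdots\mc N_0$ with $\mc N_m=\mc B_m^{-1}\,{\mc L^{\t_m}}^{-1}\,\mc E^{\t_m}\,\mc B_m$, which is \eqref{facpw}. Exactly as in Theorem~\ref{thm:fact}, $\mc N^{\t_m}:={\mc L^{\t_m}}^{-1}\mc E^{\t_m}$ is a regular perturbation of the identity with $D\mc N^{\t_m}(y)=\mc I+O(\a)$ when $y=O(\a)$ (valid while characteristics do not cross on $[0,\t_m]$, since $\mc L^{\t_m}=D\mc E^{\t_m}(0)$), and this property survives conjugation by the fixed linear operator $\mc B_m$; hence every $\mc N_m$, and therefore $\mc N$, is regular with $D\mc N_m(0)=\mc I$.

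Finally, the $k$-mode structure follows from Lemma~\ref{lem:Tk} alone: each factor of $\mc L_0$ acts on the row $\mc T_k$ by right multiplication --- $\mc S^{\t}$ by $R(k\piot\t)$, $\mc J_m$ by $M(J_m)$, $\mc S^{-T/4}$ by $P^{-k}$ --- so $\mc L_0$ preserves $k$-modes, and writing $\c(k\piot t)=\mc T_k\(1\\0\)$ and multiplying the matrices in order yields $\mc L_0\,\c(k\piot t)=\mc T_k\,v$ with $v$ the column obtained by applying the $2\times2$ product in \eqref{dkpw} to $\(1\\0\)$. Applying $\IR-$, which kills the cosine component of a $k$-mode and keeps the sine component, leaves exactly $\d_k(J,\Theta;T)\,\s(k\piot t)$ with $\d_k=\(0&1\)v$, which is \eqref{dkpw}. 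The argument is almost entirely bookkeeping; the one point deserving care is the uniformity across $m$ of the estimate $D\mc N_m(0)=\mc I$ together with regularity of $\mc N$, which relies on each $\mc E^{\t_m}$ remaining a classical, shock-free evolution on its entropy subinterval --- precisely the small-data regime in which the construction operates.
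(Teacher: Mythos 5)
Your proposal is correct and follows essentially the same route as the paper's proof: the paper likewise sets $\mc B_0=\mc I$, derives the recursion $\mc B_m=\mc J_m\,\mc L^{\t_{m-1}}\,\mc B_{m-1}$ (whose closed form is exactly the block of linear factors you wrote down), and verifies \eqref{facpw} by the same telescoping cancellation, with the $k$-mode computation reduced to Lemma~\ref{lem:Tk} just as you do. The only cosmetic difference is that the paper obtains $\mc B_m$ by induction on the required cancellation identity, whereas you state it in closed form first and then check the telescoping.
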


\begin{proof}
  It is clear that $\mc F(0)=0$, and \eqref{Lpw} and \eqref{dkpw}
  follow directly from \eqref{L} and Lemma~\ref{lem:Tk}, as in
  Corollary~\ref{cor:DF0}.  We show the factorization \eqref{facpw} by
  induction: to begin, set
  \[
    \mc B_0 := \mc I, \quad
    \mc N_0 := {\mc L^{\t_0}}^{-1}\,\mc E^{\t_0}, \com{so that}
    \mc E^{\t_0} = \mc L^{\t_0}\,\mc N_0.
  \]
  Next, assume inductively that
  \[
        \mc N_j = \mc B_j^{-1}\,{\mc L^{\t_j}}^{-1}\,
    \mc E^{\t_j}\,\mc B_j \com{for} j < m,
  \]
  which implies
  \begin{equation}
    \label{Nprod}
    \mc N_j\dots\mc N_0 = \mc B_j^{-1}\,{\mc L^{\t_j}}^{-1}\,
    \mc E^{\t_j}\,\mc B_j\,\mc B_{j-1}^{-1}\,{\mc L^{\t_{j-1}}}^{-1}\,
    \mc E^{\t_{j-1}}\,\mc B_{j-1}\dots{\mc L^{\t_0}}^{-1}\,\mc E^{\t_0},
  \end{equation}
  and comparing this to $\mc F$, we require each $\mc B_j$ to satisfy
  \[
    \mc B_j\,\mc B_{j-1}^{-1}\,{\mc L^{\t_{j-1}}}^{-1} = \mc J_j.
  \]
  Thus, at the inductive step, we choose
  \[
    \mc B_m := \mc J_m\,\mc L^{\t_{m-1}}\,\mc B_{m-1} \com{and}
     \mc N_m = \mc B_m^{-1}\,{\mc L^{\t_m}}^{-1}\,
    \mc E^{\t_m}\,\mc B_m.
  \]
  This implies that for each $m\le n$, we have
  \[
    \mc B_m = \mc J_m\,\mc L^{\t_{m-1}}\,\mc J_{m-1}\dots
    \mc L^{\t_1}\,\mc J_1\,\mc L^{\t_0},
  \]
  and \eqref{Nprod} then yields
  \[
    \begin{aligned}    
    \mc S^{-T/4}\,\mc E^{\t_n}\,\mc J_n\,\mc E^{\t_{n-1}}\,
    \dots\,\mc J_1\,\mc E^{\t_0} &= \mc S^{-T/4}\,\mc L^{\t_n}\,
    \mc B_n\,\mc N_n\,\dots\mc N_0\\
    &= \mc L_0\,\mc N_n\,\dots\mc N_0,
    \end{aligned}
  \]
  and \eqref{facpw} follows, completing the proof.
\end{proof}

It follows that, for fixed reference period $T$, the base linearized
operator $D\mc F(0) = \IR-\,\mc L_0$ has a $k$-mode kernel if and only
if $\d_k(J,\Theta,T)=0$.  As above, our purpose is to perturb this to a
nontrivial solution $y^0$ of $\mc F(y^0)=0$ with finite amplitude
$\a$.  We proceed as in the earlier case.

For fixed $T$, denote the zero set of $\d_k$ by
$\Lambda_k=\Lambda_k(T)$, that is, set
\begin{equation}
  \label{lambda}
  \Lambda_k := \big\{ (J,\T)\in\B R^{2n+1}\;\big|\;
  \d_k\big(J,\T;T\big) = 0 \big\}, \quad k\ge 1,
\end{equation}
and we declare a parameter $(J,\T)\in\Lambda_k$ to be
\emph{nonresonant} if it is in no other zero set $\Lambda_j$,
\begin{equation}
  \label{nonres}
  (J,\T) \in \Lambda_k, \com{but}
  (J,\T) \notin\Lambda_j, \quad j\ge1,\ j\ne k.
\end{equation}
Then, as above, any nonresonant $(J,\T)\in \Lambda_k$ provides a
$k$-mode that perturbs to a solution of the nonlinear problem.

\begin{theorem}
  \label{thm:pwent}
  Suppose that $T$ is fixed, and let $(J,\T)\in\Lambda_k(T)$ be
  nonresonant.  Then the $k$-mode $\a\,\c(k\piot t)$ perturbs to a
  pure tone solution of the nonlinear problem $\mc F(y^0) = 0$.  More
  precisely, there is an $\ol\a_k>0$ and functions
  \[
    W(\a,z) = \sum_{j\ge 1,j\ne k}a_j\,\c(j\piot t) \com{and}
    z = z(\a) \in \B R,
  \]
  defined for $|\a|<\ol\a_k$ and some interval $z\in(-\ol z_k,\ol z_k)$, such
  that $W(0,z)=0$, $z(0)=0$, and
  \[
    \mc F\Big(\a\,\c\big(k\piot t\big) + z(\a) +
    W\big(\a,z(\a)\big)\Big) = 0,
  \]
  and this generates a space and time periodic solution of the
  compressible Euler equations.
\end{theorem}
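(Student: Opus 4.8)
The plan is to reproduce the argument of Theorem~\ref{thm:mainex} with the single $1$-mode replaced by the chosen $k$-mode and the one-jump operator replaced by the piecewise constant operator \eqref{F=0pw}, using the factorization already in hand. By Lemma~\ref{lem:pws} we have $\mc F = \IR-\,\mc L_0\,\mc N$ with $\mc N = \mc N_n\cdots\mc N_0$, and each $\mc N_m = \mc B_m^{-1}\,{\mc L^{\t_m}}^{-1}\,\mc E^{\t_m}\,\mc B_m$ is, by Lemma~\ref{lem:D2E} and Corollary~\ref{cor:D2N}, twice Frechet differentiable near $0$ with $\mc N_m(0)=0$ and $D\mc N_m(0)=\mc I$. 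For $\a$ small enough the composed evolution applied to data of size $O(\a)$ stays regular (no gradient blowup) on each of the finitely many fixed widths $\t_m$, so $\mc F$ is a well defined $C^2$ map near $0$.

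For the Liapunov--Schmidt splitting, set $\mc H_1 := \{z+\a\,\c(k\piot t)\mid z,\a\in\B R\}$ and $\mc H_2$ the closure of $\mathrm{span}\{\c(j\piot t):j\ge1,\ j\ne k\}$ in $H^s$, so $y^0\in\mc H_1\oplus\mc H_2$. As target take $\mc H = \{\beta\,\s(k\piot t)\}\oplus\mc H_+$, where $\mc H_+$ is spanned by $\{\s(j\piot t):j\ge1,\ j\ne k\}$ under the divisor-weighted norm $\|y\|^2 = \beta^2 + \sum_{j\ne k} c_j^2\,\d_j^{-2}\,j^{2s}$, exactly as in \eqref{Hplus}; since \eqref{dkpw} gives $|\d_j|\le\prod_m\max\{1,J_m\}$ we get $\mc H\subset H^s$. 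By Lemma~\ref{lem:pws}, $D_{y_2}\whc F(0,0)=\IR-\,\mc L_0$ sends $\c(j\piot t)$ to $\d_j\,\s(j\piot t)$, so by the nonresonance hypothesis ($\d_j\ne0$ for $j\ne k$) it is an isometry of $\mc H_2$ onto $\mc H_+$, hence bounded invertible. The implicit function theorem then produces, as in Lemma~\ref{lem:himodes}, a $C^1$ map $W(\a,z)\in\mc H_2$ on a neighborhood $|\a|<\ol\a_k$, $|z|<\ol z_k$ solving the auxiliary equation $\Pi\,\whc F(z+\a\,\c(k\piot t),W(\a,z))=0$ with $W(0,z)=0$, and the argument of Lemma~\ref{lem:W} gives $W(\a,z)=o(|\a|)$ uniformly in $z$.

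It remains to solve the scalar bifurcation equation $f(\a,z):=\langle\s(k\piot t),\whc F(z+\a\,\c(k\piot t),W(\a,z))\rangle=0$. Since $\partial_z f|_{(0,0)}=0$ (all $0$-modes are annihilated by $\IR-$), we pass to $g(\a,z):=f(\a,z)/\a$ for $\a\ne0$, $g(0,z):=\partial_\a f(0,z)$, and must verify $\partial_z g|_{(0,0)}\ne0$. Arguing as for \eqref{dgdz}, using $W=o(|\a|)$ and $D\mc N(0)=\mc I$,
\[
  \partial_z g\big|_{(0,0)} = \big\langle\s(k\piot t),\,\mc L_0\,D^2\mc N(0)[1,\c(k\piot t)]\big\rangle,
\]
where we dropped $\IR-$ since $\s(k\piot t)$ is odd. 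The chain rule applied to $\mc N=\mc N_n\cdots\mc N_0$, together with $\mc N_m(0)=0$ and $D\mc N_m(0)=\mc I$, makes the second derivative telescope to $D^2\mc N(0)[1,Y^0]=\sum_{m=0}^n D^2\mc N_m(0)[1,Y^0]$; by Corollary~\ref{cor:D2N} and $\mc B_m 1=1$ each term equals $\nu\,\t_m\,\frac{d}{dt}Y^0$, so
\[
  D^2\mc N(0)[1,\c(k\piot t)] = \nu\Big(\TS\sum_{m=0}^n\t_m\Big)\frac{d}{dt}\c(k\piot t) = -\nu\,k\piot\Big(\TS\sum_{m=0}^n\t_m\Big)\s(k\piot t).
\]
Finally, $\d_k=0$ means the $2\times2$ matrix $A$ representing $\mc L_0$ on the $k$-mode (Lemma~\ref{lem:Tk}) has vanishing $(2,1)$-entry, and invertibility of $\mc L_0$ forces $A_{22}\ne0$, so $\langle\s(k\piot t),\mc L_0\,\s(k\piot t)\rangle=A_{22}\,\|\s(k\piot t)\|^2\ne0$ and hence $\partial_z g|_{(0,0)}\ne0$. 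The implicit function theorem gives $z=z(\a)$ with $z(0)=0$, and $y^0=\a\,\c(k\piot t)+z(\a)+W(\a,z(\a))$ solves $\mc F(y^0)=0$; feeding this minimal tile into the reflection/tiling construction of Theorem~\ref{thm:tile}, adapted to the piecewise constant setup described at the start of this section, yields the asserted space and time periodic solution of \eqref{lagr}.

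The main obstacle --- and essentially the only place the structure of the equations is used nontrivially --- is the non-degeneracy $\partial_z g|_{(0,0)}\ne0$: one needs the second-derivative contributions of the successive evolutions to \emph{accumulate} to $\nu\sum_m\t_m$ rather than to cancel, which is where genuine nonlinearity ($\nu>0$) enters and is precisely the balance-of-compression-and-rarefaction mechanism. Everything else is bookkeeping already carried out in the one-jump case: the factorization of Lemma~\ref{lem:pws} renders the small divisors uniform so that the weighted-norm trick applies verbatim, and the finite total width $\ell$ together with smallness of $\a$ rules out gradient singularities in the composite evolution.
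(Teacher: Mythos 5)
Your proposal is correct and follows essentially the same route as the paper's proof: the same Liapunov--Schmidt splitting with the $k$-mode in place of the $1$-mode, the same divisor-weighted target norm making $\IR-\mc L_0$ an isometry so the implicit function theorem solves the auxiliary equation, and the same computation $\partial_z g|_{(0,0)} = -\nu\,k\piot\big(\sum_m\t_m\big)\langle\s(k\piot t),\mc L_0\,\s(k\piot t)\rangle\ne0$ via the telescoped second derivative $\sum_m D^2\mc N_m(0)[1,\cdot]$ from Corollary~\ref{cor:D2N}. Your explicit matrix-entry argument ($\d_k=0$ kills the $(2,1)$-entry, so invertibility forces $A_{22}\ne0$) is just a slightly more spelled-out version of the paper's remark that $\langle\s,\mc L_0\,\c\rangle=0$ and $\mc L_0$ is invertible on $k$-modes.
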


\begin{proof}
  The proof proceeds exactly as in that of Theorem~\ref{thm:mainex}.
  The Hilbert spaces are defined as in \eqref{Hsplit} and
  \eqref{Hplus}, with the $k$-mode being substituted for the 1-mode.
  The analog of Lemma~\ref{lem:himodes} then follows exactly as above
  to yield the solution $W(\a,z)$ of the auxiliary equation.
  The bifurcation equation \eqref{bif} is replaced by
  \[
    f(\a,z) :=
    \Big\langle \s(k\piot t), \whc F\big(z+\a\,\c(k\piot t)+
    W(\a,z)\big)\Big\rangle = 0,
  \]
  and we again define $g$ by \eqref{gdef}.  The analog of \eqref{dgdz}
  is
  \begin{equation}
    \label{dgdzpw}
    \frac{\del g}{\del z}\Big|_{(0,0)} =
    \Big\langle \s\big(k\piot t\big),
    \sum_{m=0}^n\mc L_0\,D^2\mc N_m(0)\big[1,\c(k\piot t)\big]\Big\rangle,
  \end{equation}
  and Lemmas~\ref{lem:D2E} and Corollary~\ref{cor:D2N} continue to
  hold.  Since
  \[
    D^2\mc N_m(0)\big[1,Y^0\big] =
    \nu\,\t_m\,\frac{d}{dt}Y^0,
  \]
  the analog of \eqref{dgdzss} is
  \[
    \frac{\del g}{\del z}\Big|_{(0,0)} = - \nu\,k\,\piot\,
        \Big\langle \s\big(k\piot t\big),
        \mc L_0\,\s\big(k\piot t\big)\Big\rangle\sum_{m=0}^n\t_m\ne 0,
  \]
  where again the coefficient is nonzero because
  \[
    \Big\langle \s\big(k\piot t\big),
    \mc L_0\,\c\big(k\piot t\big)\Big\rangle = 0,
  \]
  and $\mc L_0$ is invertible on $k$-modes, spanned by
  $\c\big(k\piot t\big)$ and $\s\big(k\piot t\big)$.
\end{proof}

\subsection{Structure of the sets $\Lambda_k(T)$}

In Theorem \ref{thm:pwent}, we showed that nonresonant modes perturb,
but we have not yet shown the existence of such modes.  We now study
the structure of the sets $\Lambda_k(T)$, and show that a small
perturbation of any piecewise constant entropy profile yields the
existence of some $k$ for which $(J,\T)\in\Lambda_k$, that is
$\d_k(J,\Theta;T)=0$ by \eqref{lambda}.  Moreover, we can choose this
perturbed entropy profile to be nonresonant, so that the corresponding
$k$-mode solution indeed perturbs.

\begin{lemma}
  \label{lem:lamk}
  For each $k\ge 1$, and fixed $T>0$, the set $\Lambda_k(T)$ is a
  $C^\infty$ submanifold of $\B R^{2n+1}$ of codimension 1, and for
  each $j\ne k$, the intersection $\Lambda_k\cap\Lambda_j$ is a
  submanifold of $\B R^{2n+1}$ of codimension 2 if it is nonempty.
\end{lemma}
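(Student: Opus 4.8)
The plan is to use the angle (rotation) formalism of the single-jump case. Since every $2\times2$ factor in \eqref{dkpw} is invertible, the intermediate vectors never vanish, so I track the accumulated angle: set $\psi_0 := k\piot\t_0$, inductively $\psi_m := k\piot\t_m + h(J_m,\psi_{m-1})$, and put $\Psi_k := \psi_n$ and $\rho_k := \prod_{m=1}^n\sqrt{\c^2\psi_{m-1} + J_m^2\,\s^2\psi_{m-1}} > 0$; both are $C^\infty$ on $\B R^{2n+1}$ because $h$ in \eqref{hdef} is smooth. Reading \eqref{dkpw} from right to left and using \eqref{angle} together with $P^{-k} = R(-k\pi/2)$ yields the factorization
\[
  \d_k(J,\Theta;T) = \rho_k\,\s\big(\Psi_k - k\TS{\tfrac\pi2}\big),
\]
so that $\Lambda_k = \{\,\Psi_k \equiv k\pi/2 \pmod{\pi}\,\}$.

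For the codimension-one statement I would apply the regular value theorem to $\Psi_k$: from \eqref{hder}, $h_x > 0$ everywhere, so by the chain rule $\partial_{\t_m}\Psi_k = k\piot\prod_{l=m+1}^n h_x(J_l,\psi_{l-1}) > 0$ for every $m$; hence $\nabla\Psi_k$ never vanishes, each connected component of $\Lambda_k$ is a regular level set of $\Psi_k$ and so a $C^\infty$ hypersurface, and $\Lambda_k$ is their disjoint union.

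For the intersection I would examine the rank of the $2\times(2n+1)$ Jacobian of $(\d_k,\d_j)$ along $\Lambda_k\cap\Lambda_j$. On $\Lambda_k$ the sine factor vanishes, so $\nabla\d_k = \epsilon_k\,\rho_k\,\nabla\Psi_k$ with $\epsilon_k = \c(\Psi_k - k\pi/2) = \pm1$, and likewise on $\Lambda_j$; thus the Jacobian drops rank exactly where $\nabla\Psi_j \parallel \nabla\Psi_k$. A short comparison of the $\t_n$-, $\t_{n-1}$- and $J_n$-entries of the two gradients forces the proportionality constant to be $j/k$ and then equality of $h_x$ and of $h_J/h_x$ evaluated at $\psi^{(k)}_{n-1}$ and $\psi^{(j)}_{n-1}$; since $h_x(J_n,\cdot)$ depends on its argument only through $\s^2$, since $h_J(J_n,x)/h_x(J_n,x) = \s(2x)/(2J_n)$, and since $J_n\ne1$, this is possible only if $\psi^{(k)}_{n-1},\psi^{(j)}_{n-1}\in\frac\pi2\B Z$. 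Iterating this down the chain and using that $h$ fixes multiples of $\pi/2$ shows that on the non-transverse locus every intermediate angle for \emph{both} modes lies on a coordinate axis, which confines that locus to a countable union of slices $\{\t_0 = c_0,\dots,\t_n = c_n\}$ with the $c_i$ rational multiples of $T$. Away from this locus the regular value theorem gives that $\Lambda_k\cap\Lambda_j$ is a $C^\infty$ codimension-two submanifold.

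The main obstacle is precisely this non-transverse locus. Either the axis conditions are incompatible with simultaneous membership in $\Lambda_k$ and $\Lambda_j$ — a parity count on $\ell = \t_0 + \dots + \t_n$ disposes of this for many pairs $(k,j)$ — or they are compatible, and one must argue directly. Near such a point $\Psi_k$ and $\Psi_j$ are, to first order, the \emph{same} linear functional of $\Theta$ (each $h_x$-factor degenerates to $J_l^{\pm1}$, independent of $k$), so $\Lambda_k$ and $\Lambda_j$ are tangent there; I would then expand an appropriate scalar combination $\d_k - \lambda\,\d_j$, restricted to the hypersurface $\Lambda_k$, to its first nonvanishing order and check that the leading coefficient is nonzero thanks to $J_m\ne1$ (in the $n=1$ model it is proportional to $J_1(J_1^2-1)$), so that $\d_j$ vanishes on $\Lambda_k$ along a single degenerate direction with nonzero leading term, whence $\Lambda_k\cap\Lambda_j$ is again a $C^\infty$ codimension-two submanifold. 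Splicing the transverse and non-transverse pieces completes the proof, and since there are only countably many sets $\Lambda_k\cap\Lambda_j$ this is exactly the structure used in the genericity argument below.
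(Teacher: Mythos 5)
Your factorization $\d_k=\rho_k\,\s\big(\Psi_k-k\TS{\frac\pi2}\big)$ with $\rho_k>0$, and the resulting proof of the codimension-one statement (the $\t_n$-component of $\nabla\Psi_k$ is $k\piot>0$, so $0$ is a regular value of $\d_k$), are correct, and in fact cleaner than the paper's argument, which works with the two-sided decomposition $\d_k=\ul{z_m^{(k)}}^TM(J_m)\ol{z_m^{(k)}}$ at each jump and must exclude by hand the simultaneous vanishing of all components of $\nabla\d_k$. Your reduction of the codimension-two statement to independence of $\nabla\Psi_k$ and $\nabla\Psi_j$, and your identification of the degenerate locus with configurations in which intermediate angles lie on the coordinate axes, are also sound.

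The gap is in the treatment of that degenerate locus, which you correctly flag as the main obstacle but do not close. First, the locus genuinely meets $\Lambda_k\cap\Lambda_j$, so it cannot be dismissed: for $n=1$ the point $(\t_0,J,\t_1)=(T/4,\,J,\,T/2)$ lies in $\Lambda_1\cap\Lambda_3$ for \emph{every} $J>0$, and a direct computation gives $\nabla\d_1=(-1,0,-J)$ and $\nabla\d_3=(-3,0,-3J)=3\,\nabla\d_1$ there (taking $T=2\pi$). Second, and decisively, your concluding inference --- that a nonvanishing leading coefficient in the expansion of $\d_j\big|_{\Lambda_k}$ implies the intersection is a codimension-two submanifold --- is not valid: a function of two variables with leading term $b^3$ can have zero set $\{b=c^2\}$ (a manifold, e.g.\ $b^3-c^6$) or $\{b=0\}\cup\{b=\pm c^2\}$ (not a manifold, e.g.\ $b^3-bc^4$), so the leading term decides nothing. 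In the example above one can compute exactly, on $\Lambda_1=\{a=-h(J,b)\}$ with $\t_0=T/4+a$, $\t_1=T/2+b$, that $\d_3$ equals $8J(J^2-1)\tan^3(b)$ times a nonvanishing factor, so the intersection is precisely the line $\{a=b=0\}$ and is a manifold --- but this requires identifying the zero set, not merely its leading coefficient, and you do not do this. A smaller further issue is that for $n\ge2$ the ``iterate down the chain'' step is only asserted: the degenerate factors $h_x(J_l,\cdot)$ take the values $J_l^{\pm1}$ and need not agree between the $k$- and $j$-chains, so the induction does not close automatically. For context, the paper's own proof disposes of this same configuration by asserting that $\s(\ul{\vp_k})\,\s(\ol{\vp_k})=0$ forces $\d_k\ne0$, which the example above contradicts; so you have put your finger on a real difficulty in the lemma rather than missed a trick from the paper, but your proposal does not yet resolve it.
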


\begin{proof}
  Without loss of generality we can assume $T=2\pi$.  The set
  $\Lambda_k$ is defined in \eqref{lambda} as the zero set of the
  single $C^\infty$ scalar equation $\d_k(J,\Theta)=0$, given by
  \eqref{dkpw}.  Thus to show it is a manifold of codimension 1, it
  suffices to show that the gradient
  \[
    \nabla\d_k \ne 0, \qquad \nabla:=\nabla_{(J,\T)},
  \]
  everywhere on $\Lambda_k$.  To isolate the dependence of
  $\d_k(J,\Theta)$ on the localized triple $(\t_m, J_m, \t_{m-1})$,
  $m\in\{1,\dots,n\}$, note that for each such $m$, \eqref{dkpw} can
  be written as
  \begin{equation}
    \label{dkz}
    \d_k(J,\Theta;T) = \ul{z_m^{(k)}}^T\,M(J_m)\,\ol{z_m^{(k)}},
  \end{equation}
  where the vectors $\ol{z_m^{(k)}}$ and $\ul{z_m^{(k)}}$ are defined
  inductively by
  \[
    \begin{aligned}
      \ol{z_0^{(k)}} &:= \(1\\0\),\\
      \ol{z_{m+1}^{(k)}} &:= R(k\t_m)\,M(J_m)\,\ol{z_m^{(k)}},
      \quad m = 0,\dots,n-1,
    \end{aligned}   
  \]
  and 
  \[
    \begin{aligned}
      \ul{z_n^{(k)}} &:= R(-k\t_n)\,P^{k}\,\(1\\0\),\\
      \ul{z_{m-1}^{(k)}} &:= R(-k\t_{m-1})\,M(J_m)\,\ul{z_m^{(k)}},
      \quad m = n,\dots,1,
    \end{aligned}   
  \]
  respectively.  Each of $\ol{z_m^{(k)}}$ and $\ul{z_m^{(k)}}$ depends
  only on those variables that determine the entropy profile to the
  left or right of the jump $J_m$, respectively.

  For any fixed $m$, we further simplify by noting that since
  rotations and jumps are invertible, the $z_m^{(k)}$'s never vanish,
  so we can write them as
  \begin{equation}
    \label{rvp}
    \ul{z_m^{(k)}} := \ul{r_k}
    \(\c\big(\ul{\vp_k}\big)\\\s\big(\ul{\vp_k}\big) \)
    \com{and}
    \ol{z_m^{(k)}} := \ol{r_k}
    \(\c\big(\ol{\vp_k}\big)\\\s\big(\ol{\vp_k}\big) \),
  \end{equation}
  for some given angles $\ul{\vp_k}$ and $\ol{\vp_k}$, and scalar
  amplitudes $\ul{r_k}$ and $\ol{r_k}$, where we have simplified the
  subscripts for readability; recall that each of these is defined for
  each $m$.

  We calculate the components of the gradient by differentiating
  \eqref{dkz} directly: it is immediate that
  \[
    \frac{\del\d_k}{\del J_m} =
    \(0&1\)\ul{z_m^{(k)}}\,\(0&1\)\ol{z_m^{(k)}},
  \]
  while since $\frac{\del}{\del\t}R(\t) = PR(\t) = R(\t)P$, we
  have
  \[
    \frac{\del\d_k}{\del\t_m} =
    \ul{z_m^{(k)}}^T\,kP\,M(J_m)\,\ol{z_m^{(k)}}, \qquad
    \frac{\del\d_k}{\del\t_{m-1}} =
    \ul{z_m^{(k)}}^T\,M(J_m)\,kP\,\ol{z_m^{(k)}}.
  \]
  If we use \eqref{rvp}, we get the explicit formulas
  \begin{equation}
    \label{graddk}
    \begin{aligned}
      \frac{\del\d_k}{\del J_m} &=
      \ul{r_k}\,\ol{r_k}\,
      \s(\ul{\vp_k})\,\s(\ol{\vp_k}),\\
      \frac{\del\d_k}{\del\t_m} &=
      \ul{r_k}\,\ol{r_k}\,k\,
      \Big(\s(\ul{\vp_k})\,\c(\ol{\vp_k})
      -J_m\,\c(\ul{\vp_k})\,\s(\ol{\vp_k})\Big),\\
      \frac{\del\d_k}{\del\t_{m-1}} &=
      \ul{r_k}\,\ol{r_k}\,k\,
      \Big(J_m\,\s(\ul{\vp_k})\,\c(\ol{\vp_k})
      -\c(\ul{\vp_k})\,\s(\ol{\vp_k})\Big),
    \end{aligned}
  \end{equation}
  for each $m$.

  To show that $\Lambda_k$ is a manifold, we first suppose that
  $\nabla\d_k=0$, so that the right hand sides in
  \eqref{graddk} all vanish at every $m$.  However, if all three
  vanish for just one $m$, the first equation implies either
  $\s(\ul{\vp_k})=0$ or $\s(\ol{\vp_k})=0$, and the second equation
  then implies that both of these are zero.  It then follows that
  $\c(\ul{\vp_k})=\pm1$ and $\c(\ol{\vp_k})=\pm1$, and plugging these
  in to \eqref{dkz}, we must have $\d_k(J,\Theta)\ne0$, so
  $(J,\T)\notin\Lambda_k$.

  We similarly show that for $j\ne k$, if $\Lambda_k\cap\Lambda_j$ is
  nonempty, it is a manifold in $\B R^{2n+1}$ of codimension 2, which
  will imply that it is a submanifold of $\Lambda_k$ of codimension 1.
  This will follow if we show that the gradients of $\d_k$ and $\d_j$
  are independent at each
  \[
    (J,\T)\in\Lambda_k\cap\Lambda_j = \Big\{(J,\T)\;\Big|\;
    \d_k(J,\Theta)=\d_j(J,\Theta)=0\Big\}.
  \]
  We thus suppose that the gradients $\nabla\d_k$ and
  $\nabla\d_j$ are \emph{dependent}, so that
  \[
    \nabla\d_j = c\,\nabla\d_k, \com{and set}
    C = c\,\frac{\ul{r_k}\,\ol{r_k}}{\ul{r_j}\,\ol{r_j}}.
  \]
  By comparing the gradients, it follows from \eqref{graddk} that we
  have
  \[
    \begin{aligned}
      \s(\ul{\vp_j})\,\s(\ol{\vp_j})
      &= C\,\s(\ul{\vp_k})\,\s(\ol{\vp_k}),\\
      \s(\ul{\vp_j})\,\c(\ol{\vp_j})
      -J_m\,\c(\ul{\vp_j})\,\s(\ol{\vp_j})
      &= C\,\frac kj\,\Big(\s(\ul{\vp_k})\,\c(\ol{\vp_k})
      -J_m\,\c(\ul{\vp_k})\,\s(\ol{\vp_k})\Big),\\
      J_m\,\s(\ul{\vp_j})\,\c(\ol{\vp_j})
      -\c(\ul{\vp_j})\,\s(\ol{\vp_j})
      &= C\,\frac kj\,\Big(J_m\,\s(\ul{\vp_k})\,\c(\ol{\vp_k})
      -\c(\ul{\vp_k})\,\s(\ol{\vp_k})\Big).
    \end{aligned}
  \]
  Eliminating $J_m$, the last two equations simplify, so we have
  \[
    \begin{aligned}
      \s(\ul{\vp_j})\,\s(\ol{\vp_j})
      &= C\,\s(\ul{\vp_k})\,\s(\ol{\vp_k}),\\
      \s(\ul{\vp_j})\,\c(\ol{\vp_j})
      &= C\,\frac kj\,\s(\ul{\vp_k})\,\c(\ol{\vp_k}),\\
      \c(\ul{\vp_j})\,\s(\ol{\vp_j})
      &= C\,\frac kj\,\c(\ul{\vp_k})\,\s(\ol{\vp_k}),
    \end{aligned}
  \]
  and multiplying the last two equations and dividing by the first
  also yields
  \[
    \c(\ul{\vp_j})\,\c(\ol{\vp_j})
    = C\,\frac{k^2}{j^2}\,\c(\ul{\vp_k})\,\c(\ol{\vp_k}),
  \]
  unless $\s(\ul{\vp_k})\,\s(\ol{\vp_k})=0$.  It now follows from
  \eqref{dkz} that $\d_k\ne \d_j$ unless possibly
  $\s(\ul{\vp_k})\,\s(\ol{\vp_k})=0$; however, as above, in this case
  neither $\d_k$ or $\d_j$ vanishes, and the proof is complete.
\end{proof}

The piecewise constant entropy profile can be viewed as being fully
parameterized by $(J,\T) \in \B R^{2n+1}_+$, although this degenerates
if some $J_m=1$.  The following lemma follows directly from this
parameterization of the piecewise constant entropy profile, denoted
$s(J,\T)$, recalling that the jump is given by $J = e^{-[s]/2c_p}$ in
\eqref{jump}.

\begin{lemma}
  \label{lem:dkpw}
  The parameter change
  \[
    \begin{gathered}
      \T \to \wt\T(\eta,m) \com{given by}\\
      \t_m\to \t_m+\eta,\quad \t_{m-1}\to \t_{m-1}-\eta,
    \end{gathered}
  \]
  has the effect of shifting the position of the jump $J_m$ to the
  right by an amount $\eta$, while for $m<n$, the change
  \[
    \begin{gathered}
      J \to \wt J(h,m) \com{given by}\\
      J_{m+1}\to J_{m+1}\,e^{h},\quad J_m\to J_m\,e^{-h},
    \end{gathered}
  \]
  increases the value of the entropy at level $\t_m$ by $2c_p\,h$.  The
  corresponding $L^1$ norm of the change in entropy is
  \[
    \begin{aligned}
      \big\|s\big(J,\wt\T\big)-s(J,\T)\big\|_{L^1}
      &= 2c_p\,|\log J_m|\,|\eta|, \com{or}\\
      \big\|s\big(\wt J,\T\big)-s(J,\T)\big\|_{L^1}
      &= 2c_p\,\t_m\,|h|,
    \end{aligned}
  \]
  respectively.
\end{lemma}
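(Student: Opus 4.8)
The plan is to pass from the coordinates $(J,\T)\in\B R^{2n+1}_+$ to the more transparent description of the profile $s=s(J,\T)$ in terms of its breakpoints and its level values, and then read off both the geometric effect and the $L^1$ norm directly. First I would introduce $s_m$, the constant value of $s$ on the $m$-th level, so that $s$ has width $\t_j$ on level $j$, the $m$-th jump $\mc J_m$ sits at $x_m:=\t_0+\dots+\t_{m-1}$, and, by $J_m=e^{-[s]_m/2c_p}$ from \eqref{jump} with $[s]_m=s_m-s_{m-1}$, we have $s_m-s_{m-1}=-2c_p\log J_m$, hence the telescoping relation $s_m=s_0-2c_p\sum_{j=1}^m\log J_j$. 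Thus, apart from the fixed base value $s_0$ on level $0$, the profile is determined by the jump locations $x_1<\dots<x_n$ (equivalently the widths $\t_j$) together with the signed jump heights $-2c_p\log J_j$.

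For the first change $\T\to\wt\T(\eta,m)$, I would observe that only $\t_{m-1}$ and $\t_m$ are altered and their sum is preserved; hence every jump location $x_j$ with $j\ne m$ is unchanged, the location of $\mc J_m$ alone is translated by a distance $|\eta|$, and — since $J$ is untouched — every level value $s_i$ is unchanged. Therefore $s(J,\wt\T)$ and $s(J,\T)$ agree outside the interval of length $|\eta|$ lying between the old and new positions of $\mc J_m$, where one profile equals $s_{m-1}$ and the other equals $s_m$, so the pointwise difference there has absolute value $|s_m-s_{m-1}|=2c_p|\log J_m|$; integrating yields $\|s(J,\wt\T)-s(J,\T)\|_{L^1}=2c_p|\log J_m|\,|\eta|$.

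For the second change $J\to\wt J(h,m)$, valid for $1\le m\le n-1$ so that both $\mc J_m$ and $\mc J_{m+1}$ are present, the widths $\t_j$ — hence all jump locations — are fixed, and only $\log J_m\to\log J_m-h$ and $\log J_{m+1}\to\log J_{m+1}+h$ change. I would then substitute into $s_i=s_0-2c_p\sum_{j\le i}\log J_j$: for $i<m$ nothing moves; for $i=m$ the partial sum acquires $-h$, so $s_m\to s_m+2c_p h$; and for $i>m$ the contributions $-h$ (from $\mc J_m$) and $+h$ (from $\mc J_{m+1}$) cancel, so $s_i$ is unchanged. Hence the two profiles differ only on level $m$, an interval of width $\t_m$, where the value is shifted by the constant $2c_p h$; this confirms the claimed effect on the entropy at level $\t_m$ and gives $\|s(\wt J,\T)-s(J,\T)\|_{L^1}=2c_p\,\t_m\,|h|$.

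I do not expect any substantive obstacle, since the content is exactly the bookkeeping above. The one point needing a moment's care is the cancellation in the telescoping sum in the last step — which is what localizes the $J$-perturbation to a single level, and is precisely why the compensating factors $e^{-h}$ and $e^{h}$ are placed on the \emph{adjacent} jumps $\mc J_m$ and $\mc J_{m+1}$ — together with keeping the sign convention straight when translating the location of $\mc J_m$ in the first step.
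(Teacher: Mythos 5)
Your proposal is correct and is exactly the bookkeeping the paper has in mind: the paper offers no written proof, stating only that the lemma ``follows directly from this parameterization'' of $s(J,\T)$ via $J_m=e^{-[s]_m/2c_p}$, and your telescoping identity $s_m=s_0-2c_p\sum_{j\le m}\log J_j$ together with the localization of each perturbation (to the length-$|\eta|$ interval swept by the jump, resp.\ to the single level $\t_m$) is precisely that direct argument. The only quibble is with the statement rather than your proof: with the paper's convention that $\mc J_m$ sits at $x_m=\sum_{j\le m-1}\t_j$, the change $\t_{m-1}\to\t_{m-1}-\eta$ moves the jump \emph{left} for $\eta>0$, a sign that you prudently leave uncommitted and that is irrelevant to the $L^1$ formulas.
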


A consequence of these lemmas is the following theorem, which states
that generically, zero divisors are nonresonant, so generate nonlinear
pure tones of finite amplitude.  Moreover, nonresonant profiles are
dense in the set of measurable profiles.  This means that given any
entropy profile $s(x)$ on $(0,\ell)$, an arbitrarily small
perturbation of $s(x)$ supports finite amplitude pure tone space and
time periodic solutions of the compressible Euler equations.

\begin{theorem}
  \label{thm:genpw}
  Given any $T$, we define the \emph{resonant set} of $k$-modes
  to be
  \[
    \Lambda^{res}_k(T) := \Big\{(J,\T)\in\Lambda_k\;\Big|\;
    \d_j(J,\T)=0 \text{ for some }j\ne k\Big\} \subset \Lambda_k(T).
  \]
  Then the set of \emph{nonresonant} $k$-modes is generic in the
  $2n$-dimensional manifold $\Lambda_k(T)$, in that the set
  $\Lambda^{res}_k(T)$ is both meagre (or first Baire category) in
  $\Lambda_k$ and has $2n$-dimensional Hausdorff measure 0.

  Moreover, given any entropy profile, an arbitrarily small $L^1$
  perturbation yields a piecewise constant profile with infinitely
  many nonresonant zero divisors, each of which which generates a
  corresponding periodic solution.  That is, the set of entropy
  profiles which support infinitely many finite amplitude periodic
  solutions is dense in the set of all (measurable) entropy profiles.
\end{theorem}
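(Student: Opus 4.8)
The statement has two parts, which I would handle separately. The first, for a fixed reference period $T$, is essentially immediate from Lemma~\ref{lem:lamk}. By definition $\Lambda^{res}_k(T)=\bigcup_{j\ge1,\,j\ne k}\big(\Lambda_k(T)\cap\Lambda_j(T)\big)$, and by Lemma~\ref{lem:lamk} each nonempty $\Lambda_k\cap\Lambda_j$ is a codimension-one submanifold of the $2n$-dimensional manifold $\Lambda_k$. Such a set is relatively closed in $\Lambda_k$ — it is the zero set within $\Lambda_k$ of the continuous scalar $\d_j$ — and, being of strictly smaller dimension, has empty relative interior, so it is nowhere dense in $\Lambda_k$; likewise it has $(2n-1)$-dimensional, hence $2n$-dimensionally null, Hausdorff measure. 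The countable union over $j\ne k$ preserves both properties, which gives the first claim.

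For the ``moreover'' clause the plan is to combine $L^1$-density of step functions with a genericity statement for piecewise constant profiles, parallel to Theorem~\ref{thm:T}. Given $s$, first choose a piecewise constant $s_0$ with $\|s-s_0\|_{L^1}<\epsilon/2$; merging adjacent equal levels we may assume all its jumps satisfy $J_m\ne1$, so that $s_0$ corresponds to some $(J,\Theta)\in\B R^{2n+1}_+$ as in Section~\ref{sec:pw}. Then, exactly as in Section~\ref{sec:Tvar}, I would read $\d_k(J,\Theta;T)$ off the accumulated angle $\Psi=\Psi_{(J,\Theta)}(\w)$, $\w:=k\,\piot\sum_m\t_m$, built by alternately rotating by $\w\,\t_m/\sum_\ell\t_\ell$ and applying the jump map $h(J_m,\cdot)$ of \eqref{hdef}. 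Since every rotation adds a positive angle and each $h(J_m,\cdot)$ is smooth, strictly increasing, and satisfies $h(J_m,x+\pi)=h(J_m,x)+\pi$, the map $\Psi$ extends to a smooth strictly increasing bijection of $(0,\infty)$ with $\Psi(0^+)=0$; hence for every $k\ge1$ there is a unique root $\w^{(k)}=\w^{(k)}(J,\Theta):=\Psi^{-1}(k\pi/2)$ and a corresponding reference period $T^{(k)}$ (given by \eqref{Tkw} with $\ul\t+\ol\t$ replaced by $\sum_m\t_m$) at which $\d_k(J,\Theta;T^{(k)})=0$ — the piecewise constant analogue of Lemma~\ref{lem:Tj}. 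By injectivity of $\Psi$, and exactly as in \eqref{rescon}, the $k$-mode at $T^{(k)}$ is resonant only if $\w^{(p)}=q\,\w^{(k)}$ for some $p\ne k$ and $q\in\B Q_+$.

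It then remains to show that for each such triple $(k,p,q)$ the set $\mc Z_{k,p,q}:=\{(J,\Theta)\in\B R^{2n+1}_+:\w^{(p)}=q\,\w^{(k)}\}$ has measure zero and empty interior. Granting this, its countable union over all $(k,p,q)$ is meagre and null; its complement — the profiles for which \emph{every} $k$-mode at its period $T^{(k)}$ is nonresonant — is therefore dense; and Lemma~\ref{lem:dkpw}, which says that small parameter changes produce small $L^1$ changes in the profile, lets us pick such a profile within $L^1$-distance $\epsilon/2$ of $s_0$, hence within $\epsilon$ of $s$. By Theorem~\ref{thm:pwent} each of its infinitely many $k$-modes then perturbs to a periodic solution, and the density claim follows. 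I expect the lower-dimensionality of $\mc Z_{k,p,q}$ to be the main obstacle. Since $\partial_\w\Psi>0$, each root $\w^{(k)}=\Psi^{-1}(k\pi/2)$ is a real-analytic function of $(J,\Theta)$, so $G_{k,p,q}:=\w^{(p)}/\w^{(k)}$ is real-analytic and it suffices to rule out $G_{k,p,q}\equiv q$: the nearly-isentropic limit $J_m\to1$ forces $\w^{(k)}\to k\pi/2$, hence $G_{k,p,q}\to p/k$, which settles the case $q\ne p/k$, while for $q=p/k$ one differentiates $G_{k,p,q}$ in a single well-chosen jump strength $J_m$, localizing the dependence exactly as in the $\ol{z_m^{(k)}},\ul{z_m^{(k)}}$ decomposition of the proof of Lemma~\ref{lem:lamk}, so that the two scales $\w^{(k)}$ and $q\,\w^{(k)}$ enter through $\del_J h$ from \eqref{hder} at distinct arguments and cannot cancel except where all the pertinent tangent factors vanish — in which case, precisely as in the proofs of Lemma~\ref{lem:lamk} and Theorem~\ref{thm:T}, one of the divisors involved is forced nonzero, so the point lies outside $\mc Z_{k,p,q}$. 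Making this localized gradient computation precise for general $n$ is the only genuinely technical step; everything else is density of step functions and countable unions of meagre null sets.
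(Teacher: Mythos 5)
Your first half coincides with the paper's: both write $\Lambda^{res}_k(T)=\bigcup_{j\ne k}(\Lambda_k\cap\Lambda_j)$ and invoke Lemma~\ref{lem:lamk} to conclude each intersection is a closed codimension-one submanifold of $\Lambda_k$, hence nowhere dense and $2n$-dimensionally null, and pass to the countable union. For the ``moreover'' clause, however, you take a genuinely different route. The paper keeps the reference period $T$ \emph{fixed}: after approximating $s$ by a step profile $s(J,\T)$, it observes that $\d_k=\ul z\,R(k\piot\t_m)\,\ol z$ is $2\pi$-periodic in $k\piot\t_m$, so for $k$ large a tiny perturbation of a single width $\t_m$ lands the profile on $\Lambda_k(T)$, and then the genericity of the first half supplies a further small perturbation within $\Lambda_k(T)$ to a nonresonant point. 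You instead work with $k$-dependent reference periods: every step profile automatically satisfies $\d_k(J,\T;T^{(k)})=0$ for \emph{every} $k$ at its own base period $T^{(k)}$ (the content of Lemma~\ref{lem:wsol}), and you then need total nonresonance to be generic in $(J,\T)$, i.e.\ that each $\mc Z_{k,p,j}=\{\w^{(p)}=q\,\w^{(k)}\}$ is null and nowhere dense --- which is precisely Theorem~\ref{thm:T2}, proved in the paper immediately after this one, and which your real-analyticity/localized-gradient sketch essentially reproduces. What each buys: your route delivers the ``infinitely many'' claim more honestly, since at a fixed $T$ a finite-parameter step profile cannot generically lie on infinitely many $\Lambda_k(T)$ simultaneously (the paper's construction as written only exhibits one nonresonant zero divisor, and the ``infinitely many'' must in any case be read with $k$-dependent periods); the paper's route is shorter here because it defers all the variable-period machinery to Section 5.2, whereas you must front-load the nondegeneracy computation for $\mc Z_{k,p,j}$, the one step you flag but do not fully carry out.
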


\begin{proof}
  We write
  \[
    \Lambda^{res}_k(T) = 
    \bigcup_{j\ne k}\big(\Lambda_k\cap\Lambda_j\big),
  \]
  and by Lemma \ref{lem:lamk}, each of these intersections is both
  nowhere dense and measure zero with respect to $2n$-dimensional
  Hausdorff measure in $\B R^{2n+1}$, and the first part of the
  theorem follows.

  For the second part, we note that the equation $\d_k(J,\Theta)=0$
  can be regarded as an equation for the unknown $\t_m$ (or
  $k\piot\t_m$), which is $2\pi$-periodic, namely
  \begin{equation}
    \label{tmeq}
    \d_k = \ul z\,R(k\piot\t_m)\,\ol z = 0,
  \end{equation}
  similar to \eqref{dkz}, for some nonzero vectors $\ul z$ and
  $\ol z$.  Note that as $k$ increases, solutions of \eqref{tmeq} get
  closer together.  Given an arbitrary measurable entropy
  profile $S(x)$ and $\epsilon>0$, we can approximate the entropy
  profile by a piecewise constant profile $s(J,\T)$, with
  \[
    \big\|S-s(J,\T)\big\|_{L^1} < \epsilon/3.
  \]
  Next, by choosing $k$ large enough, we find a small perturbation
  $\wh\Theta$ of $\Theta$ with $\theta_m$ a solution of \eqref{tmeq},
  so that
  \[
    (J,\wh\T)\in\Lambda_k \com{with}
    \big\|s(J,\wh\T)-s(J,\T)\big\|_{L^1} < \epsilon/3.
  \]
  Finally, by genericity, we find a small perturbation
  $(\wt J,\wt\T)\in\Lambda_k$ of $(J,\wh\T)$ which is nonresonant, and
  for which
  \[
    \big\|s(\wt J,\wt \T)-s(J,\wh\T)\big\|_{L^1} < \epsilon/3;
  \]
  combining these completes the proof.
\end{proof}

\subsection{Dependence on Reference Period}

As in Section~\ref{sec:Tvar}, we again allow the piecewise constant
entropy field, parameterized by $(J,\T)$, to be arbitrary, and
consider the dependence of the divisors $\d_k(J,\T;T)$ on the
reference period $T$.  The total width of the evolution (in the
material frame) is $\sum_m\t_m = \ell$, and by scale invariance of our
system \eqref{lagr}, we can without loss of generality take $\ell=1$.
This means that our parameter space which fully describes any
piecewise constant entropy profile on $[0,\ell]=[0,1]$ is
\[
  J \in\B R^n_+, \qquad
  \T \in \Delta^n \subset \B R^{n+1}_+,
\]
where $\Delta^n$ is the $n$-simplex,
\[
  \Delta^n := \Big\{ \Theta\in\B R^{n+1}_+\;\Big|\;
  \sum_{m=0}^n\t_m = 1 \Big\},
\]
which is the convex hull of the standard coordinate vectors.  It will
be convenient to parameterize $\Delta^n$ by the open solid simplex in
$\B R^n$,
\[
  \Delta_\circ^n := \Big\{ \Theta^\circ\in\B R^n_+\;\Big|\;
  \sum_{m=0}^{n-1}\t_m < 1 \Big\}, \com{with}
  \t_n := 1 - \sum_{m=0}^{n-1}\t_m > 0;
\]
here we write
\[
  \T^\circ := (\t_0,\dots,\t_{n-1}) \com{and}
  \T := (\t_0,\dots,\t_n) = (\T^\circ,\t_n).
\]
Note that whenever any $J_m=1$, this description becomes degenerate,
and in fact the actual number of entropy jumps is $\#\big\{J_m\ne1\big\}$.

For a given $T$, the basis vectors $\mc T_k(T)$ and divisors
$\d_k(J,\T;T)$ are given by \eqref{Tk} and \eqref{dkpw}, respectively,
and so again satisfy the degeneracy \eqref{d2j}.  As in our earlier
development, we write $\w = k\piot$, recalling we have assumed
$\ell=1$, and we express each $\d_k(J,\T;T)$ in terms of $\w$, as
\[
  \begin{aligned}
    \d_k(J,\T;T) = \(0&1\)&P^{-k}\,
    R(\w\,\t_n)\,M(J_n)\,\dots\\&\dots
    R(\w\,\t_1)\,M(J_1)\,R(\w\,\t_0)\,\(1\\0\).
  \end{aligned}
\]
Since we are interested in those values of $T$ for which $\d_k$
vanishes, and since the matrices $R(\w\,\t_m)$ and $M(J_m)$ are
invertible, we again use the function $h$ defined in \eqref{hdef} to
describe the angles at each jump.

Proceeding inductively, we thus define
\begin{equation}
  \label{gamdef}
  \begin{aligned}
    \g_1(\w) &:= h(J_1,\w\t_0), \com{and}\\
    \g_{m+1}(\w) &:= h\big(J_{m+1},\w\t_m+\g_m), \quad m = 1,\dots,n-1.
  \end{aligned}
\end{equation}
Thus $\g_n(\w)$ is the angle obtained after the final jump $J_n$, and
our condition that $\d_k=0$ is again that $\g_n+\w\t_n$ lie exactly on
a coordinate axis.  Thus, in analogy with \eqref{weq1}, we
again define the $k$-th \emph{base frequency}
$\w^{(k)}=\w^{(k)}(J,\T)$ by the implicit equation
\begin{equation}
  \label{weq}
  \w^{(k)}\t_n + \g_n\big(\w^{(k)}\big) = k\,\frac\pi2.
\end{equation}
Here we have suppressed dependence on the entropy profile parameter
$(J,\T)$.  Note, however, that $\g_m$ depends only on the
initial part of the parameter,
\[
  \g_m = 
  \g_m\big(\w;J_1,\dots,J_m,\t_0,\dots,\t_{m-1}\big) =: \g_m(\w).
\]
To illustrate, in Figure~\ref{fig:freqs}, we show the corresponding
rotations and jumps (scaled for visibility) for the first four nonzero
eigenfrequencies.  Here the entropy field consists of four constant
states separated by three jumps.  The circular arcs represent linear
evolution of the $k$-mode through the entropy level $\t_m$ so are
rotations by $\w^{(k)}\t_m$, and the vertical segments represent the
action of the jumps, in which the second component is scaled but the
first remains constant.  The four curves are color coded, and some
arcs are labeled: for example, $\w^{(1)}\t_1$ (blue) is the evolution
of the 1-mode between jumps $J_1$ and $J_2$.

\begin{figure}[thb]
  \begin{tikzpicture}[>=stealth]
  \draw[->] (-4.5,0) -- (4.5,0);
  \draw[->] (0,-4.5) -- (0,5);

\def\th0{22}

\foreach \rad/\mult/\C in
  {3.5/1/C0,2.15/2.208/C2,3.1/4.6425/C1,1.7/3.655/C3}{%
\fill[color=\C] (\rad, 0) circle (2 pt);
\draw[very thick,color=\C,>->,shorten <=2.4pt,shorten >=2.5pt]
(\rad0, 0) arc (0:\mult*\th0:\rad) coordinate (ee);
  \foreach \J/\th in {1.3/20,1.5/15,0.8/20}
  {%
    \draw[very thick,color=C7] 
    let \p2 = (ee) in (ee) -- (\x2,\J*\y2) coordinate (ff);
    \fill[color=\C] (ee) circle (2 pt);
    \fill[color=\C] (ff) circle (2 pt);
    \draw[very thick,color=\C,>->,shorten <=2.4pt,shorten >=2.5pt]
    let \p1 = (ff), \n1 = {veclen(\x1,\y1)},
    \n2 = {atan(\y1/\x1)+180*(\x1<0)}
    in (ff) arc [radius=\n1, start angle=\n2, delta angle=\mult*\th]
    coordinate (ee);
    \fill[color=\C] (ee) circle (2 pt);
  }
}

\node[C0] at (3.6,2.3) {$\w^{(1)}\theta_1$};
\node[C2] at (-1.5,3) {$\w^{(2)}\theta_2$};
\node[C1] at (-3.55,2.8) {$\w^{(4)}\theta_1$};
\node[C3] at (-1.15,-1.5) {$\w^{(3)}\theta_3$};
\node[C1] at (2.3,-2.85) {$\w^{(4)}\theta_3$};
\end{tikzpicture}
\caption{Rotations and jumps generating frequencies}
\label{fig:freqs}
\end{figure}
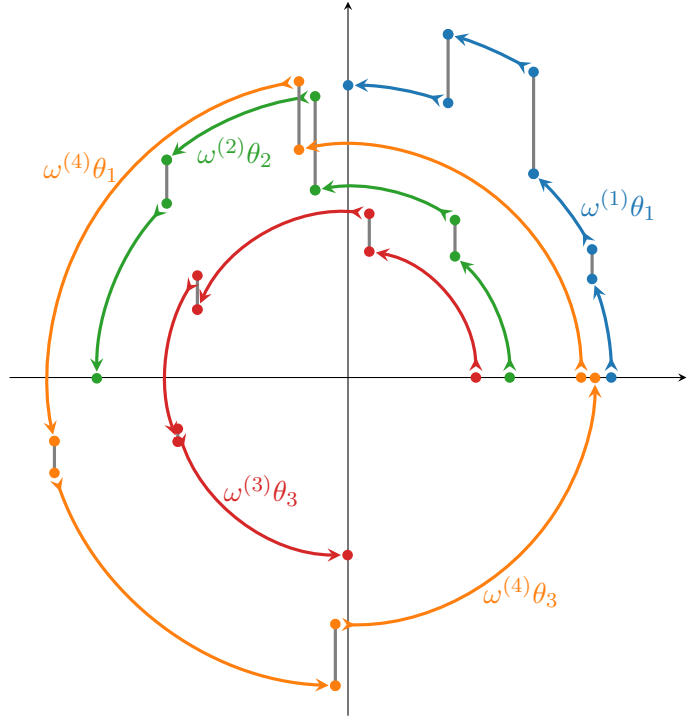

\begin{lemma}
  \label{lem:wsol}
  For any given $J\in\B R^n_+$, $\T\in\Delta^n$, there are infinitely
  many bounded time periods
  \[
    T^{(k)} := k\,\frac{2\pi}{w^{(k)}}\,\ell, \com{or}
    T^{(k)} := k\,\frac{2\pi}{w^{(k)}}, \quad \ell=1,
  \]
  such that the $k$-mode with corresponding reference period
  $T^{(k)}$, that is $\mc T_k\big(T^{(k)}\big)$, has a zero divisor,
  $\d_k(J,\T;T^{(k)})=0$.
\end{lemma}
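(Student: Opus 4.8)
The plan is to mimic the single-jump argument of Lemma~\ref{lem:Tj}, with the angle function $\w(1-\t)+h(J,\w\,\t)$ there replaced by the iterated angle $\g_n$ built in \eqref{gamdef}. By scale invariance we take $\ell=\sum_m\t_m=1$ throughout, so the angles appearing in \eqref{dkpw} are $k\piot\,\t_m=\w\,\t_m$ with $\w:=k\piot$, and the $k$-th base frequency $\w^{(k)}$ of \eqref{weq} is the solution of $f(\w)=k\,\pb$, where
\[
  f(\w) := \w\,\t_n + \g_n(\w), \qquad \w>0.
\]
First I would establish the structural properties of $f$. Since $h$ is $C^\infty$ with $\del h/\del x>0$ by \eqref{hder}, an induction on $m$ using $\g_{m+1}=h(J_{m+1},\w\,\t_m+\g_m)$ shows each $\g_m$ is smooth and strictly increasing in $\w$; hence so is $f$. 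From $h(J,0)=0$ we get $\g_m(0)=0$, so $f(0)=0$. The quantitative input is that $h(J,x)$ and $x$ always lie in the same quadrant, so $|h(J,x)-x|<\pi$ by \eqref{hdef}; applying this at each of the $n$ jumps and telescoping gives
\[
  \big|\g_n(\w)-\w\,(\t_0+\cdots+\t_{n-1})\big|<n\,\pi,
  \com{hence} \big|f(\w)-\w\big|<n\,\pi,
\]
using $\sum_m\t_m=1$. Thus $f:(0,\infty)\to(0,\infty)$ is a smooth strictly increasing bijection, so for every $k\ge1$ there is a unique $\w^{(k)}>0$ with $f(\w^{(k)})=k\,\pb$, and the $\w^{(k)}$ are strictly increasing in $k$.

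Next I would deduce boundedness of the periods. From $|f(\w)-\w|<n\pi$ and $f(\w^{(k)})=k\,\pb$ we get $|\w^{(k)}-k\,\pb|<n\,\pi$, so $\w^{(k)}/k$ is bounded above by $\pb+n\pi$ and, for $k>2n$, bounded below by a positive constant; since each of the finitely many remaining $\w^{(k)}$ is positive, $\inf_{k\ge1}\w^{(k)}/k>0$. Hence the reference periods $T^{(k)}:=k\,\frac{2\pi}{\w^{(k)}}$ (with the factor $\ell$ restored in the unnormalized case) satisfy $0<\sup_k T^{(k)}<\infty$ --- this is the meaning of ``infinitely many bounded time periods'', and it is consistent with the bound on $\w_k/k$ in Theorem~\ref{thm:slev}.

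Finally I would verify that $T=T^{(k)}$, equivalently $\w=\w^{(k)}$, forces $\d_k$ to vanish. By \eqref{dkpw}, $\d_k(J,\T;T)=\(0&1\)\,P^{-k}\,v$ where
\[
  v := R(\w\,\t_n)\,M(J_n)\cdots R(\w\,\t_1)\,M(J_1)\,R(\w\,\t_0)\(1\\0\);
\]
since every $R$ and $M$ is invertible, $v\ne0$, and repeated use of the angle rule \eqref{angle} together with the recursion \eqref{gamdef} shows that $v$ is represented by the angle $\w\,\t_n+\g_n(\w)$. As $P^{-k}=R(-k\,\pb)$ and the functional $\(0&1\)$ sends a nonzero vector of angle $\a$ and length $r$ to $r\,\s(\a)$, we obtain
\[
  \d_k(J,\T;T) = |v|\;\s\big(\w\,\t_n+\g_n(\w)-k\,\pb\big),
\]
which vanishes precisely when $\w\,\t_n+\g_n(\w)\in k\,\pb+\pi\,\B Z$, and in particular when \eqref{weq} holds, i.e.\ when $\w=\w^{(k)}$ and $T=T^{(k)}$. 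Since the $\w^{(k)}$ are distinct, this exhibits infinitely many such modes. The only real obstacle is bookkeeping: keeping the cumulative angle single-valued across the coordinate axes --- exactly why $h$ in \eqref{hdef} was defined to be smooth --- and verifying the telescoping bound $|f(\w)-\w|<n\pi$; with these in place the conclusion follows just as in Lemma~\ref{lem:Tj}.
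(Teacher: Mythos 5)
Your proposal is correct and follows essentially the same route as the paper's proof: monotonicity of $\g_n(\w)+\w\,\t_n$ in $\w$ via the positivity of $\del h/\del x$, the telescoped quadrant bound on $\g_n-\w\sum_{m<n}\t_m$ to get $\w^{(k)}\sim k\,\pi/2$ and hence bounded periods, and the angle characterization of $\d_k=0$. The only differences are cosmetic (you use the bound $|h(J,x)-x|<\pi$ where the same-quadrant property actually gives $<\pi/2$, and you spell out the identity $\d_k=|v|\,\s(\w\t_n+\g_n-k\pi/2)$ that the paper treats as already established).
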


\begin{proof}
  We have seen that $\d_k(J,\T;T)=0$ if and only if \eqref{weq} holds
  for appropriate $k$.  To show that this uniquely defines $\w^{(k)}$,
  we differentiate \eqref{gamdef}: inductively, we have
  \[
    \begin{aligned}
      \frac{\del\g_1}{\del\w}
      &= \frac{\del h}{\del x}\Big|_{(J_1,\w\t_0)}\,\t_0 \com{and}\\
      \frac{\del\g_{m+1}}{\del\w}
      &= \frac{\del h}{\del x}\Big|_{(J_{m+1},\w\t_m+\g_m)}\,
        \Big(\t_m + \frac{\del\g_m}{\del\w}\Big).
    \end{aligned}
  \]
  Now by \eqref{hder}, each of these terms is positive, and so
  differentiating the left hand side of \eqref{weq}, we get
  \[
    \frac{\del}{\del\w}\Big(\g_n(\w)+\w\,\t_n\Big) > \t_n > 0,
  \]
  so by the implicit function theorem there exists a unique positive
  solution $\w^{(k)}$ for each $k\ge1$.

  We now show that $\w^{(k)}$ grows like $k$ for $k$ large.  Note from
  \eqref{hdef} that $h(J,x)$ is always in the same quadrant as $x$,
  and $|h(J,x)-x\big|<\frac\pi2$.  Applying this to \eqref{gamdef}, we
  get
  \[
    \big|\g_1 - \w\t_0\big| < \frac\pi2, \com{and}
    \big|\g_{m+1} - (\w\t_m+\g_n)\big| < \frac\pi2,
  \]
  for $m=1,\dots,n-1$.  Telescoping and using the triangle inequality
  we get
  \[
    \Big|\g_n - \w\sum_{m=0}^{n-1}\t_m\Big| < n\,\frac\pi2,
  \]
  and finally using \eqref{weq} to eliminate $\g_n$ yields
  \[
    \Big|k\,\frac\pi2 - \w^{(k)}\sum_{m=0}^n\t_m\Big|
    < n\,\frac\pi2,
  \]
  and since $\sum_m\t_m=1$, this yields
  \[
    (k-n)\,\frac\pi2 < \w^{(k)} < (k+n)\,\frac\pi2,
  \]
  and the boundedness of reference periods $T^{(k)}$ follows.
\end{proof}

Having described the frequencies $\w^{(k)}$ and corresponding periods
$T^{(k)}$, we again look for the conditions that determine resonance
of distinct frequencies.  We have shown above that each $\w^{(k)}$ can
be regarded as a single-valued, non-degenerate smooth function
\[
  \w^{(k)}:\B R^n_+\times\Delta^n\to\B R, \qquad
  \w^{(k)} = \w^{(k)}\big(J,\T\big),
\]
in which the simplex $\Delta^n\subset\B R^{n+1}_+$ is independently
parameterized by the $\T^\circ\in\Delta^n_\circ\subset\B R^n$.
Because the addition of extra entropy intervals introduces more
degrees of freedom, the following generalization of
Theorem~\ref{thm:T} to a general step entropy profile is unsurprising.

\begin{theorem}
  \label{thm:T2}
  There is a meagre subset $\mc Z\subset\Delta^n\times\B R^n$ of
  ($2n$-dimensional) measure zero, such that, for any
  $(J,\T)\notin\mc Z$, \emph{every} frequency $\w^{(k)}$ is
  non-resonant.  For each of the corresponding piecewise constant
  entropy profiles, the $k$-mode with reference period $T^{(k)}$,
  $k\ge 1$, is non-resonant, and so perturbs to a finite amplitude
  solution of the nonlinear equation \eqref{F=0pw}.  This in turn
  generates a space and time periodic solution of the nonlinear
  Euler equations.
\end{theorem}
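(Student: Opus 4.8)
The plan is to mirror the proof of Theorem~\ref{thm:T}, with the single reduced pair $(J,\t)$ replaced by the full piecewise constant parameter $(J,\T)$. First I would invoke scale invariance of \eqref{lagr} to fix $\ell=\sum_m\t_m=1$, so that the parameter space is $\B R^n_+\times\Delta^n$, a $2n$-dimensional manifold, which I will chart by $(\t_0,\dots,\t_{n-1},J_1,\dots,J_n)$ over $\Delta^n_\circ$; by Lemma~\ref{lem:wsol} each base frequency $\w^{(k)}=\w^{(k)}(J,\T)$ is a single-valued smooth function there, determined implicitly by \eqref{weq}. As in \eqref{rescon}, feeding the angle $\tfrac jk\,\w^{(k)}$ through the recursion \eqref{gamdef} shows that the $j$-mode (with respect to reference period $T^{(k)}$) is resonant with the $k$-mode precisely when $\tfrac jk\,\w^{(k)}=\w^{(p)}$ for some integer $p$. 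So for each triple $(k,j,p)$ of positive integers with $q:=j/k\ne 1$ I set
\[
  \mc Z_{k,j,p} := \big\{(J,\T)\in\B R^n_+\times\Delta^n \;\big|\; \w^{(p)}(J,\T)=q\,\w^{(k)}(J,\T)\big\},
\]
and the fully resonant set is the countable union $\mc Z:=\bigcup_{k,j,p}\mc Z_{k,j,p}$.

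The core step is to show that each $\mc Z_{k,j,p}$ is a $C^\infty$ submanifold of codimension one, equivalently that $F:=\w^{(p)}-q\,\w^{(k)}$ has nonvanishing gradient on $\{F=0\}$. Here I would differentiate \eqref{weq} implicitly with respect to the $J_m$ and $\t_m$: using the chain of positive derivatives $\del\g_m/\del\w$ from the proof of Lemma~\ref{lem:wsol} together with the explicit partials $\del h/\del x,\ \del h/\del J$ in \eqref{hder}, this writes every component of $\nabla\w^{(k)}$ as a ratio whose denominator is $\t_n+\del\g_n/\del\w\big|_{\w^{(k)}}>0$ and whose numerator is an explicit function of the accumulated angles $\g_1,\dots,\g_n$ evaluated at $\w=\w^{(k)}$ — the same angles that appear in the $\d_k$-gradient formulas \eqref{graddk}. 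Assuming $\nabla F=0$, one obtains a system tying the angles at $\w^{(k)}$ to those at $\w^{(p)}$; eliminating among the $J_m$- and $\t_m$-partials exactly as in the single-jump computation in Theorem~\ref{thm:T} (and in Lemma~\ref{lem:lamk}) forces the degenerate condition that all relevant $\tan$-values vanish, or some $J_m=1$ (the isentropic degeneracy). But on that locus a direct inspection of \eqref{dkpw} shows $\d_k\ne 0$, so such a point lies in no $\Lambda_k(T^{(k)})$ and a fortiori not in $\mc Z_{k,j,p}$ — a contradiction.

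Granting this, each $\mc Z_{k,j,p}$ is closed, nowhere dense, and of $2n$-dimensional Hausdorff measure zero, so $\mc Z$ is meagre and of measure zero. For $(J,\T)\notin\mc Z$ no two distinct eigenfrequencies are rationally related, so by the resonance criterion every $k$-mode with its (bounded, by Lemma~\ref{lem:wsol}) reference period $T^{(k)}$ is nonresonant in the sense of \eqref{nonres}; Theorem~\ref{thm:pwent} then perturbs each such $k$-mode to a finite amplitude solution of \eqref{F=0pw}, and the reflection construction of Section~\ref{sec:pw} turns it into a space and time periodic solution of \eqref{lagr}. Since the chart over $\Delta^n_\circ$ is a diffeomorphism preserving the measure-zero and meagre properties, this gives the stated set $\mc Z\subset\Delta^n\times\B R^n$.

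I expect the main obstacle to be the gradient computation for $\mc Z_{k,j,p}$: in the single-jump case Theorem~\ref{thm:T} reduces, after a change of variables, to showing $h(J,qx)-q\,h(J,x)$ has nonvanishing gradient, but with $n$ jumps the quantity $\w^{(p)}-q\,\w^{(k)}$ is only accessible through the nested recursion \eqref{gamdef}, so isolating a clean non-degenerate direction and pinning down the degenerate locus precisely is the delicate part. (One useful device would be to write $\g_n(\w)=\w(1-\t_n)+\delta(\w)$ with $\delta$ the bounded sum of the $h$-defects from \eqref{gamdef}, so that \eqref{weq} becomes $\w+\delta(\w)=k\tfrac\pi2$ and the resonance condition becomes $\delta(q\w^{(k)})-q\,\delta(\w^{(k)})=(p-j)\tfrac\pi2$, a direct analog of the single-jump level set.) Everything downstream — the countable-union bookkeeping and the appeals to Lemma~\ref{lem:wsol} and Theorem~\ref{thm:pwent} — is then routine.
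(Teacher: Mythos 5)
Your proposal follows the same architecture as the paper's proof: write the resonant set as the countable union $\mc Z=\bigcup\mc Z_{k,j,p}$ over the conditions $\w^{(p)}=q\,\w^{(k)}$, show each piece is a codimension-one submanifold, and conclude. Your parenthetical ``device'' is in fact exactly the paper's key step: after the substitution $x=\w\,\T^\circ$ the two implicit equations for $\w^{(k)}$ and $\w^{(p)}$ collapse to the single explicit level-set equation $g(J,x):=\g_n(qx)-q\,\g_n(x)=(p-j)\frac\pi2$ (your $\delta(q\w)-q\,\delta(\w)$ is the same function), and the nondegeneracy computation then really is verbatim the single-jump one, because only the outermost $h(J_n,\cdot)$ is differentiated in the $x_{n-1}$ and $J_n$ directions — the inner angles $\g_{n-1}$ do not depend on those two variables. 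So the part you flag as the ``delicate'' obstacle is handled by the paper exactly as you anticipate, and your main-text alternative (implicit differentiation of $\w^{(k)}$ in the original $(J,\T)$ coordinates) is precisely what the change of variables is designed to avoid.

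The one genuine soft spot is your disposal of the degenerate locus. You claim that when the two partials vanish one lands on a locus where ``a direct inspection of \eqref{dkpw} shows $\d_k\ne0$.'' That is not the right conclusion, and for the branch ``some $J_m=1$'' it is false: a profile with one trivial jump simply has fewer effective jumps, and Lemma~\ref{lem:wsol} still produces frequencies with $\d_k=0$. The correct argument is different in each branch. If $J_n\ne1$ and both $\partial g/\partial x_{n-1}$ and $\partial g/\partial J_n$ vanish, one is forced to $\G_{n-1}(qx)=\G_{n-1}(x)=0$, whence $g(J,x)=0$, contradicting $g=(p-j)\frac\pi2\ne0$ — the contradiction is with the \emph{level} of $g$, not with membership in $\Lambda_k$. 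If $J_n=1$, the last jump contributes nothing and one must recurse backward to $\partial g/\partial x_{n-2}$, $\partial g/\partial J_{n-1}$, and so on; only when \emph{all} $J_m=1$ does one get $g\equiv0$, so that the nonzero level set is empty. You appear to be importing the contradiction from Lemma~\ref{lem:lamk} (where $\nabla\d_k=0$ does force $\d_k\ne0$) into the resonance-manifold argument, where the analogous endgame is different. With that repair the rest of your outline — countable union, Hausdorff-measure and Baire bookkeeping, and the appeal to Theorem~\ref{thm:pwent} — goes through as you describe.
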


\begin{proof}
  We proceed as in the proof of Theorem~\ref{thm:T}.  Thus we fix a
  $k$-mode, and assume that the $j$ mode resonates with this $k$ mode.
  The $k$-mode determines the reference period
  $T^{(k)}=k\,2\pi/\w^{(k)}$, and resonance of the $j$-mode means that
  also $\d_j\big(J,\T;T^{(k)}\big)=0$.  This in turn corresponds to
  $\w = j\,2\pi/T^{(k)}$ being a solution of \eqref{weq}, say
  \begin{equation}
    \label{res}
    \w = j\,\frac{2\pi}{T^{(k)}} = \w^{(p)}, \com{so that}
    \w^{(p)} = q\,\w^{(k)}, \quad q :=\frac jk\in\B Q_+.
  \end{equation}
  Thus, fixing positive integers $k$, $j\ne k$ and $p$, we define
  \[
    \mc Z_{k,p,j} := \Big\{(J,\T)\in\B R^n_+\times\Delta^n\;\Big|\;
    \w^{(p)} = q\,\w^{(k)}\Big\},
  \]
  where $\w^{(k)}$ and $\w^{(p)}$ are functions of $(J,\T)$, regarded
  as known.  We wish to show that this is a non-degenerate constraint
  on the parameters, which will imply that $\mc Z_{k,p,j}$ is a
  codimension one submanifold, and so is small, in both the measure
  and Baire senses.

  We again get a more explicit version of the angles, and so also
  $\w^{(k)}$, by using the fact that $\t_n = 1 - \sum_{m<n}\t_m$ in
  \eqref{weq}, which gives, with $\w=\w^{(k)}$,
  \begin{equation}
    \label{woft}
    \w = \sum_{m=0}^{n-1} \w\t_m - \g_n(\w) + k\,\frac\pi2.
  \end{equation}
  Now whenever $\t_m$ appears in \eqref{gamdef}, it is scaled by $\w$,
  and this is the only context in which it appears.  We thus make the
  substitution $\w\t_m\to x_m$ for $m<n$.  That is, we use $\T^\circ$
  to parameterize the simplex, and scale this up to a vector
  \[
    x = (x_0,\dots,x_{n-1}) \in\B R^n, \com{by}
    x_m = \w\,\t_m, \com{or} x = \w\,\T^\circ.
  \]
  We now use \eqref{gamdef} to define $\g_m$ as a function of $x$,
  namely
  \begin{equation}
    \label{gx}
    \begin{aligned}
      \g_1(x) &:= h(J_1,x_0), \com{and}\\
      \g_{m+1}(x) &:= h\big(J_{m+1},x_m+\g_m\big), \quad m = 1,\dots,n-1,
    \end{aligned}    
  \end{equation}
  which now yields the explicit function $\g_n = \g_n(J,x)$.  

  We can combine this function and the scaling $x=\w\,\T^\circ$ to get
  a geometric description of $w^{(k)}$, as follows.  Referring to
  \eqref{woft}, we see that \eqref{weq} is equivalent to the coupled
  expressions
  \begin{equation}
    \label{wxk}
    \w^{(k)} = \sum_{m=0}^{n-1}x^{(k)}_m - \g_n(x^{(k)}) + k\,\frac\pi2, \qquad
    x^{(k)}_m = \w^{(k)}\,\t_m.
  \end{equation}
  We thus interpret $\w^{(k)}$ as being determined by the intersection
  of a ray $\w\,\T^\circ$ and the graph of the explicit function
  $\sum x_m+\g_n(x)+k\frac\pi2$.

  We now use \eqref{wxk} to re-express the resonance condition
  \eqref{res} as a single equation: that is, we have
  \[
    \w^{(p)} = q\,\w^{(k)} \com{if and only if}
    g(J,x) = (p-j)\,\frac\pi2,
  \]
  where we have defined the function
  \begin{equation}
    \label{gJx}
    g:\B R^{2n}_+\to\B R \com{by}
    g(J,x) := \g_n(q\,x) - q\,\g_n(x),
  \end{equation}
  and written
  \[
    x := x^{(k)}, \quad \w := \w^{(k)}, \quad
    \w^{(p)} = q\,\w, \com{and} x^{(p)} = q\,x,
  \]
  with $q=j/k\in\B Q_+$ fixed and $\g_n$ is given by \eqref{gx}.  Here
  the vector identity $x^{(p)} = q\,x^{(k)}$ follows from the second
  equation in \eqref{wxk}.  Thus we have effectively described the
  resonant set as a level surface of the explicit function $g$, namely
  \[
    \mc Z_{j,k,p} = \Big\{(J,x)\in\B R^{2n}\;\Big|\;
    g(J,x) = (p-j)\,\frac\pi2\Big\},
  \]
  and we recover the original coordinates $(J,\T)\in\B
  R^n_+\times\Delta^n$ by setting
  \[
    \T^\circ := \frac x{\w^{(k)}}, \com{and}
    \t_n = 1 - \sum_{m=0}^{n-1}\t_m,
  \]
  in which $\w^{(k)}$ is given explicitly by \eqref{wxk}.

  We will show that $g$ is non-degenerate, that is
  \[
    \nabla_{(J,x)}g \ne 0 \com{whenever} g(J,x) = b,
  \]
  for any nonzero constant $b\ne0$.  From the definition \eqref{gJx}, and
  using \eqref{hder}, we calculate
  \[
    \begin{aligned}
      \frac{\del g}{\del x_{n-1}}
      &= \frac{\del h}{\del x}
      \bigg|_{\big(J_n,q\,x_{n-1}+\g_{n-1}(qx)\big)}\,q
      - q\,\frac{\del h}{\del x}
      \bigg|_{\big(J_n,x_{n-1}+\g_{n-1}(x)\big)}\\[3pt]
      &= \frac{q\,J_n\,(1+\G_{n-1}^2(qx))}{1 + J^2\,\G_{n-1}^2(qx)}
        - \frac{q\,J\,(1+\G_{n-1}^2(x))}{1 + J^2\,\G_{n-1}^2(x)}\\[3pt]
      &= \frac{q\,J\,(J^2-1)\,\big(\G_{n-1}^2(x)-\G_{n-1}^2(qx)\big)}
        {\big(1 + J^2\,\G_{n-1}^2(qx)\big)\,\big(1 + J^2\,\G_{n-1}^2(x)\big)},
    \end{aligned}
  \]
  where we have set
  \[
    \G_{n-1}(x) := \tf\big(x_{n-1}+\g_{n-1}(x)\big).
  \]
  It follows that
  \[
    \frac{\del g}{\del x_{n-1}} = 0
    \com{iff}
    \G_{n-1}(qx) = \pm\G_{n-1}(x),
  \]
  unless $J_n=1$.  Similarly,
  \[
    \begin{aligned}
      \frac{\del g}{\del J_n}
      &= \frac{\del h}{\del J}
      \bigg|_{\big(J_n,q\,x_{n-1}+\g_{n-1}(qx)\big)}
      - q\,\frac{\del h}{\del J}
      \bigg|_{\big(J_n,x_{n-1}+\g_{n-1}(x)\big)}\\
      &= \frac{\G_{n-1}(qx)}{1 + J^2\,\G_{n-1}^2(qx)}
        - q\,\frac{\G_{n-1}(x)}{1 + J^2\,\G_{n-1}^2(x)}.
    \end{aligned}
  \]
  Thus, if we assume $J_n\ne 1$, and assume both
  \[
    \frac{\del g}{\del x_{n-1}}\bigg|_{(J,x)} = 0 \com{and}
    \frac{\del g}{\del x_{n-1}}\bigg|_{(J,x)} = 0,
  \]
  then we must have both
  \[
    \G_{n-1}(qx) = \pm\G_{n-1}(x) \com{and}
    \G_{n-1}(qx) - q\,\G_{n-1}(x) = 0,
  \]
  which in turn implies $\G_{n-1}(qx) = \G_{n-1}(x) = 0$, so that, by
  \eqref{gJx}, \eqref{gx} and \eqref{hdef}, we get
  \[
    g(J,x) = \arctan\big(J_n\,\G_{n-1}(qx)\big)
    - q\,\arctan\big(J_n\,\G_{n-1}(x)\big) = 0.
  \]

  On the other hand, if $J_n=1$, which is the degenerate case, we
  carry out the same calculation for $\frac{\del g}{\del x_{n-2}}$ and
  $\frac{\del g}{\del J_{n-1}}$, and continue by backward induction as
  necessary.  If all $J_m=1$, which is the most degenerate isentropic
  case, then since $h(1,x)=x$, we get $g = 0$ identically.

  We have thus shown that, as long as at least one $J_m\ne 1$,
  \[
    \nabla_{(J,x)} g = 0 \com{implies} g(J,x) = 0.
  \]
  The implicit function theorem now tells us that non-zero level sets
  of $g$, and in particular $\mc Z_{k,p,j}$, are codimension one
  submanifolds of $\B R^{2n}$, and so these are again both measure
  zero and nowhere dense.

  As before, we now set
  \[
    \mc Z := \bigcup_{k,p,j}\mc Z_{k,p,j},
  \]
  and since this is a countable union, $\mc Z$ has measure zero and is
  meagre.  Since $\mc Z$ contains all possible resonances, the proof
  is complete.
\end{proof}

\section{Generalizations}
\label{sec:SL}

We return now to the compressible Euler equations \eqref{lagr} with a
general constitutive law, $p = p(v,s)$, in which we solve for $v$, so
that $v=v(p,s)$.  For classical solutions, in a Lagrangian frame, we
write the system as
\begin{equation}
  \label{genls}
  p_x + u_t = 0, \qquad
  u_x - v(p,s)_t = 0,
\end{equation}
or, in quasilinear form,
\[
  p_x + u_t = 0, \qquad
  u_x - v_p(p,s)\,p_t = 0,
\]
where we are again evolving in $x$, and since $s_t=0$, we regard the
entropy as a given fixed function $s(x)$.  As above we make the
observation that the nonlinear equations respect the symmetry $p$
even, $u$ odd, as functions of $t$.  That is, if we specify
$p(x_0,\cdot)$ even, $u(x_0,\cdot)$ odd at one point $x_0$, then this
is satisfied throughout the interval of classical existence, that is
as long as gradients remain finite.

\subsection{Minimal Tile for Periodicity and Boundary Conditions}

We again build a tile which generates a periodic solution by assuming
the data is time periodic and imposing extra symmetry conditions at
the ends $x=0$ and $x=\ell$ of the evolution.  As in \eqref{olsymm},
\eqref{ols}, we impose a regular reflective boundary condition at
$x=0$: that is, we impose the \emph{acoustic reflection boundary
  condition} $u(0,\cdot)=0$, with $p(0,\cdot)$ even and periodic.
Physically, this acoustic boundary condition describes pure (lossless)
reflection of a sound wave off a wall.  On the other hand, in
\eqref{ulsymm}, \eqref{uls}, the boundary condition which is a shifted
reflection, provides a mechanism for the generation of periodic tiles,
but is not realizable as a simple physical boundary condition.  This
is because when reflecting the profile around $x=\ell$ to generate a
(half) periodic tile, the velocity need not actually vanish at
$x=\ell$.  This can be explained by noting the effect of the
quarter-period shift $S^{T/4}$ on a $k$-mode: we have
\[
  \mc S^{T/4}\s\big(k\piot t\big) =
  \s\big(k\piot t-k{\TS{\frac\pi2}}\big) =
  \begin{cases}
    \pm\c\big(k\piot t\big), & k \text{ odd},\\
    \pm\s\big(k\piot t\big), & k \text{ even},
  \end{cases}
\]
and similarly for $\mc S^{T/4}\c\big(k\piot t\big)$.  It follows that,
for $k$-modes with $k$ even,
\[
  \IR-\,\mc S^{T/4}\,\mc T_{k} = 0 \com{iff}
  \IR-\,\mc T_{k} = 0,
\]
while this fails for odd $k$-modes.  As a consequence, at least at the
linear level, if we double the time frequency of tones, then we can
drop the $1/4$-period shift in posing the right boundary condition,
and this then becomes an acoustic reflection boundary condition at
$x=\ell$, exactly as at $x=0$.

It follows that we can pose two problems that effectively have the
same method of solution:  as in previous sections, we assume $p$ even
and $u$ odd as functions of $t$, and set
\begin{equation}
  \label{ypu}
  y(x,t) := p(x,t) + u(x,t),
\end{equation}
and define a nonlocal scalar flux by
\[
  g(y) := u + v(p,s) = \IR-y + v\big(\IR+y,s\big),
\]
so that $y$ solves the nonlocal scalar conservation law
\begin{equation}
  \label{ycl}
  y_x + g(y)_t = 0.
\end{equation}
Note that this is distinct from our earlier treatment of piecewise
constant entropy, in which we used the isentropic equations at each
entropy level together with the rescaling of $x$, whereas here we use
the full equations and unscaled $x$ directly.

The two problems we then pose are:
\begin{itemize}
\item the \emph{periodic tile problem}, namely
  \begin{equation}
  \label{yper}
    \mc F_P(y^0) := \IR-\,\mc S^{-T/4}\,\mc E^\ell\,y^0 = 0;
  \end{equation}
\item the \emph{acoustic boundary value problem}, namely
  \begin{equation}
  \label{yab}
    \mc F_A(y^0) := \IR-\,\mc E^\ell\,y^0 = 0.
  \end{equation}
\end{itemize}
In both of these problems, as above, $\mc E^\ell$ denotes nonlinear
evolution through the varying entropy profile from $x=0$ to $x=\ell$,
and the data $y^0=y^0(t)$ is again assumed to be even and $T$-periodic.

For a given entropy profile $s(x)$, each solution of \eqref{yper}
generates a space and time periodic solution, generated by a
reflection at the left boundary and a shifted reflection on the right,
so the space period of the solution is $4\ell$.  On the other hand, a
solution of \eqref{yab} generates a periodic solution by just one
reflection in $x$, so has space period $2\ell$.  The two problems are
related as follows: any $k$-mode solution of the linearization of
\eqref{yper} with $k$ even is also a solution of that of \eqref{yab},
and conversely an even $2j$-mode linearized solution of \eqref{yab}
can also be realized as a solution of the linearization of
\eqref{yper}.  Moreover, if these linearized solutions perturb, then
by uniqueness they will coincide as solutions of the corresponding
nonlinear problems.

It appears that \eqref{yper} generates more solutions than
\eqref{yab}, but we regard \eqref{yab} as a more physically relevant
problem.  This is because the acoustic boundary condition \eqref{yab}
is simply a reflection off a wall, so the system models sound waves
bouncing between two walls which bound an unrestricted varying entropy
profile on $[0,\ell]$.  On the other hand, the velocity $u$ need not
vanish at $x=\ell$ with \eqref{yper}, and it is hard to ascribe
\eqref{yper} to a physical condition, because we expect that
controlling an entropy profile to be perfectly symmetric between two
walls (at $x=0$ and $x=2\ell$) is practically infeasible.  However, we
regard periodic solutions in which compression and rarefaction are in
perfect balance as being of fundamental importance.

\subsection{Linearization and Sturm-Liouville systems}

We now analyze \eqref{genls} following the same steps as before.  We
regard the entropy field $s(x)$ as a given piecewise continuous
function on the interval $x\in[0,\ell]$, continuous at the endpoints,
and assume a constitutive law
\[
  v = v(p,s), \com{with} v_p<0,
\]
and we will make further assumptions as necessary.

We begin by noting that the quiet state $(\ol p,0)$ is a solution of the
Euler equations satisfying the boundary conditions; although it is a
constant solution of \eqref{genls}, it is part of a non-constant
standing wave solution of \eqref{lagr}.  Linearizing \eqref{genls}
around $(\ol p,0)$ gives the system
\begin{equation}
  \label{UP}
  \begin{gathered}
    P_x + U_t = 0, \qquad
    U_x + \sg^2\,P_t = 0,\\ \com{where}
    \sg = \sg(x) := \sqrt{-v_p(\ol p,s)},    
  \end{gathered}
\end{equation}
in which we use the convention that $(P,U)$ solve linearized
equations, while $(p,u)$ solve the nonlinear system.

We use separation of variables to solve \eqref{UP}, in which we look
for $T$-periodic solutions of \eqref{UP} in which $P$ and $U$ are even
and odd, respectively, so we set
\begin{equation}
  \label{UPsub}
  \begin{aligned}
    P(x,t) &:= \sum_{n\ge 0} \vp_n(x)\,\c\big(n\piot t\big), \\
    U(x,t) &:= \sum_{n> 0} \psi_n(x)\,\s\big(n\piot t\big).
  \end{aligned}
\end{equation}
Plugging in to \eqref{UP} and simplifying, we get the ODE system
\begin{equation}
  \label{SL1}
  \dot\vp_n + \w\,\psi_n = 0, \qquad
  \dot\psi_n - \sg^2\,\w\,\vp_n = 0,  \qquad
  \w := n\piot,
\end{equation}
where $\dot\square$ denotes $\frac{d}{dx}\square$.

We denote nonlinear evolution through $x$ according to \eqref{ycl} by
$\mc E^x$, and we denote the linearization of this around the constant
quiet state $(\ol p,0)$ by $\mc L^x := D\mc E^x(\ol p,0)$.  That is,
$\mc L^x$ denotes evolution by \eqref{UP}, or more precisely by the
linear nonlocal scalar equation for $Y := P+U$,
\begin{equation}
  \label{Ylin}
  Y_x + \big(\IR-Y+\sg^2\,\IR+Y\big)_t = 0.
\end{equation}
We again use the notation \eqref{Tk}, so that \eqref{UPsub} can be
written
\begin{equation}
  \label{YasT}
  Y(x,t) = P + U = \sum_{n\ge 0} \mc T_n\,\(\vp_n(x)\\\psi_n(x)\),
\end{equation}
and we obtain an analogue of Lemma~\ref{lem:Tk}, whose proof is
omitted.

\begin{lemma}
  \label{lem:Lx}
  The linearized evolution operator $\mc L^x = D\mc E^x(\ol p,0)$ acts
  on the $k$-mode row vector $\mc T_k = \mc T_k(T)$ given by
  \eqref{Tk}, by
  \[
    \mc L^x\,\mc T_k = \mc T_k\,\Psi\big(x;k\piot\big), \com{so}
    \mc L^x\,\mc T_k\(a\\b\) = \mc T_k\,\Psi\(a\\b\),
  \]
  where $\Psi = \Psi(x;\w)$ is the fundamental solution of
  \eqref{SL1}, so satisfies
  \begin{equation}
    \label{Psi}
    \dot\Psi(x;\w) = \w\,
    \(0&-1\\\sg^2(x)&0\)\,\Psi(x;\w),
    \qquad \Psi(0;\w) = I.
  \end{equation}
\end{lemma}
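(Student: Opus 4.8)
The plan is to reduce the statement to a Fourier decomposition in $t$, exactly parallel to the proof of Lemma~\ref{lem:Tk}. First I would record that the Frechet derivative $\mc L^x = D\mc E^x(\ol p,0)$ is precisely evolution by the linearized nonlocal equation \eqref{Ylin}; this is the analogue, for a general constitutive law, of the computation behind Lemma~\ref{lem:D2E}, carried out for the scalar conservation law \eqref{ycl} with flux $g(y)=\IR-\,y+v\big(\IR+\,y,s\big)$. At the quiet state $y\equiv\ol p$ the characteristic field is affine, so differentiating the solution formula \eqref{et}, \eqref{shift} at $y^0=\ol p$ gives a linear operator whose coefficient in $t$ is frozen and whose even and odd parts are exactly the pair \eqref{UP}. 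Since $s(x)$ is piecewise continuous on $[0,\ell]$ and $v_p<0$, the coefficient $\sg^2$ is bounded, so this linear evolution exists, is unique, and is absolutely continuous in $x$ by the standard linear ODE theory.

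Next I would observe that the operator in \eqref{Ylin} maps each $k$-mode subspace $\mathrm{span}\{\c(k\piot t),\s(k\piot t)\}$ into itself: the reflection projections act diagonally on modes, with $\IR+$ fixing $\c(k\piot t)$ and annihilating $\s(k\piot t)$ and $\IR-$ doing the reverse; the coefficient $\sg^2=\sg^2(x)$ carries no $t$-dependence; and $\del_t$ sends $\c(k\piot t)$ and $\s(k\piot t)$ into one another with factor $k\piot$. Substituting the one-mode ansatz $Y(x,t)=\mc T_k\,\big(\vp_k(x),\psi_k(x)\big)^T=\vp_k(x)\,\c(k\piot t)+\psi_k(x)\,\s(k\piot t)$ from \eqref{YasT} into \eqref{Ylin}, and collecting the coefficients of $\c(k\piot t)$ and of $\s(k\piot t)$ separately, produces precisely the $2\times2$ system \eqref{SL1} with $\w=k\piot$. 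Hence the map taking data $\big(\vp_k(0),\psi_k(0)\big)^T$ to $\big(\vp_k(x),\psi_k(x)\big)^T$ is the fundamental solution matrix of the first-order system \eqref{SL1}, which written in matrix form is exactly \eqref{Psi}, and it satisfies $\Psi(0;\w)=I$ since $\mc L^0$ is the identity.

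Finally, unwinding the row-vector bookkeeping: for scalars $a,b$, applying $\mc L^x$ to $\mc T_k\,(a,b)^T=a\,\c(k\piot t)+b\,\s(k\piot t)$ returns, by the previous step, $\mc T_k\,\Psi(x;k\piot)\,(a,b)^T$, which is the asserted identity $\mc L^x\,\mc T_k=\mc T_k\,\Psi(x;k\piot)$. I do not expect a genuine obstacle here: the lemma is the general-flux counterpart of Lemma~\ref{lem:Tk}, and the only points needing care are the identification of $D\mc E^x(\ol p,0)$ with evolution by \eqref{Ylin}, rather than merely invoking it, and --- with an eye to the Sturm-Liouville analysis to follow --- uniform control of $\Psi(x;\w)$ as $\w\to\infty$. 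For the statement as given, however, only the existence and uniqueness of $\Psi$ is needed, and that follows at once from the boundedness of $\sg^2$.
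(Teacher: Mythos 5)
Your proposal is correct and is exactly the argument the paper intends: the paper omits the proof, stating only that the lemma is the analogue of Lemma~\ref{lem:Tk}, and your route --- identifying $\mc L^x$ with evolution by the linearized nonlocal equation \eqref{Ylin}, checking that this evolution preserves each $k$-mode subspace, and separating variables to recover the system \eqref{SL1} whose fundamental matrix is $\Psi(x;k\piot)$ with $\Psi(0;\w)=I$ --- is precisely that analogue. The only cosmetic imprecision is the appeal to the solution formula \eqref{et}, \eqref{shift}, which was derived for the transport form of the non-dimensionalized system rather than for the conservation-law form \eqref{ycl}; this is not load-bearing, since linearizing either form about the constant quiet state yields the same equation \eqref{Ylin}.
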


Alternatively, we can express the linearized system \eqref{UP} as a
wave equation with varying speed, namely
\begin{equation}
  \label{Peq}
  P_{xx} - \sg^2\,P_{tt} = 0,
\end{equation}
and after use of \eqref{UPsub}, by separation of variables, we get the
second-order linear system
\begin{equation}
  \label{SL2}
  \ddot \vp_n + \big(n\piot\big)^2\,\sg^2\,\vp_n = 0,
\end{equation}
which is equivalent to \eqref{SL1}.

We now consider boundary values for these ODE systems.  In both
\eqref{yper} and \eqref{yab}, the data $y^0$ (and so also $Y^0$) posed
at $x=0$ is even, so using \eqref{ypu} and \eqref{UPsub}, this becomes
the condition
\begin{equation}
  \label{bc0}
  U(0,\cdot) = 0, \com{equivalently} \psi_n(0) = \dot\vp_n(0) = 0.
\end{equation}
If we are solving \eqref{yab}, we get the same condition at $x=\ell$,
namely
\begin{equation}
  \label{bcab}
  U(\ell,\cdot) = 0, \com{equivalently} \psi_n(\ell) = \dot\vp_n(\ell) = 0.
\end{equation}
On the other hand, if we are solving \eqref{yper}, the boundary
condition is
\[
  \begin{aligned}
  0 &= \IR-\,\mc S^{-T/4}\big(P(\ell,\cdot)+U(\ell,\cdot)\big)\\
  &= \IR-\sum\Big(\vp_n(\ell)\,\c\big(n\piot t-n{\TS\frac\pi2}\big)
  +\psi_n(\ell)\,\s\big(n\piot t-n{\TS\frac\pi2}\big)\Big),
  \end{aligned}
\]
which yields the conditions
\begin{equation}
  \label{bcper}
  \begin{gathered}
    \dot\vp_n(\ell) = \psi_n(\ell) = 0, \quad n \text{ even},\\
    \vp_n(\ell) = \dot\psi_n(\ell) = 0, \quad n \text{ odd}.
  \end{gathered}
\end{equation}

Our boundary conditions can now be expressed succinctly using the
fundamental solution \eqref{Psi}, in analogy with Lemma~\ref{lem:pws},
as follows.

\begin{lemma}
  \label{lem:fund}
  Suppose that constant ambient pressure $\ol p$, entropy profile
  $s(x)$ and reference period $T$ are given.  Then the constant quiet
  state $y^0=\ol p$ solves \eqref{yper} and \eqref{yab}, and the
  respective linearizations around $\ol p$ are
  \[
    D\mc F_P(\ol p) = \IR-\,\mc S^{-T/4}\,\mc L^\ell \com{and}
    D\mc F_A(\ol p) = \IR-\,\mc L^\ell,
  \]
  and moreover the nonlinear functionals $\mc F_P$ and $\mc F_A$
  factor as
  \begin{equation}
    \label{Ngenl}
    \mc F_P = \IR-\,\mc S^{-T/4}\,\mc L^\ell\,\mc N^\ell \com{and}
    \mc F_A = \IR-\,\mc L^\ell\,\mc N^\ell,
  \end{equation}
  respectively, where
  $\mc N^\ell := \big(\mc L^\ell\big)^{-1}\mc E^\ell$ in both cases.
  The linearized operators respect $k$-modes, and we get
  \[
    \begin{aligned}
      D\mc F_P(\ol p)\Big[\c\big(k\piot t\big)\Big]
      &= \d_{P,k}(T)\,\s\big(k\piot t\big) \com{and}\\
      D\mc F_A(\ol p)\Big[\c\big(k\piot t\big)\Big]
      &= \d_{A,k}(T)\,\s\big(k\piot t\big),
    \end{aligned}
  \]
  where the $k$-th divisor is given by
  \begin{equation}
    \label{dkcts}
    \begin{aligned}
      \d_{P,k}(T) &:= \(0&1\)\,P^{-k}\,
      \Psi\big(\ell;k\piot\big)\(1\\0\) \com{or}\\
      \d_{A,k}(T) &:= \(0&1\)\,
      \Psi\big(\ell;k\piot\big)\(1\\0\),    
    \end{aligned}
  \end{equation}
  for the periodic or acoustic boundary conditions, respectively.
\end{lemma}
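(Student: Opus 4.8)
The plan is to repeat the argument of Corollary~\ref{cor:DF0} and Lemma~\ref{lem:pws} verbatim, with the finite product of rotation and jump matrices $R(\cdot),M(\cdot)$ replaced throughout by the single Sturm--Liouville fundamental solution $\Psi(\ell;\cdot)$ of \eqref{Psi} supplied by Lemma~\ref{lem:Lx}. First I would verify that the quiet state $y^0=\ol p$ solves both \eqref{yper} and \eqref{yab}: a constant is a stationary solution of the nonlocal scalar law \eqref{ycl} (its $x$-derivative vanishes, and $g(\ol p)=v(\ol p,s(x))$ is independent of $t$), so $\mc E^\ell\,\ol p=\ol p$; being even in $t$ and invariant under $\mc S^{-T/4}$, it is annihilated by $\IR-$, whence $\mc F_P(\ol p)=\mc F_A(\ol p)=0$.

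Next, since $\IR-$ and $\mc S^{-T/4}$ are bounded linear operators, they commute with the Fr\'echet derivative, giving $D\mc F_P(\ol p)=\IR-\,\mc S^{-T/4}\,D\mc E^\ell(\ol p)$ and $D\mc F_A(\ol p)=\IR-\,D\mc E^\ell(\ol p)$; and $D\mc E^\ell(\ol p)=\mc L^\ell$ by the very definition of $\mc L^\ell$. The factorization \eqref{Ngenl} is then the trivial identity $\mc E^\ell=\mc L^\ell\,(\mc L^\ell)^{-1}\mc E^\ell=\mc L^\ell\,\mc N^\ell$, so the only thing to check is that $\mc L^\ell$ is invertible. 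For this I would observe that the coefficient matrix in \eqref{Psi} is trace free, so Liouville's formula gives $\det\Psi(x;\w)\equiv1$; hence $\Psi(\ell;\w)$ — and therefore $\mc L^\ell$ — is invertible, and $\mc N^\ell=(\mc L^\ell)^{-1}\mc E^\ell$ is a regular perturbation of the identity with $D\mc N^\ell(\ol p)=\mc I$. The only non-bookkeeping ingredient here is that $\mc E^\ell$ is a well-defined, (twice) differentiable operator on the relevant Sobolev space for a general constitutive law and a possibly discontinuous entropy profile; this rests on the characteristic representation of \eqref{ycl}, exactly as in Lemma~\ref{lem:D2E}, and is established in detail in Section~\ref{sec:D2E}, valid as long as $\del_t y$ remains finite.

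For the $k$-mode claim, the linear equation \eqref{Ylin} has $t$-independent coefficients, so $\mc L^\ell$ preserves Fourier $k$-modes; quantitatively Lemma~\ref{lem:Lx} gives $\mc L^\ell\,\mc T_k=\mc T_k\,\Psi(\ell;k\piot)$. Writing the input $\c(k\piot t)$ as the column vector $\(1\\0\)$ in the basis $\mc T_k$, and recalling from Lemma~\ref{lem:Tk} that $\IR-$ extracts the $\s(k\piot t)$-component, i.e.\ acts as the row $\(0&1\)$ on the coefficient vector, the acoustic case gives immediately $D\mc F_A(\ol p)[\c(k\piot t)]=\big(\(0&1\)\,\Psi(\ell;k\piot)\(1\\0\)\big)\,\s(k\piot t)=\d_{A,k}(T)\,\s(k\piot t)$, so $\d_{A,k}$ is the lower-left entry of $\Psi(\ell;k\piot)$. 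For the periodic problem I would insert $\mc S^{-T/4}\mc T_k=\mc T_k\,P^{-k}$, again from Lemma~\ref{lem:Tk}, before the projection; this attaches the extra factor $P^{-k}$ on the left and produces $\d_{P,k}(T)$, the lower-left entry of $P^{-k}\Psi(\ell;k\piot)$. This is exactly \eqref{dkcts}.

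I do not expect a genuine obstacle inside this lemma: once Lemmas~\ref{lem:Tk} and~\ref{lem:Lx} are available, the whole statement reduces to the invertibility of $\Psi(\ell;\cdot)$ and to tracking the action of $\IR-$ and $\mc S^{-T/4}$ on a single mode. The substantive work sits in the supporting results — the construction of $\Psi$ and the Sturm--Liouville reduction (Lemma~\ref{lem:Lx}), and the regularity theory for $\mc E^\ell$ under a general equation of state with piecewise continuous entropy (Section~\ref{sec:D2E}). One bookkeeping point I would state explicitly is that the $p$ even, $u$ odd symmetry, which \eqref{genls} preserves, is exactly what lets the scalar variable $y=p+u$ (resp.\ $Y=P+U$) carry the full state, so that nothing is lost in passing to the scalar evolution \eqref{ycl} (resp.\ \eqref{Ylin}).
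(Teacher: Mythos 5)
Your proposal is correct and follows exactly the route the paper intends: the paper states this lemma without a separate proof, treating it as immediate from Lemma~\ref{lem:Lx}, the mode computations of Lemma~\ref{lem:Tk}, and the factorization device of Theorem~\ref{thm:fact}/Lemma~\ref{lem:pws}, which is precisely what you reassemble (including the correct observation that $\det\Psi\equiv 1$ by Liouville's formula gives invertibility of $\mc L^\ell$). No gaps.
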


It follows that the single mode $\c\big(k\piot t\big)$ provides a
solution given by $\mc T_k\,\Psi\big(x;k\piot\big)$ of the linearized
problem satisfying the boundary conditions if and only if
\[
  \d_{P,k}(T)=0 \com{or} \d_{A,k}(T)=0,
\]
respectively.  In this case,
$\Psi\big(x;k\piot\big)\scriptstyle\begin{pmatrix}1\\0\end{pmatrix}$
solves \eqref{SL1}, and since it satisfies the boundary conditions, it
is an eigenfunction of the Sturm-Liouville problem \eqref{SL2},
corresponding to eigenvalue $\lambda_k := \big(k\piot\big)^2$.  As in
previous sections, we expect that if the corresponding mode is
nonresonant, then this should perturb to a nonlinear pure tone
solution of \eqref{genls}.

We are thus led to introduce the Sturm-Liouville (SL) operator
\begin{equation}
  \label{SLop}
  \mc L := - \frac1{\sg^2}\,\frac{d^2}{dx^2}, \com{so that}
  \mc L \phi = - \frac1{\sg^2}\,\ddot\phi,
\end{equation}
and to introduce the associated SL eigenvalue problem,
\begin{equation}
  \label{SLev}
  \mc L\,\vp = \lambda\,\vp, \com{which is}
  - \ddot\vp = \lambda\,\sg^2\,\vp,\com{on} 0<x<\ell,
\end{equation}
and subject to the boundary conditions \eqref{bc0} at $x=0$ and either
\eqref{bcper} or \eqref{bcab} at $x=\ell$.  We note that the boundary
conditions imply self-adjointness, implying that this is a
\emph{regular SL problem} as long as the weight
$\sg^2(x) = -v_p\big(\ol p,s(x)\big)$ is a positive, bounded piecewise
continuous function~\cite{Pryce,BR,CodLev}.  In particular, the
setup we have here is a direct generalization of the cases studied
earlier in this paper, and the results here apply unchanged in that
context.  Moreover, because the corresponding ODEs are linear, the
solutions can be expressed as integrals, and our methods extend to BV
entropy profiles with only minor technical changes.

We collect some well-known classical results for regular SL eigenvalue
problems in the following lemma; see \cite{Pryce,CodLev} for details.
This is the direct generalization of the first statement of
Lemma~\ref{lem:Tj}; indeed, the existence of infinitely many reference
periods for a piecewise constant entropy profile is a special case of
the classical Sturm-Liouville theory.  Many of the statements in this
lemma will be explicitly verified below.

\begin{lemma}
  \label{lem:SLprops}
  Assume that the entropy profile is piecewise continuous.  The
  Sturm-Liouville system \eqref{SLev}, with boundary conditions
  \eqref{bc0} and \eqref{bcper} or \eqref{bcab}, has infinitely many
  eigenvalues $\lambda_k$.  These are positive, simple, monotone
  increasing and isolated, and satisfy the growth condition
  $\lambda_k=O(k^2)$.  The corresponding eigenfunctions form an
  orthogonal $L^2$ basis in the Hilbert space with weight function
  $\sg^2$.
\end{lemma}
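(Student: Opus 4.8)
The plan is to obtain Lemma~\ref{lem:SLprops} from the classical theory of regular Sturm--Liouville problems, organised around the Pr\"ufer angle, which is robust under the low regularity (piecewise continuity) of the weight $\sg^2$ and is exactly the ``rotation'' variable underlying the earlier constructions in the paper. First I would record that $\sg^2(x) = -v_p\big(\ol p,s(x)\big)$, being piecewise continuous on the compact interval $[0,\ell]$ and continuous at the endpoints, is bounded above and bounded below away from zero, $0 < m \le \sg^2(x) \le M < \infty$. By Lemma~\ref{lem:Lx}, with $\w := \sqrt\lambda$, the pair $z(x) := \Psi(x;\w)\,(1,0)^T = \big(\vp(x),\psi(x)\big)$ solves \eqref{SL1} with $z(0) = (1,0)$, and $\det\Psi\equiv1$ since the coefficient matrix in \eqref{Psi} is trace free; in particular $z(x)\ne0$ for every $x$, so I would define the continuous Pr\"ufer angle $\vt = \vt(x;\lambda)$ by $\vp = \rho\,\c\vt$, $\psi = \rho\,\s\vt$ with $\rho > 0$ and $\vt(0;\lambda) = 0$. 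From \eqref{SL1} one computes
\[
  \dot\vt = \w\,\big(\sg^2\,\c^2\vt + \s^2\vt\big),
\]
in which \emph{no derivative of $\sg$ appears}; hence $\vt$ is well defined and absolutely continuous even for merely piecewise continuous $\sg^2$, and $\w\,m' \le \dot\vt \le \w\,M'$ with $m' := \min\{1,m\} > 0$ and $M' := \max\{1,M\}$.

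Next I would set $\Theta(\lambda) := \vt(\ell;\sqrt\lambda)$ and show that $\Theta:[0,\infty)\to[0,\infty)$ is a continuous, strictly increasing bijection with $\Theta(0) = 0$ and $\sqrt\lambda\,m'\ell \le \Theta(\lambda) \le \sqrt\lambda\,M'\ell$. Continuity is continuous dependence of the Carath\'eodory solution of the Pr\"ufer equation on the parameter $\lambda$; the two-sided bound is immediate from the bounds on $\dot\vt$ and in particular forces $\Theta(\lambda)\to\infty$; and strict monotonicity is the Sturm comparison argument: if $\lambda_1 < \lambda_2$ then $\vt_1(0) = \vt_2(0) = 0$, and at any point where $\vt_1 = \vt_2$ one has $\dot\vt_2 > \dot\vt_1$ because the bracket $\sg^2\c^2\vt + \s^2\vt \ge m' > 0$, so $\vt_2$ cannot return to $\vt_1$ once it has left it, whence $\vt_2(\ell) > \vt_1(\ell)$.

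The eigenvalues are then read off from the endpoint conditions. Condition \eqref{bc0} at $x=0$ is $\dot\vp(0) = 0$, i.e.\ $\psi(0) = 0$, i.e.\ $\vt(0)\in\pi\B Z$, which is built in. The acoustic condition \eqref{bcab} at $x=\ell$ is $\psi(\ell) = 0$, i.e.\ $\Theta(\lambda)\in\pi\B Z$; since $\Theta$ is a continuous strictly increasing bijection there is, for each integer $k\ge0$, a unique $\lambda_k\ge0$ with $\Theta(\lambda_k) = k\pi$, and these exhaust the spectrum (with $\lambda_0 = 0$, constant eigenfunction). For the periodic condition \eqref{bcper} the constraint at $x=\ell$ is $\psi(\ell) = 0$ for the even-indexed modes and $\vp(\ell) = 0$ for the odd-indexed ones, i.e.\ $\Theta(\lambda)\in\tfrac\pi2\B Z$ with the parity of $\tfrac2\pi\Theta(\lambda)$ matching the mode index, and again there is a unique $\lambda_k$ with $\Theta(\lambda_k) = k\tfrac\pi2$. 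In both cases this indexing agrees with that of the $k$-modes and reference periods $T^{(k)}$ of Lemma~\ref{lem:wsol}; strict monotonicity of $\Theta$ makes the $\lambda_k$ strictly increasing and isolated, hence each of multiplicity one, since the condition at $x=0$ already pins the eigensolution to the one-dimensional span of $\Psi(\cdot;\sqrt{\lambda_k})\,(1,0)^T$. Positivity for $k\ge1$ I would get by multiplying \eqref{SLev} by $\vp$ and integrating by parts: the boundary term vanishes in all cases, leaving $\int_0^\ell\dot\vp^2 = \lambda\int_0^\ell\sg^2\vp^2 \ge 0$, with equality only when $\vp$ is constant, which is excluded for $k\ge1$ (in the periodic case a constant is in addition killed by the Dirichlet condition). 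The growth bound is immediate: $\Theta(\lambda_k) = k\pi$ (or $k\tfrac\pi2$) together with $\Theta(\lambda_k) \ge \sqrt{\lambda_k}\,m'\ell$ gives $\lambda_k = O(k^2)$, while the companion upper bound on $\Theta$ gives conversely that $\lambda_k$ is bounded below by a positive multiple of $k^2$. Finally, orthogonality of the eigenfunctions in $L^2$ with weight $\sg^2$ is Green's identity --- the boundary conditions make the operator $\mc L$ of \eqref{SLop} symmetric, so $(\lambda_j - \lambda_k)\langle\vp_j,\vp_k\rangle_{\sg^2} = 0$ --- and completeness follows from the spectral theorem for $\mc L$ as a self-adjoint operator with compact resolvent, since for $\mu$ not an eigenvalue $(\mc L - \mu)^{-1}$ is an integral operator on $L^2_{\sg^2}(0,\ell)$ with bounded (Green's function) kernel, hence Hilbert--Schmidt; alternatively one simply invokes the classical results in \cite{Pryce,CodLev}.

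I do not expect a deep obstacle, since this is textbook material; the one point requiring care is that $\sg^2$ is only piecewise continuous, so I would be explicit that the right-hand side of the Pr\"ufer equation is smooth (hence Lipschitz) in $\vt$ uniformly in $x$ and only piecewise continuous in $x$, so that Carath\'eodory existence, uniqueness and continuous dependence on $\lambda$ all apply, and that the comparison argument uses only continuity of the angle functions, which survives the jumps of $\sg^2$. The completeness statement is the only place where I would lean on the classical citation rather than reprove it from scratch.
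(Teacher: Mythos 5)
Your proof is correct, and it is worth noting how it relates to what the paper actually does: the paper offers no proof of Lemma~\ref{lem:SLprops} at the point where it is stated, instead citing the classical references \cite{Pryce,CodLev} and remarking that ``many of the statements in this lemma will be explicitly verified below'' --- that later verification (the Angle Variable subsection and Lemma~\ref{lem:wk}) uses the \emph{modified} Pr\"ufer variables $\vp = r\rho^{-1}\c\,\t$, $\psi = r\rho\,\s\,\t$ with $\rho=\sqrt\sg$, whose angle equation $\dot\t = \w\,\sg - \frac{\dot\sg}{2\sg}\s(2\t)$ involves $d\log\sqrt\sg$ and hence leans on the $BV$ hypothesis, and establishes monotonicity in $\w$ by differentiating that ODE and finding the integrating factor $r^2$. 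You instead use the unmodified Pr\"ufer angle, whose equation $\dot\vt = \w\,(\sg^2\c^2\vt+\s^2\vt)$ contains no derivative of $\sg$ at all, and you get monotonicity by the first-order Sturm comparison argument at coincidence points. What your route buys is a self-contained proof under exactly the stated hypothesis (piecewise continuity, i.e.\ $0<m\le\sg^2\le M$) with no regularity of $\sg$ beyond boundedness; what the paper's modified substitution buys --- and the reason it is used there --- is the clean separation of the rotation $\w\sg\,dx$ from the jump contribution $d\log\sqrt\sg$, which is what drives the resonance analysis of \eqref{wkexp} and \eqref{wq}. Your endpoint dictionary ($\psi(\ell)=0\Leftrightarrow\vt(\ell)\in\pi\B Z$, $\vp(\ell)=0\Leftrightarrow\vt(\ell)\in\frac\pi2+\pi\B Z$), the parity matching with \eqref{bcper}, the integration by parts for positivity, the simplicity from the one-dimensionality of solutions with $\psi(0)=0$, and the two-sided bound $\sqrt{\lambda_k}\,m'\ell\le k\frac\pi2\le\sqrt{\lambda_k}\,M'\ell$ giving $\lambda_k\asymp k^2$ are all sound; deferring completeness to the compact-resolvent/spectral-theorem argument or to \cite{CodLev} is exactly what the paper does.
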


We label the eigenvalues $\lambda_k$ and corresponding eigenfunctions
$\vp_k$, scaled so that $\vp_k(0)=1$, so that \eqref{SLev} and
\eqref{SL1} hold for each $k\ge1$, and the $\lambda_k$'s are
increasing with $k$.  Because we prefer to work with the frequencies,
we define the $k$-th \emph{eigenfrequency} $\w_k$ and corresponding
\emph{reference period} $T_k$ by
\begin{equation}
  \label{Tn}
\w_k := \sqrt{\lambda_k}, \com{and} 
  T_k := k\,\frac{2\pi}{\w_k} = k\,\frac{2\pi}{\sqrt{\lambda_k}},
\end{equation}
respectively.  It follows that the $k$-modes
\[
  \begin{aligned}
  Y_k(x,t) &:= \mc T_k(T_k)\,\Psi\big(x;k{\TS\frac{2\pi}{T_k}}\big)
  \,{\TS\begin{pmatrix}1\\0\end{pmatrix}}
  \com{or}\\
  P_k(x,t) &:= \vp_k(x)\,\c\big(k{\TS\frac{2\pi}{T_k}}t\big) = \vp_k(x)\,\c(\w_kt),
  \end{aligned}
\]
solve \eqref{Ylin} or \eqref{Peq}, respectively, while also satisfying
the appropriate boundary conditions.  These $k$-modes lie in the
kernel of the linearized operator $D\mc F(\ol p)$, and we will show
that these perturb to pure
tone solutions of the nonlinear problem.  We note that for given $k$,
the frequency $\w_k$ is determined by the entropy profile, and this in
turn yields the reference period.  Consideration of other resonant and
nonresonant modes then refers to this fixed period $T_k$.
\subsection{Nonresonant modes}

As in our above development, we wish to perturb $Y_k$ (or $P_k$) to a
time periodic solution of the corresponding nonlinear equation.
Following \eqref{y0}, and having identified the reference period,
we consider perturbations of the initial data of the form
\begin{equation}
  \label{pert}
  y^0(0) = p^0(t) := \ol p + \a\,\c\big(k{\TS\frac{2\pi}{T_k}}t\big)
   + z + \sum_{j\ge 1,j\ne k}a_j\,\c\big(j{\TS\frac{2\pi}{T_k}}t\big),
\end{equation}
in which the reference period $T_k$ is determined by the choice of
SL eigenvalue $\lambda_k$ in \eqref{Tn}, and we now regard this as
fixed.

We again declare the $k$-mode to be \emph{nonresonant} if no other
$j$-modes satisfy the boundary condition, that is
\[
  \d_{P,j}(T_k)\ne0 \com{or} \d_{A,j}(T_k)\ne0,
  \com{for all} j\ne k,
\]
respectively for the appropriate boundary conditions \eqref{yper} or
\eqref{yab}.  Here we note that the reference period $T_k$ is
fixed, while $j$ varies, and the collection of $j$-mode basis elements
\[
  \big\{\mc T_j(T_k)\;|\;j\ge 1\big\} =
  \Big\{\(\c(j\piotk t)&\s(j\piotk t)\)\;|\;j\ge 1\Big\}
\]
span the even and odd functions of period $T_k$, respectively.

\begin{lemma}
  \label{lem:nonres}
  The $j$-mode is resonant with the fixed $k$-mode if and only if two
  distinct SL frequencies are rationally related: that is,
  \[
    \d_j(T_k) = 0 \com{iff}
    k\,\w_p = j\,\w_k,
  \]
  for some index $p = p(j)\ne k$.
\end{lemma}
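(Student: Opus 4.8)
The plan is to unwind the definitions and reduce the statement to the Sturm--Liouville eigenfunction characterization already in place. First I would substitute $T=T_k$ into the divisor formula \eqref{dkcts}. Since $T_k=k\,2\pi/\w_k$ by \eqref{Tn}, the frequency entering the $j$-th divisor at reference period $T_k$ is
\[
  j\,\piotk \;=\; \frac jk\,\w_k ,
\]
so that $\d_j(T_k)$ — either the periodic version $\d_{P,j}$ or the acoustic version $\d_{A,j}$ — is read off from the single vector $v:=\Psi\big(\ell;\frac jk\w_k\big)\(1\\0\)$. By Lemma~\ref{lem:Lx} and \eqref{SL1}, $v=(\vp(\ell),\psi(\ell))$ where $(\vp,\psi)$ solves the system \eqref{SL1} with parameter $\lambda=(\frac jk\w_k)^2$ and initial data $\vp(0)=1$, $\psi(0)=0$, i.e.\ with exactly the left boundary condition \eqref{bc0}; equivalently $\vp$ solves \eqref{SL2} with this $\lambda$, $\vp(0)=1$, $\dot\vp(0)=0$.

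Next I would match the vanishing of the divisor to an eigenfunction condition at $x=\ell$. By \eqref{dkcts}, $\d_{A,j}(T_k)=0$ iff the second entry of $v$ vanishes, i.e.\ $\psi(\ell)=0$, which is precisely the acoustic right boundary condition \eqref{bcab}; and $\d_{P,j}(T_k)=0$ iff $P^{-j}v$ has vanishing second entry, i.e.\ (since $P=R(\pi/2)$) iff $v$ lies on the coordinate axis at angle $j\pi/2$ — that is $\psi(\ell)=0$ for $j$ even and $\vp(\ell)=0$ for $j$ odd — which is exactly the parity-$j$ case of \eqref{bcper}. In either problem, then, $\d_j(T_k)=0$ says precisely that $\vp$ also satisfies the right boundary condition, hence is a non-trivial eigenfunction of the regular Sturm--Liouville problem \eqref{SLev} with eigenvalue $\lambda=(\frac jk\w_k)^2$. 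Thus $\d_j(T_k)=0$ iff $\frac jk\w_k=\sqrt{\lambda_p}=\w_p$ for some index $p$, i.e.\ $k\,\w_p=j\,\w_k$; and since $j\ne k$ while the eigenvalues are simple by Lemma~\ref{lem:SLprops}, necessarily $p\ne k$. The converse is the same computation run backwards: if $k\w_p=j\w_k$ with $p\ne k$, take the normalized eigenfunction $\vp_p$ (so $\vp_p(0)=1$, $\dot\vp_p(0)=0$), note $\frac jk\w_k=\w_p$, and the corresponding vector $v=(\vp_p(\ell),\psi_p(\ell))$ lies on the axis prescribed by its right boundary condition, which yields $\d_j(T_k)=0$.

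I do not expect a genuine obstacle here; the only care needed is bookkeeping. In the acoustic case the right boundary condition \eqref{bcab} is index-independent, so the correspondence is immediate in both directions. In the periodic case \eqref{bcper} alternates with the parity of the mode number, so one must check that the index $p$ produced above automatically has the parity of $j$: this holds because, for the periodic problem, the eigenfrequencies $\w_p$ are the interleaving of the Neumann--Neumann spectrum (even $p$, where $\psi(\ell)=0$) with the Neumann--Dirichlet spectrum (odd $p$, where $\vp(\ell)=0$), so the axis on which $v$ lands both determines and is determined by the parity of $p$, which is the parity of $j$ built into the twist $P^{-j}$. Beyond that, the proof is nothing more than Lemma~\ref{lem:fund} together with \eqref{dkcts}, the definition \eqref{Tn} of $T_k$, the Sturm--Liouville reading of \eqref{SL1}--\eqref{SL2}, and the simplicity of the spectrum from Lemma~\ref{lem:SLprops}.
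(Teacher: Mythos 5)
Your proof is correct and follows the same route as the paper's: the paper's argument is just a terser version of the observation that $\d_j(T_k)=0$ says the solution at frequency $j\,2\pi/T_k=(j/k)\,\w_k$ satisfies the right boundary condition, hence $(j/k)\,\w_k$ is an SL eigenfrequency $\w_p$ with $p\ne k$. Your additional bookkeeping (the explicit reading of \eqref{dkcts} as a boundary condition, and the parity matching between $p$ and $j$ in the periodic case) is a faithful filling-in of details the paper leaves implicit.
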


\begin{proof}
  By construction, we have
  \[
    \d_k(T_k) = 0, \com{with}
    T_k := k\,\frac{2\pi}{\w_k}.
  \]
  If $\d_j(T_k) = 0$, then $j\frac{2\pi}{T_k}$ also corresponds to some
  SL frequency, and since $j\ne k$, we must have
  \[
    \w_p = j\frac{2\pi}{T_k} \com{for some}
    p\ne k,
  \]
  and substituting in $T_k$ gives the result.
\end{proof}

The resonance or nonresonance of modes corresponding to SL eigenvalues
depends on the entropy profile and constitutive law.  Moreover,
because the SL eigenvalues, and hence also frequencies, vary
continuously with the entropy profile, we expect that in a strong
sense, as in the previous cases, the set of profiles with resonant
modes should be small, so that generically, all modes should be
nonresonant.

\subsection{Angle Variable}

Once again we treat the general entropy profile as given, and solve
for the base frequencies $\w_k$ that yield $k$-mode solutions of the
linearized equation.  Starting with the linearization \eqref{UP}, we
separate variables by setting
\[
  P(x,t) := \vp(x)\,\c(\w\,t), \qquad
  U(x,t) := \psi(x)\,\s(\w\,t),
\]
which yields the SL system \eqref{SL1}, namely
\begin{equation}
  \label{SL3}
  \dot\vp + \w\,\psi = 0, \qquad
  \dot\psi - \w\,\sg^2\,\vp = 0,
\end{equation}
with $\sg^2 = - v_p(\ol p,s) = \sg^2(x)$.  According to \eqref{bc0},
we first consider initial values
\[
  \vp(0) = c_0 \com{and} \psi(0) = 0,
\]
for appropriate $c_0\ne 0$.  Finding the values of $\w_k$ that meet
the boundary conditions \eqref{bcab} or \eqref{bcper} will then
determine the appropriate reference period
$T_k:=k\,\frac{2\pi}{\w_k}$.

As we have seen earlier, the most important part of the evolution is
the angle $\t=\t(x)$ of the vector $(\vp,\psi)$.  This can be
effectively captured with the use of \emph{modified Pr\"ufer
  variables}, which are
\begin{equation}
  \label{pruf}
  \vp(x) := r(x)\,\frac1{\rho(x)}\,\c\big(\t(x)\big), \qquad
  \psi(x) := r(x)\,\rho(x)\,\s\big(\t(x)\big),
\end{equation}
see \cite{Pryce,CodLev,BR}.  Here we interpret $r(x)$ as the
radial length or amplitude, $\rho(x)$ as the eccentricity or aspect,
and $\t(x)$ as the angle variable.  This is a degenerate description
in which we are free to choose the aspect $\rho(x)>0$, and having done
so, both $r(x)$ and $\t(x)$ will be determined by the equations.
Plugging in \eqref{pruf} into \eqref{SL3} and simplifying, we get the
system
\[
  \begin{aligned}
    \frac{\dot r}r\,\c(\t) - \frac{\dot\rho}{\rho}\,\c(\t)
    - \s(\t)\,\dot\t + \w\,\rho^2\,\s(\t) &= 0, \\
    \frac{\dot r}r\,\s(\t) + \frac{\dot\rho}{\rho}\,\s(\t)
    + \c(\t)\,\dot\t - \w\,\frac{\sg^2}{\rho^2}\,\c(\t) &= 0,
  \end{aligned}
\]
with initial conditions
\[
  \t(0) = 0 \com{and} r(0) = 1,
  \com{so} c_0 = \frac1{\rho(0)}.
\]
After use of elementary trig identities, this in turn becomes
\[
  \begin{aligned}
    \frac{\dot r}r - \frac{\dot\rho}{\rho}\,\c(2\t)
    + \w\,\Big(\rho^2-\frac{\sg^2}{\rho^2}\Big)\s(\t)\,\c(\t) &=0, \\
    \dot\t + \frac{\dot\rho}{\rho}\,\s(2\t)
    - \w\,\Big(\frac{\sg^2}{\rho^2}\,\c^2(\t) + \rho^2\,\s^2(\t)\Big) &=0.
  \end{aligned}
\]
Thus the system is reduced to a nonlinear scalar ODE for the angle
$\t(x)$, coupled with an integration for the amplitude $r(x)$.
Moreover, it is now clear that we should choose $\rho$ such that
\begin{equation}
  \label{rho}
  \rho^2 = \frac{\sg^2}{\rho^2}, \com{that is}
  \rho (x) := \sqrt{\sg(x)}.
\end{equation}
With this choice the equations simplify further, and we get the scalar
equation for $\t(x)$,
\begin{equation}
  \label{theta}
  \dot\t = \w\,\sg - \frac{\dot\sg}{2\sg}\,\s(2\t),
  \qquad \t(0) = 0,
\end{equation}
coupled with a linear homogeneous equation for $r(x)$, namely
\[
  \frac{\dot r}r = \frac{\dot\sg}{2\sg}\,\c(2\t),
  \qquad r(0) = 1,
\]
which immediately yields the quadrature
\begin{equation}
  \label{rint}
  r(x) = \exp\Big\{ \int_0^x \c\big(2\t(y)\big)\;d\log\sqrt\sg(y)\Big\}.
\end{equation}

We note that \eqref{theta} is consistent with our treatment of
piecewise constant entropy; in that case, we have $\dot\t=const$ on
intervals, while each jump contributes a $\d$-function.  Thus the
angle changes linearly on each entropy interval, with a finite jump in
angle (independent of $\w$) at each entropy jump.  In particular, our
framework extends without change to bounded measurable entropy
profiles for which $\log\sg$ has bounded variation.  Moreover, in the
prototypical case of a $\g$-law gas, $\log\sg$ is a multiple of the
entropy.

As above, we are interested in characterizing the base $k$-mode
frequencies and periods, that yield periodic solutions of the
linearized equations.  That is, we want to characterize periods $T_k$,
such that the $k$-mode $\mc T_k(T_k)$ satisfies the boundary
conditions \eqref{bcab} or \eqref{bcper}, respectively, or
equivalently
\[
  \d_{A,k}(T_k)=0 \com{or} \d_{P,k}(T_k)=0,
\]
respectively, these being given by \eqref{dkcts}.  Using \eqref{pruf},
we express the boundary conditions in terms of the angle variable
$\t(x)$, as follows.  For the periodic boundary condition
\eqref{bcper}, we require
\begin{equation}
  \label{kodd}
  \vp_k(\ell) = 0, \com{so} \t(\ell) = \Big(n+\frac12\Big)\pi,
\end{equation}
for $k$ odd, and for $k$ even or the acoustic boundary condition, we
need
\begin{equation}
  \label{keven}
  \psi_k(\ell) = 0, \com{so} \t(\ell) = n\,\pi,
\end{equation}
for some $n$.

We can methodically enumerate all such conditions in terms of the
frequency $\w$ and angle $\t(x) = \t(x,\w)$, using the \emph{angle
  boundary condition}
\begin{equation}
  \label{abc}
  \t(\ell,\w) = k\,\frac\pi2,
\end{equation}
which we interpret as an implicit condition for the coefficient
$\w=\w_k$ of \eqref{theta}.  Integrating, this becomes the condition
\begin{equation}
  \label{wkexp}
  k\,\frac\pi2 = \w_k\,\int_0^\ell\sg\;dy -
  \int_0^\ell \s\big(2\t(y,\w_k)\big)\;d\log\sqrt\sg(y),
\end{equation}
which is an implicit equation for $\w_k$.

Thus, given an entropy profile, we find the base reference period
$T_k$ of the $k$-mode by solving \eqref{wkexp} for $\w_k$, where
$\t(x,\w)$ solves \eqref{theta}, and then using \eqref{Tn}.  Each such
$k$-mode with reference period $T_k$ then determines a periodic
problem of the linearized equation.

Note that \eqref{abc} is equivalent to the periodic boundary condition
\eqref{bcper}, but permits more frequencies than the acoustic
reflection boundary condition \eqref{bcab} allows, these latter two
being equivalent only for even values of $k$.  However, the even modes
form a closed subspace, so if we start from $a_m=0$ for all odd $m$,
this persists when we perturb to the nonlinear problem.  Thus for
notational convenience, we use \eqref{wkexp} to identify all
frequencies, with the understanding that if we are using \eqref{bcab},
all linearizations and perturbations are restricted to even modes.

\begin{lemma}
  \label{lem:wk}
  For each integer $k\ge 1$, and any entropy profile $s(x)$ such that
  $\log\sg\in BV[0,\ell]$, there is a unique $\w_k$ such that the
  solution $\t(x)$ of \eqref{theta} satisfies \eqref{abc}.  Moreover,
  $\{\w_k\}$ is a monotone increasing sequence which grows like $k$,
  that is $\w_k/k\in[1/C,C]$ for some constant $C>0$.
\end{lemma}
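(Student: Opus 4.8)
The plan is to study the function $\Theta(\w):=\t(\ell,\w)$, where $\t(\cdot,\w)$ is the angle solving \eqref{theta}, and to prove that $\Theta:(0,\infty)\to(0,\infty)$ is a continuous, strictly increasing bijection satisfying the two-sided linear bound
\[
  \big|\Theta(\w)-\w\,\Sigma\big|\le\tfrac12\,V \com{for all} \w>0,
\]
where $\Sigma:=\int_0^\ell\sg\,dy\in(0,\infty)$ and $V:=\mathrm{Var}_{[0,\ell]}\log\sg<\infty$. Granting this, for each $k\ge1$ there is a unique $\w_k>0$ with $\Theta(\w_k)=k\,\frac\pi2$, i.e.\ with $\t(\ell,\w_k)=k\,\frac\pi2$ as in \eqref{abc}; strict monotonicity of $\Theta$ forces $\w_k<\w_{k+1}$; and the bound gives $\w_k\,\Sigma\in\big[k\frac\pi2-\frac V2,\ k\frac\pi2+\frac V2\big]$, so $\w_k/k\to\frac\pi{2\Sigma}$. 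In particular $\w_k/k\le C$ for all $k$ and $\w_k/k\ge1/C$ for all large $k$, while for the finitely many remaining small $k$ one uses $0<\w_1\le\w_k$; after enlarging $C$ this gives $\w_k/k\in[1/C,C]$ for every $k\ge1$.

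First I would give \eqref{theta} a precise meaning when $\log\sg\in BV$, so that $\dot\sg/\sg$ is a finite signed measure: the correct formulation is the Stieltjes integral equation
\[
  \t(x)=\w\int_0^x\sg\,dy-\int_0^x\s\big(2\t(y)\big)\,d\log\sqrt\sg(y),
\]
whose value at $x=\ell$ is \eqref{wkexp}. Since $y\mapsto\s(2y)$ is globally Lipschitz and the total mass $\int_0^\ell\big|d\log\sqrt\sg\big|=\frac12V$ is finite, a contraction-mapping argument on subintervals of small $\big|d\log\sqrt\sg\big|$-mass, patched over $[0,\ell]$, produces a unique solution $\t(\cdot,\w)$ of bounded variation. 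It is continuous away from the atoms of $d\log\sg$, where it jumps by the same $\w$-independent rule $\t_+=h(\cdot,\t_-)$ (with $h$ the increasing function \eqref{hdef}) that governs the piecewise-constant case of Sections~\ref{sec:onejump}--\ref{sec:pw}; this is exactly the content of the discussion following \eqref{rint}, and in particular across an atom $|\t_+-\t_-|\le\frac12\big|[\log\sg]\big|$. Standard continuous-dependence estimates for this equation make $\w\mapsto\t(\ell,\w)$ continuous, indeed locally Lipschitz.

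Next I would establish strict monotonicity of $\Theta$. Writing $\eta:=\partial_\w\t$ and differentiating the integral equation in $\w$, the sensitivity $\eta$ solves the \emph{linear} Stieltjes equation
\[
  d\eta=\sg\,dx-2\,\c(2\t)\,\eta\,d\log\sqrt\sg, \qquad \eta(0)=0,
\]
and using the integrating factor $r(x)^2$ from \eqref{rint} one checks $d\big(r^2\eta\big)=r^2\,\sg\,dx$ with no atomic contribution (across an atom $r^2$ jumps by exactly the reciprocal of $\partial_x h$, by \eqref{hder}), whence the classical Pr\"ufer formula
\[
  \eta(x)=\frac1{r(x)^2}\int_0^x r(y)^2\,\sg(y)\,dy>0, \qquad x\in(0,\ell].
\]
Hence $\Theta'(\w)=\eta(\ell)>0$. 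For the endpoints: at $\w=0$ the constant $\t\equiv0$ solves the integral equation (using $h(\cdot,0)=0$), so by uniqueness $\Theta(0)=0$ and continuity gives $\Theta(\w)\to0$ as $\w\to0^+$; and evaluating the integral equation at $x=\ell$ and bounding $|\s(2\t)|\le1$ gives $|\Theta(\w)-\w\,\Sigma|\le\frac12V$ (using $\mathrm{Var}\log\sqrt\sg=\frac12V$ and the atomic estimate above), so $\Theta(\w)\to\infty$ as $\w\to\infty$. Thus $\Theta$ is a continuous strictly increasing bijection of $(0,\infty)$ onto itself, and the conclusions of the lemma follow as in the first paragraph.

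I expect the main obstacle to be the first step: making \eqref{theta} rigorous for a merely-$BV$ weight and establishing existence, uniqueness, and continuous parameter dependence for the resulting measure-driven ODE, together with reconciling its behavior at atoms with the piecewise-constant jump analysis already carried out (so that, for instance, Lemma~\ref{lem:Tj} is recovered as a special case). Once the problem is correctly posed as a Stieltjes equation with finite-mass driving measure, the integrating-factor identity, the monotonicity, and the growth bound are all routine.
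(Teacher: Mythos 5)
Your proof is correct and follows essentially the same route as the paper: monotonicity of $\t(\ell,\cdot)$ in $\w$ via the sensitivity $\zeta=\partial_\w\t$ with integrating factor $r^2$ from \eqref{rint}, and the growth bound by estimating $|\s(2\t)|\le1$ in \eqref{wkexp}. The extra care you take with the Stieltjes formulation of \eqref{theta} for $\log\sg\in BV$ and with the endpoint behavior of $\w\mapsto\t(\ell,\w)$ fills in details the paper leaves implicit, but does not change the argument.
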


\begin{proof}
  We will show that for any fixed $x>0$, the function $\t(x)$ is
  strictly monotone increasing as a function of $\w$.  In particular,
  taking $x=\ell$ in \eqref{abc} implies that $\w_k$ exists for each
  $k\ge1$ and is increasing.  To get the growth rate for $\w_k$, we
  use \eqref{wkexp} to write
  \[
    \bigg|\,\w_k\,\int_0^\ell\sg\;dy - k\,\frac\pi2\bigg|
    \le \int_0^\ell d\,|\!\log\sqrt\sg(y)|.
  \]

  To show monotonicity of $\t(\cdot,\w)$, we note that $\sg$ is
  determined by the entropy $s(x)$ and differentiate the ODE
  \eqref{theta} with respect to $\w$.  Denoting
  $\frac{\del\t}{\del \w}$ by $\zeta$, this yields
  \begin{equation}
    \label{zeta}
    \dot\zeta = \sg - \frac{\dot\sg}{\sg}\,\c(2\t)\,\zeta,
    \qquad \zeta(0) = 0,
  \end{equation}
  which by \eqref{rint} can also be written
  \[
    \dot\zeta = \sg - 2\,\frac{\dot r}{r}\,\zeta,
    \qquad \zeta(0) = 0.
  \]
  It follows that $r^2$ is an integrating factor, and we integrate to
  get
  \[
    r^2(x)\,\zeta(x) = \int_0^xr(y)^2\,\sg(y)\;dy.
  \]
  Thus $\zeta(x)>0$ for $x>0$, and the proof is complete.    
\end{proof}

\subsection{Perturbation and Auxiliary equation}

Assuming now that the $k$-th mode is nonresonant, we show that the
$k$-mode bifurcates to a time periodic solution of the compressible
Euler equations with periodic or acoustic reflective boundary
conditions \eqref{bcper} or \eqref{bcab}, respectively.  As noted
above, it is enough to consider the periodic condition \eqref{bcper}
only, and restrict our attention to even modes when using the
reflective condition \eqref{bcab}.

Our strategy is unchanged: we build Hilbert spaces as in
\eqref{Hsplit} and \eqref{Hplus}, but separating the $k$-mode rather
than the 1-mode.  The analogues of Lemmas~\ref{lem:himodes}
and~\ref{lem:W} then follow in exactly the same way.  For clarity and
brevity, we define the spaces and state the lemmas without proof.

For $k$ fixed, the angle boundary condition \eqref{abc}, or
equivalently \eqref{wkexp}, determines the base frequency $\w_k$, and
by \eqref{Tn} this gives both the SL eigenvalue $\lambda_k$ and
reference period $T=T_k$, namely
\[
  \lambda_k := \w_k^2 \com{and} T_k := k\,\frac{2\pi}{\w_k},
\]
respectively, and while considering other modes, this period $T=T_k$
remains fixed.

Following \eqref{Hsplit}, we define
\begin{equation}
  \label{Hsplit2}
  \begin{aligned}
    \mc H_1 &:= \big\{z + \a\,\c(k\piotk t)\;\big|\;z, \a\in\B R\big\}
    \com{and}\\
    \mc H_2 &:= \Big\{ \sum_{j\ne k}a_j\,\c(j\piotk t)\;\Big|
    \;\sum_{j\ne k} a_j^2\,j^{2s}<\infty\Big\},    
  \end{aligned}
\end{equation}
so the domain is $H^s = \mc H_1\oplus\mc H_2$.  Similarly, from
\eqref{Hplus}, we describe the range $\mc H$ by
\begin{equation}
  \begin{aligned}
  \mc H_+ &:= \Big\{ y = \sum_{j\ne k}a_j\,\s(j\piotk t)\;\Big|
  \;\|y\|<\infty\Big\}, \com{and}\\
  \mc H &:= \big\{ \beta\,\s(k\piotk t) \big\} \oplus \mc H_+, \com{with norm}\\
  \|y\|^2 &:= \beta^2 + \sum_{j>1} a_j^2\,\d_j^{-2}\,j^{2s},
  \end{aligned}
  \label{Hplus2}
\end{equation}
and where we have set
\[
  \d_j := \d_{P,j}(T_k) \com{or} \d_j := \d_{A,j}(T_k),
\]
respectively for the periodic or acoustic boundary condition.  We
similarly define the orthogonal projection,
\[
  \Pi:\mc H\to \mc H_+ \com{by}
  \Pi\Big[\beta\,\s(k\piotk t)+\sum_{j\ne k}a_j\,\s(jt)\Big] :=
  \sum_{j\ne k}a_j\,\s(jt),
\]
which projects onto all but the $k$-mode.

Now let $\mc F$ denote $\mc F_P$ or $\mc F_A$, given by \eqref{Ngenl},
depending on whether periodic or acoustic boundary conditions
\eqref{bcper} or \eqref{bcab} are used, respectively.  Note that the
data consists only of even functions and $\mc F$ incorporates the
projection onto odd modes, and the Hilbert spaces respect this
structure, being spaces of even and odd functions for the domain and
range, respectively.

As above, the nonlinear problem is treated as a
bifurcation problem, which consists of an infinite dimensional
\emph{auxiliary equation}, namely
\begin{equation}
  \label{auxil}
  \Pi\,\mc F\big(y^0\big) = 0, \com{with}
  y^0 = \ol p + z + \a\,\c(k\piotk t) + W,
\end{equation}
together with a scalar \emph{bifurcation equation}, namely
\begin{equation}
  \label{bifurc}
  \Big\langle \s(k\piotk t), \mc F\big(y^0\big)\Big\rangle = 0.
\end{equation}
Here the amplitude $\a$ parameterizes the $k$-mode linear solution
that we are perturbing, $z$ is the free parameter of the 0-mode, which
is always in the kernel, and $W$ is the nonlinear correction off the
kernel.  As usual, we first solve \eqref{auxil} to get a solution
$W(\a,z)$ uniform in $\a$ and $z$, and then we solve for the 0-mode
correction $z$ as a function of $\a$.  Here $z$ is the 0-mode
correction that gives a nonzero derivative in the bifurcation equation
as a consequence of genuine nonlinearity.  Physically, this is a
correction to the ambient pressure that balances rarefaction and
compression, and ultimately this is the physical mechanism for
avoiding shock formation and generating time periodic solutions.

The following summarizes the conclusions of Lemmas \ref{lem:himodes}
and \ref{lem:W}, and follows from the implicit function theorem
exactly as those lemmas do.

\begin{lemma}
  \label{lem:aux}
  If the $k$-mode is nonresonant, there is a neighborhood
  $\mc U\subset\mc H_1$ of the origin and a unique $C^1$ map
  \[
    W:\mc U\to\mc H_2, \com{written}
    W\big(\ol p + z+\a\,\c(k\piotk t)\big) =: W(\a,z) \in\mc H_2,
  \]
  such that, for all $z+\a\,\c(k\piotk t)\in\mc U$, we have 
  a solution of the auxiliary equation \eqref{auxil}, given by
  \[
    \Pi\,\mc F\Big(\ol p + z+\a\,\c(k\piotk t) + W(\a,z)\Big) = 0.
  \]
  Moreover, the map $W(\a,z)$ satisfies the estimate
  \[
    W(\a,z) = o(|\a|),
  \]
  uniformly for $z$ in a neighborhood of 0.
\end{lemma}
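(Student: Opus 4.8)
The plan is to deduce Lemma~\ref{lem:aux} from the Hilbert-space implicit function theorem, repeating \emph{verbatim} the argument of Lemmas~\ref{lem:himodes} and \ref{lem:W}, the only change being that the explicit divisor \eqref{dkpw} is replaced by its Sturm--Liouville form \eqref{dkcts}. Write $\mc L_0$ for the fixed linear factor $\mc S^{-T/4}\,\mc L^\ell$ (periodic case) or $\mc L^\ell$ (acoustic case) appearing in the factorization \eqref{Ngenl}, so that $\mc F = \IR-\,\mc L_0\,\mc N^\ell$ with $\mc N^\ell = (\mc L^\ell)^{-1}\mc E^\ell$; by the general-flux analogues of Lemma~\ref{lem:D2E} and Corollary~\ref{cor:D2N}, $\mc N^\ell$ is a regular perturbation of the identity with $D\mc N^\ell(\ol p) = \mc I$, since the characteristic field for $y_x + g(y)_t = 0$ is still given by a regular quadrature. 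As $\mc E^\ell$ preserves a small ball of $H^s$ by the local existence theory, $\whc F(y_1,y_2) := \mc F(y_1+y_2)$ is a well-defined $C^1$ map on a neighbourhood of $(\ol p,0)$ in $\mc H_1\times\mc H_2$, and $\whc F(\ol p + z,0) = \mc F(\ol p + z) = 0$ for small $z$ because constant states solve \eqref{yper} and \eqref{yab}. In the acoustic case one restricts throughout to even modes, which form a closed invariant subspace, so no essential change occurs.

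Next I would compute the partial Fr\'echet derivative $D_{y_2}\whc F(\ol p,0) = \IR-\,\mc L_0$ on $\mc H_2$. By Lemma~\ref{lem:fund} it acts diagonally, sending $\sum_{j\ne k}a_j\,\c(j\piotk t)$ to $\sum_{j\ne k}a_j\,\d_j\,\s(j\piotk t)$ with $\d_j = \d_{P,j}(T_k)$ or $\d_{A,j}(T_k)$; nonresonance of the $k$-mode gives $\d_j\ne 0$ for every $j\ne k$, so this map is injective. Its failure to have bounded inverse is exactly the small-divisor phenomenon, and it is dealt with, as in \eqref{Hplus2}, by equipping the target with the adapted norm that divides the $j$-mode by $\d_j$, turning $D_{y_2}\whc F(\ol p,0):\mc H_2\to\mc H_+$ into an isometry. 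For $\mc H$ to be a genuine Hilbert space embedded in $H^s$ one needs the divisors bounded \emph{above}, which holds because the modified Pr\"ufer amplitude $r(x)$ in \eqref{rint} is bounded on $[0,\ell]$ when $\log\sg\in BV$. Since the $k$-mode lies in $\ker\{\IR-\,\mc L_0\}$ by the definition of $\w_k$ via \eqref{abc}, we have $\Pi\,D_{y_2}\whc F(\ol p,0) = D_{y_2}\whc F(\ol p,0)$, so the implicit function theorem applied to $\Pi\,\whc F$ yields the unique $C^1$ map $W:\mc U\to\mc H_2$ with $\Pi\,\whc F(\ol p + z + \a\,\c(k\piotk t),W(\a,z)) = 0$, and $W(0,z) = 0$ follows from $\mc F(\ol p+z)=0$ by uniqueness.

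For the estimate $W(\a,z) = o(|\a|)$ I would differentiate the auxiliary equation in $\a$ at $\a = 0$, as in the proof of Lemma~\ref{lem:W}: writing $y = \ol p + z + \a\,\c(k\piotk t) + W(\a,z)$ and using $\mc F = \IR-\,\mc L_0\,\mc N^\ell$, one gets $0 = \Pi\,\IR-\,\mc L_0\,D\mc N^\ell(\ol p+z)\big[\c(k\piotk t) + \del_\a W|_{\a=0}\big]$. Since $D\mc N^\ell(\ol p + z)$ still respects $k$-modes (the shifted background pressure $\ol p + z$ merely rescales the linearized wavespeed), $D\mc N^\ell(\ol p+z)[\c(k\piotk t)]$ is a pure $k$-mode and is annihilated by $\Pi$; hence $\Pi\,\IR-\,\mc L_0\,D\mc N^\ell(\ol p+z)[\del_\a W|_{\a=0}] = 0$, and since $\Pi\,\IR-\,\mc L_0:\mc H_2\to\mc H_+$ is invertible and $D\mc N^\ell(\ol p + z) = \mc I + O(z)$ is invertible for $z$ small, we conclude $\del_\a W|_{\a=0} = 0$ uniformly in $z$, which is the claim.

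The main obstacle is not the bifurcation-theoretic skeleton, which is identical to the earlier cases, but the two analytic inputs it silently requires in the Sturm--Liouville setting: that the divisors $\d_j$ are bounded above (needed for $\mc H\hookrightarrow H^s$), which I would extract from the boundedness of \eqref{rint} under $\log\sg\in BV$; and that $\mc N^\ell = (\mc L^\ell)^{-1}\mc E^\ell$ is a $C^1$ — in fact $C^2$, for the subsequent bifurcation equation — regular perturbation of the identity on the relevant Sobolev scale, which is precisely the content of the general-flux version of Lemma~\ref{lem:D2E} and is established through the solution of an inhomogeneous linear system in Section~\ref{sec:D2E}.
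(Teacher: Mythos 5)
Your proposal is correct and follows essentially the same route as the paper, which itself gives no separate proof of this lemma but states that it follows from the implicit function theorem exactly as Lemmas~\ref{lem:himodes} and~\ref{lem:W} do, using the factorization \eqref{Ngenl}, the adapted norms \eqref{Hsplit2}--\eqref{Hplus2}, and the mode-diagonal action of $\IR-\,\mc L_0$ from Lemma~\ref{lem:fund}. Your closing paragraph correctly identifies the two analytic inputs the paper leaves implicit in the Sturm--Liouville setting (upper boundedness of the divisors via the Pr\"ufer amplitude, and the $C^2$ regularity of $\mc N^\ell$ established in Section~\ref{sec:D2E}).
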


\subsection{Bifurcation equation}

It remains to solve the bifurcation equation \eqref{bifurc}, which is
scalar.  Proceeding as before, we define scalar functions $f$ and $g$
in analogy with \eqref{bif}, \eqref{gdef}.  That is, we set
\begin{equation}
  \label{fdef}
  f(\a,z) := \Big\langle \s(\w\, t),
  \mc F\big(\ol p + z+\a\,\c(\w\, t) + W(\a,z)\big)\Big\rangle,
\end{equation}
where $\w := \w_k = k\,2\pi/T_k$, and we set
\[
  \begin{aligned}
    g(\a,z) &:= \frac1\a\, f(\a,z), \quad \a\ne0,\\[2pt]
    g(0,z) &:=  \frac{\del f}{\del\a}(0,z).
  \end{aligned}
\]  
Our goal is to show that there is some $z=z(\a)$ such that
\begin{equation}
  \label{fgbif}
  f\big(\a,z(\a)\big) = 0, \com{or equivalently}
  g\big(\a,z(\a)\big) = 0.
\end{equation}
As above, we cannot apply the implicit function theorem to $f$
directly, because $\frac{\del f}{\del z}\big|_{\a=0}=0$, but we can
apply it to $g$: to do so, we must show that
\begin{equation}
  \label{faz}
  \frac{\del g}{\del z}\Big|_{(0,0)} \ne 0, \com{which is}
  \frac{\del^2f}{\del z\,\del\a}\Big|_{(0,0)} \ne 0.
\end{equation}

We can rewrite $f(\a,z)$ using \eqref{yper}, to get
\[
  f(\a,z) = 
   \Big\langle\s\big(\w\,t),\IR-\,\mc S^{-T/4}\,\mc E^\ell y^0\Big\rangle
  = \Big\langle\s\big(\w\,t-k\,\frac\pi2\big),\mc E^\ell y^0\Big\rangle,
\]
where we have used self-adjointness of $\IR-$ and the
fact that $\s(k\piotk t)$ is odd as a function of $t$, together
with
\[
  \big(\mc S^{-T/4}\big)^\dag\big[\s(\w\,t)\big] =
  \mc S^{T/4}\big[\s(\w\,t)\big] =
  \s\Big(\w\,\big(t - \frac T4\big)\Big)
  = \s\big(\w\,t-k\,\frac\pi2\big),
\]
where $\square^\dag$ denotes the adjoint (in $t$).  It follows that
\begin{equation}
  \label{d2fdaz}
  \frac{\del^2f}{\del z\,\del\a}\Big|_{(0,0)} = 
  \Big\langle \s\big(\w\,t -k\,{\TS\frac\pi2}\big),
  \frac{\del^2}{\del z\,\del\a}\mc E^\ell y^0
  \Big|_{(0,0)}\Big\rangle.
\end{equation}
where
\begin{equation}
  \label{bifdat}
  y^0 := \ol p + z+\a\,\c(\w\,t) + W(\a,z).
\end{equation}
Recall that the acoustic boundary problem \eqref{yab} is obtained by
limiting ourselves to $k$ even.

To proceed we must thus calculate the second derivative of the evolution,
evaluated at the constant state, namely
\begin{equation}
  \label{d2Edadz}
  \frac{\del^2}{\del z\,\del\a}
  \mc E^\ell y^0\Big|_{(0,0)}.
\end{equation}
We postpone this calculation to
the next section so that we can complete the bifurcation argument.
The proof of the following lemma is the topic of Section~\ref{sec:D2E}
below.

\begin{lemma}
  \label{lem:D2Enz}
  For a genuinely nonlinear constitutive equation, the second
  derivative given in \eqref{d2fdaz} is not equal to zero, that is
  \[
    \frac{\del^2f}{\del z\,\del\a}\Big|_{(0,0)} = 
    \Big\langle \s\big(\w\,t -k\,{\TS\frac\pi2}\big),
    \frac{\del^2}{\del z\,\del\a}\mc E^\ell y^0
    \Big|_{(0,0)}\Big\rangle \ne 0.
  \]
\end{lemma}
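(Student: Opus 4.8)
The plan is to reduce the mixed second derivative to a single second-derivative evaluation of nonlinear evolution at the quiet state, express that as the endpoint value of an \emph{inhomogeneous} linearized system, and recognize its pairing against $\s(\w t-k\tfrac\pi2)$ as a Rayleigh-type integral whose sign is controlled by genuine nonlinearity. By \eqref{d2fdaz}, \eqref{bifdat} it suffices to compute $\frac{\del^2}{\del z\,\del\a}\mc E^\ell y^0\big|_{(0,0)}$ with $y^0=\ol p+z+\a\,\c(\w t)+W(\a,z)$ and $\w=\w_k$. Every constant state is a fixed point of $\mc E^x$, and Lemma~\ref{lem:aux} gives $W(0,z)=0$ together with $\del W/\del\a\big|_{\a=0}=0$ uniformly in $z$; hence differentiating first in $\a$ and then in $z$, all contributions of $W$ drop out and
\[
\frac{\del^2}{\del z\,\del\a}\mc E^\ell y^0\Big|_{(0,0)} = D^2\mc E^\ell(\ol p)\big[\mathbf 1,\c(\w t)\big],
\]
where the first slot $\mathbf 1$ is the $0$-mode direction $\del/\del z$. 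Thus it remains to show $\big\langle\s(\w t-k\tfrac\pi2),\,D^2\mc E^\ell(\ol p)[\mathbf 1,\c(\w t)]\big\rangle\ne0$.

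\emph{The inhomogeneous linearized system.} Since $\mc E^x$ fixes the quiet state $\ol p+z$, its linearization there is evolution under \eqref{Ylin} with $\sg^2$ replaced by $\sg_z^2(x):=-v_p(\ol p+z,s(x))$. Writing $V:=\del Y/\del z\big|_{z=0}$ for $Y$ solving \eqref{Ylin} with $Y(0)=\c(\w t)$, and differentiating \eqref{Ylin} in $z$, one gets $D^2\mc E^\ell(\ol p)[\mathbf 1,\c(\w t)]=V(\ell,\cdot)$ with
\[
V_x + \big(\IR-V+\sg^2\,\IR+V\big)_t = \w\,\mu(x)\,\vp_k(x)\,\s(\w t), \qquad V(0)=0, \qquad \mu(x):=-v_{pp}(\ol p,s(x)),
\]
where we used that $Y$ remains the $k$-mode $\vp_k(x)\,\c(\w t)+\psi_k(x)\,\s(\w t)$, so $\IR+Y=\vp_k\,\c(\w t)$ and the forcing $-(\mu\,\IR+Y)_t$ is the displayed pure $k$-mode. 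Constructing and solving this inhomogeneous $2\times2$ system -- equivalently, an inhomogeneous second-order Sturm--Liouville equation with forcing proportional to $\mu\,\vp_k$ -- is the task carried out in Section~\ref{sec:D2E}; in particular $V(\ell,\cdot)$ is itself a pure $k$-mode, and by Duhamel it is the integral over $\xi\in[0,\ell]$ of linearized propagation from $\xi$ to $\ell$ applied to $\w\,\mu(\xi)\,\vp_k(\xi)\,\s(\w t)$.

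\emph{Fredholm obstruction and conclusion.} Pairing $V(\ell,\cdot)$ with $\s(\w t-k\tfrac\pi2)$ -- equivalently, after transferring $\mc S^{T/4}$ onto the test function, pairing the Duhamel representation against the backward-propagated boundary functional -- is exactly the solvability obstruction of the inhomogeneous SL problem $-\ddot v-\lambda_k\,\sg^2 v=(\text{forcing}\propto\mu\,\vp_k)$ under the boundary conditions \eqref{bc0}, \eqref{bcper}/\eqref{bcab}. Since $\lambda_k$ is a simple eigenvalue (Lemma~\ref{lem:SLprops}), this obstruction is a nonzero multiple of the projection of the forcing onto the $1$-dimensional null space spanned by $\vp_k$, so
\[
\frac{\del^2 f}{\del z\,\del\a}\Big|_{(0,0)} = c_k\int_0^\ell v_{pp}\big(\ol p,s(x)\big)\,\vp_k(x)^2\,\varpi(x)\;dx, \qquad c_k\ne0,
\]
with $\varpi>0$ a positive weight carrying the Pr\"ufer normalization \eqref{rint} and a factor of $\w$; this specializes, for a $\g$-law gas and the rotating piecewise-constant $k$-mode, to the quantity $-\nu\big(\sum_m\t_m\big)\big\langle\s,\mc L_0\,\s\big\rangle$ appearing in Theorem~\ref{thm:pwent}. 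Genuine nonlinearity forces $v_{pp}(\ol p,s)$ to keep a fixed sign (e.g.\ $v_{pp}=\frac{\g+1}{\g^2}\,p^{-1/\g-2}\,e^{s/c_p}>0$ for a $\g$-law gas), while $\vp_k$ is a nontrivial real eigenfunction, so the integrand $v_{pp}\,\vp_k^2\,\varpi$ is of one sign and positive on a set of positive measure; hence the integral, and therefore $\frac{\del^2 f}{\del z\,\del\a}\big|_{(0,0)}$, is nonzero.

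\emph{Main obstacle.} The delicate step is the content of Section~\ref{sec:D2E}: showing that $\mc E^\ell$ is genuinely twice Frechet differentiable on $H^s$ for an arbitrary genuinely nonlinear constitutive law with $\log\sg\in BV$ -- so that the inhomogeneous system above is well posed and the bilinear second derivative is bounded $H^s\times H^s\to H^{s-1}$ -- and identifying the pairing of $D^2\mc E^\ell(\ol p)[\mathbf 1,\c(\w t)]$ with $\s(\w t-k\tfrac\pi2)$ \emph{precisely} as the Rayleigh-type integral above, verifying that no term transverse to this obstruction survives.
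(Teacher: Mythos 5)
Your proposal is correct and follows the same essential route as the paper: you reduce the mixed derivative to $D^2\mc E^\ell(\ol p)[1,\c(\w t)]$ (using $W(0,z)=0$ and $\del_\a W|_{\a=0}=0$ to kill the $W$-terms, exactly as in Lemma~\ref{lem:d2ects}), and you arrive at the same inhomogeneous linearized system \eqref{ODE}/\eqref{bifDaz} with zero data and forcing proportional to $v_{pp}\,\w\,\vp_k\,\s(\w t)$. Where you diverge is only in how the endpoint pairing is evaluated: the paper constructs the full fundamental matrix in modified Pr\"ufer variables (Lemma~\ref{lem:SLfund}), applies variation of parameters, and reads off $\dot b=-v_{pp}\,\w\,\vp_k^2$ so that $b(\ell)<0$ and the pairing is $\wt\vp(\ell)\,b(\ell)$ or $\wt\psi(\ell)\,b(\ell)$; you instead invoke the Fredholm alternative at the simple eigenvalue $\lambda_k$, which via the Lagrange/Green identity gives the boundary defect as $-\dot\vp_k(\ell)\,\wh\vp(\ell)=\lambda_k\int_0^\ell v_{pp}\,\vp_k^2\,dx$ (for $k$ odd; similarly with $\vp_k(\ell)$ for $k$ even). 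These are the same computation -- variation of parameters with a unimodular Wronskian \emph{is} the Green identity -- and both hinge on genuine nonlinearity fixing the sign of $v_{pp}$. Your version is slightly more abstract and arguably cleaner for this one lemma, but it leaves two small points implicit that you should spell out: (i) the nonvanishing of your constant $c_k$ requires that $\vp_k(\ell)$ and $\dot\vp_k(\ell)$ do not vanish simultaneously (nontriviality of the eigenfunction, which the paper gets for free from $\det\Psi\equiv1$); and (ii) your weight $\varpi$ is in fact a constant multiple of $\lambda_k$, not a genuinely $x$-dependent weight, so no positivity argument beyond the fixed sign of $v_{pp}$ is needed. The paper's more explicit Pr\"ufer construction buys the fundamental solution $\Psi$ itself, which it also needs for \eqref{dkcts} and Lemma~\ref{lem:wk}, so nothing is lost there by being concrete.
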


Applying the lemma to \eqref{d2fdaz} now reduces the solvability of
the bifurcation equation to a single non-degeneracy condition, namely
that the $k$-mode be nonresonant.

\begin{theorem}
  \label{thm:bifurc}
  Assume that the $k$-mode is nonresonant.  Then there exists a
  one-parameter family of solutions of the form \eqref{pert} of
  equation \eqref{yper} or \eqref{yab}, respectively, parameterized by
  the amplitude $\a$ in a neighborhood of $0$.  This in turn generates
  a periodic pure tone solution of the compressible Euler equations.
\end{theorem}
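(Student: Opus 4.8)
The plan is to run the Liapunov--Schmidt/implicit--function argument of Theorem~\ref{thm:mainex} with the $k$-mode in place of the $1$-mode, using the factorization of Lemma~\ref{lem:fund} in place of Theorem~\ref{thm:fact}, and deferring the single genuinely new ingredient---the non-vanishing of a mixed second derivative---to Lemma~\ref{lem:D2Enz}. First I would fix a nonresonant $k$-mode; by Lemma~\ref{lem:wk} this pins down the eigenfrequency $\w_k$ and hence the reference period $T_k=k\,2\pi/\w_k$, which is held fixed from now on. Write $\mc F$ for $\mc F_P$ or $\mc F_A$ as in \eqref{yper}, \eqref{yab}, and recall from \eqref{Ngenl} the factorization $\mc F=\IR-\mc S^{-T/4}\mc L^\ell\mc N^\ell$ (drop the shift for $\mc F_A$), where $\mc N^\ell=(\mc L^\ell)^{-1}\mc E^\ell$ is a regular nonlinear perturbation of the identity with $D\mc N^\ell(\ol p)=\mc I$ by Corollary~\ref{cor:D2N}. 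Then I would introduce the Hilbert spaces $\mc H_1,\mc H_2,\mc H_+,\mc H$ of \eqref{Hsplit2}, \eqref{Hplus2}, whose weighted norms are built precisely so that the uniform small divisors $\d_j$ are absorbed and the partial derivative $D_{y_2}\mc F(\ol p)$, which by Lemma~\ref{lem:fund} is $\IR-\mc S^{-T/4}\mc L^\ell$ (resp.\ $\IR-\mc L^\ell$), becomes an isometry of $\mc H_2$ onto $\mc H_+$; the non-resonance hypothesis guarantees $\d_j\ne0$ for all $j\ne k$, so this map is injective.

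With the spaces in hand, Lemma~\ref{lem:aux} (the analogue of Lemmas~\ref{lem:himodes} and~\ref{lem:W}, proved by the classical implicit function theorem on Hilbert spaces) supplies a $C^1$ map $W(\a,z)\in\mc H_2$, defined on a neighbourhood of the origin, solving the auxiliary equation
\[
  \Pi\,\mc F\bigl(\ol p+z+\a\,\c(k\piotk t)+W(\a,z)\bigr)=0,
\]
with $W(0,z)=0$ and $W(\a,z)=o(|\a|)$ uniformly in $z$. What remains is the scalar bifurcation equation \eqref{bifurc}, i.e.\ $f(\a,z)=0$ for $f$ as in \eqref{fdef}. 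Because $W(0,z)=0$ and every constant state satisfies $\mc F(\ol p+z)=0$, we have $f(0,z)\equiv0$, so $\del_z f|_{\a=0}=0$ and the implicit function theorem cannot be applied to $f$ directly; instead I would pass to $g(\a,z):=f(\a,z)/\a$, extended by $\del_\a f(0,z)$ at $\a=0$, noting that $g(\a,z)=0$ is equivalent to $f(\a,z)=0$ for $\a\ne0$. Solving $g=0$ near $(0,0)$ by the implicit function theorem requires the single non-degeneracy condition $\del_z g|_{(0,0)}=\del^2_{z\a}f|_{(0,0)}\ne0$, which is exactly Lemma~\ref{lem:D2Enz}. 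Granting that lemma one obtains a $C^2$ curve $z=z(\a)$ with $z(0)=0$ and $g(\a,z(\a))=0$, hence $f(\a,z(\a))=0$ for all small $\a$, so that
\[
  y^0(t)=\ol p+\a\,\c(\w_k t)+z(\a)+W\bigl(\a,z(\a)\bigr)
\]
solves $\mc F(y^0)=0$ for $|\a|<\ol\a_k$, with leading order the linear pure tone of Theorem~\ref{thm:slev}.

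Finally, each such $y^0$ generates a genuine solution of the Euler equations by the reflection principle: as in Theorem~\ref{thm:symm} and Theorem~\ref{thm:tile}, reconstruct $(p,u)$ from $y$ via $p=\ol p+\IR+(y-\ol p)$ and $u=\IR-y$ on $[0,\ell]\times[0,T_k]$, then extend by the reflection \eqref{olsymm} at $x=0$ together with the shifted reflection \eqref{ulsymm} at $x=\ell$ (for $\mc F_A$, a second acoustic reflection instead), obtaining a classical solution of \eqref{lagr} that is $4\ell$- (resp.\ $2\ell$-) periodic in $x$ and $T_k$-periodic in $t$; continuity across each tiling axis is the extra-symmetry-point identity \eqref{reflper}, and classicality holds for $|\a|$ small because the underlying characteristic field stays non-degenerate. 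The main obstacle---the only step that is not a transcription of the one-jump argument---is Lemma~\ref{lem:D2Enz}: one must show that the mixed second Frechet derivative of the infinite-dimensional, variable-coefficient evolution operator $\mc E^\ell$ at the quiet state, tested against the odd $k$-mode, is nonzero. This is the genuine-nonlinearity input, and handling it requires the explicit description and solution of $D^2\mc E$ via an inhomogeneous linear system, which is carried out in Section~\ref{sec:D2E}.
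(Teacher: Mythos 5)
Your proposal follows the paper's own proof essentially verbatim: the auxiliary equation is solved by Lemma~\ref{lem:aux} in the weighted Hilbert spaces \eqref{Hsplit2}--\eqref{Hplus2}, the bifurcation equation is reduced to the non-degeneracy condition \eqref{faz}, and that condition is exactly Lemma~\ref{lem:D2Enz}, whose proof via the inhomogeneous Sturm--Liouville system is deferred to Section~\ref{sec:D2E}. The argument and its decomposition into auxiliary and bifurcation steps are the same as in the paper, so the proposal is correct.
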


\begin{proof}
  Lemma~\ref{lem:aux} shows that the auxiliary equation has a unique
  solution in a neighborhood of the origin.  It remains only to show that
  the bifurcation equation \eqref{fgbif}, namely
  $f\big(\a,z\big) = 0$, for the $k$-mode can always be solved
  uniquely in a neighborhood of the origin.  Using \eqref{faz},
  Lemma~\ref{lem:D2Enz} and the implicit function theorem imply the
  existence of a unique $z(\a)$ such that \eqref{bifdat} gives the
  data $y^0$ which solves the equation \eqref{fgbif} for $\a$ in a
  neighborhood of the origin.
\end{proof}

\subsection{Nonresonance conditions}

As in our earlier development, we now consider conditions for
resonance or non-resonance of linear modes.  As in
Section~\ref{sec:Tvar}, we regard the resonance conditions as
conditions on the infinite dimensional entropy field, and show that
the set of entropy fields for which any resonance condition holds is
small.

According to Lemma~\ref{lem:nonres}, the $j$-mode is resonant with the
$k$-mode if and only if there is some $p$ such that
\begin{equation}
    \label{resw}
    \w_p = q\,\w_k, \com{with} q := \frac jk.
\end{equation}
The frequencies $\w_k$ and $\w_p$ are in turn chosen by the condition
\eqref{abc} or \eqref{wkexp}, which we rewrite as the implicit
conditions
\[
  \begin{aligned}
  k\,\frac\pi2 &= \w_k\,\int_0^\ell\sg\;dy -
  \int_0^\ell \s\big(2\t(y,\w_k)\big)\;d\log\sqrt\sg(y),\\
  p\,\frac\pi2 &= \w_p\,\int_0^\ell\sg\;dy -
  \int_0^\ell \s\big(2\t(y,\w_p)\big)\;d\log\sqrt\sg(y).
  \end{aligned}
\]
Assuming \eqref{resw} and eliminating the linear term, we get the
condition
\begin{equation}
  \label{wq}
  \int_0^\ell\Big(q\,\s\big(2\t(y,\w_k)\big)
  - \s\big(2\t(y,q\,\w_k)\big)\Big)\;d\log\sqrt\sg(y)
  = (p-j)\,\frac\pi2,
\end{equation}
which is necessary and sufficient for \eqref{resw} to hold.

We view \eqref{wq} as a restriction on the entropy profile $s(x)$, and
as before we set
\[
  \mc Z_{k,j,p} := \Big\{s(x)\;\Big|\;
  \w_p = \frac jk\,\w_k \Big\},
\]
so that $s(x) \in \mc Z_{k,j,p}$ if and only if \eqref{wq} holds.
As before, we then form the union
\[
  \mc Z := \bigcup_{k,p,j}\mc Z_{k,p,j},
\]
which is the set of all entropy profiles having \emph{some} resonant
mode.

In order to show that the resonant set is small, we need to introduce
a convenient topology on the set of entropy profiles.  We thus define
the subset
\begin{equation}
  \label{BVsp}
  \mc B := \Big\{s\in L^1[0,\ell]\;\Big|\;
  \sg \in L^1,\ \log\sg\in BV\Big\},
\end{equation}
together with the $L^1$ topology, where $\sg$ is given by \eqref{UP}.

\begin{lemma}
  \label{lem:Z}
  Each of the sets $\mc Z_{k,j,p}$ is nowhere dense in $\mc B$, and
  the resonant set $\mc Z$ is meagre in $\mc B$.
\end{lemma}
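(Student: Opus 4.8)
The plan is to reduce the meagreness of $\mc Z=\bigcup_{k,j,p}\mc Z_{k,j,p}$ to the nowhere-density of each individual $\mc Z_{k,j,p}$: the index set of triples $(k,j,p)$ of positive integers is countable, and a countable union of nowhere dense sets is meagre by definition. So the whole content is to prove that each $\mc Z_{k,j,p}$ is nowhere dense in $\mc B$.

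First I would identify $\mc Z_{k,j,p}$ as the zero set of the scalar functional $\Phi_{k,j,p}(s):=k\,\w_p(s)-j\,\w_k(s)$ on $\mc B$, where $\w_m(s)$ is the $m$-th eigenfrequency produced by Lemma~\ref{lem:wk}; by \eqref{Tn} and Lemma~\ref{lem:SLprops} one has $\w_m(s)=\sqrt{\lambda_m(s)}$ with $\lambda_m(s)$ the $m$-th eigenvalue of the regular Sturm--Liouville problem \eqref{SLev} with weight $\sg^2=-v_p(\ol p,s)$, and by Lemma~\ref{lem:nonres} the condition $s\in\mc Z_{k,j,p}$ is precisely $k\,\w_p(s)=j\,\w_k(s)$. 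Using the classical continuous dependence of regular Sturm--Liouville eigenvalues on the weight (see \cite{Pryce,CodLev}), together with the regularity of the constitutive law so that $s\mapsto\sg^2$ varies continuously (the condition $\log\sg\in BV$ keeping $\sg^{\pm1}$ bounded), the map $s\mapsto\w_m(s)$ is continuous on $\mc B$; hence $\Phi_{k,j,p}$ is continuous and $\mc Z_{k,j,p}=\Phi_{k,j,p}^{-1}(0)$ is closed. A closed set is nowhere dense exactly when it has empty interior, so it remains only to show that the complement $\mc B\setminus\mc Z_{k,j,p}$ is dense.

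For density of the complement I would argue as in the second half of the proof of Theorem~\ref{thm:genpw}, pushing everything down to the piecewise-constant case. Given $s_0\in\mc B$ and $\epsilon>0$: (i) approximate $s_0$ in $L^1$ by a piecewise-constant profile $s_1$ with finitely many jumps and $\|s_0-s_1\|_{L^1}<\epsilon/2$; such an $s_1$ lies in $\mc B$ since it has finite total variation. (ii) Now $s_1$ sits inside the finite-dimensional family of $n$-jump profiles parameterized by $(J,\T)\in\B R^{2n+1}_+$, and by Theorem~\ref{thm:T2} (equivalently Theorem~\ref{thm:genpw}) the totally nonresonant parameters in this family are generic---their complement has measure zero---so I may perturb $(J,\T)$ within the family to a piecewise-constant $s_2$ with $\|s_1-s_2\|_{L^1}<\epsilon/2$ that is totally nonresonant; in particular $\Phi_{k,j,p}(s_2)\ne 0$, i.e.\ $s_2\notin\mc Z_{k,j,p}$. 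Then $\|s_0-s_2\|_{L^1}<\epsilon$, which gives density of the complement and finishes the argument. An alternative, infinitesimal route---more in the spirit of Lemma~\ref{lem:lamk}---is to realize $\mc Z_{k,j,p}$ locally as a codimension-one $C^1$ submanifold: the Hadamard variation formula for Sturm--Liouville eigenvalues gives $D\Phi_{k,j,p}(s)[\d s]=-\tfrac12\big(k\,\w_p\,\langle\vp_p^2,\d\sg^2\rangle/\|\vp_p\|_w^2-j\,\w_k\,\langle\vp_k^2,\d\sg^2\rangle/\|\vp_k\|_w^2\big)$ with $\d\sg^2=-v_{ps}(\ol p,s)\,\d s$, and this functional is not identically zero on $\mc Z_{k,j,p}$ because $\vp_k^2$ and $\vp_p^2$ are not proportional---the eigenvalues $\lambda_k\ne\lambda_p$ are simple, so $\vp_k,\vp_p$ are independent with distinct nodal sets---while $v_{ps}\ne0$ by genuine nonlinearity; the implicit function theorem then concludes.

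The hard part is the first step: controlling how $\w_m(s)$ moves as $s$ ranges over an $L^1$-neighborhood. The Pr\"ufer-angle description \eqref{theta}--\eqref{wkexp} used everywhere else involves the singular coefficient $d\log\sqrt\sg$, a finite measure only because $\log\sg\in BV$ and whose total variation is not controlled by the $L^1$-distance of $s$; the clean way around this is to argue through the second-order problem \eqref{SLev}, whose only coefficient is the bounded, $L^1$-depending weight $\sg^2$, and to invoke the standard continuity---and, for the infinitesimal argument, differentiability---of regular Sturm--Liouville eigenvalues with respect to that weight. The remaining ingredients---density of step functions in $L^1$, that finite-variation profiles lie in $\mc B$, and the bookkeeping that the perturbation in step (ii) is simultaneously $L^1$-small and parameter-admissible---are routine and parallel Theorem~\ref{thm:genpw}.
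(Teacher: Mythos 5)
Your proposal follows essentially the same route as the paper: show each $\mc Z_{k,j,p}$ is closed (via continuity of the eigenfrequencies in $s$) and that its complement is dense by approximating any profile with a piecewise constant one and then perturbing, via Theorem~\ref{thm:genpw}, to a totally nonresonant parameter, so that $\mc Z$ is meagre as a countable union of nowhere dense sets. Your extra care on the continuity step (routing through the second-order problem \eqref{SLev} and classical Sturm--Liouville eigenvalue perturbation rather than the measure $d\log\sqrt\sg$, which the paper simply declares ``evidently continuous'') is a legitimate refinement, but the argument is the paper's.
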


\begin{proof}
  Since the map $\mc B\to\B R$ given by \eqref{wq},
  \[
    s(\cdot) \mapsto 
    \int_0^\ell\Big(q\,\s\big(2\t(y,\w_k)\big)
    - \s\big(2\t(y,q\,\w_k)\big)\Big)\;d\log\sqrt\sg(y)
  \]
  is evidently continuous, each of the sets $\mc Z_{k,j,p}$ is closed
  in $\mc B$.  Let $\epsilon>0$ and $s=s(x)\in\mc Z_{k,j,p}$ be given.
  Since piecewise constant functions are dense, we use
  Theorem~\ref{thm:genpw} to find a piecewise constant entropy profile
  $\ol s(J,\T)$ which approximates $s(x)$ but is fully nonresonant, so
  in particular,
  \[
    \big\|s(x) - \ol s(J,\T)\big\|_{L^1} < \epsilon, \com{with}
    \ol s(J,\T) \notin \mc Z_{k,j,p}.
  \]
  It follows that the interior of the closure of $\mc Z_{k,j,p}$ is
  empty, that is $\mc Z_{k,j,p}$ is nowhere dense, and, being a
  countable union of nowhere dense sets, $\mc Z$ is meagre.
\end{proof}

We cannot use this proof to show directly that $\mc Z$ is nowhere
dense, because $\mc Z$ is not itself closed in $\mc B$.  We expect
that similar to our earlier results, if one were to regard $\mc B$ as
a measure or probability space, then the resonant set $\mc Z$ would
also have zero measure.  However we will not pursue this here.  Our
next theorem is a summary the foregoing lemmas.

\begin{theorem}
  \label{thm:genl}
  Given any entropy profile $s(x)$ with $s\in\mc B$, the linearization
  \eqref{genls} around the constant state solution $(\ol p,0)$
  determines a Sturm-Liouville operator \eqref{SL1}, \eqref{SLop}.
  Imposing the periodic tiling boundary conditions \eqref{yper},
  respectively the acoustic reflection boundary conditions
  \eqref{yab}, determines an increasing sequence $\w_k$, respectively
  $\w_{2k}$, of linearized SL frequencies.  To each of these
  frequencies corresponds a time periodic solution of the linearized
  equations \eqref{UP}, \eqref{Peq}.  For any such frequency which is
  nonresonant, there is a one-parameter family of perturbations of the
  linearized $k$-mode (resp.~$2k$-mode) to a pure tone solution of the
  nonlinear system \eqref{lagr}, parameterized by the amplitude $\a$
  of the $k$-mode component of the linearized data.  The set of fully
  nonresonant profiles, for which every $k$-mode (resp.~$2k$-mode)
  perturbs to a solution of the nonlinear problem, is generic in
  $\mc B$, in that it is residual, or the complement of a countable
  union of nowhere dense sets.
\end{theorem}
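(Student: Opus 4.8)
The plan is to assemble Theorem~\ref{thm:genl} from the results proved above; it is essentially a single statement collecting the Sturm--Liouville structure, the bifurcation argument, and the genericity count, so the work is in stringing the lemmas together correctly.

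First I would record the linearized structure. For $s\in\mc B$ the hypothesis $\log\sg\in BV$ is exactly what is needed for the angle equation \eqref{theta} to make sense, and Lemma~\ref{lem:wk} then produces, for each $k\ge1$, a unique frequency $\w_k$ solving the angle boundary condition \eqref{abc} (equivalently \eqref{wkexp}), with $\w_k/k\in[1/C,C]$. Setting $\lambda_k:=\w_k^2$ and $T_k:=k\,2\pi/\w_k$ as in \eqref{Tn}, Lemma~\ref{lem:SLprops} (or the Pr\"ufer reduction directly) identifies the $\lambda_k$ as the eigenvalues of the regular SL problem \eqref{SLev} with boundary conditions \eqref{bc0} and \eqref{bcper}, and the normalized eigenfunctions $\vp_k$ give the linearized pure tones $P_k(x,t)=\vp_k(x)\,\c(\w_k t)$, $U_k(x,t)=\psi_k(x)\,\s(\w_k t)$ solving \eqref{UP}, \eqref{Peq}; by Lemma~\ref{lem:fund} the corresponding $Y_k$ lies in $\ker D\mc F(\ol p)$. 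For the acoustic condition \eqref{bcab} one retains only the even angle values $\t(\ell)=n\pi$, i.e.\ the frequencies $\w_{2k}$, and throughout the argument one restricts to the closed, symmetry-invariant subspace of even $T_k$-periodic functions, so that odd-mode coincidences never enter.

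Next, fix $k$ and the associated reference period $T_k$. If the $k$-mode is nonresonant, i.e.\ $\d_{P,j}(T_k)\ne0$ (resp.\ $\d_{A,2j}(T_k)\ne0$) for all $j\ne k$, then Theorem~\ref{thm:bifurc} applies: Lemma~\ref{lem:aux} solves the auxiliary equation \eqref{auxil} with $W(\a,z)=o(|\a|)$, and the nondegeneracy $\del^2 f/\del z\,\del\a\,\big|_{(0,0)}\ne0$ of Lemma~\ref{lem:D2Enz}, a consequence of genuine nonlinearity, lets the implicit function theorem solve the bifurcation equation \eqref{bifurc} for $z=z(\a)$. The data \eqref{bifdat} then gives, for $|\a|$ small, a solution of \eqref{yper} (resp.\ \eqref{yab}) of the form \eqref{pert}, and the reflection construction underlying \eqref{yper}, \eqref{yab} generates from it a space and time periodic solution of \eqref{lagr} with the stated leading order $p=\ol p+\a\,\vp_k\,\c(\w_k t)+O(\a^2)$, $u=\a\,\psi_k\,\s(\w_k t)+O(\a^2)$. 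Finally, for genericity I would invoke Lemma~\ref{lem:Z}. A profile $s$ is \emph{fully nonresonant} exactly when $s\notin\mc Z_{k,j,p}$ for every triple $(k,j,p)$, i.e.\ $s\in\mc Z^c$ with $\mc Z=\bigcup_{k,j,p}\mc Z_{k,j,p}$, the set $\mc Z_{k,j,p}$ being cut out by the single equation \eqref{wq} expressing $\w_p=\tfrac jk\,\w_k$. Lemma~\ref{lem:Z} shows each $\mc Z_{k,j,p}$ is nowhere dense in $(\mc B,\|\cdot\|_{L^1})$ --- using density of piecewise constant profiles together with density among those of the fully nonresonant ones from Theorem~\ref{thm:genpw}. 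Hence $\mc Z$, a countable union of nowhere dense sets, is meagre, so $\mc Z^c$ is residual, and by the previous step every $k$-mode (resp.\ every $2k$-mode) of every $s\in\mc Z^c$ perturbs, which is the assertion.

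I expect the real content to lie behind two deferred inputs rather than in the assembly: the nondegeneracy Lemma~\ref{lem:D2Enz}, postponed to Section~\ref{sec:D2E}, which needs the explicit second derivative $D^2\mc E^\ell(\ol p,0)$ and the solution of an inhomogeneous linear SL system; and the density statement Theorem~\ref{thm:genpw} feeding Lemma~\ref{lem:Z}. Within the assembly itself the only delicate point is bookkeeping: ``nonresonance'' of the $k$-mode is a condition relative to the $k$-dependent reference period $T_k$, and in the acoustic case everything must be carried out on the invariant even subspace so that the extra (odd) frequencies permitted by \eqref{abc} but forbidden by \eqref{bcab} are genuinely irrelevant.
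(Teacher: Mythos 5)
Your proposal is correct and follows essentially the same route as the paper, which presents Theorem~\ref{thm:genl} explicitly as ``a summary of the foregoing lemmas'': the Pr\"ufer/angle-variable construction of the $\w_k$ via Lemma~\ref{lem:wk}, the bifurcation argument of Theorem~\ref{thm:bifurc} (resting on Lemmas~\ref{lem:aux} and~\ref{lem:D2Enz}), and the genericity statement from Lemma~\ref{lem:Z}. Your bookkeeping remarks --- that nonresonance is relative to the $k$-dependent reference period $T_k$, and that the acoustic case is carried out on the invariant even subspace --- match the paper's treatment.
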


\section{Differentiation of the Evolution Operator}
\label{sec:D2E}

To complete the proof of the existence of periodic solutions of the
compressible Euler equations with generic entropy profile, we must
prove Lemma~\ref{lem:D2Enz} stated and used above, which requires
calculation of the second derivative of the evolution operator.

In differentiating the solution twice, we cannot apply
Lemma~\ref{lem:D2E} or Corollary~\ref{cor:D2N} directly, because we no
longer have exact expressions for the nonlinear solutions, although
this can be carried out: see~\cite{YDiffOp}.  However, because our
gradients remain finite and our solutions to the PDE are classical, we
can expand the solution and evaluate the derivative at the origin.

We briefly describe our strategy for evaluating \eqref{d2Edadz}.  Here
we view $\mc E^\ell$ as a Banach space valued function of the two real
variables $\a$ and $z$, evaluated on data given by \eqref{pert}.
Formally we assume that $(p,u)$ are general functions of variables
$(\a,z)$, satisfying \eqref{genls}, and we denote the derivatives with
respect to $\a$ and $z$ as
\[
  \wh{p_\a}:=\frac{\partial p}{\partial \a}, \quad
  \wh{p_z}:=\frac{\partial p}{\partial z}, \com{and}
  \wh{p_{z\a}}:=\frac{\partial^2 p}{\partial z\,\partial \a},
\]
respectively, and similarly for $u$.  Differentiating \eqref{genls} in
$\a$, we get
\[
  \wh{p_\a}_x  +  \wh{u_\a}_t = 0, \qquad
  \wh{u_\a}_x - \big(v_p(p)\,\wh{p_\a}\big)_t = 0,
\]
which are the linearized equations.  Now differentiating in $z$ yields
\begin{equation}
  \label{D2Eeq}
  \wh{p_{z\a}}_x + \wh{u_{z\a}}_t=0, \qquad
  \wh{u_{z\a}}_t - \Big(v_p(p)\,\wh{p_{z\a}} +
  v_{pp}(p)\,\wh{p_\a}\,\wh{p_{z}} \Big)_t = 0,
\end{equation}
which captures the second derivative.  In the context we are working
in, we have $\wh{p_{z}}\big|_{(0,0)} = 1$, and the second derivative
equation is linear inhomogeneous, with varying coefficients
$v_p\big(\ol p,s(x)\big)$ and $v_{pp}\big(\ol p,s(x)\big)\ne 0$ which
encode the leading order effects of the nonlinear interaction of
acoustic waves with the varying entropy field.

\begin{lemma}
  \label{lem:d2ects}
  The second derivative \eqref{d2Edadz} of the evolution operator
  acting on $y^0$ given by \eqref{bifdat} is given by the
  solution of the linear inhomogeneous SL system
  \begin{equation}
    \label{ODE}
    \begin{aligned}
      \dot{\wh\vp} + \w\,\wh\psi &= 0,\\
      \dot{\wh\psi} - \sg^2\,\w\,\wh\vp &= - v_{pp}\,\w\,\vp_k, 
    \end{aligned}
  \end{equation}
  with vanishing initial data $\wh\vp(0) = \wh\psi(0) = 0$,
  with coefficient $\sg^2 = - v_p(\ol p,s)$, and where
  $v_{pp} = v_{pp}(\ol p,s)\ne 0$.  More precisely, we have
  \begin{equation}
    \label{dEdadz}
    \frac{\del^2}{\del z\,\del\a} \mc E^\ell y^0\Big|_{(0,0)} =
    \wh\vp(\ell)\,\c(\w t)+\wh\psi(\ell)\,\s(\w t),
  \end{equation}
  where $\w = \w_k$ is given by \eqref{Tn}.
\end{lemma}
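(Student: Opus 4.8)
The plan is to regard $(p,u)$ as the classical solution of \eqref{genls} with data \eqref{bifdat}, viewed as a function of the two real parameters $(\a,z)$, and to obtain $\frac{\del^2}{\del z\,\del\a}\mc E^\ell y^0=\frac{\del^2}{\del z\,\del\a}(p+u)(\ell,\cdot)$ by differentiating the PDE, exactly as in the passage from \eqref{genls} to \eqref{D2Eeq}. Two facts about the base solution handle all the bookkeeping. First, when $\a=0$ the data reduces to $y^0=\ol p+z$ because $W(0,z)=0$ by Lemma~\ref{lem:aux}; being constant in $t$ this is the exact stationary solution $p\equiv\ol p+z$, $u\equiv0$, so $\wh{p_z}\big|_{(0,0)}=1$ and $\wh{u_z}\big|_{(0,0)}=0$ identically in $(x,t)$. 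Second, the estimate $W(\a,z)=o(|\a|)$ uniform in $z$ forces $\del_\a W(0,z)\equiv0$, hence $\del_z\del_\a W(0,0)=0$; thus $W$ contributes nothing to the data of either the first or the second parameter derivative at the origin, and $\del_\a y^0\big|_{(0,0)}=\c(\w t)$ while $\del_z\del_\a y^0\big|_{(0,0)}=0$.

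First I would take the $\a$-derivative at $(0,0)$. Differentiating \eqref{genls} in $\a$ and evaluating at the quiet state gives the linearized system \eqref{UP} with weight $\sg^2=-v_p(\ol p,s)$, and by the previous step its data at $x=0$ is $\wh{p_\a}(0,\cdot)=\c(\w t)$, $\wh{u_\a}(0,\cdot)=0$ with $\w=\w_k$. Since this operator respects $k$-modes (Lemma~\ref{lem:Lx}), separation of variables gives $\wh{p_\a}(x,t)=\vp_k(x)\,\c(\w t)$ and $\wh{u_\a}(x,t)=\psi_k(x)\,\s(\w t)$, where $(\vp_k,\psi_k)$ solves \eqref{SL1} with $\vp_k(0)=1$, $\psi_k(0)=0$ (so $\vp_k$ is the $k$-th SL eigenfunction).

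Next I would differentiate once more in $z$; this is precisely the differentiation from \eqref{genls} to \eqref{D2Eeq}. Evaluating the coefficients at $(\a,z)=(0,0)$, using $\wh{p_z}\big|_{(0,0)}=1$ and substituting the first-derivative formula $\wh{p_\a}=\vp_k(x)\,\c(\w t)$, the inhomogeneity of the second-variation system is governed by $\del_t\wh{p_\a}=-\w\,\vp_k(x)\,\s(\w t)$ with coefficient $v_{pp}(\ol p,s)$, which reproduces the right side $-v_{pp}\,\w\,\vp_k$ of \eqref{ODE}. Because this forcing is a pure $k$-mode and the data $\del_z\del_\a W(0,0)$ vanishes, the solution stays in the $k$-mode; writing $\wh{p_{z\a}}(x,t)=\wh\vp(x)\,\c(\w t)$ and $\wh{u_{z\a}}(x,t)=\wh\psi(x)\,\s(\w t)$ and matching coefficients of $\c(\w t)$ and $\s(\w t)$ turns \eqref{D2Eeq} into exactly the inhomogeneous SL system \eqref{ODE} with $\wh\vp(0)=\wh\psi(0)=0$. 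Reading off the second-order part of $(p+u)(\ell,\cdot)$ then yields \eqref{dEdadz}.

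The one substantive point — and where the real work of this section lies — is the legitimacy of the differentiation: one must know that $\mc E^\ell$ is genuinely twice Fr\'echet differentiable on the relevant Sobolev space, with derivatives obtained by solving the linearized and second-variation PDEs, and that the composite map $(\a,z)\mapsto\mc E^\ell\big(y^0(\a,z)\big)$ is $C^2$, so that the chain rule and the interchange of $\del_\a$ and $\del_z$ are valid. Since our solutions are classical on $[0,\ell]$ with finite gradients, this follows from standard well-posedness and smooth dependence on data for the hyperbolic system \eqref{genls} near the quiet state; alternatively one may appeal to the explicit second-derivative formula \eqref{D2E} and to \cite{YDiffOp}. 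Everything else is the routine linear computation sketched above.
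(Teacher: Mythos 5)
Your proposal is correct and takes essentially the same route as the paper: differentiate the system \eqref{genls} in $\a$ and then in $z$ at the quiet state, use $W(0,z)=0$ and $W(\a,z)=o(|\a|)$ from Lemma~\ref{lem:aux} to get $\wh{p_z}\big|_{(0,0)}=1$ and vanishing initial data for the second variation, and separate variables in the $k$-mode to arrive at \eqref{ODE}. The only cosmetic difference is that the paper writes the first-order term $\a\,\vp_k(x)\,\c(\w t)$ into the ansatz \eqref{pusol} up front rather than deriving it from the linearized evolution, and your explicit remark on the legitimacy of the parameter differentiation matches the paper's appeal to classical small-amplitude solutions on $[0,\ell]$.
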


\begin{proof}
  For the data as given by \eqref{auxil}, \eqref{bifdat}, we write the
  corresponding solution of \eqref{genls} as
  \begin{equation}
    \label{pusol}
    \begin{aligned}
      p(x,t) &= \ol p + z + \a\,\vp_k(x)\,\c(\w t) + \wh p(x,t),\\
      u(x,t) &= \a\,\psi_k(x)\,\s(\w t) + \wh u(x,t),
    \end{aligned}
  \end{equation}
  where $\w = k \piotk$, and $\wh p$, $\wh u = o(|\a|)$.  Here we
  have scaled the eigenfunction by $\vp_k(0)=1$, and we have
  \[
    \wh p(0,\cdot) + \wh u(0,\cdot) = W(\a,z).
  \]
  To rederive equation \eqref{D2Eeq}, we make the ansatz
  \eqref{pusol}, expand the nonlinear (classical) solution, and
  differentiate, first in $\a$ and setting $\a=0$, then in $z$ and
  setting $z=0$, to get the derivatives $\wh{p_{\a z}}$ and
  $\wh{u_{\a z}}$.  This is allowed because our solutions are small
  amplitude and locally defined, so do not suffer gradient blowup in
  the region $0\le x\le \ell$.  In this notation, it follows that
  \[
    \frac{\del^2}{\del z\,\del\a}
    \mc E^\ell y^0\Big|_{(0,0)} =
    \wh{p_{\a z}}(\ell,\cdot) + \wh{u_{\a z}}(\ell,\cdot),
  \]
  where $\cdot$ denotes a function of $t$.
  
  We use \eqref{pusol} to expand $v(p,s)$ as
  \[
    v(p,s) = v(\ol p+z,s) + v_p(\ol p+z,s)\,
    \big[\a\,\vp_k(x)\,\c(\w t) + \wh p\big] + O(|\a|^2),
  \]
  and substitute into \eqref{genls} to get
  \[
    \begin{gathered}
      \a\,\dot\vp_k\,\c(\w t) + \wh p_x
      + \a\,\w\,\psi_k\,\c(\w t)+\wh u_t = 0,\\
      \a\,\dot\psi_k\,\s(\w t) + \wh u_x - v_p(\ol p+z,s)\,
      \big(-\a\,\w\,\vp_k\,\s(\w t)+\wh p_t\big) = O(\a^2).
    \end{gathered}
  \]
  Differentiating with respect to $\a$ and setting $\a=0$, we get
  \[
    \begin{gathered}
      \dot\vp_k\,\c(\w t) + \wh{p_\a}_x
      + \w\,\psi_k\,\c(\w t)+\wh{u_\a}_t = 0,\\
      \dot\psi_k\,\s(\w t) + \wh{u_\a}_x - v_p(\ol p+z,s)\,
      \big(-\w\,\vp_k\,\s(\w t) + \wh{p_\a}_t\big) = 0.
    \end{gathered}
  \]
  We now differentiate this in $z$ and set $z=0$, to get
  \begin{equation}
    \label{bifDaz}
    \begin{aligned}
      \wh{p_{\a z}}_x + \wh{u_{\a z}}_t &= 0,\\
      \wh{u_{\a z}}_x + \sg^2\,\wh{p_{\a z}}_t &=
      - v_{pp}(\ol p,s)\,\w\,\vp_k\,\s(\w t),
    \end{aligned}
  \end{equation}
  where we have used \eqref{UP} and the fact that
  $\wh{p_\a}\big|_{(0,0)}=0$.  This is a restatement of the second
  derivative equation \eqref{D2Eeq}.  According to
  Lemma~\ref{lem:aux}, and refering to our ansatz \eqref{bifdat},
  namely
  \[
    y^0 := \ol p + z+\a\,\c(\w\,t) + W(\a,z),
  \]
  we have $W(\a,z) = o(|\a|)$, which yields
  \[
    \frac{\partial W}{\partial \a}\bigg|_{(0,0)} = 0, \com{so also}
    \frac{\partial^2 W}{\partial \a\,\partial z}\bigg|_{(0,0)} = 0.
  \]
  This implies that the initial conditions for \eqref{bifDaz} are
  \[
    \wh{p_{\a z}}(0,t) = 0, \quad \wh{u_{\a z}}(0,t) = 0,
  \]
  and we wish to integrate to $x=\ell$.

  Finally, we make the ansatz
  \[
    \wh{p_{\a z}} = \wh\vp(x)\,\c(\w t), \quad
    \wh{u_{\a z}} = \wh\psi(x)\,\s(\w t), \com{with}
    \w = k\piotk,
  \]
  and separate variables in \eqref{bifDaz} to get the inhomogeneous
  system \eqref{ODE}, as required.
\end{proof}

We solve \eqref{ODE} using Duhamel's principle with the fundamental
solution $\Psi(x;\w)$ given in \eqref{Psi}: it is straight-forward to
check that the solution of \eqref{ODE} is
\begin{equation}
    \label{duhamel}
  \(\wh\vp(x)\\\wh\psi(x)\) = -\w\,\int_0^x
  \Psi(x-x';\w)\(0\\1\)\vp_k(x')\,v_{pp}\;dx',
\end{equation}
and where $v_{pp}=v_{pp}\big(\ol p,s(x')\big)$.  We carry out this
calculation explicitly in Lemma~\ref{lem:duh} below.

This last integrand requires us to find the full fundamental solution
$\Psi(x;\w)$ of the linear system \eqref{SL3}.  To do so, we consider
initial data of the form
\[
  \wt\vp(0) = 0 \com{and} \wt\psi(0) = \wt c_0,
\]
solution of which provides the second column of $\Psi$.  As in
\eqref{pruf}, we again use modified Pr\"ufer coordinates, and so we
make the ansatz
\begin{equation}
  \label{pruf2}
  \wt\vp(x) := - \wt r(x)\,\frac1{\wt\rho(x)}\,\s\big(\wt\t(x)\big), \qquad
  \wt\psi(x) := \wt r(x)\,\wt\rho(x)\,\c\big(\wt\t(x)\big),
\end{equation}
As above, plugging in \eqref{pruf2} into \eqref{SL3} and simplifying
yields
\[
  \begin{aligned}
    - \frac{\dot{\wt r}}{\wt r}\,\s(\wt\t)
    + \frac{\dot{\wt\rho}}{\wt\rho}\,\s(\wt\t)
    - \c(\wt\t)\,\dot{\wt\t}
    + \w\,\wt\rho^2\,\c(\wt\t) &= 0, \\
    \frac{\dot{\wt r}}{\wt r}\,\c(\wt\t)
    + \frac{\dot{\wt\rho}}{\wt\rho}\,\c(\wt\t)
    - \s(\wt\t)\,\dot{\wt\t}
    + \w\,\frac{\sg^2}{\wt\rho^2}\,\s(\wt\t) &= 0,
  \end{aligned}
\]
with initial conditions
\[
  \wt\t(0) = 0 \com{and} \wt r(0) = 1,
  \com{so} \wt c_0 = \frac1{\wt\rho(0)}.
\]
Again choosing $\wt\rho := \sqrt\sg = \rho(x)$, after simplifying we get
\begin{equation}
  \label{rtwt}
  \frac{\dot{\wt r}}{\wt r}
  = - \frac{\dot{\sg}}{2\sg}\,\c(2\wt\t), \qquad
  \dot{\wt\t} = \frac{\dot{\sg}}{2\sg}\,\s(2\wt\t)
  + \w\,\sg,
\end{equation}
and the first of these can again be integrated, giving
\[
  \wt r(x) = \exp\Big\{ -\int_0^x
  \c\big(2\wt\t(y)\big)\;d\log\sqrt\sg(y)\Big\}.
\]  

We summarize the foregoing, which gives a full description of a
fundamental solution of the SL system.

\begin{lemma}
  \label{lem:SLfund}
  A fundamental matrix of the system \eqref{Psi} is
  \[
    \Psi(x;\w) = \(\vp&\wt\vp\\\psi&\wt\psi\) =
    \( r(x)\,\frac1{\rho(x)}\,\c\big(\t(x)\big)
    & -\wt r(x)\,\frac1{\rho(x)}\,\s\big(\wt\t(x)\big)\\[2pt]
    r(x)\,\rho(x)\,\s\big(\t(x)\big)
    & \wt r(x)\,\rho(x)\,\c\big(\wt\t(x)\big) \),
  \]
  with
  \[
    \rho(x) = \sqrt{\sg(x)} \com{and}
    \Psi(0;\w) = \(\frac1{\rho(0)}&0\\[2pt]0&\rho(0)\),
  \]
  where $\t$ and $\wt\t$ solve
  \[
    \dot\t = \w\,\sg - \frac{\dot\sg}{2\sg}\,\s(2\t), \qquad
    \dot{\wt\t} = \w\,\sg + \frac{\dot\sg}{2\sg}\,\s(2\wt\t),
  \]
  with $\t(0) = \wt\t(0) = 0$ respectively, and $r$ and $\wt r$ are
  given explicitly by
  \[
    \begin{aligned}
      r(x) &= \exp\Big\{ \int_0^x
             \c\big(2\t(y)\big)\;d\log\sqrt\sg(y)\Big\}, \\     
      \wt r(x) &= \exp\Big\{ -\int_0^x
                 \c\big(2\wt\t(y)\big)\;d\log\sqrt\sg(y)\Big\},
    \end{aligned}
  \]
  respectively.  Moreover, we have
  \[
    \det\Psi(x;\w) = r(x)\,\wt r(x)\,\c\big(\t(x)-\wt\t(x)\big) = 1
  \]
  for all $x$, and in particular
  \[
    \big|\t(x)-\wt\t(x)\big| <\frac\pi2 \com{for all $x$.}
  \]
\end{lemma}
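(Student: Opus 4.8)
The plan is to recognize the claimed matrix as the assembly of the two modified Pr\"ufer solutions already constructed in the text, and then to obtain both $\det\Psi\equiv 1$ and the angle bound from a single Wronskian computation.

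First I would note that the first column $(\vp,\psi)^T$ is exactly the solution of the SL system \eqref{SL3} produced by the substitution \eqref{pruf} with the aspect choice $\rho=\sqrt\sg$ of \eqref{rho}: plugging \eqref{pruf} into \eqref{SL3} and simplifying yielded the scalar angle equation \eqref{theta} for $\t$ together with the quadrature \eqref{rint} for $r$, and the normalizations $\t(0)=0$, $r(0)=1$ give $\vp(0)=1/\rho(0)$, $\psi(0)=0$. In the same way, the second column $(\wt\vp,\wt\psi)^T$ is the solution of \eqref{SL3} produced by the ansatz \eqref{pruf2} with the \emph{same} aspect $\wt\rho=\sqrt\sg$; plugging into \eqref{SL3} and simplifying yielded the pair \eqref{rtwt}, namely the angle equation $\dot{\wt\t}=\w\sg+\frac{\dot\sg}{2\sg}\s(2\wt\t)$ with $\wt\t(0)=0$ together with the quadrature for $\wt r$ with $\wt r(0)=1$, so that $\wt\vp(0)=0$, $\wt\psi(0)=\rho(0)$. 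Since each column satisfies the first-order linear system \eqref{SL3}, which is exactly \eqref{Psi} written out, the matrix $\Psi$ with these two columns solves \eqref{Psi} with initial value whose first column is $(1/\rho(0),0)^T$ and second column is $(0,\rho(0))^T$; in particular $\det\Psi(0;\w)=\tfrac1{\rho(0)}\cdot\rho(0)=1$.

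Next I would compute the determinant two ways. On the one hand, the coefficient matrix of \eqref{SL3} (equivalently of \eqref{Psi}) has zero trace, so Liouville's formula gives $\det\Psi(x;\w)=\det\Psi(0;\w)\exp\{\int_0^x 0\;dy\}=1$ for all $x$. On the other hand, directly from the Pr\"ufer forms,
\[
  \det\Psi = \Big(\frac{r}{\rho}\,\c(\t)\Big)\,\big(\wt r\,\rho\,\c(\wt\t)\big)
    - \Big(-\frac{\wt r}{\rho}\,\s(\wt\t)\Big)\,\big(r\,\rho\,\s(\t)\big)
    = r\,\wt r\,\big(\c(\t)\,\c(\wt\t)+\s(\t)\,\s(\wt\t)\big)
    = r\,\wt r\,\c(\t-\wt\t),
\]
so that $r(x)\,\wt r(x)\,\c\big(\t(x)-\wt\t(x)\big)=1$ for all $x$. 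Finally I would read off the angle bound: since $r$ and $\wt r$ are exponentials of finite integrals they are strictly positive, hence $\c\big(\t(x)-\wt\t(x)\big)=1/\big(r(x)\wt r(x)\big)>0$ and in particular $\t(x)-\wt\t(x)$ never meets an odd multiple of $\pi/2$; as $\t-\wt\t$ is continuous in $x$ with value $0$ at $x=0$, the intermediate value theorem forces it to stay in the connected component of $\{\vt:\c(\vt)\ne0\}$ containing $0$, that is $\big|\t(x)-\wt\t(x)\big|<\pi/2$ for all $x$.

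I do not expect a serious obstacle here: all the ODE reductions (\eqref{theta}, \eqref{rint}, \eqref{rtwt}) were already carried out in the surrounding text, so the only content is (i) observing that the two Pr\"ufer solutions assemble into a matrix solution of \eqref{Psi} with the indicated (non-identity) initial data, and (ii) the one-line determinant computation that yields simultaneously $\det\Psi\equiv1$ and the bound on $\t-\wt\t$. The mild subtlety worth flagging is that the determinant collapses to $r\,\wt r\,\c(\t-\wt\t)$ only because both columns were set up with the \emph{same} aspect $\rho=\sqrt\sg$, and that passing from pointwise positivity of $\c(\t-\wt\t)$ to $|\t-\wt\t|<\pi/2$ uses continuity together with the common initial value $\t(0)=\wt\t(0)=0$, not merely the positivity at each point.
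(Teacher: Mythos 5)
Your proposal is correct and follows essentially the same route as the paper: the two Pr\"ufer reductions \eqref{theta}, \eqref{rint} and \eqref{rtwt} supply the columns, Abel/Liouville (zero trace) gives $\det\Psi\equiv\det\Psi(0;\w)=1$, the direct expansion gives $\det\Psi=r\,\wt r\,\c(\t-\wt\t)$, and continuity from the common initial angle yields $|\t-\wt\t|<\pi/2$. Your write-up is simply a more detailed version of the paper's one-line proof, and your flags (same aspect $\rho=\sqrt\sg$ in both columns; non-identity initial data for this fundamental matrix; continuity plus the common initial value, not just pointwise positivity) are all apt.
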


In this representation of the fundamental solution, the first column
describes the linearized evolution of the even modes $\c(\w\,t)$, and
the second column describes evolution of the odd modes $\s(\w\,t)$.

\begin{proof}
  Combining \eqref{rho}, \eqref{theta} and \eqref{rint} with
  \eqref{rtwt} yields the fundamental solution, and $\Psi(0;\w)$
  follows from our choices $\t(0)=\wt\t(0)=0$ and $r(0)=\wt r(0)=1$.
  Abel's theorem implies the determinant is constant, and the last
  inequality follows by continuity of $\t$ and $\wt\t$.
\end{proof}

Having calculated the fundamental matrix $\Psi$ of \eqref{Psi}, we now
solve the inhomogeneous system \eqref{ODE}.  Rather than use Duhamel's
principle \eqref{duhamel} directly, we rederive it explicitly in the
Pr\"ufer variables using variation of parameters.

\begin{lemma}
  \label{lem:duh}
  The solution of \eqref{ODE} can be written as
  \begin{equation}
    \label{hatpsi}
    \(\wh\vp(x)\\[2pt]\wh\psi(x)\) = \Psi(x;\w)\,\(a(x)\\b(x)\),
  \end{equation}
  for appropriate functions $a$ and $b$, and we have $b(x) < 0$ for
  all $x > 0$.
\end{lemma}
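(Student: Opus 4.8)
The plan is to solve the linear inhomogeneous Sturm--Liouville system \eqref{ODE} by variation of parameters against the fundamental matrix $\Psi(x;\w)$ of its homogeneous part \eqref{SL3}, \eqref{Psi}, which Lemma~\ref{lem:SLfund} has already produced in explicit Pr\"ufer form. Viewing \eqref{ODE} as a first-order system for $(\wh\vp,\wh\psi)$ with vanishing initial data and forcing term $\big(0,\,-\w\,v_{pp}\,\vp_k\big)$, where $v_{pp}=v_{pp}\big(\ol p,s(x)\big)$ and $\w=\w_k$, I substitute the ansatz \eqref{hatpsi}, $(\wh\vp,\wh\psi)^T = \Psi(x;\w)\,(a,b)^T$. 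Since $\Psi$ solves the homogeneous equation, the homogeneous contributions cancel and one is left with $\Psi(x;\w)\,(\dot a,\dot b)^T = \big(0,\,-\w\,v_{pp}\,\vp_k\big)^T$; the vanishing initial data together with invertibility of $\Psi(0;\w)$ forces $a(0)=b(0)=0$.

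Next, because Lemma~\ref{lem:SLfund} gives the Wronskian identity $\det\Psi(x;\w)\equiv1$, the inverse of $\Psi$ is simply its adjugate: if $\Psi$ has first column $(\vp,\psi)^T$ and second column $(\wt\vp,\wt\psi)^T$ as there, then $\Psi^{-1}$ has rows $(\wt\psi,\,-\wt\vp)$ and $(-\psi,\,\vp)$. Hence
\[
  \dot a = \w\,v_{pp}\,\vp_k\,\wt\vp, \qquad
  \dot b = -\,\w\,v_{pp}\,\vp_k\,\vp,
\]
and integrating from $0$,
\[
  a(x) = \w\!\int_0^x v_{pp}\big(\ol p,s(y)\big)\,\vp_k(y)\,\wt\vp(y)\;dy,
  \qquad
  b(x) = -\,\w\!\int_0^x v_{pp}\big(\ol p,s(y)\big)\,\vp_k(y)\,\vp(y)\;dy.
\]
This is the honest variable-coefficient form of Duhamel's formula \eqref{duhamel}, and it establishes the representation \eqref{hatpsi} with $a,b$ explicit; what remains is the sign of $b$. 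First I would identify the factor $\vp$ appearing under the integral: it is the first component of the first column of $\Psi$, i.e.\ the solution of \eqref{SL3} with $(\vp(0),\psi(0))=(\rho(0)^{-1},0)$, whereas $\vp_k$ is the solution of the very same scalar problem normalized by $\vp_k(0)=1$. By linearity and uniqueness $\vp=\rho(0)^{-1}\vp_k$, so $\vp_k\,\vp=\rho(0)^{-1}\vp_k^{\,2}\ge0$ pointwise (equivalently, the Pr\"ufer expression of Lemma~\ref{lem:SLfund} gives $\vp_k\vp = \rho(0)\,r^2\rho^{-2}\,\c^2(\t)\ge0$). The genuine nonlinearity hypothesis of Lemma~\ref{lem:D2Enz}, which we take in its standard convexity form $v_{pp}\big(\ol p,s(x)\big)>0$ throughout $[0,\ell]$, holds automatically for a $\g$-law gas. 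Since $\vp_k(0)=1$, the zeros of $\vp_k$ are isolated, so $v_{pp}\,\vp_k\,\vp\ge0$ with strict positivity on a set of positive measure in $(0,x)$; therefore $b(x)<0$ for every $x>0$, as asserted.

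The variation-of-parameters computation is routine once Lemma~\ref{lem:SLfund} supplies $\Psi$ and the identity $\det\Psi\equiv1$; the substantive point is the sign analysis. Two things must be handled with care there: first, that the $\vp$ multiplying $\vp_k$ in the integrand for $b$ is a \emph{positive} scalar multiple of the eigenfunction, so $\vp_k\vp$ cannot change sign and no cancellation occurs in the integral; second, that genuine nonlinearity is invoked in the strong form $v_{pp}>0$ rather than merely $v_{pp}\ne0$, which together with $\vp_k(0)=1$ upgrades $b\le0$ to the strict inequality $b<0$ on $(0,\ell]$ needed downstream in Lemma~\ref{lem:D2Enz}.
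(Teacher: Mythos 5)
Your proof is correct and follows essentially the same route as the paper: variation of parameters against the Pr\"ufer fundamental matrix of Lemma~\ref{lem:SLfund}, the adjugate formula for $\Psi^{-1}$ coming from $\det\Psi\equiv1$, and the observation that $\dot b=-\w\,v_{pp}\,\vp_k\,\vp$ has a fixed sign so that $b(x)<0$ for $x>0$. You are in fact slightly more careful than the paper's own argument, which writes $\dot b=-v_{pp}\,\w\,\vp^2$ without remarking that $\vp$ and $\vp_k$ differ by the positive factor $\rho(0)^{-1}$, and which leaves implicit both the sign convention $v_{pp}>0$ and the isolated-zeros point needed to upgrade $b\le0$ to strict negativity.
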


\begin{proof}
  Because $\Psi$ is a fundamental solution, using the ansatz
  \eqref{hatpsi} in \eqref{ODE} yields the simplified system
  \begin{equation}
    \label{abODE}
    \(a\\b\)^{\dot{}} = - v_{pp}\,\w\,\vp\,\Psi^{-1}\,\(0\\1\),
    \qquad a(0) = b(0) = 0.
  \end{equation}
  Next, since $\det\Psi \equiv 1$, we have
  \[
    \Psi^{-1} = \(\wt\psi&-\wt\vp\\-\psi&\vp\),
  \]
  and the second component of \eqref{abODE} simplifies to
  \[
    \dot b = -v_{pp}\,\w\,\vp^2 < 0.
  \]
  It follows that $b(x) < 0$ for all $x > 0$, as required.
\end{proof}

Lemma~\ref{lem:d2ects} gives the second derivative of the evolution in
terms of the fundamental solution, which is calculated in
Lemma~\ref{lem:SLfund}.  In Lemma~\ref{lem:duh}, we explicitly
calculate a nonvanishing term due to genuine nonlinearity, as seen by
the $v_{pp}$ term in \eqref{abODE}.  This in turn allows us to solve
the bifurcation equation as stated in Lemma~\ref{lem:D2Enz} above.

\begin{proof}[Proof of Lemma~\ref{lem:D2Enz}]
  We substitute \eqref{dEdadz} into \eqref{d2fdaz}, to get
  \[
    \Big\langle \s\big(\w\,t -k\,{\TS\frac\pi2}\big),
    \frac{\del^2}{\del z\,\del\a}\mc E^\ell y^0
    \Big|_{(0,0)}\Big\rangle =
    \begin{cases}
      \wh\vp(\ell), & k\text{ odd},\\
      \wh\psi(\ell), & k\text{ even},
    \end{cases}
  \]
  respectively, where $(\wh\vp,\wh\psi)$ solve \eqref{ODE}.
  
  Evaluating \eqref{hatpsi} at $x=\ell$, we get
  \[
    \(\wh\vp(\ell)\\[2pt]\wh\psi(\ell)\) =
    \(\vp(\ell)&\wt\vp(\ell)\\\psi(\ell)&\wt\psi(\ell)\)
    \,\(a(\ell)\\b(\ell)\),
    \com{with} b(\ell)<0.
    \]
  By our choice \eqref{abc} of $\w$, for $k$ odd we have
  \eqref{kodd}, so that $\vp(\ell)=0$, which implies that
  \[
    \wh\vp(\ell) = \wt\vp(\ell)\,b(\ell) \ne 0,
  \]
  and similarly for $k$ even, we have \eqref{keven}, which is
  $\psi(\ell)=0$, so we must have
  \[
    \wh\psi(\ell) = \wt\psi(\ell)\,b(\ell) \ne 0,
  \]
  and the proof is complete.
\end{proof}


\providecommand{\url}[1]{{\tt #1}}

\end{document}